\documentclass[11pt,a4paper]{article}
 \usepackage[UTF8,scheme=plain]{ctex}
 \usepackage{indentfirst, latexsym,bm}
 
 \usepackage{amsmath}
 \usepackage{mdframed}
 \usepackage{pifont}
 \usepackage{amsfonts}
 \usepackage{mathrsfs}
 \usepackage{array}
 \usepackage{multirow} 
 \usepackage[normalem]{ulem}
 \usepackage{tikz}
 \usepackage{graphicx}
 \usepackage{subfigure}
 \usepackage{picinpar}
 \usepackage{setspace}
 \usepackage[textsize=footnotesize]{todonotes}
 \RequirePackage[colorlinks,citecolor=blue,urlcolor=blue,linkcolor=blue]{hyperref}
 \usepackage[top=2cm,bottom=2cm, outer=2cm, inner=2cm]{geometry}
 
 \usepackage{caption}
 \usepackage{subcaption}
 \usepackage[labelfont={bf,small},textfont={small}]{caption}
 
 \RequirePackage[numbers]{natbib}
 \usepackage{enumitem} % math notations i.e. \mathscr
 
 \usepackage{booktabs}
 \usepackage{authblk}
 
 \usepackage{threeparttable}
 \usepackage{mathrsfs,amsfonts,amsmath}
 
 \usepackage{color}
 
 \makeatletter
 \def\namedlabel#1#2{\begingroup
 	#2%
 	\def\@currentlabel{#2}%
 	\phantomsection\label{#1}\endgroup
 }
 \makeatother

 \numberwithin{figure}{section}

 \newcommand\email[1]{\href{mailto:#1}{ \nolinkurl{#1}}}

 \newtheorem{theorem}{Theorem}[section]
 \newtheorem{definition}[theorem]{Definition}
 \newtheorem{lemma}[theorem]{Lemma}
 \newtheorem{corollary}[theorem]{Corollary}
 \newtheorem{proposition}[theorem]{Proposition}
 \newtheorem{remark}[theorem]{Remark}
 \newtheorem{condition}[theorem]{Condition}
 \newtheorem{example}{Example}[section]

 \def\blemma{\begin{lemma}}\def\elemma{\end{lemma}}
 \def\bproposition{\begin{proposition}}\def\eproposition{\end{proposition}}
 \def\ttheorem{\begin{theorem}}\def\etheorem{\end{theorem}}
 \def\bcorollary{\begin{corollary}}\def\ecorollary{\end{corollary}}
 \def\bremark{\begin{remark}}\def\eremark{\end{remark}}
 \def\bcondition{\begin{condition}}\def\econdition{\end{condition}}

 \def\benumerate{\begin{enumerate}}\def\eenumerate{\end{enumerate}}
 \def\bitemize{\begin{itemize}}\def\eitemize{\end{itemize}}

 \def\beqlb{\begin{eqnarray}}\def\eeqlb{\end{eqnarray}}
 \def\beqnn{\begin{eqnarray*}}\def\eeqnn{\end{eqnarray*}}
 \def\ar{\!\!\!&}

 \def\proof{\noindent{\it Proof.~~}}\def\qed{\hfill$\Box$\medskip}

\begin{document} 

 \title{\bf  \Large Large Excursions of Reflected L\'evy Processes: Local Asymptotics}
 
 \author{ Hao Wu\footnote{School of Mathematical Sciences, Nankai University, China; email: wuhao.math@outlook.com} 
 \quad\ and\quad 
 Wei Xu\footnote{School of Mathematics and Statistics, Beijing Institute of Technology,  China; email: xuwei.math@gmail.com. Xu gratefully acknowledges financial support from the Fundamental Research Funds for the Central Universities, the National Natural Science Foundation of China (No. 11531001) and the National Key R\&D Program of China (No. 2023YFA1010103).} 
 }  
 \maketitle

 \begin{abstract}
 	
 This paper establishes uniform local large-deviation asymptotics for excursions of a L\'evy process $X$ with negative drift $-\beta$ and $\alpha$-locally regularly varying L\'evy measure $\nu$, reflected at its running infimum. 
 Under the excursion measure $\underline n$, we first prove that the lifetime $\zeta$ and height $\overline\epsilon$ satisfy
 \beqnn
 \underline{n}\big(\zeta\in(t,t+\delta] \big) \sim  \underline{n}(\overline\epsilon\in\big (\beta t,\beta t+\beta\delta]\big)
 \sim\underline{n}(\zeta)\cdot\nu\big((\beta t,\beta t+\beta \delta]\big),
 \quad \mbox{as } t\to\infty, 
 \eeqnn
 uniformly in $\delta\in[\delta_0,\infty]$ for every $\delta_0>0$. 
 In contrast to the asymptotic equivalence of the corresponding tail events, fixed-width lifetime and height windows have asymptotically negligible relative overlap. 
 Nevertheless, for any positive function $\theta$ satisfying $|X_t+\beta t|/\theta_t\to0$ in probability, 
  \beqnn
 \underline n\big( |\overline\epsilon-\beta t|  \leq \theta_t \,\big|\, \zeta\in (t,t+\delta] \big)  \sim 
 \underline n\big( |\zeta-t|  \leq \theta_t \,\big|\, \overline\epsilon \in (\beta t,\beta t+\beta\delta] \big) \to 1  . 
 \eeqnn
 For conditioning windows of sublinear width bounded away from zero, both conditional excursion laws converge under spatial rescaling to a single-jump process with deterministic jump size and random arrival time determined by the integrated excursion survival function. 
 An additional time rescaling yields the same deterministic linear descent killed at zero. 
 Finally, we identify the joint limiting distribution of the unscaled pre-terminal position and terminal overshoot, including continuous and discontinuous termination.

 	\bigskip
 	
 	\noindent {\it MSC 2020 subject classifications:} Primary 60G51, 60F17; secondary 60B10.
 	
 	\smallskip
 	
 	\noindent  {\it Keywords and phrases:} L\'evy process, excursion measure, reflected process, local probability.

 \end{abstract} 
 
 %\tableofcontents
 
 \section{Introduction and main results}
 \label{Sec.Introduction}
 \setcounter{equation}{0}
 
 Consider a one-dimensional real-valued L\'evy process $X=\{X_t:t\geq0\}$ with negative drift $\beta:=-\mathbf E[X_1]\in(0,\infty)$, diffusion coefficient $\sigma\geq0$ and L\'evy measure $\nu(dy)$ satisfying
 \begin{equation}\label{eqn.LevyMeasure}
 	\int_{\mathbb{R}}\big(|y|\wedge |y|^2\big)\,\nu(dy)<\infty.
 \end{equation}
 For $x\in \mathbb{R}$, we write $ \tau^-_x$ and $ \tau^+_x$ for the first passage times of $X$ into $(-\infty,x)$ and $(x,\infty)$, respectively. 
 Denote by $\underline{X}$ and $\overline{X}$ the \textsl{running infimum} and \textsl{supremum processes}, respectively. The reflected processes $X-\underline{X}$ and $\overline{X}-X$ are non-negative strong Markov processes.
 Their excursions away from zero form two Poisson point processes on the space $\mathcal{E}$ of excursions, with respective intensities $\underline{n}(d\epsilon)$ and $\overline{n}(d\epsilon)$ that are well-known as \textsl{excursion measures}.  
 Here, each excursion $\epsilon \in \mathcal{E}$ is a non-negative c\`adl\`ag function on $\mathbb{R}_+$ with \textsl{lifetime} $\zeta$ and \textsl{height} $\overline\epsilon$ defined by 
 \beqnn
 \zeta:= \inf\big\{t>0: \epsilon_s= 0 \mbox{ for all } s\geq t \big\}
 \quad\mbox{and}\quad 
 \overline\epsilon:=\sup_{0\leq s<\zeta}\epsilon_s, 
 \eeqnn
 with $\inf \emptyset =\infty$ for convention. 
 For path convergence, each excursion is killed at zero at and after its lifetime. 
 To describe its termination, we also retain the unreflected terminal position, denoted by $\epsilon_\zeta\leq0$, as an additional mark. Thus $\epsilon_{\zeta-}$ and $-\epsilon_\zeta$ are the pre-terminal position and terminal overshoot, respectively.  
 
 Our earlier work~\cite{CuiWuXu2025} investigated the asymptotic shapes of excursions with long lifetime or large height. 
 When the tail law $\overline\nu(x):=\nu\big((x,\infty)\big)$ is regularly varying at infinity, we proved that
 \beqlb\label{CWX01}
 \underline n(\zeta>t)\sim\underline n(\overline\epsilon>\beta t)
 \sim\underline n(\zeta)\cdot \overline\nu(\beta t),\quad \mbox{as } t\to\infty.
 \eeqlb
 Moreover, the events $\{\zeta>t\}$ and $\{\overline\epsilon>\beta t\}$ are asymptotically equivalent, in the sense that
 \beqlb\label{eqn.tail.equivalence}
 \underline n(\overline\epsilon>\beta t\mid\zeta>t)\sim
 \underline n(\zeta>t\mid\overline\epsilon>\beta t)\to 1.
 \eeqlb
 Conditioned on either event, the spatially rescaled excursion converges to a single-jump process with a Pareto-distributed jump size and an independent arrival time $\mathcal T$ satisfying
 \begin{equation}\label{CWX02}
 	\mathbf P(\mathcal T\leq s)=
 	\frac{\underline n(\zeta\wedge s)}{\underline n(\zeta)},\quad s\geq0.
 \end{equation}
 After rescaling time by $t$, the effect of the negative mean becomes visible through the linear descent following the jump. 
 
 These results describe excursions selected by a tail event. 
 A finer question is how the excursion behaves when its lifetime or height is constrained to lie in a local interval.
 More precisely, for $\delta\in(0,\infty]$, $x\in \mathbb{R}$ and $a>0$, write
 \beqlb 
 \varDelta_\delta :=(0,\delta],\quad x+\varDelta_\delta:=(x,x+\delta]
 \quad \mbox{and}\quad   a\cdot (x+\varDelta_\delta):=(ax,ax+a\delta], 
 \eeqlb
 with $x+\varDelta_\infty:=(x,\infty)$.
 We are interested in the local masses
 \beqlb\label{eqn.001}
 \underline n\big(\zeta\in t+\varDelta_\delta\big)
 \quad\text{and}\quad
 \underline n\big(\overline\epsilon\in x+\varDelta_\delta\big),
 \eeqlb
 as well as in the corresponding excursion laws obtained by conditioning on either of these events.
 Under a local regular-variation assumption on the L\'evy measure $\nu(dy)$, this paper aims to address the following questions:
 \begin{enumerate}
 	\item[$\bullet$] Does the asymptotic equivalence \eqref{eqn.tail.equivalence} remain valid at the local level for the masses in~\eqref{eqn.001}\,?
 	
 	\item[$\bullet$]  Do local lifetime conditioning and local height conditioning lead to the same scaling limits\,?
 	
 	\item[$\bullet$]  How does a long excursion terminate when its lifetime is constrained to a local interval\,?
 \end{enumerate}

 \subsection{Literature review}
 
 Local first-passage asymptotics have been studied extensively for a random walk $S=\{S_n:n=0,1,\cdots \}$. Let $\tau^-_0=\inf\{n\geq1:S_n\leq0\}$. 
 When $S$ belongs to the attraction domain of a stable law with positivity parameter $\rho \in (0,1)$ and norming sequence $c=\{c_n:n=1,2,\cdots\}$, Vatutin and Wachtel~\cite{VatutinWachtel2009} proved that  
 \beqnn
 \mathbf{P}\big(\tau^-_0=n\big)\sim(1-\rho)\cdot \frac{\mathbf{P}(\tau^-_0>n)}{n},\quad \mbox{as $n\to\infty$}.
 \eeqnn
 Later, Doney~\cite{Doney2012} obtained related local first-passage estimates allowing the barrier to vary with time. In particular, the probability of first crossing a level $x=o(c_n)$ is asymptotic to the probability for level zero multiplied by the ascending ladder-height renewal function. These results connect local first-passage laws with the spatial structure of paths conditioned to stay positive. 
 When $S$ has negative drift $\mathbf{E}[S_1]=-a<0$ and
 step-size  has subexponential density $b$, Denisov et al.~\cite{DenisovVatutinWachtel2014} proved that for each fixed $x\geq 0$,
 \beqnn
 \mathbf{P}_x\big (\tau^-_0=n\big)\sim a\cdot \mathbf{E}_x[\tau^-_0]\cdot b(an) . 
 \eeqnn
 In particular, if $b$ is regularly varying at infinity with index $-\alpha-1$ for some $\alpha>1$,   this and the corresponding tail estimate give 
 \beqlb\label{eqn.003}
 \mathbf{P}_x\big (\tau^-_0=n\big) \sim \alpha \cdot \frac{\mathbf{P}_x\big (\tau^-_0>n\big)}{n}. 
 \eeqlb 
 
 For L\'evy processes, Doney and Rivero~\cite{DoneyRivero2013,DoneyRivero2016Erratum} developed the corresponding local theory under the assumption that $X$ belongs to the domain of attraction of a strictly stable law without centering, with scaling index $\gamma \in (0,2]$, positivity parameter $\rho \in (0,1)$ and norming function $c$. 
 Their excursion-lifetime estimate gives, for bounded positive window widths, that as $t\to\infty$, 
 \beqlb\label{eqn.004}
 \underline n\big(\zeta\in t+\varDelta_\delta\big) \sim (1-\rho)\cdot \frac{\underline n(\zeta>t)}{t}\cdot \delta.
 \eeqlb
 The original uniformity over widths tending to zero is understood with the additional hypotheses in their erratum~\cite{DoneyRivero2016Erratum}.
 They also distinguished continuous termination from termination by a jump as follows: as $t\to\infty$, 
 \beqlb\label{eqn.005}
 \underline{n}\big(\epsilon_{\zeta-}=0\,\big|\, \zeta\in t+\varDelta_\delta\big)  \to \frac{\underline{\tt a}}{\mathbf{E}\big[\widehat{H}_1\big]}  
 \quad \mbox{and}\quad
 \underline{n}\big(\epsilon_{\zeta-}>0\,\big|\, \zeta\in t+\varDelta_\delta\big) \to 1- \frac{\underline{\tt a}}{\mathbf{E}\big[\widehat{H}_1\big]}  ,
 \eeqlb
 where $\widehat{H}$ is the descending ladder height process that has drift $\underline{\tt a}$. 
 When $\gamma(1-\rho)<1$, one has $\mathbf{E}\big[\widehat{H}_1\big]=\infty$, so continuous termination is negligible on the scale of the total local lifetime mass. In the boundary case $\gamma(1-\rho)=1$, $\mathbf{E}\big[\widehat{H}_1\big]$ can be finite or infinite and both mechanisms must be retained. In the absence of negative jumps, termination is continuous.

 \subsection{Main results}
 
 We now state our main results on the uniform local asymptotics for the lifetime and height of large excursions, together with the corresponding scaling limits describing their geometric shapes. 
 Throughout, we write $f\sim g$ whenever $f(x)/g(x)\to1$ as $x\to\infty$. 
 For $\kappa\in\mathbb{R}$, a positive measurable function $f$ on $\mathbb{R}$ is said to be \textsl{regularly varying} at infinity with index $\kappa$, written $f \in \mathrm{RV}^\infty_\kappa$, if $f(\lambda t)/f(t) \to \lambda^\kappa$ as $t\to\infty$ for any $\lambda>0$. 
 We refer to \cite{BinghamGoldieTeugels1987} for the basic theory of regularly varying functions. We also introduce the class
 \beqnn
 \boldsymbol{\varTheta}=\big\{\theta:[0,\infty)\to(0,\infty):
 \theta\text{ is non-decreasing and }\theta_t/t\to0 \mbox{ as }t\to\infty\big\}.
 \eeqnn
 
 If the state $0$ is irregular for $(0,\infty)$, for $X$, then $\underline{n}(\epsilon_0>0)>0$.
 Under the conditional measure $\underline n(\cdot\,|\,\epsilon_0>0)$, the canonical process has the same law as $X$ started from an initial position with distribution $\underline{n}(\epsilon_0\in dx\,|\,\epsilon_0>0)$ and killed upon first downward passing $0$.  
 Consequently, the asymptotic behavior of excursions starting from a positive position can be deduced from the corresponding asymptotics for the L\'evy process started from a random positive initial state; see \cite{WuXu2026c}.
 We therefore restrict throughout to the complementary case and impose the standing assumption that 
 \begin{center}
 	\bf the state $0$ is regular for $(0,\infty)$, for $X$.
 \end{center}
 
 All our results are established under the following local regular-variation  condition on the L\'evy measure $\nu(dy)$. 
 This condition strengthens the heavy-tail assumption imposed in \cite{CuiWuXu2025} by specifying the asymptotic mass of bounded intervals, while still allowing $\nu(dy)$ to be non-absolutely continuous.
 
 \begin{condition}\label{Main.Condition}
 	For some $\alpha>1$, assume that $x\mapsto\nu\big(x+\varDelta_\delta\big)
 	\in \mathrm{RV}^\infty_{-\alpha-1}$ for any 
 	$\delta>0$. 
 \end{condition}
 
 \begin{remark} \label{Remark01}
 	By Proposition~3.3 in \cite{WuXu2026a}, Condition~\ref{Main.Condition} is equivalent to requiring that for every $\delta>0$,
 	\beqlb\label{eqn.30}
 	\overline{\nu}\in\mathrm{RV}_{-\alpha}^{\infty}
 	\quad \mbox{and} \quad
 	\nu\big(x+\varDelta_\delta\big)
 	\sim
 	\alpha\cdot\frac{\overline{\nu}(x)}{x}\cdot\delta,
 	\quad \mbox{as } x\to\infty.
 	\eeqlb
 	In particular, this yields that Condition~\ref{Main.Condition} need only be verified for $\delta$ in a neighborhood of $0$. 
 	Moreover, Lemma~\ref{Lemma.301} extends the asymptotic equivalence in~\eqref{eqn.30} uniformly to $\delta\in[\theta_0,\theta_x]$ for every $\theta\in\boldsymbol{\varTheta}$. 
 	Since $\theta_x=o(x)$, this corresponds to interval widths that are sublinear in $x$. For intervals of arbitrary width, however, the local approximation in~\eqref{eqn.30} is no longer uniform in general, and the interval mass $\nu(x+\varDelta_\delta)$ must be retained explicitly. 
 \end{remark}

 \textit{\textbf{Local asymptotics for lifetime and height.}}
 Our first two results provide local versions of~\eqref{CWX01}, uniformly over all interval widths bounded away from zero. The same constant $\underline n(\zeta)<\infty$ appears in both asymptotics. It accounts for the possible times at which the large jump occurs while the excursion is alive. Recall that $\beta(t+\varDelta_\delta)=(\beta t,\beta t+\beta\delta]$.
 
 \begin{theorem}\label{MainThm01}
 	For any constant $\delta_0>0$,
 	we have as $t\to\infty$, uniformly for
 	$\delta\in[\delta_0,\infty]$,
 	\beqlb\label{eqn.MainThm01}
 	\underline n\big(\zeta\in t+\varDelta_\delta\big)
 	\sim
 	\underline n(\zeta)\cdot
 	\nu\big(\beta(t+\varDelta_\delta)\big).
 	\eeqlb
 \end{theorem}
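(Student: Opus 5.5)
We prove the estimate by a single-big-jump decomposition under $\underline n$, comparing it with the tail result \eqref{CWX01} of \cite{CuiWuXu2025}. By the compensation formula for the Poisson jumps of $X$ inside an excursion, for any measurable set $A$,
\beqnn
\underline n(\text{an excursion jump of size in } A \text{ occurs})\le \underline n\Big(\int_0^\zeta \nu(A - \epsilon_{s-})\,ds\Big),
\eeqnn
with matching lower bounds when we restrict to the first large jump. Fix $M$ large and a function $\theta\in\boldsymbol{\varTheta}$ with $\theta_t\to\infty$ slowly. We split $\{\zeta\in t+\varDelta_\delta\}$ according to whether the excursion has a jump larger than $\theta_t$. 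Consider first the case with such a jump. Call its time $s$, the position just before it $y=\epsilon_{s-}$, and the post-jump level $z$. By the strong Markov property, the remaining lifetime is the first passage time $\tau^-_0$ of $X$ started at $z$. The law of large numbers with fluctuations $o(\theta_t)$, for the process started high, then gives $\tau^-_0\in z/\beta + O(\theta_t/\beta)$ with high probability, uniformly in $z$ large. Hence $\zeta\in t+\varDelta_\delta$ forces $z\in\beta(t-s+\varDelta_\delta)$ up to an error $\theta_t$. Integrating with the compensation formula gives a main term
\beqnn
\underline n\Big(\int_0^{\zeta} \nu\big(\beta(t-s+\varDelta_\delta)-\epsilon_{s-}\big)\,ds\Big)\approx \underline n(\zeta)\,\nu\big(\beta(t+\varDelta_\delta)\big).
\eeqnn
This relies on the local uniformity of Lemma~\ref{Lemma.301} (and \eqref{eqn.30}): shifts of size $s+\epsilon_{s-}=o(t)$ do not change $\nu(x+\varDelta_\delta)$ asymptotically. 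Dominated convergence is justified by $\underline n(\zeta)<\infty$, together with tail bounds on $\epsilon_{s-}$ and $s$ derived from \eqref{CWX01}.

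The key technical step is to replace the exact window by the fuzzy window coming from the fluctuation of $\tau^-_0$. Here we convolve the window indicator with the law of $\tau^-_0-z/\beta$ under $\mathbf P_z$. The case $\delta\ge\delta_0$ is the delicate one. We write the convolution against the jump law and use that $\nu$ of an interval of width $\beta\delta+O(\theta_t)$ is comparable to $\nu$ of width $\beta\delta$. For $\delta$ bounded, \eqref{eqn.30} is linear in $\delta$, so a shift of order $\theta_t$ costs relative error $\theta_t/\delta$. This is \emph{not} small.

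We would therefore argue differently. Conditional on the pre-jump data $(s,y)$, the map $z\mapsto \mathbf P_z(\tau^-_0\in t-s+\varDelta_\delta)$ integrated against $\nu(dz-y)$ behaves like $\nu$ convolved with a probability kernel. Since $\nu(x+\varDelta_\delta)$ is locally regularly varying, hence essentially locally flat at scale $\theta_t=o(t)$ by the uniformity in Lemma~\ref{Lemma.301}, smoothing by a kernel of width $o(t)$ preserves the asymptotics. Concretely, we write
\beqnn
\int \mathbf P_z(\tau^-_0\in t'+\varDelta_\delta)\,\nu(dz)=\mathbf E\Big[\nu\big(\{z: \tau^-_0(z)\in t'+\varDelta_\delta\}\big)\Big].
\eeqnn
By monotonicity of $z\mapsto\tau^-_0$ under the shift coupling $X^z=z+X$, the set $\{z:\tau^-_0(z)\in t'+\varDelta_\delta\}$ is an interval $(\beta t'+\xi_1,\beta t'+\beta\delta+\xi_2]$. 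Its random endpoints are shifted by $\xi_i=o_P(t)$, and its width is exactly $-\inf$-increments of $X$, which by the LLN is $\beta\delta$ plus a fluctuation. Handling this width fluctuation uniformly in $\delta\ge\delta_0$ is the step we expect to be the main obstacle. We would first try conditioning on the path after $\tau^-$ at level $\beta t'$ and using the renewal structure of the descending ladder process.

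It remains to show that the no-big-jump part and multiple-jump parts are $o(\nu(\beta(t+\varDelta_\delta)))$. For excursions with all jumps $\le\theta_t$, we use exponential (Fuk–Nagaev type) bounds for $\underline n(\zeta>t/2,\text{ no jump}>\theta_t)$, which decay faster than any power. This exceeds $t^{-\alpha-1-\epsilon}$ by taking $\theta_t=t/\log^2 t$. Two jumps larger than $\theta_t$ contribute $O(\underline n(\zeta)\,t\,\overline\nu(\theta_t)^2)$, which is negligible. Finally, $\delta=\infty$ is exactly \eqref{CWX01}. Uniformity on $[\delta_0,\infty]$ follows by treating $\delta\le \eta t$ through the argument above and $\delta>\eta t$ by differencing tails via \eqref{CWX01}.
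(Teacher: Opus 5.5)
Your single-big-jump route differs from the paper's proof. The paper does not decompose the excursion for this theorem. It differences the identity \eqref{eqn.209} at $t$ and $t+\delta$ and shows $\mathbf P(X_t>0)-\mathbf P(X_{t+\delta}>0)\sim(\alpha-1)\,\delta\,\overline\nu(\beta t)$ via the two crossing probabilities and Lemma~\ref{Lemma.203}. It then adds the term $\frac{\delta}{t+\delta}\underline n(\zeta>t)$, and obtains uniformity by cutting the window into pieces of width $1/n$. Your route can be made to work, but as written it has two genuine gaps.

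\emph{First gap: the central estimate is left open.} You correctly reach
\[
\int_{\theta_t}^\infty \mathbf P_{y+w}\big(\tau_0^-\in t'+\varDelta_\delta\big)\,\nu(dw)=\mathbf E\Big[\nu\big([-\underline X_{t'}-y,\,-\underline X_{t'+\delta}-y)\cap(\theta_t,\infty)\big)\Big],
\]
but you then treat the width $W_{t'}:=\underline X_{t'}-\underline X_{t'+\delta}$ as ``$\beta\delta$ plus a fluctuation'' and call it the main obstacle. For bounded $\delta$ there is no law of large numbers for $W_{t'}$: it stays a non-degenerate $O(1)$ random variable. No concentration is needed, and the missing idea is the following.
\begin{itemize}
\item $W_{t'}$ is stochastically dominated by $-\underline X_\delta$, hence uniformly integrable.
\item By Lemma~\ref{Lemma.301}, $\nu(x+\varDelta_w)=\alpha\,\overline\nu(x)/x\cdot w\,(1+o(1))$ uniformly in $w\in[\varepsilon,R]$ and $|x-\beta t'|\le\eta t'$.
\item Corollary~\ref{Corollary.202} disposes of $W_{t'}<\varepsilon$, $W_{t'}>R$, and atypical locations of $-\underline X_{t'}-y$.
\end{itemize}
Letting $\eta\downarrow0$, the expectation equals $\alpha\,\frac{\overline\nu(\beta t')}{\beta t'}\,\mathbf E[W_{t'}]\,(1+o(1))$. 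Spitzer's identity gives $\mathbf E[W_{t'}]=\int_{t'}^{t'+\delta}\mathbf E[X_s^-]\,s^{-1}\,ds\to\beta\delta$ (Lemma~\ref{Lemma.310}). This is exactly Proposition~\ref{Prop.7011} (stated there with threshold $\lambda t$, $\lambda\in(0,\beta)$), whose proof does not use the theorem. Your worry about a relative error $\theta_t/\delta$ arises only if the two endpoints are perturbed separately. In the exact coupled interval, the location error enters only through $\overline\nu(x)/x$, which is flat on scales $o(t)$. No ladder-renewal argument is required.

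\emph{Second gap: the two-jump term is not negligible as you bound it.} The bound $\underline n(\zeta)\,t\,\overline\nu(\theta_t)^2$ from \eqref{Prop.203} is a tail-scale estimate that discards the constraint $\zeta\in t+\varDelta_\delta$. Compare it with the target $\nu(\beta(t+\varDelta_\delta))\asymp\overline\nu(\beta t)/t$, taking $\theta_t=t/\log^2t$. The ratio is of order $t^{2-\alpha}$ up to logarithmic and slowly varying factors. It therefore diverges for every $\alpha\in(1,2)$, a range allowed by Condition~\ref{Main.Condition}, and a slower-growing $\theta_t$ only makes this worse.

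The fix is to keep the local lifetime event at the later of the two jumps. The displayed identity and Corollary~\ref{Corollary.202} give
\[
\sup_{y\ge0,\,t'\ge0}\int_{\theta_t}^\infty\mathbf P_{y+w}(\tau_0^-\in t'+\varDelta_\delta)\,\nu(dw)\le C\,\overline\nu(\theta_t)/\theta_t.
\]
Compensating at the second jump, which must occur before time $t+\delta$, bounds the two-jump mass by $C(t+\delta)\,\underline n(\zeta)\,\overline\nu(\theta_t)^2/\theta_t$. This is $o(\overline\nu(\beta t)/t)$ because $\alpha>1$.

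The same uniform local bound, not the tail asymptotics \eqref{CWX01}, is also what controls the part of your main term where the jump time or pre-jump position is of order $t$. There the ratio $\nu\big(\beta(t-s+\varDelta_\delta)-\epsilon_{s-}\big)/\nu\big(\beta(t+\varDelta_\delta)\big)$ is unbounded, so dominated convergence using only $\underline n(\zeta)<\infty$ does not apply.
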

 
 \begin{theorem}\label{MainThm02}
 	For any constant $\delta_0>0$,
 	we have as $x\to\infty$, uniformly for
 	$\delta\in[\delta_0,\infty]$,
 	\beqlb\label{eqn.MainThm02}
 	\underline n\big(\overline\epsilon\in x+\varDelta_\delta\big)
 	\sim
 	\underline n(\zeta)\cdot\nu\big(x+\varDelta_\delta\big).
 	\eeqlb
 \end{theorem}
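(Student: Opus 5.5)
The plan is to reduce the local height mass to a statement about the L\'evy process itself via the excursion/first-passage structure, and then run a one-big-jump argument localized to the window $x+\varDelta_\delta$. Under $\underline n$ with $0$ regular for $(0,\infty)$, the compensation formula (equivalently, the Markov property of $\underline n$ at first passage over a level $y$) expresses $\underline n(\overline\epsilon\in x+\varDelta_\delta)$ as the $\underline n$-mass of excursions that pass above $x$ but not above $x+\delta$. Heuristically, the dominant contribution comes from a single jump of size in roughly $x+\varDelta_\delta$ occurring at a time $s<\zeta$ while the excursion sits at a moderate level $\epsilon_{s-}$. After this jump the excursion drifts down linearly at rate $\beta$ and never rises significantly above the landing point. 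By the compensation formula the main term is
\[
\underline n\Big(\int_0^\zeta \nu\big((x-\epsilon_{s-})+\varDelta_\delta\big)\,ds\Big)\times(1+o(1)),
\]
and the task is to show this is $\sim \underline n(\zeta)\,\nu(x+\varDelta_\delta)$ uniformly in $\delta\ge\delta_0$.

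The steps are as follows.

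\textbf{Lower bound.} Fix a large $K$ and restrict to the set where the jump occurs at a time $s\le K$ with $\epsilon_{s-}\le K$, and where after the jump the L\'evy process started at $y\in x+\varDelta_\delta$ never exceeds $x+\delta$. We need the probability that $X$ started at $y$ has $\sup_t(X_t-X_0)\le x+\delta-y$. This is problematic when $y$ is close to $x+\delta$, so we shrink the target window by a margin $M$ and reapply at the level $x+\varDelta_{\delta-M}$. Since $\sup_t(X_t-X_0)$ is a.s.\ finite, the margin costs only $\nu(x+\delta-M+\varDelta_M)$, which is $o(\nu(x+\varDelta_\delta))$ uniformly for $\delta\ge\delta_0$ after letting $M$ be fixed and using~\eqref{eqn.30}. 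For the shift $\epsilon_{s-}\le K$, use that $\nu(x-u+\varDelta_\delta)\sim\nu(x+\varDelta_\delta)$ uniformly over $u\le K$ and $\delta\ge\delta_0$, via Remark~\ref{Remark01} and Lemma~\ref{Lemma.301} for sublinear $\delta$, together with the tail regular variation of $\overline\nu$ for $\delta\gtrsim x$. Letting $K\to\infty$ recovers $\underline n(\zeta)$.

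\textbf{Upper bound.} Decompose the excursions hitting $x+\varDelta_\delta$ into three classes:
\begin{enumerate}
\item those with exactly one jump larger than $\eta x$, handled as in the lower bound, with an extra term for jumps landing below $x$ followed by a climb into the window;
\item those with no jump larger than $\eta x$;
\item those with at least two jumps larger than $\eta x$.
\end{enumerate}
Class~(ii) is controlled by a Fuk--Nagaev/exponential-martingale bound for truncated jumps. This gives a bound that is $o(x^{-N})$ times $\underline n(\zeta)$ in expectation, negligible against $\nu(x+\varDelta_{\delta_0})\asymp \overline\nu(x)/x$. Class~(iii) is $O(\overline\nu(\eta x)^2)\underline n(\zeta^2)$-type. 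Since second moments of $\zeta$ may be infinite, use instead the tail result~\eqref{CWX01} applied after the first big jump, which gives $O(\overline\nu(x)^2)=o(\overline\nu(x)/x)$ because $\alpha>1$. In class~(i), the climb-up term is a jump to $x-u$ followed by $X$ rising by $u$ before leaving $x+\delta$. Its mass is bounded by
\[
\sum_{u} \nu(x-u+\varDelta_1)\,\mathbf P\big(\sup_t X_t\in u+\varDelta_{\delta+1}\big).
\]
Because $\mathbf P(\sup_t X_t>u)$ is integrable in $u$ (finite mean, as $\alpha>1$), this sums to $o(\nu(x+\varDelta_\delta))$ after a truncation of $u$.

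\textbf{Main obstacle.} The difficulty is uniformity in $\delta$ across the full range $[\delta_0,\infty]$, where neither the local form~\eqref{eqn.30} nor the tail form alone suffices. I would split $\delta\le\theta_x$ and $\delta>\theta_x$ for a suitable $\theta\in\boldsymbol{\varTheta}$. For $\delta\le\theta_x$, use Lemma~\ref{Lemma.301}. For $\delta>\theta_x$, $\nu(x+\varDelta_\delta)\ge c\,\theta_x\overline\nu(x)/x$, and all error terms (shifts by $u\le K$, the margin $M$, and the climb-up term) are $O(\overline\nu(x)/x)$, hence relatively negligible. Keeping every error in the absolute form $O(\overline\nu(x)/x)$ or $o(\overline\nu(x)/x)$, rather than relative to $\nu(x+\varDelta_\delta)$, is the key bookkeeping device that makes both regimes work simultaneously.
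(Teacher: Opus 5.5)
There is a genuine gap in how you treat the post-jump climb when the window has bounded width. This is exactly the case $\delta=\delta_0$ that the uniform statement must cover. Put $S:=\overline X_\infty$ (the limiting law of the climb after a big jump, since the killing level lies far below) and $c_x:=\alpha\,\overline\nu(x)/x$. By \eqref{eqn.30} and Lemma~\ref{Lemma.301}, $\nu(y+\varDelta_\delta)\approx c_x\delta$ whenever $|y-x|=O(1)$. For a big jump from a bounded level landing at $y$, the event $\{\overline\epsilon\in x+\varDelta_\delta\}$ is essentially $\{y+S\in x+\varDelta_\delta\}$. Your splitting then gives two pieces:
\begin{itemize}
\item Jumps landing in the window with no later exceedance of $x+\delta$ contribute about $c_x\int_0^\delta\mathbf P(S\le v)\,dv$.
\item Jumps landing at $x-u$, $u>0$, and then climbing into the window contribute about $c_x\int_0^\infty\mathbf P(u<S\le u+\delta)\,du=c_x\,\mathbf E[S\wedge\delta]$.
\end{itemize}
This climb-in term is of the same order as $\nu(x+\varDelta_\delta)$, not $o(\nu(x+\varDelta_\delta))$. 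Truncating in $u$ only removes large climbs; the bounded climbs carry a fixed fraction of the mass. Incidentally, $\mathbf P(\overline X_\infty>u)$ is regularly varying with index $1-\alpha$, so it is integrable only for $\alpha>2$, not $\alpha>1$. No moment is needed anyway, since each value of $S$ is counted for at most $\delta+2$ integers $u$.

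Because $0$ is regular for $(0,\infty)$, $S>0$ a.s., so the landing term alone is strictly below $c_x\delta$. Only the sum $c_x\int_0^\delta\mathbf P(S\le v)\,dv+c_x\mathbf E[S\wedge\delta]=c_x\delta$ is correct. Concretely:
\begin{itemize}
\item Your lower bound reaches only the fraction $\delta^{-1}(\delta-M)^+\,\mathbf P(S\le M)<1$ of the target. The claim that $\nu(x+\delta-M+\varDelta_M)=o(\nu(x+\varDelta_\delta))$ uniformly in $\delta\ge\delta_0$ is false: at $\delta=\delta_0$ the ratio tends to $M/\delta_0$, and $M$ must be large to make $\mathbf P(S>M)$ small.
\item In your upper bound, the climb-in term adds $c_x\mathbf E[S\wedge\delta]$. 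Added to the crude landing bound $c_x\delta$, it overshoots by the factor $1+\mathbf E[S\wedge\delta]/\delta$. It matches only if you keep the no-exceedance constraint on the landing term and evaluate both pieces exactly.
\end{itemize}
Keeping errors in the absolute form $O(\overline\nu(x)/x)$ cannot rescue this, because for bounded $\delta$ one has $\nu(x+\varDelta_\delta)\asymp\overline\nu(x)/x$. Your scheme works only for $\delta\to\infty$, and bounded windows cannot be recovered from long ones.

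The missing idea is to treat the big jump and the subsequent climb jointly, as a convolution, and to use local translation invariance of $\nu$. Your heuristic main term $\underline n\big(\int_0^\zeta\nu(x-\epsilon_{s-}+\varDelta_\delta)\,ds\big)$ does have the right asymptotics. The reason is not that the landing point and the maximum fall in the same window. It is that shifting the window by the bounded random amount $\epsilon_{s-}+S$ does not change its $\nu$-mass to first order. The paper implements exactly this. It counts jumps in $\mathcal A_\delta(x,K)$, whose pre-jump position and post-jump climb are at most $K$ and whose \emph{subsequent maximum}, not landing point, lies in $x+\varDelta_\delta$. By compensation and Fubini, each pre-jump level $z\le K$ contributes $G_\delta(x,K,z)\sim\nu(x+\varDelta_\delta)\,\mathbf P(\overline X_\infty\le K)$ (Proposition~\ref{Prop.508}), which gives Lemma~\ref{Lemma.504}. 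Lemmas~\ref{Lemma.501}--\ref{Lemma.503} dispose of the remaining configurations.

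Two further remarks.
\begin{itemize}
\item Your class~(iii) bound is not justified as stated. Without using the window, the expected number of pairs of jumps exceeding $\eta x$ is at least of order $x\,\overline\nu(x)^2$, because after the first one $\mathbf E_y[\tau_0^-]=\underline n(\zeta)\widehat V(y)\asymp y$. This is not $o(\overline\nu(x)/x)$ when $\alpha<2$, and \eqref{CWX01} does not localize it. The paper localizes through the potential bound of Proposition~\ref{Prop.506}, and by requiring candidate jumps to start below $K$, where the occupation time is bounded (Proposition~\ref{Prop.509}).
\item Uniformity in $\delta\in[\delta_0,\infty]$ is not the main obstacle. Once the fixed-$\delta$ asymptotic holds, it follows by splitting the window into pieces of length $1/n$ and using additivity, as at the end of the proof of Theorem~\ref{MainThm01}.
\end{itemize}
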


 Our window estimates concern $\delta\in[\delta_0,\infty]$, where $\delta_0>0$, and do not assert uniformity as $\delta\to0+$. 
 It follows directly from Theorems~\ref{MainThm01}--\ref{MainThm02} together with Remark~\ref{Remark01} that,  for every $\theta\in\boldsymbol{\varTheta}$, as $t\to\infty$, uniformly in $\delta\in[\theta_0,\theta_t]$,
 \beqnn
 \underline n\big(\zeta\in t+\varDelta_\delta\big)\sim  
 \underline n\big(\overline\epsilon\in \beta(t+\varDelta_\delta)\big)
 \sim \alpha \cdot \frac{\underline n(\zeta> t)}{t} \cdot \delta . 
 \eeqnn
 The lifetime asymptotic is the negative-mean analogue of \eqref{eqn.004} and the continuous-time counterpart of \eqref{eqn.003}, with uniformity extending to growing sublinear windows.
 The use of the full interval mass in Theorems~\ref{MainThm01} and \ref{MainThm02} provides a unified formulation that covers both bounded local windows and tail events. 
 As explained in Remark~\ref{Remark01}, when the interval width is sublinear in the reference level, local regular variation further reduces the interval mass to an asymptotic quantity proportional to $\delta$. 
 Such a reduction is, however, neither necessary nor generally valid uniformly over the full range of interval widths allowed in the theorem statements.

 \textit{\textbf{Failure of local equivalence.}} The preceding results also imply that corresponding lifetime and height windows have asymptotically equal masses:
 \beqnn
 \underline n\big(\zeta\in t+\varDelta_\delta\big) \sim\underline{n}\big(\overline\epsilon\in\beta(t+\varDelta_\delta)\big).
 \eeqnn
 It is therefore natural to ask whether these local events are asymptotically equivalent, as their tail counterparts are in~\eqref{eqn.tail.equivalence}. Our next theorem shows that the answer is negative. In fact, each fixed-window event becomes unlikely under conditioning on the other.
 
 \begin{theorem}\label{MainThm03}
 	For any constants $\delta_1,\delta_2>0$, we always have as $t\to\infty$,
 	\beqnn
 	\underline{n}\big(  \overline\epsilon\in \beta (t+\varDelta_{\delta_1}) \,|\,  \zeta\in t+\varDelta_{\delta_2} \big) 
 	\sim  \frac{\delta_1}{\delta_2}\cdot \underline{n}\big(   \zeta\in t+\varDelta_{\delta_2} \,|\, \overline\epsilon\in \beta (t+\varDelta_{\delta_1} )\big) \to 0.
 	\eeqnn 
 \end{theorem}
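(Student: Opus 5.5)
The asymptotic equivalence part of the statement is just Bayes' formula combined with Theorems~\ref{MainThm01}--\ref{MainThm02}. Write $A_t=\{\overline\epsilon\in\beta(t+\varDelta_{\delta_1})\}$ and $B_t=\{\zeta\in t+\varDelta_{\delta_2}\}$. Then
\beqnn
\frac{\underline n(A_t\,|\,B_t)}{\underline n(B_t\,|\,A_t)}=\frac{\underline n(A_t)}{\underline n(B_t)}\sim\frac{\underline n(\zeta)\,\nu(\beta(t+\varDelta_{\delta_1}))}{\underline n(\zeta)\,\nu(\beta(t+\varDelta_{\delta_2}))}\to\frac{\delta_1}{\delta_2},
\eeqnn
where the last step uses \eqref{eqn.30} with widths $\beta\delta_1,\beta\delta_2$. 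It therefore suffices to prove that $\underline n(A_t\cap B_t)=o\big(\overline\nu(t)/t\big)$, since both $\underline n(A_t)$ and $\underline n(B_t)$ are of exact order $\overline\nu(t)/t$.

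To show the overlap is negligible, I would use the single-big-jump description of a long excursion. On $A_t\cap B_t$ the excursion must contain a jump of size of order $t$; otherwise both events have mass $o(\overline\nu(t)/t)$ by the tail asymptotics \eqref{CWX01} applied to excursions with all jumps $\le \eta t$, which I would take from the earlier paper \cite{CuiWuXu2025} or prove with a standard truncation bound. Decompose at the first jump exceeding $\eta t$. By the compensation formula under $\underline n$, the mass of $A_t\cap B_t$ is bounded by
\beqnn
\int_0^\infty \underline n\big(\epsilon_{s}\in dz,\ s<\zeta\big)\int \nu(dy)\,\mathbf P_{z+y}\big(\overline X_{\tau_0^-}\text{ condition},\ \tau_0^-\in t-s+\varDelta_{\delta_2}\big),
\eeqnn
where the condition is that the post-jump maximum $\max(z+y,\sup)$ lies in $\beta(t+\varDelta_{\delta_1})$. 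The post-jump path starts at $w=z+y$. Since the drift is negative and the post-jump maximum is $w+O_p(1)$, the height constraint forces $w$ into a window $\beta(t+\varDelta_{\delta_1})$ enlarged by $O(1)$. The lifetime constraint forces $\tau_0^-$ from $w$ into a window of width $\delta_2$ around $t-s$.

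The main obstacle is showing that, given $w$ in a window of fixed size, the time $\tau_0^-$ starting from $w\approx\beta t$ lands in a fixed window with vanishing probability. This requires an anti-concentration statement: $\sup_{u}\mathbf P_w(\tau_0^-\in u+\varDelta_{\delta_2})\to0$ as $w\to\infty$. I would get it from the fluctuations of $\tau_0^-$, which satisfy $(\tau_0^- - w/\beta)$ spread at scale $\to\infty$. Under \eqref{eqn.LevyMeasure} with $\alpha>1$, either $\sigma>0$ or the jumps give $X_t+\beta t$ a nondegenerate spread growing in $t$; a concentration-function bound (Kolmogorov--Rogozin for the Lévy process at time $w/\beta$) should then transfer to $\tau_0^-$. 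Combining this with $\int\underline n(s<\zeta)\,ds=\underline n(\zeta)<\infty$ and dominated convergence over $s$ and $z$, with $z$ tight under $\underline n(\epsilon_s\in dz)$, gives the bound $\underline n(A_t\cap B_t)\le o(1)\cdot\underline n(\zeta)\,\nu(\beta(t+\varDelta_{\delta_1}+O(1)))=o(\overline\nu(t)/t)$. The degenerate case of a pure drift plus compound Poisson jumps needs separate care: there I would use the randomness of the number and size of the small jumps. I expect that case to be the delicate point.
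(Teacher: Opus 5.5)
Your first step is the paper's: Bayes' formula, Theorems~\ref{MainThm01}--\ref{MainThm02} and \eqref{eqn.30}. So is your overall plan: force a single large jump landing within $O(1)$ of $\beta t$, then show that the post-jump passage time cannot fall in a fixed window (Lemmas~\ref{Lemma.600}--\ref{Lemma.602}).

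The first gap is that the reduction to a jump of order $t$ does not work as you justify it. The tail asymptotics \eqref{CWX01} make no-big-jump excursions negligible only at the tail scale, i.e.\ $o(\overline\nu(t))$. That is a factor $t$ too weak for masses of order $\overline\nu(t)/t$. A truncation bound at level $\eta t$ (the argument of Proposition~\ref{Prop.503} with $\vartheta_x=\eta x$) is no better: it gives a bound of order $e^{-1/\eta}$, which does not vanish as $t\to\infty$.

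What you need is the genuinely local Lemma~\ref{Lemma.501}. Up to $o(\nu(\beta t+\varDelta_{\beta\delta_1}))$, the height event produces a jump in $\mathcal A_{\beta\delta_1}(\beta t,K)$: pre-jump position $\le K$, post-jump ascent $\le K$, and subsequent maximum in the window. Its proof truncates at the sublinear level $x^\eta$. Jumps in $(x^\eta,\eta x]$ are then controlled by separate compensation estimates that rest on the local bound of Proposition~\ref{Prop.504}. Your ``post-jump maximum is $w+O_p(1)$'' is not enough there, since $\mathbf P(\overline X_\infty>j)$ need not be summable in $j$ when $\alpha\le 2$. Summing over $\mathcal A$-jumps, rather than decomposing at the first jump above $\eta t$, also covers excursions whose maximum is reached before that jump; your compensation integral misses those.

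The second gap is that the key anti-concentration step is asserted rather than proved, and the difficulty you flag is not there. The paper transfers anti-concentration to the passage time as in \eqref{eqn.5024}. On $\{\tau^-_{-y}\in r+\varDelta_{\delta_2}\}$, either $\underline X_r-\underline X_{r+\delta_2}>\eta$, which has probability at most $\mathbf P(-\underline X_{\delta_2}>\eta)$ uniformly in $r$, or $\underline X_r\in[-y,-y+\eta)$. Proposition~\ref{Prop.2011} then gives $\sup_y\mathbf P(\underline X_r+\beta r\in y+\varDelta_\eta)\to0$, for two reasons:
\begin{itemize}
\item $X_r-\underline X_r\overset{\rm d}=\overline X_r$ is tight;
\item the compound Poisson part of the jumps above $1$ is independent of the rest and has a non-degenerate step law, so a Kolmogorov--Rogozin concentration bound applies.
\end{itemize}
Lattice structure is irrelevant for windows of fixed positive width, so drift plus compound Poisson needs no separate treatment.

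The paper needs this only for $r\in[t-R,t]$, because it first discards $\mathcal A$-jumps after time $R$ via Lemma~\ref{Lemma.600}. Your version, uniform in the time shift, also follows from the same two ingredients: for bounded shifts, $\mathbf P(\tau^-_{-w}\le u+\delta_2)\to0$ trivially as $w\to\infty$. With that version the split at $R$ becomes unnecessary.
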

 
 The vanishing conditional probabilities in Theorem~\ref{MainThm03} do not preclude concentration in a wider window. Our next theorem shows that local lifetime conditioning forces the height to lie near $\beta t$, and local height conditioning forces the lifetime to lie near $t$, on scales dominating the centered fluctuations of the underlying process.
 
 \begin{theorem} \label{MainThm04}
 	For any constant $\delta>0$ and any positive function $\theta$ on $\mathbb{R}_+$ satisfying $|X_t+\beta t|/\theta_t \overset{\rm p}\to 0$,  we have   as $t\to\infty$, 
 	\beqlb\label{eqn.MainThm04}
 	\underline n\big( |\overline\epsilon-\beta t|  \leq \theta_t \,\big|\, \zeta\in t+\varDelta_\delta \big) \to 1
 	\quad \mbox{and}\quad 
 	\underline n\big( |\zeta-t|  \leq \theta_t \,\big|\, \overline\epsilon \in \beta(t+\varDelta_\delta ) \big) \to 1 . 
 	\eeqlb
 \end{theorem}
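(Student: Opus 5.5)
Both statements will follow the same ``single big jump'' decomposition of an excursion. Under $\underline n$, the excursion is at first a killed copy of $X$ whose paths are controlled through the Markov property at the jump time. By Theorems~\ref{MainThm01}--\ref{MainThm02}, the masses $\underline n(\zeta\in t+\varDelta_\delta)$ and $\underline n(\overline\epsilon\in\beta(t+\varDelta_\delta))$ are both of order $\underline n(\zeta)\,\alpha\,\overline\nu(\beta t)\,\delta/(\beta t)$. It therefore suffices to show that the complementary events $\{|\overline\epsilon-\beta t|>\theta_t,\ \zeta\in t+\varDelta_\delta\}$ and $\{|\zeta-t|>\theta_t,\ \overline\epsilon\in\beta(t+\varDelta_\delta)\}$ have mass $o(\overline\nu(t)/t)$. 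We may assume $\theta_t\le \eta t$ for a small $\eta$, since enlarging $\theta$ only helps.

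\textbf{Step 1: reduction to a single large jump.} Fix a large level $K$ and consider the excursion on the event that it contains a jump of size larger than $\eta t$. Using the compensation formula for the excursion measure, the contribution of excursions with no jump exceeding $\eta t$, or with two such jumps, to either local event is $o(\overline\nu(t)/t)$. I expect this step to reuse the estimates behind Theorems~\ref{MainThm01}--\ref{MainThm02}, namely a local version of a Fuk--Nagaev type bound. The compensation formula then gives the remaining main contribution:
\beqnn
\int_0^\infty ds\int \underline n\big(\epsilon_s\in dz,\ s<\zeta,\ \text{no big jump before } s\big)\int_{\eta t}^\infty \nu(dy)\,\mathbf P_{z+y}\big(\cdot\big),
\eeqnn
where under $\mathbf P_{z+y}$ we consider $X$ started at $z+y$ and killed at $\tau_0^-$. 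Tightness of $s$ and $z$ (since $\underline n(\zeta)<\infty$) means only $s,z=O(1)$ matter up to a $K$-dependent error.

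\textbf{Step 2: post-jump analysis.} After the jump, with $w=z+y$, the excursion is $w+X$ killed at zero. Its height is $w+O(\sup_{r\le \tau}(X_r+\beta r))$ and its lifetime is $\tau^-_{-w}\approx w/\beta$. The hypothesis $|X_t+\beta t|/\theta_t\overset{\rm p}\to0$ gives
\beqnn
\mathbf P\big(|\tau^-_{-w}-w/\beta|>\theta_t/(2\beta)\big)\to0
\eeqnn
uniformly for $w\in\beta t\pm \eta t$. Strictly, this needs control of the supremum rather than the marginal, which I would obtain via a maximal inequality: Etemadi/Ottaviani for Lévy processes, together with monotonicity of $\theta$ if needed. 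The lifetime event $\{\zeta\in t+\varDelta_\delta\}$ forces $\tau_{-w}^-\in t-s+\varDelta_\delta$.

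\textbf{Step 3: the main obstacle, a local statement.} We need
\beqnn
\int_{\eta t}^\infty\nu(dy)\,\mathbf P_{y}\big(\tau_0^-\in t+\varDelta_\delta,\ |y-\beta t|>\theta_t/2\big)=o(\overline\nu(t)/t),
\eeqnn
but the probability in $w$ is not itself small; only the joint $\nu$-mass is. I would try to write the integral as $\mathbf E\big[\nu\big(\{y:|y-\beta t|>\theta_t/2,\ y\in \beta(t-\cdot)\text{-window determined by the fluctuation}\}\big)\big]$. That is, condition on the post-jump fluctuation path $\Xi$ of $X$ (independent of $y$), so that the lifetime event reads $y\in \beta t+\Xi$-dependent interval of width $O(\delta)$ (up to the overshoot, which is $O(1)$ in probability). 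By Remark~\ref{Remark01} and Lemma~\ref{Lemma.301}, which give uniform local regular variation, $\nu$ of such an interval is $\le C\overline\nu(t)\delta/t$ uniformly. The mass is therefore bounded by $C\frac{\overline\nu(t)}{t}\,\mathbf P(|\Xi|>\theta_t/2)\to0\cdot\overline\nu(t)/t$.

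The height statement is symmetric. There, $\overline\epsilon\in\beta(t+\varDelta_\delta)$ pins $y$ to a window of width $O(\delta)$ around $\beta t$ minus the post-jump running maximum. The running maximum is $O_p(1)$ because of the negative drift, so $y$ lies near $\beta t$, and $|\zeta-t|>\theta_t$ then forces a fluctuation of size $\theta_t$, which has vanishing probability. Dividing by the masses from Theorems~\ref{MainThm01}--\ref{MainThm02} and letting $K\to\infty$ and $\eta\to0$ concludes the proof.
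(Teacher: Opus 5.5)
The gap is in Step~1 for the \emph{lifetime}-conditioned statement. Your Step~3 is essentially the paper's proof of Lemma~\ref{Lemma.704}: Fubini in the jump size given the post-jump path, then a uniform bound on $\nu$ of short intervals. For the height-conditioned statement your outline matches the paper's Part~II. There the reduction in Step~1 is supplied by Lemmas~\ref{Lemma.501} and~\ref{Lemma.600}, and the post-jump lifetime by Lemma~\ref{Lemma.705}. For the lifetime statement, however, you need excursions with $\zeta\in t+\varDelta_\delta$ and no early jump exceeding $\eta t$ to have mass $o(\overline\nu(t)/t)$, and neither of your justifications gives this:
\begin{itemize}
\item The compensation formula only evaluates configurations that \emph{do} contain a big jump; it says nothing about those that contain none.
\item The proof of Theorem~\ref{MainThm01} is purely analytic, using identity \eqref{eqn.209} and the sign-change probabilities of Propositions~\ref{Prop.301}--\ref{Prop.302}. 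It says nothing about which excursion paths realize the local lifetime event, and it contains no Fuk--Nagaev estimate. An exponential bound of the type in Proposition~\ref{Prop.503}, truncated at $\eta t$, only gives a constant of order $e^{-\beta/\eta}$.
\item Your Step~3 gives only upper bounds on scenarios with a big jump, so it cannot rule out the scenario without one.
\end{itemize}
Your claim about two big jumps is also unsupported: \eqref{Prop.203} over a time horizon $t$ gives only $t\,\overline\nu(\eta t)^2$, which is not $o(\overline\nu(t)/t)$ for $\alpha<2$. You do not actually need it, since the post-jump law in your compensation integral is unrestricted.

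The missing idea is the paper's mass-counting argument. Proposition~\ref{Prop.7011} proves the \emph{sharp} asymptotic $\int_{\lambda t}^\infty\mathbf P(\tau^-_{-z-w}\in t+\varDelta_\delta)\,\nu(dw)\sim\nu(\beta(t+\varDelta_\delta))$, uniformly in $z\in[0,K]$. This uses the exact limit $\mathbf E[\underline X_t-\underline X_{t+\delta}]\to\beta\delta$ and uniform integrability (Lemma~\ref{Lemma.310}). Combined with the compensation formula, it gives $\underline n(\mathcal J^{\lambda t}\le R\mid\zeta\in t+\varDelta_\delta)\to\underline n(R\wedge\zeta)/\underline n(\zeta)$ (Lemma~\ref{Lemma.701}). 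Theorem~\ref{MainThm01} then forces this single-jump mass to exhaust the total as $R\to\infty$, for every $\lambda\in(0,\beta)$. A direct Fuk--Nagaev route with $\eta$ small enough might also work, but it would have to be proved; it is not available from the earlier sections.

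Two smaller points. First, in Step~3, ``overshoot $O(1)$ in probability'' is not enough to bound the expectation of $\nu$ of the random interval on the deviation event. You need uniform integrability of $\underline X_{t-R}-\underline X_{t+\delta}$ and its independence from $\mathscr F_{t-R}$. Second, the post-jump excess height is the running maximum $\overline X_{\tau^-_{-w}}\le\overline X_\infty$, not a centered supremum. Under local lifetime conditioning it must itself be controlled by your Fubini device with the event $\{\overline X_{t+\delta}>K\}$, as in Lemma~\ref{Lemma.703}. The pre-jump supremum needs the same treatment, as in Lemma~\ref{Lemma.702}.
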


 Theorems~\ref{MainThm03} and~\ref{MainThm04} describe the relation between long lifetime and large height at two different spatial and temporal resolutions. The former concerns localization in fixed-width windows, whereas the latter establishes concentration in windows that dominate the fluctuations of $X_t+\beta t$; see Figure \ref{Fig.01}. In this sense, the finer the localization window, the more precise the asymptotic correspondence between a long lifetime and a large height becomes.
 Obviously, the law of large numbers yields that the convergence in probability in Theorem~\ref{MainThm04} can be satisfied whenever $\liminf_{t\to\infty}\theta_t/t  >0$.
 Moreover, if $\mathbf{E}\big[|X_1|^p\big]<\infty$ for some $p \in(1,2]$, the standard moment inequalities yield $\mathbf{E}\big[|X_t+\beta t|^p\big]=O(t)$ and hence $|X_t+\beta t|/t^\gamma \overset{\rm p}\to 0$  as $t\to\infty$ for any $\gamma >1/p$.

 \begin{figure}[htbp]
 	\centering
 	% 第一张图
 	\begin{minipage}{0.49\textwidth}
 		\centering
 		\resizebox{\linewidth}{!}{%
 			\begin{tikzpicture}[
 				font=\Large,
 				baseline=(current bounding box.south)
 				]
 				\path[use as bounding box]
 				(-2.7,-0.9) rectangle (10.3,7.3);
 				
 				% 水平参考线
 				\draw[black,densely dashed,thin]
 				(0,6.2) -- (9.75,6.2);
 				\draw[black,densely dashed,thin]
 				(0,4.6) -- (9.75,4.6);
 				\draw[black,densely dashed,thin]
 				(0,3.0) -- (9.75,3.0);
 				
 				% 辅助线
 				\draw[black,dashed,thin]
 				(2.15,0) -- (2.15,7);
 				\draw[orange,dashed,thin]
 				(2.15,6.2) -- (8.05,0);
 				\draw[orange,dashed,thin]
 				(2.15,3.0) -- (7.65,0);
 				
 				% 插入第一张 R 路径图
 				% 图片对应绘图区：[0,10] x [0,7]
 				\node[anchor=south west,inner sep=0,outer sep=0] at (0,0)
 				{\includegraphics[width=10cm,height=7cm]{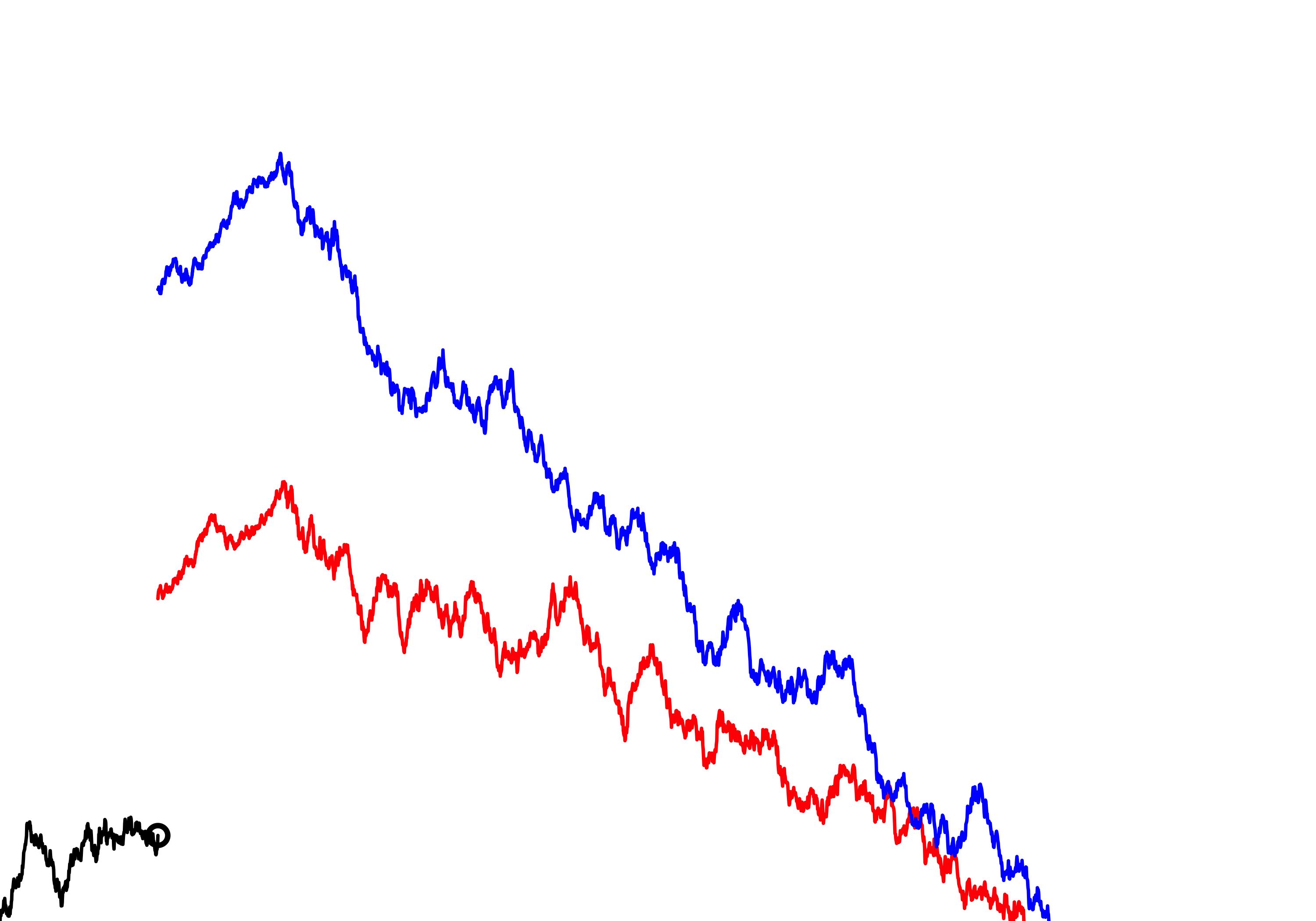}};
 				
 				% 坐标轴
 				\draw[->,thick] (0,0) -- (10,0);
 				\draw[->,thick] (0,0) -- (0,7);
 				
 				% 纵轴刻度与标注
 				\draw[thick] (0,6.2) -- (0.08,6.2);
 				\draw[thick] (0,4.6) -- (0.08,4.6);
 				\draw[thick] (0,3.0) -- (0.08,3.0);
 				\node[left=3pt,font=\small] at (0,6.2)
 				{$\beta t+\theta_t$};
 				\node[left=5pt,font=\small] at (0,4.6)
 				{$\beta t$};
 				\node[left=5pt,font=\small] at (0,3.0)
 				{$\beta t-\theta_t$};
 				
 				% J 的位置
 				\draw[thick] (1.2,0) -- (1.2,0.08);
 				\node[below=5pt,font=\small] at (1.2,0) {$\mathcal{J}$};
 				
 				% 寿命区间：左开右闭，长度为 0.4
 				\draw[line width=2pt]
 				(7.65,0) -- (8.05,0);
 				\draw[black,fill=white,thick]
 				(7.65,0) circle (2pt);
 				\fill[black]
 				(8.05,0) circle (2pt);
 				\node[below=7pt,font=\small] at (7.85,0)
 				{$t+\Delta_\delta$};
 				
 				% 右上角标题：两个标题的基线均位于 y=6.8
 				\node[anchor=base east,inner sep=0,outer sep=0] at (8.75,6.8)
 				{$\underline n\big( \overline\epsilon\in\cdot \,\big|\, \zeta\in t+\varDelta_\delta \big)$};
 			\end{tikzpicture}%
 		}
 	\end{minipage}%
 	\hfill
 	% 第二张图
 	\begin{minipage}{0.49\textwidth}
 		\centering
 		\resizebox{\linewidth}{!}{%
 			\begin{tikzpicture}[
 				font=\Large,
 				baseline=(current bounding box.south)
 				]
 				\path[use as bounding box]
 				(-2.7,-0.9) rectangle (10.3,7.3);
 				
 				% 高度区间边界：上边界与第一张图的 6.2 对齐
 				\draw[black,densely dashed,thin]
 				(0,6.2) -- (9.75,6.2);
 				\draw[black,densely dashed,thin]
 				(0,5.8) -- (9.75,5.8);
 				
 				% 辅助线
 				\draw[black,dashed,thin]
 				(1.85,0) -- (1.85,7);
 				\draw[orange,dashed,thin]
 				(1.85,5.8) -- (4.8,0);
 				\draw[orange,dashed,thin]
 				(1.85,6.2) -- (9.1,0);
 				
 				% 插入第二张 R 路径图
 				% 图片对应绘图区：[0,10] x [0,7]
 				\node[anchor=south west,inner sep=0,outer sep=0] at (0,0)
 				{\includegraphics[width=10cm,height=7cm]{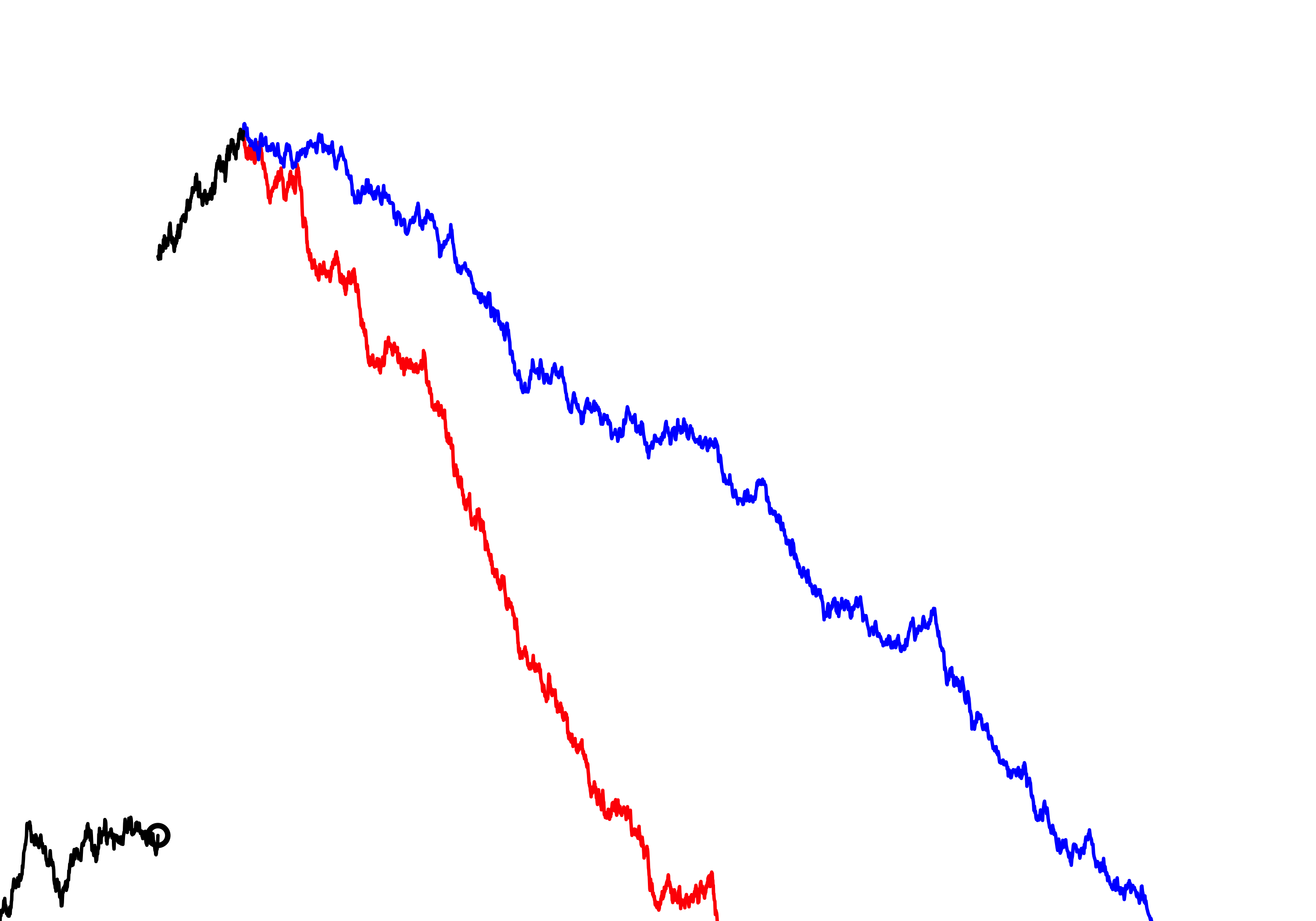}};
 				
 				% 坐标轴
 				\draw[->,thick] (0,0) -- (10,0);
 				\draw[->,thick] (0,0) -- (0,7);
 				
 				% 高度区间：左开右闭，长度为 0.4
 				\draw[line width=2pt]
 				(0,5.8) -- (0,6.2);
 				\draw[black,fill=white,thick]
 				(0,5.8) circle (2pt);
 				\fill[black]
 				(0,6.2) circle (2pt);
 				\node[left=5pt,font=\small] at (0,6.0)
 				{$\beta(t+\Delta_\delta)$};
 				
 				% J 的位置
 				\draw[thick] (1.2,0) -- (1.2,0.08);
 				\node[below=5pt,font=\small] at (1.2,0) {$\mathcal{J}$};
 				
 				% 横轴刻度与标注
 				\draw[thick] (4.8,0) -- (4.8,0.08);
 				\draw[thick] (6.95,0) -- (6.95,0.08);
 				\draw[thick] (9.1,0) -- (9.1,0.08);
 				\node[below=5pt,font=\small] at (4.8,0)
 				{$t-\theta_t$};
 				\node[below=5pt,font=\small] at (6.95,0)
 				{$t$};
 				\node[below=5pt,font=\small] at (9.1,0)
 				{$t+\theta_t$};
 				
 				% 右上角标题：两个标题的基线均位于 y=6.8
 				\node[anchor=base east,inner sep=0,outer sep=0] at (8.75,6.8)
 				{$\underline n\big( \zeta\in\cdot \,\big|\,
 					\overline\epsilon\in\beta(t+\varDelta_\delta) \big)$};
 			\end{tikzpicture}%
 		}
 	\end{minipage}
 	\caption{Conditional localization of excursion height and lifetime. 
 		Left: under the local lifetime constraint $\zeta\in t+\varDelta_\delta$, the height concentrates in $[\beta t-\theta_t,\beta t+\theta_t]$. 
 		Right: under the local height constraint $\overline\epsilon\in\beta(t+\varDelta_\delta)$, the lifetime concentrates in $[t-\theta_t,t+\theta_t]$. 
 		The coloured curves illustrate possible excursion trajectories.} \label{Fig.01}
 \end{figure}
 
 \begin{remark}\label{Remark.02}
 	There always exists some function $\theta \in \boldsymbol{\varTheta}$ such that  the convergence in probability in Theorem~\ref{MainThm04} is satisfied. For instance, choose an increasing sequence $\{T_n\}_{n\geq0}$ with $T_0=0$ and $T_1=1$ such that $T_{n}/T_{n-1}\geq n$ and $\mathbf{P}\big(|X_t+\beta t|/t \geq 1/n^2\big) \leq 1/n$ for all $n\geq 2$ and $t\geq T_n$.  
 	Consider a continuous piecewise linear function $\ell $ on $\mathbb{R}_+$ defined by 
 	\beqnn
 	\ell_t=  n+ \frac{t-T_n}{T_{n+1}-T_n} \in[n,n+1], \qquad \mbox{for }t\in[T_n,T_{n+1}).
 	\eeqnn
 	Now set $\theta_0=1$ and $\theta_t=t/\ell_t$ for $ t>0$. 
 	It is straightforward to verify that $\ell_t\to\infty$ as $t\to\infty$ and hence $\theta \in \boldsymbol{\varTheta}$.    
 	It remains to verify the convergence in probability in Theorem~\ref{MainThm04}.
 	Fix $\varepsilon\in(0,1)$.
 	For any $n\geq 2/\varepsilon$ and $t\in[T_n,T_{n+1})$, we have $n\leq \ell_t\leq n+1$  and then 
 	\beqnn
 	\mathbf{P}\big( |X_t+\beta t|/\theta_t\geq \varepsilon \big)
 	\ar=\ar   \mathbf{P}\big( |X_t+\beta t|/t\geq \varepsilon/\ell_t \big) 
 	\leq   \mathbf{P}\big( |X_t+\beta t|/t\geq 1/n^2  \big) \leq 1/n,
 	\eeqnn
 	which goes to $0$ as $t\to\infty$. 
 	This implies $\overline\epsilon/t\to\beta$ under local lifetime conditioning and $\zeta/t\to1$ under local height conditioning. Thus the local events remain linked on a macroscopic scale, despite their negligible relative overlap on a fixed scale.
 \end{remark}

 \textit{\textbf{Conditional scaling limits.}}
 We now turn to the entire excursion under local conditioning.  
 In the tail-conditioned scaling limits of~\cite{CuiWuXu2025}, the principal large jump retains a non-degenerate Pareto-distributed size after rescaling. 
 As explained in Remark~\ref{Remark.02}, a sublinear lifetime or height windows instead removes this macroscopic size randomness and forces the rescaled jump size to concentrate at a deterministic value; see Figure~\ref{Fig.02}.  
 By contrast, the occurrence time of the principal large jump remains random on the original time scale and converges in distribution to the random variable $\mathcal T$, whose law is given in~\eqref{CWX02}. 
 
 For a locally compact set $\mathcal{I}\subset \mathbb{R}_+$, let $D(\mathcal{I};\mathbb{R})$ be the space of all c\`adl\`ag functions on $\mathcal{I}$ that is equipped with the Skorohod $J_1$-topology. 
 A sequence of probability laws on $D(\mathcal{I};\mathbb{R})$ is said to   \textit{converge weakly} if it converges weakly on $D(I;\mathbb{R})$ for any compact subset $I\subset \mathcal{I}$; see Chapter 3 in \cite{Billingsley1999}.
 
 \begin{theorem}\label{MainThm05}
 	For any  $\theta \in  \boldsymbol{\varTheta}$, as $t\to\infty$, uniformly in $\delta\in[\theta_0,\theta_t]$, the following assertions hold: 
 	\begin{enumerate}
 		\item[(1)] The pushforward measures of both  $\underline n\big(\cdot\,\big|\,\zeta\in t+\varDelta_\delta\big)$ and $\underline n\big(\cdot\,\big|\,\overline{\epsilon}\in\beta(t+\varDelta_\delta)\big)$ under the scaling map $ \epsilon\mapsto \big\{\epsilon_s/t:s\geq0\big\}$ converge weakly on $D\big([0,\infty);\mathbb{R}\big)$ to the law of the single-step process $\big\{\beta\cdot\mathbf 1_{\{s\geq\mathcal T\}}:s\geq0\big\}$. 
 		
 		\item[(2)] The pushforward measures of both $\underline n\big(\cdot\,\big|\,\zeta\in t+\varDelta_\delta\big)$ and $\underline n\big(\cdot\,\big|\,\overline{\epsilon}\in\beta(t+\varDelta_\delta)\big)$ under the scaling map $\epsilon\mapsto \big\{\epsilon_{ts}/t:s>0\big\}$ converge weakly on $D\big((0,\infty);\mathbb R\big)$ to the Dirac measure concentrated at the path $\big\{\beta(1-s)^+:s>0\big\}$.
 	\end{enumerate}
 \end{theorem}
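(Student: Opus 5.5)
We prove Theorem~\ref{MainThm05} by a single-big-jump decomposition under $\underline n$, using Theorems~\ref{MainThm01}--\ref{MainThm04} as the mass estimates that serve as normalisers. The conditioning event is either $A_t=\{\zeta\in t+\varDelta_\delta\}$ or $B_t=\{\overline\epsilon\in\beta(t+\varDelta_\delta)\}$, with $\delta\in[\theta_0,\theta_t]$.

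\emph{Step 1 (the principal jump).} Fix a small $\eta>0$ and write $J_t$ for the event that the excursion makes exactly one jump of size larger than $\eta t$ before $\zeta$. Let $\mathcal J$ be the time of that jump. We first show $\underline n(J_t^c\cap A_t)=o(\underline n(A_t))$, and likewise for $B_t$.
\begin{itemize}
\item If there is no such jump, the excursion is controlled by $X$ with jumps truncated at $\eta t$. Standard exponential (Fuk--Nagaev type) bounds for this truncated process, applied under $\underline n$ via the compensation formula, make the mass superpolynomially small relative to $\overline\nu(t)/t$.
\item If there are two or more such jumps, the mass is of order $\underline n(\zeta)\,\overline\nu(\eta t)^2\cdot O(t)$ after localisation. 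This is $o(\overline\nu(t)/t)$ because $\alpha>1$.
\end{itemize}
The effect of the normaliser: by Theorems~\ref{MainThm01}--\ref{MainThm02} and Remark~\ref{Remark01}, uniformly in $\delta\in[\theta_0,\theta_t]$ both $\underline n(A_t)$ and $\underline n(B_t)$ are $\sim\alpha\,\underline n(\zeta>t)\delta/t\ge c\,\overline\nu(\beta t)/t$. So the negligible terms only need to be $o(\overline\nu(t)/t)$ uniformly in $\delta$.

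\emph{Step 2 (the decomposition and the law of $\mathcal J$).} On $J_t$ we use the compensation formula. The excursion up to $\mathcal J-$ is a path with small jumps that is still alive at time $s$. After the jump it restarts from height $y=\epsilon_{\mathcal J-}+\Delta$ as $X$ killed at $\tau^-_0$. This gives the representation
\begin{equation*}
\underline n(F;J_t\cap A_t)\approx\int_0^\infty ds\int \underline n\big(F_{\rm pre}(\epsilon_{\cdot\wedge s});\zeta>s\big)\,\nu(dy)\,\mathbf P_{y}\big(F_{\rm post};\tau^-_0\in t-s+\varDelta_\delta\big).
\end{equation*}
Because the time is truncated at $s\le K$, the pre-jump part contributes $\underline n(\zeta>s)\,ds$, which after normalisation gives the law \eqref{CWX02} of $\mathcal T$ with total mass $\underline n(\zeta)$. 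Contributions from $s>K$ vanish as $K\to\infty$ by the same estimate used in Theorem~\ref{MainThm01}.

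For the post-jump part we apply Theorem~\ref{MainThm04}, or more directly the weak law of large numbers for $X$. Under $\mathbf P_y$ we have $\tau^-_0=y/\beta+o_p(t)$, and $\sup_{u\le T}|X_u-y+\beta u|=o_p(t)$ on the scale $t$. So the constraint $\tau^-_0\in t-s+\varDelta_\delta$ with $\delta=o(t)$ forces $y=\beta t+o(t)$. The normalised $\nu$-mass of admissible $y$ matches $\underline n(A_t)/\underline n(\zeta)$. The same holds for $B_t$, where the constraint is on the supremum $y+o_p(t)$.

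\emph{Step 3 (conclusion).} For part (1), the rescaled path $\epsilon_\cdot/t$ is close to $0$ before $\mathcal J$, since the pre-jump path is $O(1)$ on compact time sets. After $\mathcal J$ it is close to $y/t-\beta(\cdot-\mathcal J)/t\approx\beta$ on compact time sets. Joint convergence of $(\mathcal J,y/t)\Rightarrow(\mathcal T,\beta)$ together with continuity of the one-jump map in $J_1$ then gives the limit $\beta\mathbf 1_{\{s\ge\mathcal T\}}$. For part (2), $\mathcal J/t\to0$ and the functional LLN give uniform convergence to $\beta(1-s)^+$ on compacts of $(0,\infty)$. The killing at zero near $s=1$ is handled by $\zeta/t\to1$.

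\emph{Main obstacle.} Uniformity in $\delta$ down to the fixed width $\theta_0$ is the hard part. For bounded windows, the localisation $\tau^-_0\in t-s+\varDelta_\delta$ cannot be read off from the LLN. It requires a local estimate, namely that $\int\nu(dy)\mathbf P_y(\tau^-_0\in u+\varDelta_\delta)\sim\nu(\beta(u+\varDelta_\delta))$ uniformly, with the path conditioned on it still concentrating. I would obtain this by reusing the smoothing argument behind Theorem~\ref{MainThm01}: the convolution of $\nu$ with the law of $\tau^-_0$ under $\mathbf P_y$, which fluctuates as $o_p(\theta)$, inherits local regular variation from Lemma~\ref{Lemma.301}. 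Concentration of $y$ within $\beta t\pm\theta'_t$ then follows from Theorem~\ref{MainThm04}, applied with some $\theta'\in\boldsymbol\varTheta$ given by Remark~\ref{Remark.02}.
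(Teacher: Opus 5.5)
Your skeleton matches the paper's: one jump of order $t$ at a time of order one, the compensation formula splitting off a pre-jump piece weighted by $\underline n(\zeta>s)\,ds$ and a post-jump piece under $\mathbf P_{z+w}$, convergence of $(\mathcal J,\Delta\epsilon_{\mathcal J}/t)$ to $(\mathcal T,\beta)$ with continuity of the one-jump map for (1), and a functional LLN after the jump for (2). But the two steps that carry the weight do not go through as written.

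\textbf{The multi-jump bound in Step 1 fails for $\alpha<2$.} The ratio $t\,\overline\nu(\eta t)^2/(\overline\nu(t)/t)$ is of order $t^2\overline\nu(t)\in\mathrm{RV}^\infty_{2-\alpha}$, which diverges. The factor $t$ comes from letting the second large jump fall anywhere in the $O(t)$ post-jump lifetime. That configuration becomes $o(\delta\,\overline\nu(t)/t)$ only once the lifetime window is kept in the estimate, so you already need the local estimate you postpone. (Truncation at the linear level $\eta t$ also gives only polynomial decay, with exponent of order $1/\eta$, not superpolynomial decay; it is enough only for $\eta$ small.)

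For lifetime conditioning the paper never counts large jumps over the whole lifetime. It excludes two jumps larger than $\lambda t$ only within a fixed horizon $R$, via \eqref{eqn.701}: $2R\,\underline n(\zeta)\overline\nu(\lambda t)^2=o(\overline\nu(t)/t)$. Everything after the first large jump stays inside $\mathbf P_{z+w}$, and the remaining configurations are handled by mass matching. Proposition~\ref{Prop.7012} shows that $\{\mathcal J^{\lambda t}\le R,\ \epsilon_{\mathcal J^{\lambda t}-}\le K\}$ has conditional mass tending to $\underline n(\zeta)^{-1}\int_0^R\underline n(\epsilon_{s-}\le K,\zeta>s)\,ds$, which tends to $1$ as $K,R\to\infty$. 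So the complement is negligible without being estimated directly. Lemmas~\ref{Lemma.501}--\ref{Lemma.503} play this role under height conditioning.

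\textbf{The deferred local estimate is the missing idea.} It is not in the proof of Theorem~\ref{MainThm01}, which uses only \eqref{eqn.209} and sign-change probabilities. Your convolution heuristic points the right way, but the fluctuation of $\tau^-_{-y}$ about $y/\beta$ is not independent of $y$. The exact device is an inverse-function identity. Since $\tau^-_{-z-w}\in u+\varDelta_\delta$ iff $-\underline X_u\le z+w<-\underline X_{u+\delta}$, for any path event $E$ of $X$ not involving $w$,
\[
\int_{\lambda t}^\infty\mathbf P\big(\tau^-_{-z-w}\in u+\varDelta_\delta;\,E\big)\,\nu(dw)=\mathbf E\Big[\nu\big([-\underline X_u-z,\,-\underline X_{u+\delta}-z)\cap(\lambda t,\infty)\big);\,E\Big].
\]
\begin{itemize}
\item For $E$ the sure event, Lemma~\ref{Lemma.301} and $\mathbf E[\underline X_u-\underline X_{u+\delta}]\to\beta\delta$ with uniform integrability (Lemma~\ref{Lemma.310}) give $\sim\nu(\beta(t+\varDelta_\delta))$ uniformly in $z\le K$ (Proposition~\ref{Prop.7011}). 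This is what identifies the law $\mathcal T$ of the jump time.
\item For general $E$, Corollary~\ref{Corollary.202} bounds the right side by $C\,\frac{\overline\nu(\lambda t)}{\lambda t}\,\mathbf E[(\underline X_u-\underline X_{u+\delta}+1);E]$. This is negligible against the normaliser whenever $\mathbf P(E)\to0$.
\end{itemize}
Combined with the location $-\underline X_u=\beta u+o_p(u)$ of the interval (Proposition~\ref{Prop.709}), this is how the paper transfers statements under $\mathbf P_y$ to the law conditioned on a fixed-width window. It yields jump size $\beta t+o(t)$ (Lemma~\ref{Lemma.704}), post-jump increments $o(t)$ on $[0,T]$ (Lemma~\ref{Lemma.904}), and $\sup_{u\le s_2t}|X_u+\beta u|=o(t)$ in part (2). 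Later large jumps are absorbed the same way. An $o_p(t)$ statement under $\mathbf P_y$ does not by itself survive such conditioning. Likewise, for a fixed-width height window the needed input is the $O(1)$ overshoot structure of Proposition~\ref{Prop.508}, not an $o_p(t)$ bound on the supremum.

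\textbf{Uniformity in $\delta$.} Your criterion (errors $o(\overline\nu(t)/t)$ uniformly in $\delta$) cannot hold for error terms whose mass grows linearly in $\delta$. The paper proves the fixed-$\delta$ statement and then partitions $t+\varDelta_\delta$ into windows of width $1/n$. It uses Theorem~\ref{MainThm01} for mass ratios, and $\theta_t/t\to0$ together with Theorem~\ref{MainThm04} to compare rescaling by $t$ with rescaling by $t+k/n$.
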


 The first limit retains the arrival time of the large jump, while its size becomes deterministic. 
 The negative mean produces a negligible displacement on each bounded interval of the original time scale after division by $t$. 
 On the accelerated scale, the arrival time collapses to zero and the accumulated negative mean gives the deterministic path $\beta(1-s)^+$; see Figure~\ref{Fig.02}. 
 Convergence is formulated on $D\big((0,\infty);\mathbb R\big)$ in the second assertion because the limiting path has right limit $\beta$ at zero, whereas the excursions start at zero.  
 
 \begin{figure}[htbp]
 	\centering
 	\def\shrink{0.5}
 	
 	% 第一张图
 	\begin{minipage}{0.49\textwidth}
 		\centering
 		\resizebox{\linewidth}{!}{%
 			\begin{tikzpicture}[
 				font=\Large,
 				baseline=(current bounding box.south)
 				]
 				\path[use as bounding box]
 				(-2.7,-0.9) rectangle (10.3,7.3);
 				
 				\pgfmathsetmacro{\xvertical}{2.15*\shrink}
 				\pgfmathsetmacro{\yupper}{4.6+1.6*\shrink}
 				\pgfmathsetmacro{\ylower}{4.6-1.6*\shrink}
 				\pgfmathsetmacro{\xend}{7.65+0.4*\shrink}
 				
 				% 水平参考线
 				\draw[black,densely dashed,thin]
 				(0,\yupper) -- (9.75,\yupper);
 				\draw[black,densely dashed,thin]
 				(0,4.6) -- (9.75,4.6);
 				\draw[black,densely dashed,thin]
 				(0,\ylower) -- (9.75,\ylower);
 				
 				% 辅助线
 				\draw[black,dashed,thick]
 				(\xvertical,0) -- (\xvertical,7);
 				\draw[orange,dashed,thin]
 				(\xvertical,\yupper) -- (\xend,0);
 				\draw[orange,dashed,thin]
 				(\xvertical,\ylower) -- (7.65,0);
 				\draw[black,thick]
 				(\xvertical,4.6) -- (7.65,0);
 				
 				% 同步压缩后的 R 路径图
 				\node[anchor=south west,inner sep=0,outer sep=0] at (0,0)
 				{\includegraphics[width=10cm,height=7cm]
 					{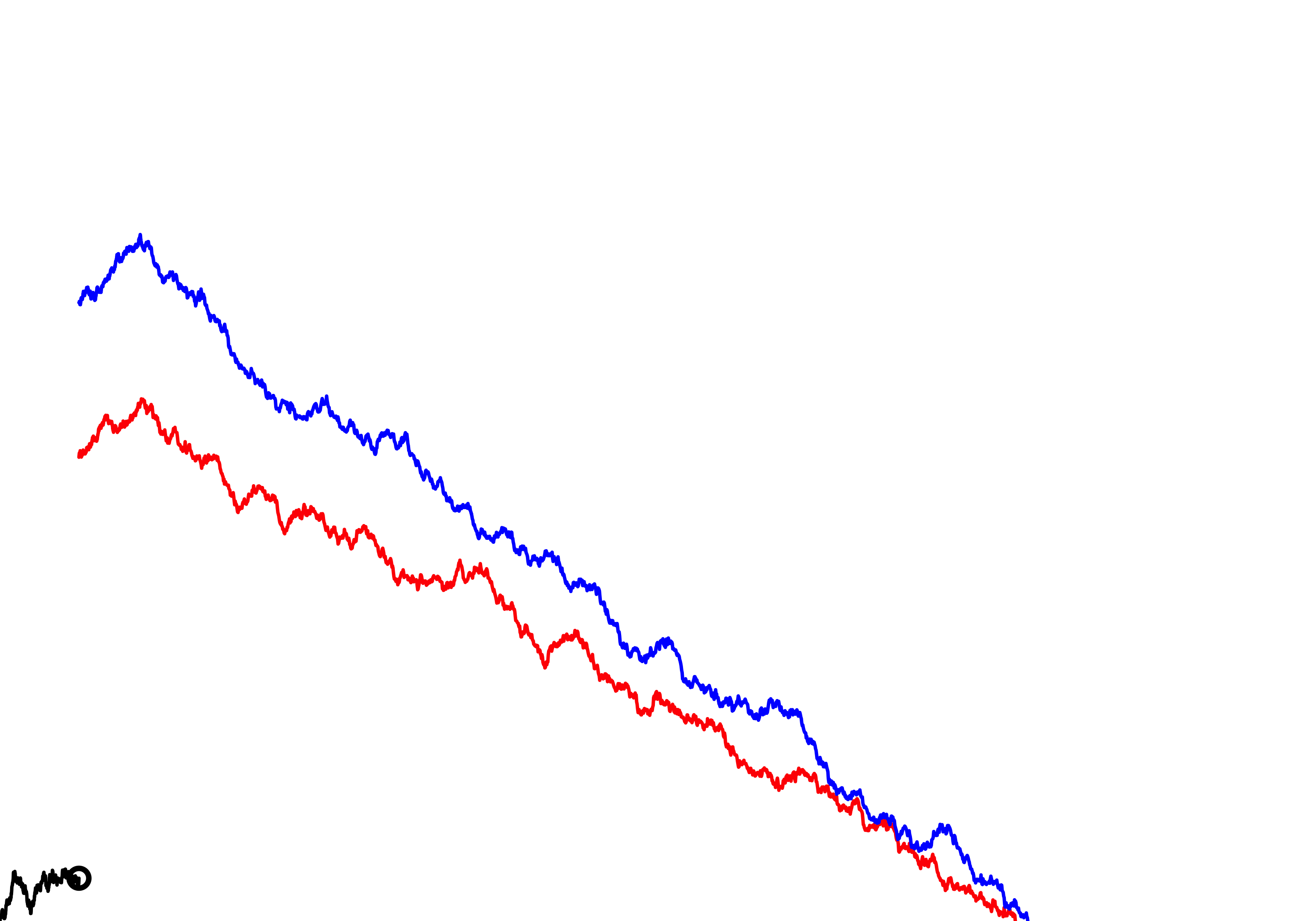}};
 				
 				% 坐标轴
 				\draw[->,thick] (0,0) -- (10,0);
 				\draw[->,thick] (0,0) -- (0,7);
 				
 				% 纵轴刻度与标注
 				\draw[thick] (0,\yupper) -- (0.08,\yupper);
 				\draw[thick] (0,4.6) -- (0.08,4.6);
 				\draw[thick] (0,\ylower) -- (0.08,\ylower);
 				\node[left=5pt,font=\small] at (0,\yupper)
 				{$\beta+\theta_t/t$};
 				\node[left=5pt,font=\small] at (0,4.6) {$\beta$};
 				\node[left=5pt,font=\small] at (0,\ylower)
 				{$\beta-\theta_t/t$};
 				
 				\draw[thick] (0.6,0) -- (0.6,0.08);
 				\node[below=5pt,font=\small] at (0.6,0) {$\mathcal{J}/t$};
 				
 				% 寿命区间同步缩小，左开右闭
 				\draw[line width=2pt]
 				(7.65,0) -- (\xend,0);
 				\draw[black,fill=white,thick]
 				(7.65,0) circle (2pt);
 				\fill[black] (\xend,0) circle (2pt);
 				\node[below=7pt,font=\small] at (7.65,0) {$1$};
 				
 				% 向左箭头
 				\draw[<-,thick]
 				({0.8*\shrink},1.5) -- (\xvertical,1.5);
 				
 				% 下方水平线旁：向上
 				\draw[->,thick]
 				({-0.8*\shrink},\ylower+0.2)
 				-- ({-0.8*\shrink},{4.6-0.5*\shrink});
 				
 				% 上方水平线旁：向下
 				\draw[->,thick]
 				({-0.8*\shrink},\yupper-0.1)
 				-- ({-0.8*\shrink},{4.6+0.7*\shrink});
 				
 				% 右上角标题
 				\node[anchor=base east,inner sep=0,outer sep=0] at (8.75,6.5)
 				{$\underline n\big( \epsilon_{t\cdot}/t\in\cdot
 					\,\big|\, \zeta\in t+\varDelta_\delta \big)$};
 			\end{tikzpicture}%
 		}
 	\end{minipage}%
 	\hfill
 	% 第二张图
 	\begin{minipage}{0.49\textwidth}
 		\centering
 		\resizebox{\linewidth}{!}{%
 			\begin{tikzpicture}[
 				font=\Large,
 				baseline=(current bounding box.south)
 				]
 				\path[use as bounding box]
 				(-2.7,-0.9) rectangle (10.3,7.3);
 				
 				\pgfmathsetmacro{\xvertical}{1.85*\shrink}
 				\pgfmathsetmacro{\yupper}{5.8+0.4*\shrink}
 				\pgfmathsetmacro{\xlower}{6.95-2.15*\shrink}
 				\pgfmathsetmacro{\xupper}{6.95+2.15*\shrink}
 				
 				% 高度区间边界
 				\draw[black,densely dashed,thin]
 				(0,\yupper) -- (9.75,\yupper);
 				\draw[black,densely dashed,thin]
 				(0,5.8) -- (9.75,5.8);
 				
 				% 辅助线
 				\draw[black,dashed,thick]
 				(\xvertical,0) -- (\xvertical,7);
 				\draw[orange,dashed,thin]
 				(\xvertical,5.8) -- (\xlower,0);
 				\draw[orange,dashed,thin]
 				(\xvertical,\yupper) -- (\xupper,0);
 				\draw[black, thick]
 				(\xvertical,5.8) -- (6.95,0);
 				
 				% 同步压缩后的 R 路径图
 				\node[anchor=south west,inner sep=0,outer sep=0] at (0,0)
 				{\includegraphics[width=10cm,height=7cm]
 					{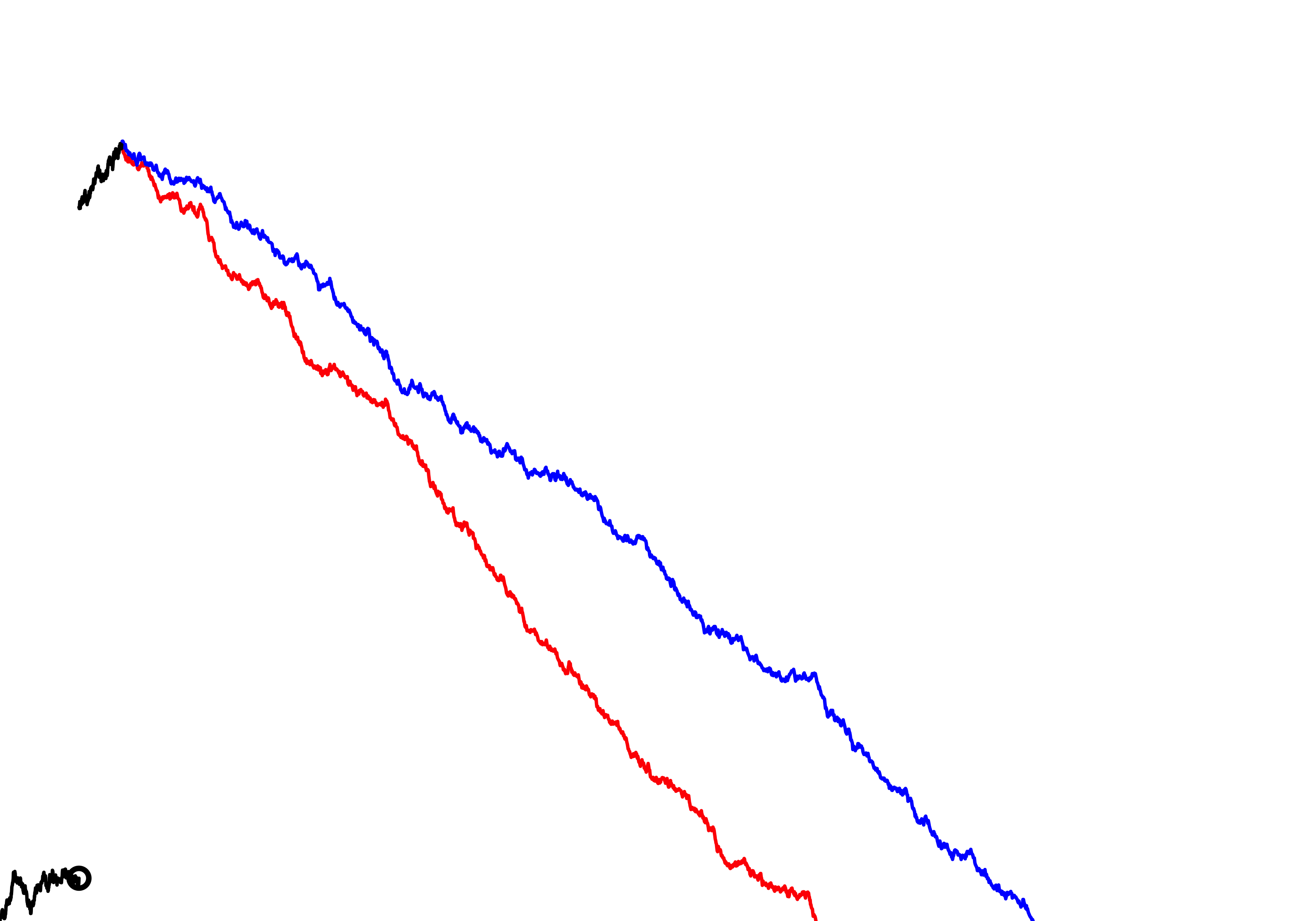}};
 				
 				% 坐标轴
 				\draw[->,thick] (0,0) -- (10,0);
 				\draw[->,thick] (0,0) -- (0,7);
 				
 				% 高度区间：左开右闭
 				\draw[line width=2pt]
 				(0,5.8) -- (0,\yupper);
 				\draw[black,fill=white,thick]
 				(0,5.8) circle (2pt);
 				\fill[black] (0,\yupper) circle (2pt);
 				\node[left=5pt,font=\small] at (0,5.8) {$\beta$};
 				
 				\draw[thick] (0.6,0) -- (0.6,0.08);
 				\node[below=5pt,font=\small] at (0.6,0) {$\mathcal{J}/t$};
 				
 				% 横轴刻度与标注
 				\draw[thick] (\xlower,0) -- (\xlower,0.08);
 				\draw[thick] (6.95,0) -- (6.95,0.08);
 				\draw[thick] (\xupper,0) -- (\xupper,0.08);
 				\node[below=5pt,xshift=-12pt,font=\small]
 				(timeLower) at (\xlower,0)
 				{$1-\theta_t/t$};
 				
 				\node[below=5pt,font=\small]
 				(timeCenter) at (6.95,0) {$1$};
 				
 				\node[below=5pt,xshift=12pt,font=\small]
 				(timeUpper) at (\xupper,0)
 				{$1+\theta_t/t$};
 				
 				% 向左箭头
 				\draw[<-,thick]
 				({0.5*\shrink},1.5) -- (\xvertical,1.5);
 				
 				% 左侧向右，指向 1
 				\draw[->,thick]
 				([xshift={-\shrink cm}]timeCenter.west)
 				-- (timeCenter.west);
 				
 				% 右侧向左，指向 1
 				\draw[->,thick]
 				([xshift={\shrink cm}]timeCenter.east)
 				-- (timeCenter.east);
 				
 				% 右上角标题
 				\node[anchor=base east,inner sep=0,outer sep=0] at (8.75,6.5)
 				{$\underline n\big( \epsilon_{t\cdot}/t\in\cdot \,\big|\,
 					\overline\epsilon\in\beta(t+\varDelta_\delta) \big)$};
 			\end{tikzpicture}%
 		}
 	\end{minipage}
 	\caption{A common macroscopic excursion shape under local lifetime and height conditioning. The panels illustrate the rescaled paths ${\epsilon_{ts}/t>0}$, conditioned on $\zeta\in t+\varDelta_\delta$ (left) and $\overline\epsilon\in\beta(t+\varDelta_\delta)$ (right). 
 		Under either conditioning, the distinguished large jump occurs at a rescaled time tending to zero, the coloured trajectories approach the deterministic profile $\beta(1-s)^+$, represented by the black reference line. } 
 	\label{Fig.02}
 \end{figure}

 Although the fixed-width lifetime and height conditioning events have asymptotically negligible relative overlap, the two conditioning schemes nevertheless lead to the same scaling limits on both time scales, through different asymptotic mechanisms. 
 More precisely, local lifetime conditioning fixes the terminal time near $t$ and forces the principal jump size to be $\beta t+o(t)$, whereas local height conditioning fixes the maximum near $\beta t$ and forces the lifetime to be $t+o(t)$.  
 These finer-scale differences account for the negligible overlap but disappear under rescaling, yielding the same single-step limit on the original time scale and deterministic descent on the accelerated time scale; see Figure~\ref{Fig.02}. 
 
 \textit{\textbf{The terminal passage.}}
 Our final result describes the termination of an excursion with lifetime in a distant local interval. The positions immediately before and at termination are not resolved by the preceding macroscopic scalings. We therefore keep these positions unscaled and identify their joint conditional limit.
 
 Let $V$ be the renewal function of the ascending ladder height process $H$, with the paired normalization specified in Section~\ref{Sec.Preliminaries}. Denote by $\underline{\tt a}$ the drift coefficient of the descending ladder height process $\widehat H$. By Theorem~7.8 in \cite[p.~213]{Kyprianou2014},
 \beqlb\label{eqn.EH}
 \mathbf{E}\big[\widehat{H}_1\big] \ar=\ar \underline{\tt a}+\int_{0}^{\infty}dy\int_{[0,\infty)} \nu\big((-\infty,-z-y]\big) \,V(dz)  = \underline{\tt a}+\int_0^\infty V(z)\cdot \nu\big((-\infty,-z)\big)\,dz.
 \eeqlb

 \begin{theorem}\label{MainThm06}  
 	For any $\delta_0>0$ and any bounded, continuous function $f$ on $\mathbb{R}_+^2$, we have as $t\to\infty$,	 uniformly for $\delta  \in[\delta_0,\infty]$,
 	\beqlb\label{eqn.MainThm06}
 	\underline{n}\big( f (\epsilon_{\zeta-},-\epsilon_\zeta ) \,\big|\, \zeta\in t+\varDelta_\delta \big) 
 	\to \frac{1}{\mathbf{E}[\widehat{H}_1]} \Big(  \underline{\tt a}  \cdot f(0,0)+  \int_0^\infty dx\int_0^\infty f(x,y)V(x)\,\nu(-x-dy) \Big) . 
 	\eeqlb
 	In particular, the two limits in \eqref{eqn.005} hold uniformly for $\delta \in[\delta_0,\infty]$ as  $t\to\infty$. 
 \end{theorem}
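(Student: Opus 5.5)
We decompose the excursion at the time of its principal large jump and then apply the Markov property. On $\{\zeta\in t+\varDelta_\delta\}$ the mass is carried, to leading order, by paths that make one jump of size about $\beta t$ at an $O(1)$ time and then descend linearly. The termination therefore happens for the post-jump process, which is a L\'evy process started from a large level $y\approx\beta t$ and killed on entering $(-\infty,0)$. The plan is to write, via the compensation formula under $\underline n$,
\beqnn
\underline n\big( f(\epsilon_{\zeta-},-\epsilon_\zeta);\,\zeta\in t+\varDelta_\delta\big)
\approx \int_0^\infty ds\int \underline n\big(\epsilon_{s-}\in dz,\, s<\zeta\big)\int \nu(dy)\,
\mathbf E_{z+y}\big[f(X_{\tau^-_0-},-X_{\tau^-_0});\,\tau^-_0\in t-s+\varDelta_\delta\big],
\eeqnn
plus an error from paths without such a big jump. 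That error is controlled by the same estimates used for Theorem~\ref{MainThm01}, and it is $o(\underline n(\zeta)\nu(\beta(t+\varDelta_\delta)))$ uniformly in $\delta\ge\delta_0$. Dividing by Theorem~\ref{MainThm01} then reduces the problem to a local statement for $X$ started high.

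The key intermediate result is a local first-passage statement with a marked termination. For $x\to\infty$ with $|x-\beta u|$ on the scale of the fluctuations,
\beqnn
\mathbf E_x\big[f(X_{\tau^-_0-},-X_{\tau^-_0});\,\tau^-_0\in u+\varDelta_\delta\big]\approx \mathbf P_x\big(\tau^-_0\in u+\varDelta_\delta\big)\cdot \mathcal L(f),
\eeqnn
where $\mathcal L(f)$ is the right-hand side of \eqref{eqn.MainThm06}. Integrated against $\nu(dy)$ over the window, $\mathbf P_x(\tau^-_0\in u+\varDelta_\delta)$ reproduces exactly the quantity already computed in the proof of Theorem~\ref{MainThm01}. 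To identify $\mathcal L$, I would use the standard undershoot/overshoot identity for first passage below $0$ from level $x$. In the limit $x\to\infty$ the joint law of $(X_{\tau^-_0-},-X_{\tau^-_0})$ converges to the stationary ladder-height law: creeping occurs with probability $\underline{\tt a}/\mathbf E[\widehat H_1]$, and otherwise the undershoot has density $V(x)\nu(-x-dy)\,dx/\mathbf E[\widehat H_1]$. This follows from the quintuple law (Doney--Kyprianou) together with the renewal theorem for $\widehat H$, which has finite mean by \eqref{eqn.EH} and $\alpha>1$. If $\mathbf E[\widehat H_1]$ is infinite the formula degenerates consistently, and that case must be checked separately.

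The main obstacle is decoupling the terminal marks from the local constraint on the passage time. We need the overshoot law to remain asymptotically the same after conditioning on $\tau^-_0\in u+\varDelta_\delta$, and not just unconditionally. My approach is to apply the Markov property at time $u-K$, where $K$ is a large constant. By the law of large numbers and Theorem~\ref{MainThm04}, the position at that time is $O_p(\theta)$-close to $\beta K$, a level where the remaining path is essentially a fixed-time-horizon problem. I would then show that the joint law of the passage time offset and the marks, started from $X_{u-K}$, is asymptotically a product. The passage-time part is smoothed by the spread of $X_{u-K}$, which is diffuse on scale $\theta_u\to\infty$, so the density of the time offset is flat across windows of width $\delta\ge\delta_0$. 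The mark part is governed by the renewal limit of the ladder structure near $0$. Making this product approximation uniform in $\delta\in[\delta_0,\infty]$ requires splitting $\delta$ into a sublinear range, handled with the local estimates of Remark~\ref{Remark01}, and a large range, handled with the tail results of \cite{CuiWuXu2025}. The final assertion of the theorem, the uniform version of \eqref{eqn.005}, follows by taking $f$ to approximate $\mathbf 1_{\{x=0\}}$. This is legitimate because the limit puts no mass near $x=0$ except through the creeping atom at $(0,0)$.
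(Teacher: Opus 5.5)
\textbf{Where the proposal fails.} The gap is in the step you yourself flag as the main obstacle, and the mechanism you propose for it does not work. The quintuple law plus the renewal theorem for $\widehat H$ identifies the limit law of $(X_{\tau_0^--},-X_{\tau_0^-})$ under $\mathbf P_x$ as $x\to\infty$ \emph{without} any constraint on $\tau_0^-$. You need that limit conditionally on $\{\tau_0^-\in u+\varDelta_\delta\}$, and the Markov step at time $u-K$ does not deliver it.

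\begin{itemize}
\item Started from $x\approx\beta u$, one has $X_{u-K}=\beta K+O_p(\theta_u)$ with $\theta_u\to\infty$ and $K$ fixed. So the position is not localized near $\beta K$; it is forced there only by the rare event $\{\tau_0^-\in u+\varDelta_\delta\}$ itself. Theorem~\ref{MainThm04}, being a statement about excursions under $\underline n$, says nothing about this.
\item The claim that the law of $X_{u-K}$ is ``diffuse on scale $\theta_u$, hence flat on windows of width $\delta$'' is a local limit theorem for $X$ that is not available here. The paper allows $\sigma=0$ and assumes nothing about the bulk of $\nu$. The only local regularity it has is Condition~\ref{Main.Condition}, i.e.\ the smoothing obtained by integrating the big jump against $\nu(dw)$; this is exactly how Proposition~\ref{Prop.7011} works.
\item Even granting flatness, the Markov step at $u-K$ leaves you comparing $\int dy\,\mathbf E_y[f(X_{\tau_0^--},-X_{\tau_0^-});\tau_0^-\in K+\varDelta_\delta]$ with $\int dy\,\mathbf P_y(\tau_0^-\in K+\varDelta_\delta)$ as $K\to\infty$. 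That is the same local-in-time mark problem at horizon $K$, and ``the renewal limit of the ladder structure near $0$'' does not resolve it.
\item Your pointwise key claim, for deterministic $x$, would in addition require a joint local limit theorem for $\tau_0^-$ from a fixed starting point. The paper neither proves nor needs such a result.
\item For uniformity in $\delta$ no splitting into sublinear and large ranges is needed. The paper partitions $(t,t+\delta]$ into windows of length $1/n$ and uses the fixed-$\delta$ result together with the uniformity of Theorem~\ref{MainThm01}, which already covers $\delta=\infty$.
\end{itemize}

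\textbf{The missing idea.} Apply the compensation formula to the \emph{terminal} jump rather than to the principal large jump; this removes the decoupling problem altogether. After subtracting $f(0,0)$ (constants are handled exactly by \eqref{eqn.EH}) you may assume $f(0,0)=0$, so creeping contributes nothing. The L\'evy system then gives the exact identity
\[
\underline n\big(f(\epsilon_{\zeta-},-\epsilon_\zeta);\,\zeta\in t+\varDelta_\delta\big)=\int_0^\infty g(x)\int_t^{t+\delta}\underline n_s(dx)\,ds,\qquad g(x):=\int_0^\infty f(x,y)\,\nu(-x-dy).
\]
The overshoot is therefore integrated exactly against $\nu$. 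The only local input is the time-integrated entrance law near the origin, $\underline n_s(dx)\approx \alpha\,\overline\nu(\beta s)/(\beta s)\cdot V(x)\,dx$ on compacts, taken from \cite{CuiWuXu2026b}. That is a statement about intervals, to which the smoothing by $\nu$ applies directly. It is complemented by separate bounds for $x$ near $0$ and near $\infty$.

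The normalization comes from Theorem~\ref{MainThm01} and $\mathbf E[\widehat H_1]=\beta\,\underline n(\zeta)$. This identity also shows that $\mathbf E[\widehat H_1]$ is always finite here, so there is no infinite-mean case to check. Finally, the creeping atom $\underline{\tt a}/\mathbf E[\widehat H_1]$ follows by complement from \eqref{eqn.EH}, not from a conditional creeping estimate.
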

 
 The identity~\eqref{eqn.EH} ensures that the limiting measure in Theorem~\ref{MainThm06} has total mass one. 
 Its atom at $(0,0)$ corresponds to continuous termination, and the jump term combines the renewal profile $V(x)$ with the rate of a negative jump from $x$ to $-y$. 
 The result connects the local entrance-law asymptotics in~\cite{CuiWuXu2026b} with the terminal passage of a long excursion. 
 It also complements the continuous and discontinuous passage estimates of Doney and Rivero~\cite{DoneyRivero2013,DoneyRivero2016Erratum} in the present negative-mean regime.  
 Whether the same conditional limit holds under weaker assumptions on the L\'evy measure, while retaining $\mathbf E\big[\widehat H_1\big]<\infty$, remains an open question. 
 
 \subsection{Methodologies}
 \label{Subsec.Methodologies}
 
 The proofs combine uniform local large-deviation estimates, excursion compensation and fluctuation identities. 
 The main difficulty is that local probabilities are smaller than the corresponding tail probabilities, requiring error estimates on the scale of a local interval mass.
 For example, for fixed $\delta>0$, the local lifetime mass is of order $t^{-1}\cdot \underline n(\zeta>t)$. 
 Subtracting tail equivalents at $t$ and $t+\delta$ does not provide the required precision, and scenarios negligible under tail conditioning may contribute substantially under local conditioning. 
 We therefore need to refine the single-big-jump arguments of~\cite{CuiWuXu2025} to account for the remaining trajectory and the local lifetime or height constraint. 
 These estimates also distinguish asymptotic equality of the marginal masses from asymptotic equivalence of the conditioning events. 
 We outline the principal steps below.

 The analysis of the local lifetime mass starts from the fluctuation identity \eqref{eqn.209}.
 Subtracting the identities at $t$ and $t+\delta$ reduces the analysis to probability differences of the form
 \beqnn
 \mathbf{P}(X_{t}>0)-\mathbf{P}(X_{t+\delta}>0)
 \ar=\ar \mathbf{P}(X_{t}>0, X_{t+\delta}\leq 0) - \mathbf{P}(X_{t}\leq 0, X_{t+\delta}>0).
 \eeqnn
 Uniform local large-deviation estimates from~\cite{WuXu2026a}, applied to the centered process $X_s+\beta s$, identify the contributions from positions near zero. 
 A further decomposition of the upward-crossing probability captures the contribution of a new large jump from the region of the typical position $-\beta t$. 
 Together, these estimates yield precise asymptotics for both crossing probabilities and their difference.

 The key step in deriving the local height asymptotics is to identify jumps that carry the excursion close to its eventual maximum. 
 More precisely, for a fixed cutoff $K>0$,  we observe that every candidate jump  has pre-jump position at most $K$, subsequent maximum in $x+\varDelta_\delta$, and further ascent after the jump at most $K$. 
 These constraints imply that its size belongs to $(x-2K,x+\delta]$. 
 Writing $N_\delta(x,K)$ for the number of candidates, we approximate the event ${\overline\epsilon\in x+\varDelta_\delta}$ by ${N_\delta(x,K)\geq1}$, with error negligible relative to $\nu(x+\varDelta_\delta)$ as $x\to\infty$ and then $K\to\infty$. We also verify that the contribution of multiple candidates is also negligible on this local scale. 
 The estimates combine truncation of positive jumps, exponential bounds for upward passage by the truncated process, and compensation for configurations involving large jumps. 
 To evaluate the expected candidate count, we first integrate out the post-jump trajectory using the killed semigroup. 
 Compensation then yields an integral over the jump time, pre-jump position and jump size. 
 
 For Theorem~\ref{MainThm03}, we split candidate jumps according to whether they occur before or after a fixed time $R$. 
 The late contribution is negligible on the local scale as $t\to\infty$ and then $R\to\infty$. 
 For an early candidate, the fixed height window confines its size to a bounded interval around $\beta t$. Imposing the lifetime constraint additionally requires the post-jump process, started near $\beta t$, to cross zero within a bounded time window near $t$. We show that this first-passage probability tends to zero uniformly over bounded shifts in the starting height and time, yielding the negligible relative overlap of the two fixed windows. 
 For Theorem~\ref{MainThm04}, we use the same distinguished jump but retain wider error windows. 
 The estimates control its arrival time, pre-jump position, size and subsequent upward displacement, together with the centered fluctuations accumulated during the descent. 
 With a scale dominating those fluctuations, these errors give the conditional localization of height and lifetime. 
 The two theorems thus follow from complementary estimates: anti-concentration on fixed scales and concentration on sufficiently large fluctuation scales. 
 
 The proof of the conditional scaling limits builds on and extends the argument for Theorem~1.8 in~\cite{CuiWuXu2025}. For $\lambda\in(0,\beta)$, under either local conditioning in Theorem~\ref{MainThm05}, the first jump time $\mathcal J^{\lambda t}$ above the threshold $\lambda t$ and the rescaled jump size $\Delta\epsilon_{\mathcal J^{\lambda t}}/t$ converge jointly to $(\mathcal T,\beta)$. On bounded intervals of the original time scale, the rescaled excursion is approximated by $t^{-1}\cdot \Delta\epsilon_{\mathcal J^{\lambda t}}\cdot \mathbf1_{\{s\geq\mathcal J^{\lambda t}\}}$.
 Estimates for the pre-jump segment and post-jump increments make the remainder negligible, yielding the single-step limit. 
 On the accelerated time scale,  a uniform law-of-large-numbers approximation controls the post-jump trajectory, while lifetime localization controls the killing time. These estimates along with $\mathcal J^{\lambda t}/t\to0$ in conditional probability yield convergence to $\beta(1-s)^+$ on compact time intervals bounded away from zero.
 
 To prove Theorem~\ref{MainThm06}, we express termination by a negative jump through the L\'evy system, i.e., for suitable test functions, the resulting integrals have the form
 \beqnn
 \int_t^{t+\delta}ds\int_0^\infty\underline{n} \big(\epsilon_s\in dx,\zeta>s\big)
 \int_0^\infty f(x,y)\,\nu(-x-dy).
 \eeqnn
 The time-integrated entrance-law limit established in \cite{CuiWuXu2026b} identifies the contribution on truncated spatial regions. 
 Additional bounds near zero and at infinity permit passage to the terminal jump kernel $V(x)\,dx\,\nu(-x-dy)$. The lifetime asymptotic and the identity  $\mathbf{E}\big[\widehat H_1\big]=\beta\underline n(\zeta)$ 
 provide the normalization. 
 The ladder-height balance~\eqref{eqn.EH} identifies the remaining mass at the origin and completes the weak limit.  
 
 \medskip
 
 \textit{\textbf{Organization of the paper.}}
 Section~\ref{Sec.Preliminaries} introduces further notation, recalls the fluctuation identities and collects the auxiliary estimates. Sections~\ref{Sec.ProofThm01} and~\ref{Sec.ProofThm02} prove the local lifetime and height laws, respectively. Section~\ref{Sec.ProofThm03} establishes the negligible overlap of fixed windows. Section~\ref{Sec.ProofThm04} proves the conditional localization estimates. The path limits are proved in Section~\ref{Sec.ProofThm05}, and the terminal-pair limit in Section~\ref{Sec.ProofThm06}.
 
 \medskip
 
 \textit{\textbf{Notation.}} 
 Let $\mathbb{N}$ be the space of all natural numbers including $0$. 
 For any $x\in\mathbb{R}$, let $x^+:=x\vee 0$, $x^-=-(x\wedge 0)$ and $[x]$ be the integer part of $x$. 
 Let $\overset{\rm f.d.d.}\longrightarrow$,  $\overset{\rm a.s.}\to$, $\overset{\rm d}\to$  and  $\overset{\rm p}\to$ be the convergence in the sense of finite dimensional distributions, almost sure convergence,  convergence in distribution and convergence in probability, respectively.  
 We make the conventions that for $x, y\in \mathbb{R}$ with $y\geq x$,
 \beqnn
 \int_x^y=-\int_y^x= \int_{(x,y]},\quad \int_{x-}^{y-}=  \int_{[x,y)}
 \quad\mbox{and}\quad
 \int_x^\infty = \int_{(x,\infty)}.
 \eeqnn
 Integrals with respect to an excursion measure are written in expectation notation, for example $\underline n(\zeta)$ and $\underline n(\zeta\wedge t)$. 
 We use $C$ to denote a positive constant whose value might change from line to line.

 \section{Preliminaries} 
 \label{Sec.Preliminaries}
 \setcounter{equation}{0}

 This section begins by introducing additional notation and recalling basic fluctuation theory for Lévy processes, for which we refer to \cite{Bertoin1996,Doney2007,Kyprianou2014}. We then present several auxiliary lemmas that will be essential for proving our main results.
 All processes in this work are defined on a complete probability space
 $(\Omega,\mathscr{F},\mathbf{P})$ equipped with a filtration $\{\mathscr{F}_t\}_{t\geq 0}$ satisfying the usual hypotheses. 
 For $x\in\mathbb{R}$, let $\mathbf{P}_x$ and $\mathbf{E}_x$ denote the law and expectation of the  process starting from $x$. 
 For simplicity, we also write $\mathbf{P}= \mathbf{P}_0$ and $\mathbf{E}=\mathbf{E}_0$.

 \subsection{Fluctuation theory}
 
 Let $\Delta X_t := X_t-X_{t-}$ denote the jump size of $X$ at time $t  > 0$. 
 The point measure of all jumps 
 \beqnn
 \sum_{t>0} \mathbf{1}_{\{ \Delta X_t\neq 0 \}}\cdot \delta_{(t,\Delta X_t)}(ds,dy),
 \eeqnn
 is a Poisson random measure on $(0,\infty)\times \mathbb{R}$ with intensity $ds\, \nu(dy)$.  
 By the compensation formula for Poisson random measures (see \cite[p.7]{Bertoin1996}), we have that for any $(\mathscr{F}_t)$-predictable process $\mathbf{F}=\{ \mathbf{F}_t:t\geq 0 \}$ taking values in the space of non-negative measurable functions on $\mathbb{R}$, 
 \beqlb\label{eqn.CompenstationX}
 \mathbf{E}\bigg[ \sum_{t>0} \mathbf{F}_t(\Delta X_t) \cdot \mathbf{1}_{\{ \Delta X_t\neq 0 \}}  \bigg]= \mathbf{E}\bigg[\int_0^\infty dt \int_{\mathbb{R}\setminus\{0\}}\mathbf             {F}_t(y)\, \nu(dy)\bigg].
 \eeqlb
 
 The reflected process $X-\underline{X}$ is a Markov process with Feller transition semigroup given by $\mathbf{P}_x(X_t\in dy, \underline{X}_t>0)$ for any $x,y>0$; see, e.g., Proposition~1 in \cite[p.156]{Bertoin1996}. 
 Its excursions away from $0$ form a Poisson point process on $\mathcal{E}$ with intensity $\underline{n}(d\epsilon)$.  
 Under $\underline{n}(d\epsilon)$, the excursion process is Markovian and shares the same semigroup as $X$ killed upon entering $(-\infty,0]$, that is, for any $t,s>0$, any measurable functionals $F$ on $D\big([0,t];\mathbb{R}\big)$ and  $G$ on $D\big([0,s];\mathbb{R}\big)$, 
 \beqlb\label{eqn.n-Markov}
 \lefteqn{\underline{n}\Big(F\big(\epsilon_r, r \in[0,t]\big) \cdot G\big(\epsilon_{r+t}, r\in[0,s]\big),\, \zeta>t+s\Big)}\ar\ar\cr
 \ar\ar\cr
 \ar=\ar \underline{n}\Big(F\big(\epsilon_r, r \leq t\big)\cdot \mathbf{E}_{\epsilon_t}\big[G\big(X_r, r\in[0,s]\big),\,  \underline{X}_s>0\big],\, \zeta>t\Big)  .
 \eeqlb 
 In particular, the excursion process is stochastically continuous and has no fixed-time jumps, in the sense that	$\underline n(\Delta\epsilon_t\neq0,\zeta>t)=0$ for all $t\geq 0$. Consequently, its entrance law satisfies 
 \beqnn
 \underline n_t(dz) :=\underline{n}\big(\epsilon_t\in dz,\,\zeta>t \big) =\underline{n}\big(\epsilon_{t-}\in dz,\,\zeta>t \big),
 \eeqnn
 and $t\mapsto\underline n_t(dz)$ is continuous as a measure-valued function. 
 By Theorem 5 in \cite{Alili2001}, the entrance law admits the decomposition
 \beqlb\label{c12}
 t\cdot \underline{n}_t(dx) =\int_0^t d s \int_0^x  \mathbf{P}_z(X_{t-s} \in dx)\,\underline{n}_s(dz);
 \eeqlb
 see also \cite{DoneySavov2010} for further detailed proof. 
 Integrating both sides of \eqref{c12} over $x>0$, then applying
 Fubini's theorem and finally the spatial homogeneity of $X$, we obtain
 \beqlb\label{eqn.209}
 t\cdot\underline n(\zeta>t)
 \ar=\ar
 \int_0^\infty
 \int_0^t ds
 \int_0^x
 \underline n_s(dz)\,
 \mathbf P_z(X_{t-s}\in dx)
 \cr\cr
 \ar=\ar
 \int_0^t ds
 \int_0^\infty \underline n_s(dz)
 \int_z^\infty
 \mathbf P_z(X_{t-s}\in dx)  
 = \int_0^t
 \mathbf P(X_{t-s}>0)
 \underline n(\zeta>s)\,ds.
 \eeqlb

 An important consequence of \eqref{eqn.CompenstationX} and \eqref{eqn.n-Markov} is that the compensation formula also holds under $\underline{n}(d\epsilon)$, i.e., let $\mathscr{G}_t:=\sigma\big(\{\epsilon_s:s\in [0,t], \epsilon \in \mathcal{E}\}\big)$ for any $t\geq 0$, for any $(\mathscr{G}_t)$-predictable process $\mathbf{F}=\{ \mathbf{F}_t:t\geq 0 \}$ taking values in the space of non-negative measurable functions on $\mathbb{R}$, we have 
 \beqlb\label{eqn.Compensation}
 \underline{n}\bigg(\sum_{0<s\leq \zeta} \mathbf{F}_s(\Delta \epsilon_s) \cdot 1_{\{\Delta \epsilon_s \neq 0\}}\bigg)
 = \underline{n}\bigg(\int_0^\zeta ds \int_{\mathbb{R}\setminus\{0\}} \mathbf{F}_s(y)\,\nu(dy)\bigg).
 \eeqlb
 In particular,  Proposition 2.1 in \cite{CuiWuXu2026b} states that for any $t> 0$ and $x,y> 0$,  
 \beqlb\label{Prop.203}
 \underline{n}\bigg(  \sum_{0<s<r\leq t\wedge \zeta} \mathbf{1}_{\{\Delta \epsilon_s > x ,\, \Delta \epsilon_r > y \}} \bigg) 
 \leq t \cdot \underline{n}(\zeta ) \cdot \overline{\nu}(x)  \cdot \overline{\nu}(y).   
 \eeqlb

 Let $\widehat{L} = \{\widehat{L}_t: t\ge 0\}$ be the local time at zero of $X-\underline{X}$ in the sense of Definition 6.1 in \cite[p.154]{Kyprianou2014}; see also \cite[p.109]{Bertoin1996}. 
 Without loss of generality, we normalize it so that 
 \beqnn
 \mathbf{E}\bigg[ \int_0^\infty e^{-t} d \widehat{L}_t\bigg] =1. 
 \eeqnn
 Its right-inverse $\widehat{L}^{-1}$ is referred to as the \textsl{descending ladder time process}, and $\widehat{H}= \{ \widehat{H}_t :t\geq 0 \}$ with $\widehat{H}_t := -X_{\widehat{L}^{-1}_t}$ is called the \textsl{descending ladder height process}. 
 Lemma~2 in \cite[p.157]{Bertoin1996} states that $(\widehat{L}^{-1},\widehat{H})$ is a L\'evy process (possibly killed at an exponential rate), whose \textsl{bivariate exponent}, by \cite[p.166]{Bertoin1996}, is given by
 \beqnn
 \widehat\kappa(\lambda,u)
 =-\log \mathbf{E}\Big[e^{-\lambda \widehat{L}^{-1}_1-u\widehat{H}_1}\Big]
 \ar=\ar
 \exp\Big\{\int_0^\infty \frac{dt}{t}\int_{(-\infty,0]}
 (e^{-t}-e^{-\lambda t+ux})\, \mathbf{P}(X_t\in dx)\Big\},
 \quad \lambda,u\geq 0.
 \eeqnn
 In particular, both $\widehat L^{-1}$ and $\widehat H$ are (possibly killed) subordinators with drifts $\underline{\tt d},\underline{\tt a}\geq 0$.
 Since the state $0$ is assumed to be regular for $(0, \infty)$, for $X$, we have $\underline{\mathrm{d}}=0$.
 By (6.16) in \cite[p.170]{Kyprianou2014}, the bivariate exponent $\widehat\kappa(\lambda, u)$ also admits the  representation
 \beqlb\label{eqn.kappa}
 \widehat \kappa(\lambda,u) \ar=\ar  \underline{\tt a}\cdot u + \int_0^\infty \int_0^\infty (1- e^{-\lambda t-u x})\,\underline{n}(\zeta \in dt,-\epsilon_\zeta \in dx). 
 \eeqlb
 Denote by $\widehat{U}(dt,dy)$ the \textsl{potential measure} of $(\widehat{L}^{-1},\widehat{H})$. 
 It is a $\sigma$-finite measure on $\mathbb{R}^2_+$ defined by
 \beqlb\label{eqn.U}
 \widehat{U}(dt,dy):= \int_0^\infty \mathbf{P}\big( \widehat{L}^{-1}_s \in  dt,\widehat{H}_s\in dy \big)\,ds.
 \eeqlb
 In particular, its marginal cumulative function in space is also known as the \textsl{renewal function} associated with the ladder height process $\widehat{H}$, that is, 
 \beqlb\label{eqn.DefV}
 \widehat{V}(x) := \widehat{U}\big(\mathbb{R}_+,[0,x]\big)= \int_0^\infty \mathbf{P}\big( \widehat{H}_s \leq x \big)\,ds,\quad x\geq 0;
 \eeqlb
 see e.g.  \cite[Chapter VI]{Bertoin1996} and \cite[Chapter 6]{Kyprianou2014}. 
 Analogously, quantities associated with $\overline{X}$ and $\overline{X}-X$ are denoted without hats, e.g., $L^{-1}$, $H$, $\kappa(\lambda,u)$, $U(dt,dy)$ and $V(x)$,  and satisfy analogous properties.

 Applying integration by parts to \eqref{eqn.kappa} with $u=0$ gives that
 \beqnn
 \widehat \kappa(\lambda,0) =  \int_0^\infty (1- e^{-\lambda t})\,\underline{n}(\zeta \in dt)
 = \lambda \int_0^\infty  e^{-\lambda t}\underline{n}(\zeta > t)\, dt,\quad \lambda \geq 0.
 \eeqnn
 By Theorem 6.9 (ii) in \cite{Kyprianou2014},  we have that $\kappa(0,0) >0$ since $X$ drifts to  $-\infty$ and hence 
 \beqlb\label{eqn.501}
 \underline{n}(\zeta )= \int_0^\infty  \underline{n}(\zeta > t)dt=\lim_{\lambda\to 0}\frac{\widehat \kappa(\lambda,0)}{\lambda} =\frac{1}{ \kappa(0,0)} < \infty.
 \eeqlb
 Here the third equality follows from the identity $\widehat \kappa(\lambda,0) \cdot \kappa(\lambda,0)=\lambda$ for all $\lambda \geq 0$. 
 Moreover, by (4.2) in \cite{CuiWuXu2025} and (4) in \cite[p.166]{Bertoin1996} we have
 \beqlb\label{eqn.EH0}
 \mathbf{E}\big[ \widehat{H}_1 \big] = \frac{\partial \widehat{\kappa}(\lambda, u)}{\partial u}\Big|_{(\lambda, u)=(0,0)}=\frac{\beta}{\kappa(0,0)}=\beta \cdot \underline{n}(\zeta).
 \eeqlb
 By \eqref{eqn.kappa} and \eqref{eqn.U}, the function $\widehat{V}$ also satisfies that
 \beqnn
 \widehat{V}(x) = \overline{\tt d} +  \int_0^\infty  \overline{n}(  \epsilon_s\leq x, \zeta>s)\, ds ,\quad x\geq 0,
 \eeqnn
 with $\overline{\tt d}\geq 0$ being the drift of $L^{-1}$; see also (16) in \cite{DoneyRivero2013}. 
 Similarly, we also have for any $x\geq 0$, 
 \beqnn
 V(x) = \int_0^\infty \mathbf{P}\big( H_s \leq x \big)\,ds =   \int_0^\infty  \underline{n}(  \epsilon_s\leq x, \zeta>s)\, ds  
 \quad \mbox{and}\quad 
 V(\infty)=  \underline{n}(\zeta) <\infty. 
 \eeqnn
 Recall that $\tau_0^-$ is the first passage time of $X$ into $(-\infty,0)$.    For any $x>0$, Proposition 17 (ii) in \cite[p.172]{Bertoin1996} and Theorem 2.2  in \cite{DenisovShneer2013} give
 \beqlb  \label{Asytau} 
 \mathbf{E}_{x}[\tau_0^-] 
 = \underline{n}(\zeta ) \cdot \widehat{V}(x)
 \quad \mbox{and}\quad 
 \mathbf{P}_x\big(\tau_0^->t\big)  
 \sim  \underline{n}(\zeta ) \cdot \widehat{V}(x) \cdot \overline{\nu}(\beta t) \in \mathrm{RV}^\infty_{-\alpha} ,\quad \mbox{as }t\to\infty. 
 \eeqlb

 \subsection{Global auxiliary lemmas}
 % \label{Sec.AuxiliaryLemma}
 % \setcounter{equation}{0}

 This section provides four important auxiliary lemmas that will be frequently used in the proofs of our main theorems. 
 Lemma~\ref{Lemma.301} extends the range of uniformity in the asymptotic equivalence \eqref{eqn.30}.
 Lemma \ref{Lemma.203} is drawn from Corollary~1.4 in \cite{WuXu2026a}, which provides a unified treatment of uniform local large-deviation asymptotics for centered L\'evy processes with regularly varying jumps. Lemma~\ref{Lemma.3001} gives the almost sure and $L^1$ limits of $X_t/t$, $-X_t^-/t$, and $\underline X_t/t$. Lemma~\ref{Lemma.310} gives uniform integrability and the mean limit for $\underline X_t-\underline X_{t+\delta}$.

 \begin{lemma}\label{Lemma.301}
 	For any $\delta_0>0$, there exists some constant $C>0$ such that for all  $\lambda\in[0,1]$, 
 	\beqlb\label{eqn.31}
 	\limsup_{x\to\infty}
 	\sup_{\delta_0\leq\delta\leq \delta_0+ \lambda x}
 	\bigg| \frac{\nu\big(  x+ \varDelta_\delta\big)}{\alpha\cdot\overline{\nu}(x)/x\cdot \delta} -1\bigg|
 	\leq C\cdot \lambda.
 	\eeqlb  
 \end{lemma}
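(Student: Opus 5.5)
The strategy is to subdivide the window into pieces of fixed small length, on which \eqref{eqn.30} applies with a fixed $\delta$, and then use regular variation of $\overline\nu$ to compare the density $\alpha\overline\nu(y)/y$ along the window with its value at $x$.

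Fix $\delta_0>0$ and $\lambda\in[0,1]$, and let $\delta\in[\delta_0,\delta_0+\lambda x]$. Write $\delta=m\delta_0+r$ with $m=[\delta/\delta_0]\geq1$ and $r\in[0,\delta_0)$. Then
\[
\nu(x+\varDelta_{m\delta_0})=\sum_{k=0}^{m-1}\nu\big(x+k\delta_0+\varDelta_{\delta_0}\big)\leq \nu(x+\varDelta_\delta)\leq \nu\big(x+\varDelta_{(m+1)\delta_0}\big).
\]
The bounds then follow in four steps.

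\emph{Step 1 (uniform local estimate).} By \eqref{eqn.30} with the fixed width $\delta_0$, for every $\eta>0$ there is $y_0$ such that for all $y\geq y_0$,
\[
(1-\eta)\,\alpha\delta_0\,\frac{\overline\nu(y)}{y}\leq \nu(y+\varDelta_{\delta_0})\leq (1+\eta)\,\alpha\delta_0\,\frac{\overline\nu(y)}{y}.
\]
Since every $y=x+k\delta_0$ with $x\geq y_0$ satisfies $y\geq y_0$, this bound holds simultaneously for all summands once $x\geq y_0$.

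\emph{Step 2 (compare each term with its value at $x$).} Every $y=x+k\delta_0$ in the sum lies in $[x,x+2x]$, because $k\delta_0\leq\delta\leq\delta_0+x$. The function $g(y)=\overline\nu(y)/y$ belongs to $\mathrm{RV}^\infty_{-\alpha-1}$. By the uniform convergence theorem for regularly varying functions, $g(x u)/g(x)\to u^{-\alpha-1}$ uniformly for $u\in[1,3]$. Hence
\[
\frac{g(x+k\delta_0)}{g(x)}\in\big[(1+\lambda+\delta_0/x)^{-\alpha-1}-\eta,\;1+\eta\big]
\]
uniformly in $k$ for large $x$. For $\lambda\in[0,1]$ we have $1-(1+\lambda)^{-\alpha-1}\leq(\alpha+1)\lambda$.

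\emph{Step 3 (sum up).} Summing the $m$ terms gives $m\delta_0$, and the boundary term adds at most one extra piece of width $\delta_0$. Because $\delta\geq\delta_0$, this extra piece contributes a relative error at most $\delta_0/\delta$ in a way that does not vanish. It therefore has to be handled by a second refinement: I would repeat the subdivision with width $\delta_0/j$ in place of $\delta_0$. Condition~\ref{Main.Condition} applies to every width, so the boundary error drops to $1/j$. Taking $x\to\infty$, then $\eta\to0$, then $j\to\infty$ leaves
\[
\limsup_{x\to\infty}\sup_{\delta}\Big|\frac{\nu(x+\varDelta_\delta)}{\alpha\,\overline\nu(x)\,\delta/x}-1\Big|\leq(\alpha+1)\lambda .
\]

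\emph{Step 4 (the constant).} The constant $C=\alpha+1$ is independent of $\lambda$, which proves \eqref{eqn.31}.

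The main obstacle is keeping uniformity in $\delta$ while $m$ grows with $x$. Step 1 handles this, since a single threshold $y_0$ serves all shifted windows. The boundary remainder $r$ is handled by the refinement in Step 3.
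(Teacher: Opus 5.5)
Your proof is correct and follows essentially the same route as the paper: split $x+\varDelta_\delta$ into windows of fixed width, apply \eqref{eqn.30} uniformly over the shifted windows, and compare $\overline\nu(y)/y$ with $\overline\nu(x)/x$ through regular variation. The only difference is the fractional remainder. The paper controls it with P\'olya's theorem, which makes $\nu(y+\varDelta_r)/(\alpha\overline\nu(y)/y)\to r$ uniform in $r\in[0,1]$. You instead sandwich the interval between whole windows and refine the mesh to width $\delta_0/j$, which is the same monotonicity mechanism written out explicitly; because your upward deviation vanishes in the limit, it also gives the constant $C=\alpha+1$.
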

 \proof Without loss of generality, we prove the result for $\delta_0=1$; the general case follows by an identical argument. 
 For any $1\leq \delta\leq  1+ \lambda x$, we decompose the interval $x+\varDelta_\delta$ into unit subintervals:
 \beqnn
 \nu\big(  x+ \varDelta_\delta\big) 
 =
 \sum_{k=0}^{[\delta]-1}  \nu\big(x+k + \varDelta_1\big) 
 +
 \nu\big( x+[\delta]+ \varDelta_{\delta-[\delta]}\big) .
 \eeqnn
 By the triangle inequality, 
 \beqlb\label{eqn.202}
 \Big| \frac{\nu\big(  x+ \varDelta_\delta\big)}{\alpha\cdot\overline{\nu}(x)/x\cdot \delta} - 1 \Big|
 \ar=\ar \frac{1}{\delta} \cdot \bigg|\sum_{k=0}^{ [\delta] -1} \Big(\frac{\nu\big(x+k + \varDelta_1\big)}{\alpha\overline{\nu}(x)/x} -1\Big)  +  \Big(  \frac{\nu\big( x+[\delta] + \varDelta_{\delta-[\delta] }\big)}{\alpha\cdot\overline{\nu}(x)/x} - \big(\delta-[\delta] \big) \Big)\bigg|\cr
 \ar\leq\ar \frac{1}{\delta}\bigg( \sum_{k=0}^{[\delta] -1} \bigg|\frac{\nu\big(x+k + \varDelta_1\big)}{\alpha\cdot\overline{\nu}(x)/x} -1\bigg|   +   \bigg|  \frac{\nu\big( x+[\delta] + \varDelta_{\delta- [\delta] }\big)}{\alpha\overline{\nu}(x)/x} -\big(\delta- [\delta]  \big) \bigg| \bigg)\cr
 \ar \leq \ar  \sup_{k=0,\cdots, [\delta]-1} \bigg|\frac{\nu\big(x+k + \varDelta_1\big)}{\alpha\cdot \overline{\nu}(x)/x} -1\bigg| 
 + \bigg|  \frac{\nu\big( x+[\delta] + \varDelta_{\delta- [\delta] }\big)}{\alpha\cdot\overline{\nu}(x)/x} -\big(\delta- [\delta]  \big) \bigg|. \qquad
 \eeqlb
 Since $\overline{\nu}(x)/x$ is strictly decreasing on $(0,\infty)$, we have uniformly in $1\leq \delta \leq 1+\lambda x$ and $k=0,\cdots, [\delta]-1$, 
 \beqlb\label{eqn.201}
 \bigg|\frac{\nu\big(x+k + \varDelta_1\big)}{\alpha\cdot\overline{\nu}(x)/x} -1\bigg|  
 \ar\leq\ar \frac{\overline{\nu}(x+k) }{x+k} \frac{x}{\overline{\nu}(x)} \cdot \bigg|\frac{\nu\big(x+k + \varDelta_1\big)}{\alpha\cdot\overline{\nu}(x+k)/(x+k)} -1\bigg|   + \bigg|\frac{\overline{\nu}(x+k) }{x+k} \frac{x}{\overline{\nu}(x)} -1\bigg| \cr
 \ar\leq\ar  \frac{ \overline{\nu}(x+ \lambda x)}{ x+ \lambda x } \frac{x}{\overline{\nu}(x)}  \cdot \bigg|\frac{\nu\big(x+k + \varDelta_1\big)}{\alpha\cdot\overline{\nu}(x+k)/(x+k)} -1\bigg| + \bigg|\frac{ \overline{\nu}(x+\lambda x)}{ x+ \lambda x } \frac{x}{\overline{\nu}(x)}  -1\bigg| . \quad
 \eeqlb
 Firstly, since $\overline{\nu}(x) \in \mathrm{RV}^\infty_{-\alpha}$, we have, as $x\to\infty$,
 \beqnn
 \bigg|\frac{ \overline{\nu}(x+\lambda x)}{ x+\lambda x } \frac{x}{\overline{\nu}(x)} -1\bigg| \to      1-(1+\lambda)^{-\alpha-1} .
 \eeqnn
 Moreover, since the function $\frac{\nu( y+ \varDelta_r)}{ \alpha \overline{\nu}(y)/y}$ is non-decreasing in $r \in [0,1]$ and converges pointwise to $r$ as $y\to\infty$; see \eqref{eqn.30}, by P\'olya's theorem (see e.g.  (0.1) in \cite[p.1]{Resnick1987}) we have as $x\to\infty$, 
 \beqnn
 \sup_{k\geq 0} \bigg|\frac{\nu\big(x+k + \varDelta_1\big)}{\alpha\cdot \overline{\nu}(x+k)/(x+k)} -1\bigg| \leq   \sup_{y\geq x}\sup_{r\in[0,1]} \bigg|  \frac{\nu\big( y+ \varDelta_r\big)}{\alpha\cdot \overline{\nu}(y)/y}  -r \bigg|  \to 0.
 \eeqnn 
 Substituting these limits into \eqref{eqn.201}, we obtain, as $x\to\infty$,
 \beqnn
 \sup_{1\leq \delta \leq 1+\lambda x} \sup_{k=0,\cdots, [\delta]-1}\bigg|\frac{\nu\big(x+k + \varDelta_1\big)}{\alpha\cdot \overline{\nu}(x)/x} -1\bigg|   
 \to 1-(1+\lambda)^{-\alpha-1} \leq (1+\alpha)\cdot  \lambda, 
 \eeqnn
 for all $\lambda\in[0,1]$.
 Similarly, the same argument shows that, uniformly in $1\le \delta\le 1+\lambda x$, the last absolute value in \eqref{eqn.202} is bounded from above by $1-(1+\lambda)^{-\alpha-1}+o(1)$. 
 Combining these estimates with \eqref{eqn.202} yields the desired bound \eqref{eqn.31}.  
 \qed

 \begin{corollary}\label{Corollary.202}
 	There exist two constants $C,x_0> 0$ such that for all $x\geq x_0$ and  all  $\delta \geq0$,
 	\beqlb\label{eqn.212}
 	\sup_{y\geq 0}\nu \big( x+y+\varDelta_\delta \big)
 	\leq
 	C\cdot \frac{\overline{\nu}(x)}{x}  \cdot 
 	(1+ \delta).
 	\eeqlb
 \end{corollary}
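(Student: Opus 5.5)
The plan is to reduce \eqref{eqn.212} to Lemma~\ref{Lemma.301} with $\delta_0=1$ and a fixed $\lambda$, say $\lambda=1$. First, monotonicity in $\delta$ lets us assume $\delta\geq 1$, since $\nu(x+y+\varDelta_\delta)\leq\nu(x+y+\varDelta_{1\vee\delta})$ and $1+(1\vee\delta)\leq 2(1+\delta)$. Lemma~\ref{Lemma.301} with $\lambda=1$ then gives $x_1$ such that
\[
\nu(z+\varDelta_\delta)\leq (1+2C_1)\,\alpha\,\frac{\overline\nu(z)}{z}\,\delta \qquad\text{for all } z\geq x_1,\ 1\leq\delta\leq 1+z .
\]

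Next we remove the restriction $\delta\leq 1+z$ and the shift $y$. Fix $z=x+y\geq x\geq x_0:=x_1$.

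\emph{Case $\delta\leq 1+z$.} The display above bounds the mass by $C\,\overline\nu(z)\delta/z$.

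\emph{Case $\delta>1+z$.} Use the trivial bound $\nu(z+\varDelta_\delta)\leq\overline\nu(z)\leq \overline\nu(z)\delta/z$, since $\delta/z>1$.

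In both cases $\nu(x+y+\varDelta_\delta)\leq C\,\overline\nu(z)\,\delta/z$ with $z=x+y$.

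Finally, $\overline\nu$ is non-increasing and $z\mapsto 1/z$ is decreasing, so $\overline\nu(z)/z$ is non-increasing. Hence $\overline\nu(x+y)/(x+y)\leq\overline\nu(x)/x$ uniformly in $y\geq0$. Taking the supremum over $y$ gives \eqref{eqn.212} with $(1+\delta)$, after absorbing the factor $2$ from the first reduction into $C$.

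The only delicate point is that Lemma~\ref{Lemma.301} is asymptotic in the base point, whereas here the base point $x+y$ varies. This is handled because the threshold $x_1$ applies to every base point $z\geq x_1$, and $z=x+y\geq x\geq x_1$. Apart from that, the argument is routine.
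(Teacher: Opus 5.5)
Your proof is correct and takes essentially the same route as the paper. Both apply Lemma~\ref{Lemma.301} with $\delta_0=\lambda=1$ uniformly over base points $z=x+y\ge x_0$, treat $\delta\le 1$ by monotonicity in $\delta$ and very large $\delta$ by the trivial bound $\nu(z+\varDelta_\delta)\le\overline\nu(z)$, and finish with the monotonicity of $\overline\nu(z)/z$. The only cosmetic difference is that you split the regimes at $1+x+y$ before taking the supremum over $y$, whereas the paper splits at $1+x$.
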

 \proof By Lemma~\ref{Lemma.301} with $\lambda=1$ and $\delta_0=1$, there exists  a constant $x_0>0$ such that for all $x\geq x_0$,
 \beqnn
 \sup_{y\geq 0}
 \sup_{1\leq\delta\leq 1+x+y}
 \Big| \frac{\nu\big( x+ y+ \varDelta_\delta\big)}{\alpha\cdot \overline{\nu}(x+y)/(x+y)\cdot \delta} -1\Big|
 \leq 2+\alpha .
 \eeqnn
 Equivalently, for all $x\geq x_0$, $y\geq 0$, and $1\le \delta\le 1+x+y$,
 \beqnn
 \nu(x+y+\varDelta_\delta) \leq \alpha(3+\alpha)\cdot \frac{\overline{\nu}(x+y)}{x+y}\cdot \delta.
 \eeqnn
 Since $z\mapsto \overline{\nu}(z)/z$ is decreasing on $(0,\infty)$, we have, for all $x\ge x_0$, $y\ge 0$, and $1\le \delta\le 1+x$,
 \beqnn
 \sup_{y\ge 0} \nu(x+y+\varDelta_\delta)
 \le
 \alpha(3+\alpha)\,\frac{\overline{\nu}(x)}{x}\,\delta.
 \eeqnn
 For $\delta\in[0,1]$, using the monotonicity of $\delta\mapsto \nu(x+y+\varDelta_\delta)$, we obtain
 \beqnn
 \sup_{y\geq 0} \sup_{\delta \in [0,1]}  \nu\big(x+y+ \varDelta_\delta\big) \leq    \sup_{y\geq 0} \nu\big( x+ y+ \varDelta_1\big) \leq  \alpha(3+\alpha) \cdot \frac{\overline{\nu}(x)}{x}     . 
 \eeqnn
 It remains to treat the case $\delta\ge 1+x$. Since $\nu(x+y+\varDelta_\delta)\le \overline{\nu}(x+y)$ and $\overline{\nu}$ is decreasing, we have, uniformly in $\delta\ge 1+x$,
 \beqnn
 \sup_{y\geq 0}\nu\big( x+ y+ \varDelta_\delta\big) 
 \leq   \sup_{y\geq 0} \overline{\nu}(x+y) \leq \overline{\nu}(x)   = \frac{\overline{\nu}(x) }{x}\cdot x \leq  \frac{\overline{\nu}(x) }{x}\cdot  \delta.
 \eeqnn
 Combining these estimates for the three regimes $\delta\in[0,1]$, $\delta\in[1,1+x]$, and $\delta\ge 1+x$, we can obtain the desired bound \eqref{eqn.212} immediately. 
 \qed

 \begin{lemma}  \label{Lemma.203}
 	For any $\lambda,\delta_0>0$ and any non-negative function $\vartheta$ on $\mathbb{R}_+$ with $\vartheta_x/x\to0$ as $x\to\infty$, we have as $t\to\infty$, 
 	\beqnn
 	\sup_{0<s\leq  t} \sup_{x\geq \lambda t} \sup_{|y|\leq \vartheta_x} \sup_{\delta\in[\delta_0,\infty]} 
 	\bigg| \frac{\mathbf{P}\big(X_s + \beta s \in x -y+ \varDelta_\delta \big)}{s\cdot  \nu\big(x + \varDelta_\delta \big)   } -1\bigg|    
 	\to 0. 
 	\eeqnn
 \end{lemma}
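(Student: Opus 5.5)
The plan is a single-big-jump decomposition for the centered process $Y_s:=X_s+\beta s$, which has mean zero and the same L\'evy measure $\nu$. Fix a small $\gamma\in(0,1/2)$ and write $Y=Y^{(\gamma x)}+Z^{(\gamma x)}$. Here $Z^{(\gamma x)}_s$ is the sum of the jumps of size larger than $\gamma x$ up to time $s$. The truncated part $Y^{(\gamma x)}$ is recentered by its compensator drift, which is $O(s\int_{\gamma x}^\infty\overline\nu)=O(s\,x\overline\nu(x))=o(x)$ uniformly in $s\le t\le x/\lambda$, since $\alpha>1$. I split $\mathbf P(Y_s\in x-y+\varDelta_\delta)$ according to the number $N_s$ of jumps larger than $\gamma x$ on $[0,s]$, namely $N_s=0$, $N_s=1$ or $N_s\ge2$. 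The goal is to show that the first and third terms are $o\big(s\,\nu(x+\varDelta_\delta)\big)$ and the second is $\sim s\,\nu(x+\varDelta_\delta)$. All three statements must hold uniformly in $s\in(0,t]$, $x\ge\lambda t$, $|y|\le\vartheta_x$ and $\delta\in[\delta_0,\infty]$. Throughout I use the lower bound
\[
\nu(x+\varDelta_\delta)\ \ge\ c\,\frac{\overline\nu(x)}{x}\,(\delta\wedge x),\qquad \delta\ge\delta_0,\ x \text{ large},
\]
which follows from Remark~\ref{Remark01} and Lemma~\ref{Lemma.301} for $\delta\le x$, and from $\overline\nu\in\mathrm{RV}^\infty_{-\alpha}$ for $\delta\ge x$. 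I pair it with the upper bound of Corollary~\ref{Corollary.202}.

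\emph{No big jump.} The event forces $Y^{(\gamma x)}_s>x/2$ for large $x$. A Fuk--Nagaev type exponential bound for L\'evy processes with jumps bounded by $\gamma x$ gives $\mathbf P(Y^{(\gamma x)}_s>x/2)\le C\big(s\overline\nu(\gamma x)\big)^{1/(2\gamma)}+e^{-cx^2/s}$. This bound uses $s\le x/\lambda$ and the finiteness of $\int(|y|\wedge y^2)\nu(dy)$. With $1/(2\gamma)$ large and $s\overline\nu(x)\le x\overline\nu(x)\to0$, it is $o\big(s\,\overline\nu(x)/x\big)$, and hence $o\big(s\,\nu(x+\varDelta_\delta)\big)$ for every $\delta\ge\delta_0$. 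The Gaussian term is handled the same way. Small $s$ causes no difficulty, because every bound carries the factor $s$ or better.

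\emph{Two or more big jumps.} A crude bound $(s\overline\nu(\gamma x))^2$ is not enough when $\alpha\le2$, so I keep the local constraint. I condition on all jumps except the last big one; that jump is independent with law $\nu(\cdot\cap(\gamma x,\infty))/\overline\nu(\gamma x)$. Its probability of landing in the required window of width $\delta$ is then bounded by Corollary~\ref{Corollary.202}, giving
\[
\mathbf P(N_s\ge2,\ Y_s\in x-y+\varDelta_\delta)\le C\,s^2\,\overline\nu(\gamma x)\,\frac{\overline\nu(\gamma x)}{\gamma x}\,(1+\delta)\wedge s^2\overline\nu(\gamma x)^2 .
\]
Comparing with the lower bound on $s\,\nu(x+\varDelta_\delta)$ leaves a ratio of order $s\overline\nu(x)\le x\overline\nu(x)/\lambda\to0$.

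\emph{Exactly one big jump.} This is the main term and, I expect, the main obstacle. By the compensation formula \eqref{eqn.CompenstationX}, the term equals, up to a factor $e^{-s\overline\nu(\gamma x)}=1+o(1)$,
\[
s\,\mathbf E\Big[\nu\big((x-y-W_s+\varDelta_\delta)\cap(\gamma x,\infty)\big)\Big],
\]
where $W_s$ is distributed as $Y^{(\gamma x)}_s$ plus its compensating drift. On the event $|W_s|\le\eta x$ with $\eta$ small, I need the ratio $\nu(x+z+\varDelta_\delta)/\nu(x+\varDelta_\delta)$ to be uniformly close to $1$ for $|z|\le(\eta+o(1))x$, uniformly in $\delta\in[\delta_0,\infty]$. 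For $\delta\le\eta' x$ this follows from Lemma~\ref{Lemma.301}, since both windows are $\sim\alpha\,\overline\nu(\cdot)\delta/(\cdot)$ with base points within a factor $1\pm\eta$. For $\delta\ge\eta' x$ it follows from $\overline\nu\in\mathrm{RV}^\infty_{-\alpha}$, because the boundary discrepancy is at most $C\eta x\,\overline\nu(x)/x$, which is small relative to $\overline\nu(x)$. In both regimes the error is $O(\eta+\eta')$, and I let $\eta,\eta'\to0$ at the end. The event $|W_s|>\eta x$ costs $s\cdot C\frac{\overline\nu(x)}{x}(1+\delta)\cdot\mathbf P(|W_s|>\eta x)$ by Corollary~\ref{Corollary.202}, with the cap $\overline\nu(\gamma x)$ when $\delta\gtrsim x$. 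The probability $\mathbf P(|W_s|>\eta x)$ tends to $0$ uniformly in $s\le x/\lambda$, by the Fuk--Nagaev bound for the upper deviation together with the weak law of large numbers for the mean-zero $Y$ (the lower deviation uses only the finite first moment). Thus this part is also $o\big(s\,\nu(x+\varDelta_\delta)\big)$. Combining the three scenarios and sending $x\to\infty$ (so $t\to\infty$), then $\eta,\eta'\to0$, gives the claimed uniform convergence. The delicate points are uniformity in $\delta$ across the transition $\delta\asymp x$, and uniformity in small $s$, where the Gaussian and small-jump parts of $W_s$ are negligible only relative to $x$.
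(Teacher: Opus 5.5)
The paper gives no proof of Lemma~\ref{Lemma.203}. It imports the statement from Corollary~1.4 of \cite{WuXu2026a}, the companion paper's unified uniform local large-deviation result. Your proposal is therefore a genuinely different, self-contained route, and its architecture is sound. Splitting the Poisson jumps at level $\gamma x$ into the cases $N_s=0$, $N_s=1$ and $N_s\ge2$ is the classical single-big-jump proof. In the regime $s\le t\le x/\lambda$ it is driven by two consequences of $\alpha>1$: $\mathbf P(N_s\ge1)\le s\,\overline\nu(\gamma x)\lesssim x\overline\nu(x)\to0$, and $s\int_{\gamma x}^\infty w\,\nu(dw)=O(s\,x\overline\nu(x))=o(x)$. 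You only use Lemma~\ref{Lemma.301} and Corollary~\ref{Corollary.202}, which are proved in Section~2 without Lemma~\ref{Lemma.203}, so nothing is circular. The citation buys a more general, unified treatment at the cost of being a black box here. Your argument is elementary and uses exactly the hypotheses at hand, namely $\alpha>1$ and the linear range $x\ge\lambda s$.

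A few steps need tightening.

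\emph{One-jump term, large windows.} For $\delta\ge\eta'x$, the boundary discrepancy $C\eta\,\overline\nu(x)$ must be compared with $\nu(x+\varDelta_\delta)$, which there is only of order $\eta'\,\overline\nu(x)$. The relative error is therefore $O(\eta/\eta')$, not $O(\eta+\eta')$. You must let $x\to\infty$, then $\eta\to0$, and only then $\eta'\to0$.

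\emph{Lower bound on the window mass.} The bound $\nu(x+\varDelta_\delta)\ge c\,\overline\nu(x)(\delta\wedge x)/x$ does not follow from Lemma~\ref{Lemma.301} with $\lambda=1$, because its error constant may exceed $1$. Apply that lemma with a small $\lambda$. Beyond $\delta=\lambda x$, use monotonicity in $\delta$ together with $\overline\nu(x)-\overline\nu((1+\lambda)x)\ge c\,\overline\nu(x)$.

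\emph{The Fuk--Nagaev step.} Only $\int(|y|\wedge y^2)\,\nu(dy)<\infty$ is assumed, so negative jumps may have infinite variance. A variance-based Fuk--Nagaev inequality therefore cannot be quoted as is. Derive the bound from a Chernoff estimate with $\theta=\log\big(1/(s\overline\nu(\gamma x))\big)/(\gamma x)$, using $e^{\theta z}-1-\theta z\le\theta\min(|z|,\theta z^2/2)$ for $z<0$. The negative jumps then contribute $s\theta\cdot o(1)=o(\theta x)$, since $s\le x/\lambda$; for very small $s$ the factor $s\theta$ is itself small. This costs only a factor $1-o(1)$ in the exponent $1/(2\gamma)$, which is harmless because you take $1/(2\gamma)$ large anyway.

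\emph{The event $|W_s|>\eta x$.} No tail inequality is needed here. Since $W_s=Y_s$ on $\{N_s=0\}$, we have $\mathbf P(|W_s|>\eta x)\le\mathbf P(|Y_s|>\eta x)+\mathbf P(N_s\ge1)$. Both terms vanish uniformly in $s\le x/\lambda$: the first by the weak law of large numbers for the mean-zero $Y$ (tightness handles bounded $s$), the second as noted above.
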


 \begin{proposition}\label{Prop.206}
 	There exists a constant $C>0$ such that for all $t\geq 0$, 
 	\beqlb\label{eqn.206}
 	\mathbf{E}\bigg[ \sup_{s\in[0,t]} \big|X_s\big| \bigg] \leq C\cdot \big( \sqrt{t} +t  \big).
 	\eeqlb
 	Moreover, for any $t_0>0$, the family $\{|X_t/t| \}_{t\geq t_0}$ is uniformly integrable.
 \end{proposition}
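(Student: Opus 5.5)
We split $X$ into its Gaussian-plus-compensated-small-jumps part and its large-jump part via the L\'evy--It\^o decomposition. Write
\beqnn
X_s=-\beta s+\sigma B_s+M_s+N_s,
\eeqnn
where $M$ is the compensated sum of jumps with $|y|\le 1$, and $N_s=\sum_{r\le s}\Delta X_r\mathbf 1_{\{|\Delta X_r|>1\}}-s\int_{|y|>1}y\,\nu(dy)$. The compensator of $N$ is finite by \eqref{eqn.LevyMeasure}. Then $\sup_{s\le t}|X_s|$ is at most
\beqnn
\beta t+\sigma\sup_{s\le t}|B_s|+\sup_{s\le t}|M_s|+\sum_{r\le t}|\Delta X_r|\mathbf 1_{\{|\Delta X_r|>1\}}+t\int_{|y|>1}|y|\,\nu(dy).
\eeqnn
We bound the expectation of each term separately.

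For the martingale terms we use Doob's $L^2$ inequality together with Jensen:
\beqnn
\mathbf E\Big[\sup_{s\le t}|B_s|\Big]\le 2\sqrt t
\quad\mbox{and}\quad
\mathbf E\Big[\sup_{s\le t}|M_s|\Big]\le 2\Big(t\int_{|y|\le1}y^2\nu(dy)\Big)^{1/2}.
\eeqnn
For the large-jump sum, the compensation formula \eqref{eqn.CompenstationX} gives expectation exactly $t\int_{|y|>1}|y|\,\nu(dy)$, which is finite by \eqref{eqn.LevyMeasure}. Adding the contributions proves \eqref{eqn.206} with $C$ depending only on $\beta,\sigma$ and the two $\nu$-integrals. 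None of these steps is delicate.

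For uniform integrability of $\{|X_t/t|\}_{t\ge t_0}$, the first bound alone gives only $L^1$-boundedness, so more is needed. The plan is to exploit the decomposition again. The Gaussian and small-jump martingale parts satisfy
\beqnn
\mathbf E\big[(\sigma B_t+M_t)^2\big]/t^2=O(1/t),
\eeqnn
so they are bounded in $L^2$ uniformly for $t\ge t_0$, hence uniformly integrable. The drift part is deterministic and bounded by $\beta$.

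The only real point is the large-jump part. Write
\beqnn
\frac1t\sum_{r\le t}\Delta X_r\mathbf 1_{\{|\Delta X_r|>1\}}.
\eeqnn
One option is to split further at a level $K$: jumps with $1<|y|\le K$ give an $L^2$-bounded term, with second moment of order $K t$ divided by $t^2$. Jumps with $|y|>K$ give a term whose $L^1$ norm is at most $\int_{|y|>K}|y|\,\nu(dy)$, uniformly in $t$, which tends to $0$ as $K\to\infty$. A family that is, for each $\varepsilon$, within $\varepsilon$ in $L^1$ of an $L^2$-bounded family is uniformly integrable, and this concludes the proof.

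A cleaner alternative is to note that $t\mapsto X_t/t$ is a reversed martingale in the natural backward filtration, after subtracting the mean $-\beta$: for $t\ge t_0$,
\beqnn
X_t/t=\mathbf E\big[X_{t_0}/t_0\,\big|\,\mathcal G_t\big],
\qquad \mathcal G_t=\sigma(X_u,u\ge t).
\eeqnn
A family of conditional expectations of a single integrable variable is uniformly integrable. I would present the truncation argument, which is the expected main (though mild) obstacle, with the reversed-martingale remark as a shortcut.
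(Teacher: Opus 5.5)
Your proof is correct. For the uniform integrability you use the same ingredients as the paper: the L\'evy--It\^o split at level $1$, an $O(1/t)$ second-moment bound for the Gaussian and compensated small-jump part, and a further cut of the large jumps at level $K$, with an $L^2$ bound below $K$ and the uniform $L^1$ bound $\int_{|y|>K}|y|\,\nu(dy)$ above it. You conclude differently, though. The paper turns these estimates into $\mathbf{E}\big[|X_t/t+\beta|\big]\to0$ and infers uniform integrability from that $L^1$ convergence. This tacitly needs \eqref{eqn.206} to cover $t$ in a bounded range $[t_0,T]$. Your criterion ($L^1$-approximation, uniformly in $t\geq t_0$, by an $L^2$-bounded family) gives uniform integrability on all of $[t_0,\infty)$ in one step. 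The $L^1$ limit can then be recovered, as in Lemma~\ref{Lemma.3001}, from the almost sure law of large numbers.

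For \eqref{eqn.206} the paper only cites Proposition~2.3 of \cite{CuiWuXu2026b}, whereas your Doob-plus-compensation computation is a complete, self-contained proof.

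The reversed-martingale route is genuinely different and the most economical, since it needs only $\mathbf{E}\big[|X_{t_0}|\big]<\infty$ and no truncation. If you present it, justify $\mathbf{E}\big[X_s\,\big|\,\sigma(X_u,u\geq t)\big]=(s/t)X_t$: use exchangeability of increments for rational $s/t$, then extend by right-continuity and domination by the integrable variable $\sup_{r\leq t}|X_r|$.

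One minor slip: for the uncompensated sum of jumps with $1<|\Delta X_r|\leq K$, divided by $t$, the second moment is $O(1)$ because of the squared-mean term, not $O(K/t)$. That rate holds only after compensation, and $L^2$-boundedness, which is all you use, holds in both cases.
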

 \proof
 The uniform upper bound follows from Proposition~2.3 in
 \cite{CuiWuXu2026b}. It remains to prove the uniform integrability of
 $\{|X_t/t|\}_{t\geq t_0}$. By the L\'evy--It\^o decomposition, we write
 $X_t=-\beta t+Y_t+M_t$, where
 \beqnn
 Y_t:=\sigma B_t+\int_0^t\int_{|y|\leq1}
 y\,\widetilde N(ds,dy)
 \quad\mbox{and}\quad 
 M_t:=\int_0^t\int_{|y|>1}
 y\,\widetilde N(ds,dy),\quad t\geq0.
 \eeqnn
 By the triangle inequality,
 \beqnn
 \mathbf{E}\big[\big|X_t/t+\beta\big|\big]
 \ar\leq\ar
 \mathbf{E}\big[\big|Y_t/t\big|\big]
 +\mathbf{E}\big[\big|M_t/t\big|\big].
 \eeqnn
 By Jensen's inequality, the It\^o isometry, and
 \eqref{eqn.LevyMeasure}, we have
 $\mathbf{E}[|Y_t/t|]\leq
 t^{-1/2}\big(\sigma^2+\int_{|y|\leq1}y^2\nu(dy)\big)^{1/2}
 \leq C/\sqrt{t}\to0$ as $t\to\infty$.
 Moreover, for any $K>1$, we also have
 \beqnn
 \mathbf{E}\big[\big|M_t/t\big|\big]
 \ar\leq\ar
 \mathbf{E}\Big[
 \frac{1}{t}\Big|
 \int_0^t\int_{1<|y|\leq K}y\widetilde{N}(ds,dy)
 \Big|\Big]
 +
 \mathbf{E}\Big[
 \frac{1}{t}\Big|
 \int_0^t\int_{|y|>K}y\widetilde{N}(ds,dy)
 \Big|\Big]
 \cr
 \ar\leq\ar
 \bigg(
 \frac{1}{t^2}\mathbf{E}\Big[
 \Big|
 \int_0^t\int_{1<|y|\leq K}y\widetilde{N}(ds,dy)
 \Big|^2\Big]
 \bigg)^{1/2}
 +
 \mathbf{E}\Big[
 \frac{1}{t}\Big|
 \int_0^t\int_{|y|>K}y\widetilde{N}(ds,dy)
 \Big|\Big]
 \cr
 \ar\leq\ar
 \Big(\frac{1}{t}\int_{1<|y|\leq K}y^2\nu(dy)\Big)^{1/2}
 +2\int_{|y|>K}|y|\nu(dy),
 \eeqnn
 which goes to $0$ as $t\to\infty$ and then $K\to\infty$. Combining these estimates together yields that $ \mathbf{E}\big[\big|X_t/t+\beta\big|\big]  \to 0$ as $t\to\infty$ and then $\{ |X_t/t| \}_{t\geq t_0}$ is uniformly integrable. 
 \qed

 \begin{proposition}\label{Prop.2011}
 	For any $\delta>0$, we have  $\sup_{y\in\mathbb R} \mathbf{P}\big( \underline{X}_t+\beta t\in y+\varDelta_\delta \big)\to 0$ as $t\to\infty$.
 \end{proposition}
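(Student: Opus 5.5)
Write $W_t:=\underline X_t+\beta t$. The claim is an anti-concentration statement: the law of $W_t$ puts vanishing mass on every window of fixed width $\delta$. I would derive it from the anti-concentration of $X_t+\beta t$, which follows from Lemma~\ref{Lemma.203}, together with a decomposition of $\underline X_t$ at the time of the infimum.

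\emph{Step 1: anti-concentration of $X_s+\beta s$.} I would first show that for each fixed $\delta>0$,
\[
\sup_{y\in\mathbb R}\mathbf P\big(X_s+\beta s\in y+\varDelta_\delta\big)\to0,\qquad s\to\infty.
\]
This is the weak anti-concentration of a L\'evy process whose centred fluctuations grow without bound. The case $\sigma>0$ is immediate from Gaussian smoothing. Otherwise I would use the L\'evy concentration-function inequality (Kolmogorov--Rogozin) for the infinitely divisible law of $X_s$. This gives $Q(X_s;\delta)\le C\,\big(s\int (1\wedge y^2/\delta^2)\,\nu(dy)\big)^{-1/2}$ whenever $\nu\neq0$, which tends to $0$. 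Condition~\ref{Main.Condition} forces $\nu\neq0$. Alternatively, Lemma~\ref{Lemma.203} controls windows at distance $\ge\lambda s$, and the central windows are handled by the same concentration bound.

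\emph{Step 2: splitting at the infimum.} I would use the Wiener--Hopf style splitting at a fixed intermediate time $s=t-r$, with $r$ large but fixed (or $r=r_t\to\infty$ slowly). On $[0,t]$ we have
\[
\underline X_t=\min\big(\underline X_{t-r},\,X_{t-r}+\underline{X'}_{r}\big),
\]
where $X'$ is an independent copy of the process started afresh at time $t-r$. Because $X$ drifts to $-\infty$ at linear speed, the infimum over $[0,t]$ is attained near the end with high probability. Precisely, $X_t-\underline X_t$ is tight: it converges in law to the supremum functional of the dual process, which is finite because $X\to-\infty$. Hence $\underline X_t=X_t-(X_t-\underline X_t)$ is a bounded-in-probability perturbation of $X_t$. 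The perturbation is not independent of $X_t$, however, so I would decouple as follows. Take $D:=X_{t}-X_{t-r}$ and $\underline D:=\inf_{u\le r}(X_{t-r+u}-X_{t-r})$, both of which are independent of $\mathscr F_{t-r}$. Then
\[
\mathbf P\big(\underline X_t\ne X_{t-r}+\underline D\big)\le \mathbf P\big(\underline X_{t-r}<X_{t-r}+\underline D\big).
\]
This probability can be made small uniformly in $t$ by choosing $r$ large, since $X_{t-r}-\underline X_{t-r}$ is tight while $\underline D\approx-\beta r$.

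\emph{Step 3: conditioning.} On the complementary event, $W_t=(X_{t-r}+\beta(t-r))+(\underline D+\beta r)$, where the two summands are independent. Conditioning on the second summand gives
\[
\sup_y\mathbf P(W_t\in y+\varDelta_\delta)\le \sup_{y}\mathbf P\big(X_{t-r}+\beta(t-r)\in y+\varDelta_\delta\big)+\mathbf P\big(\underline X_{t-r}<X_{t-r}+\underline D\big).
\]
The first term tends to $0$ by Step 1, and the second is small by the choice of $r$. Letting $t\to\infty$ and then $r\to\infty$ proves the claim.

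\emph{Main obstacle.} The delicate point is the uniform-in-$t$ smallness of $\mathbf P(\underline X_{t-r}<X_{t-r}+\underline D)$. By time reversal on $[0,t-r]$, $X_{t-r}-\underline X_{t-r}$ has the law of $\overline{\widehat X}_{t-r}$, which is bounded stochastically by the a.s.\ finite $\sup_{u\ge0}(-X_{u})$ of the dual process. So it suffices that $\mathbf P(\sup_{u\ge0}\widehat X_u>-\underline D)\to0$ as $r\to\infty$, with $\underline D$ independent of that supremum. This holds because $-\underline D\ge \beta r/2$ with probability tending to $1$, by Proposition~\ref{Prop.206} or the law of large numbers.
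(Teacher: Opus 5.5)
Your proof is correct, but it takes a more roundabout route than the paper at the decoupling step. The paper never needs independence between $X_t-\underline X_t$ and $X_t$. It uses the pathwise inclusion
\[
\big\{\underline X_t+\beta t\in y+\varDelta_\delta,\ X_t-\underline X_t\le K\big\}\subset\big\{X_t+\beta t\in y+\varDelta_{K+\delta}\big\}.
\]
So the dependence you worry about in Step~2 costs only a widening of the window from $\delta$ to $K+\delta$. The complementary event is controlled by the duality lemma: $\mathbf P(X_t-\underline X_t>K)=\mathbf P(\overline X_t>K)\le\mathbf P(\overline X_\infty>K)$. Your Step~1 already gives anti-concentration for every fixed width. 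This one-line bound therefore makes your splitting at $t-r$, the independence of $\underline D$ from $\mathscr F_{t-r}$, and the law-of-large-numbers estimate for $\underline D$ unnecessary.

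What your decoupling buys is a representation of $\underline X_t+\beta t$, off a small-probability event, as an independent sum with $X_{t-r}+\beta(t-r)$ at the original window width. That extra structure is not needed for this proposition.

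For the anti-concentration of $X_t+\beta t$, both arguments rest on a Kolmogorov--Rogozin-type concentration bound. The paper isolates the independent compound Poisson part $S_{N_t}$ of jumps larger than $1$, which is nontrivial because $\overline\nu>0$ under Condition~\ref{Main.Condition}. It removes the rest by conditioning and then uses $\sup_{n\ge m}\sup_y\mathbf P(S_n\in y+\varDelta_{K+\delta})\to0$ together with $N_t\to\infty$. Your inequality for infinitely divisible laws applies directly and needs only that $X$ is not a pure drift.

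One slip in your last paragraph needs fixing. By duality, $X_{t-r}-\underline X_{t-r}$ has the law of $\overline X_{t-r}$, which is dominated by $\overline X_\infty=\sup_{u\ge0}X_u<\infty$. The quantity $\sup_{u\ge0}(-X_u)$ that you wrote is a.s.\ infinite because $X\to-\infty$. The condition to verify is $\mathbf P(\xi>-\underline D)\to0$, where $\xi$ is independent of $\underline D$ and has the law of $\overline X_\infty$. Your argument with $-\underline D\ge\beta r/2$ does give this.
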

 \proof Firstly, we have $\underline X_t+\beta t =  X_t+\beta t - (X_t-\underline X_t)$ and then for any $K>0$, 
 \beqlb\label{eqn.5020}
 \mathbf{P}\big( \underline{X}_t+\beta t\in y+\varDelta_\delta \big)
 \ar\leq\ar   \mathbf{P}\big(   X_t-\underline X_t>K \big) +  \mathbf{P}\big( \underline{X}_t+\beta t\in y+\varDelta_\delta,\, 0\leq X_t-\underline{X}_t\leq K \big).  
 \eeqlb
 The duality lemma induces that  $ X_t-\underline{X}_t\stackrel{d}{=}\overline{X}_t$ and then 
 \beqnn
 \mathbf P\big( X_t-\underline X_t>K \big)   = \mathbf P\big( \overline{X}_t>K \big)  \leq \mathbf P\big( \overline{X}_\infty>K \big)  \to 0 ,
 \eeqnn
 as $K\to\infty$, since   $\overline X_\infty<\infty$ a.s. 
 The last probability  in \eqref{eqn.5020} can be bounded by $\mathbf P\big(  X_t+\beta t \in y+\varDelta_{K+\delta} \big)$ and hence it remains to prove that for any fixed $K>0$,
 \beqlb\label{eqn.5021}
 \limsup_{t\to\infty} \sup_{y\in\mathbb R}   \mathbf P\big(  X_t+\beta t \in y+\varDelta_{K+\delta} \big) =0.
 \eeqlb
 By the L\'evy--It\^o decomposition,  we can decompose $X_t+\beta t$ into  two independent processes:
 \beqnn
 M_t:=- \int_1^\infty y\nu(dy)\cdot t+  \sigma\cdot B(t)  + \int_0^t \int_{-\infty}^1 y \widetilde{N}(ds,dy)
 \quad \mbox{and}\quad 
 Z_t:= \int_0^t \int_1^\infty y N(ds,dy).
 \eeqnn
 Hence $\mathbf{P}\big(  X_t+\beta t \in y+\varDelta_{K+\delta} \big) 
 = \mathbf{P}\big(  Z_t \in -M_t+ y+\varDelta_{K+\delta} \big) $ and 
 \beqnn
 \sup_{y\in \mathbb{R}}\mathbf{P}\big(  X_t+\beta t \in y+\varDelta_{K+\delta} \big) 
 \ar\leq\ar  \mathbf{E}\bigg[ \sup_{y\in \mathbb{R}}\mathbf{P}\Big(  Z_t \in -M_t+ y+\varDelta_{K+\delta}\,\Big|\, M_t \Big)  \bigg]
 = \sup_{y\in \mathbb{R}}\mathbf{P}\big(  Z_t \in   y+\varDelta_{K+\delta} \big) . 
 \eeqnn
 Note that $Z \overset{\rm d}=\{S_{N_t}:t\geq 0\}$ with $S$ being a random walk with step distribution $\mathbf{1}_{\{y>1\}}\cdot \nu(dy)/\overline{\nu}(1)$ and $N_t$ being a Poisson process with rate $\overline{\nu}(1)$. 
 Then, we have  for any integer $m\geq 1$ 
 \beqnn
 \limsup_{t\to\infty} \sup_{y\in \mathbb{R}}\mathbf{P}\big(  Z_t \in   y+\varDelta_{K+\delta} \big)  
 \ar\leq \ar  \limsup_{t\to\infty} \mathbf{P}\big(  N_t<m \big) + \limsup_{t\to\infty}  \sup_{y\in \mathbb{R}} \mathbf{P}\big( S_{N_t}\in y+\varDelta_{K+\delta}, N_t\geq m \big) .
 \eeqnn
 Obviously, the first limit on the right-hand side equals $0$. 
 Moreover, by Theorem~9 in \cite[p.49]{Petrov2012}, 
 \beqnn 
 \limsup_{t\to\infty}  \sup_{y\in \mathbb{R}} \mathbf{P}\big( S_{N_t}\in y+\varDelta_{K+\delta}, N_t\geq m \big)
 \leq   \sup_{n\geq m} \sup_{y\in\mathbb{R}} \mathbf{P}\big( S_n\in  y+\varDelta_{K+\delta} \big) ,
 \eeqnn 
 which goes to $0$ as $m\to\infty$. Then the limit \eqref{eqn.5021} holds. 
 \qed 
 
 \begin{lemma} \label{Lemma.3001}
 	As $t\to\infty$, each of the three random variables $X_t/t$, $-X_t^-/t$, and $\underline{X}_t/t$ converges to $-\beta$ both almost surely and in $L^1$. 
 \end{lemma}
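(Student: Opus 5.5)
We prove Lemma~\ref{Lemma.3001} in three steps: the strong law of large numbers for L\'evy processes gives the almost sure limits, elementary comparisons give domination, and Proposition~\ref{Prop.206} gives uniform integrability.

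\textbf{Almost sure convergence.} Since $\mathbf E[|X_1|]<\infty$ by \eqref{eqn.LevyMeasure} and $\mathbf E[X_1]=-\beta$, the strong law of large numbers for L\'evy processes yields $X_t/t\to-\beta$ almost surely. To justify it, apply the classical SLLN to the i.i.d.\ increments $X_n-X_{n-1}$. Then control the fluctuation within unit intervals through $\sup_{s\in[n,n+1]}|X_s-X_n|$. These suprema are i.i.d.\ with finite mean by \eqref{eqn.206} with $t=1$, so Borel--Cantelli gives that they are $o(n)$ almost surely.

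Because $-\beta<0$, we have $X_t<0$ for all large $t$ almost surely. Hence $-X_t^-=X_t$ eventually, and $-X_t^-/t\to-\beta$ almost surely.

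For the running infimum, use the elementary fact that if $f(t)/t\to c<0$ and $f$ is locally bounded, then $\inf_{s\le t}f(s)/t\to c$. The upper bound holds because $\underline X_t\le X_t$. For the lower bound, fix $\varepsilon>0$. For $s\ge T_\varepsilon$ we have $X_s\ge(-\beta-\varepsilon)s\ge(-\beta-\varepsilon)t$. For $s\le T_\varepsilon$, the path satisfies $X_s\ge\underline X_{T_\varepsilon}$, a finite random constant that is $o(t)$. Together these give $\liminf_{t\to\infty}\underline X_t/t\ge-\beta-\varepsilon$.

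\textbf{$L^1$ convergence.} By Vitali's theorem it suffices to show that each family is uniformly integrable for $t\ge t_0$.

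\emph{First family.} For $X_t/t$, uniform integrability is exactly the second assertion of Proposition~\ref{Prop.206}.

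\emph{Second family.} We have $|X_t^-/t|\le|X_t/t|$, so this family is dominated by a uniformly integrable one and is itself uniformly integrable.

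\emph{Third family.} This is the only step needing more than domination by $|X_t|/t$, and we expect it to be the main (mild) obstacle. We have $|\underline X_t|/t\le\sup_{s\le t}|X_s|/t$. Proposition~\ref{Prop.206} only bounds the mean of this supremum, which is not enough for uniform integrability, so we use the L\'evy--It\^o decomposition from the proof of Proposition~\ref{Prop.206}. Write
\[
\sup_{s\le t}|X_s|\le\beta t+\sup_{s\le t}|Y_s|+\sup_{s\le t}|M_s|.
\]
\begin{itemize}
\item By Doob's $L^2$ inequality, $\mathbf E\big[\sup_{s\le t}|Y_s|^2\big]\le Ct$. Hence $\{\sup_{s\le t}|Y_s|/t\}_{t\ge t_0}$ is bounded in $L^2$ and therefore uniformly integrable.
\item Split $M$ into jumps of size in $(1,K]$ and jumps of size above $K$. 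The part with jumps in $(1,K]$ is again handled by Doob's $L^2$ inequality. For the part with jumps above $K$, use the pathwise bound
\[
\sup_{s\le t}\Big|\int_0^s\int_{|y|>K}y\,\widetilde N(dr,dy)\Big|\le\int_0^t\int_{|y|>K}|y|\,N(dr,dy)+t\int_{|y|>K}|y|\,\nu(dy).
\]
Both terms have expectation $2t\int_{|y|>K}|y|\,\nu(dy)$, which is small uniformly in $t$ after dividing by $t$ and choosing $K$ large.
\end{itemize}
A family that equals a uniformly integrable family plus a remainder that is uniformly small in $L^1$ is uniformly integrable. This gives uniform integrability of $\{\sup_{s\le t}|X_s|/t\}_{t\ge t_0}$, and hence of $\underline X_t/t$.

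Combining the almost sure limits with uniform integrability, Vitali's theorem yields $L^1$ convergence of all three families to $-\beta$.
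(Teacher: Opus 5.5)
Your proof is correct. For the almost sure limits it matches the paper's argument: the law of large numbers for $X_t/t$, which you justify via the skeleton and Borel--Cantelli where the paper simply cites it, and the same random-time bound $\underline X_t\geq\min\{\underline X_{T_\varepsilon},-(\beta+\varepsilon)t\}$ for the infimum. The paper handles $-X_t^-/t$ through the sandwich $\underline X_t\le -X_t^-\le X_t$ rather than eventual negativity plus domination, which is an immaterial difference.

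The genuine divergence is the $L^1$ convergence of $\underline X_t/t$. The paper does not use uniform integrability there. It writes $\mathbf E\big[|\underline X_t/t+\beta|\big]\le \mathbf E\big[|X_t/t+\beta|\big]+\mathbf E[X_t-\underline X_t]/t$. It then applies the duality lemma $X_t-\underline X_t\overset{\rm d}=\overline X_t$ and Spitzer's identity $\mathbf E[\overline X_t]=\int_0^t \mathbf E[X_s^+]/s\,ds$. A Ces\`aro argument finishes, since $\mathbf E[X_s^+]/s\le \mathbf E\big[|X_s/s+\beta|\big]\to0$.

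You instead prove uniform integrability of $\{\sup_{s\le t}|X_s|/t\}_{t\ge t_0}$ and invoke Vitali. Your tools are the L\'evy--It\^o decomposition, Doob's $L^2$ inequality for the small- and medium-jump martingales, and a pathwise $L^1$ bound for jumps beyond $K$.

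The paper's route is shorter given standard fluctuation identities, and it reuses the already established $L^1$ limit of $X_t/t$. Yours is self-contained, with no duality or Spitzer, and it actually yields more. Your estimates give $\limsup_{t\to\infty}\mathbf E\big[\sup_{s\le t}|X_s+\beta s|\big]/t\le 2\int_{|y|>K}|y|\,\nu(dy)$ for every $K$, hence the limit is $0$. Since $|\underline X_t+\beta t|\le\sup_{s\le t}|X_s+\beta s|$, this gives the $L^1$ convergence directly, without the almost sure limit or Vitali.

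One small slip: in the large-jump bound, each of the two terms has expectation $t\int_{|y|>K}|y|\,\nu(dy)$; it is their sum that equals $2t\int_{|y|>K}|y|\,\nu(dy)$.
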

 \proof It is clear that $\underline{X}_t \le -X_t^- \le X_t$. Hence it suffices to prove the desired convergence for $X_t/t$ and $\underline{X}_t/t$. 
 The almost sure convergence of $X_t/t$ to $-\beta$ follows directly from the law of large numbers for L\'evy processes; the $L^1$-convergence then follows from the uniform integrability of the family $\{|X_t/t|\}_{t\ge t_0}$, established in Proposition~\ref{Prop.206}. 
 We now prove $\underline{X}_t/t\overset{\rm a.s.}\to -\beta$. 
 Since $\underline X_t\leq X_t$, we first have  
 \beqlb \label{eqn.30111}
 \underline{X}_t/t 
 \leq X_t/t \overset{\rm a.s.}\to -\beta. 
 \eeqlb
 For the reverse inequality, fix $\varepsilon>0$. 
 The almost sure convergence $X_t/t \to -\beta$ implies that, for some finite random time $t_0>0$, we have $X_t \geq -(\beta+\varepsilon)t$ for all $t\geq t_0$. Consequently, for $t\geq t_0$, 
 \beqnn
 \underline X_t \ar=\ar  
 \underline X_{t_0} \wedge \inf_{t_0\leq s\leq t}X_s  \geq 
 \min\Big\{  \underline X_{t_0},\,
 -(\beta+\varepsilon)t
 \Big\} .
 \eeqnn
 Dividing by $t$ and taking the limit inferior gives
 \beqnn
 \liminf_{t\to\infty} \underline{X}_t/t  \geq -\beta -\varepsilon.
 \eeqnn
 Since $\varepsilon>0$ was arbitrary, combining this with \eqref{eqn.30111} yields  $\underline{X}_t/t \overset{\rm a.s.}\to -\beta$. 
 It remains to prove the $L^1$-convergence.  
 By \eqref{eqn.206} and the triangle inequality, 
 \beqnn
 \mathbf E\Big[ \big|  \underline X_t/t+\beta \big| \Big]
 \ar\leq\ar
 \mathbf E\Big[ \big| X_t/t+\beta \big| \Big]
 +
 \frac{\mathbf E[X_t-\underline X_t]}{t} 
 \eeqnn
 The first term on the right-hand side tends to $0$ as $t\to\infty$ by the $L^1$-convergence of $X_t/t$. For the second term, by the duality lemma and Spitzer's identity for Lévy processes (see, e.g., Theorem 5 in \cite[ p.~160]{Bertoin1996}), we have 
 \beqnn
 \frac{\mathbf E[X_t-\underline X_t]}{t} 
 =\frac{\mathbf E[\overline X_t]}{t} =\frac{1}{t}\int_0^t \frac{\mathbf E[X_s^+]}{s}\,ds,
 \eeqnn
 which also goes to $0$ as $t\to\infty$ by the continuous Ces\`aro theorem and $\mathbf E[X_t^+]/t \leq \mathbf E[X_t/t+\beta]\to 0$. Consequently, we have $ \mathbf{E}\big[ |  \underline X_t/t+\beta | \big]\to0$ as $t\to\infty$.  This completes the proof. 
 \qed  
 
 \begin{lemma}\label{Lemma.310}
 	For any $\delta>0$, the family $\big\{  \underline{X}_t-\underline{X}_{t+\delta}\big\}_{t\geq 0}$   is uniformly integrable and also $\mathbf{E}\big[ \underline{X}_t -\underline{X}_{t+\delta}\big] \to\beta\delta$ as $t\to\infty$. 
 \end{lemma}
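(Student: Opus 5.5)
Since $\underline X$ is non-increasing, $D_t:=\underline X_t-\underline X_{t+\delta}\geq0$. I will dominate $D_t$ by a family whose law does not depend on $t$, which gives uniform integrability directly. By the Markov property at time $t$, with $X'_r:=X_{t+r}-X_t$, $r\geq0$, an independent copy of $X$,
\beqnn
\underline X_{t+\delta}=\underline X_t\wedge\big(X_t+\underline{X'}_\delta\big).
\eeqnn
Hence
\beqnn
D_t=\big(\underline X_t-X_t-\underline{X'}_\delta\big)^+\leq -\underline{X'}_\delta,
\eeqnn
because $\underline X_t-X_t\leq 0$. The law of $-\underline{X'}_\delta$ equals that of $-\underline X_\delta$ for every $t$, and $\mathbf E[-\underline X_\delta]\leq \mathbf E[\sup_{s\leq\delta}|X_s|]<\infty$ by Proposition~\ref{Prop.206}. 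Thus $\{D_t\}_{t\geq0}$ is dominated by a single integrable variable in law, and so it is uniformly integrable.

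For the mean, I will use the identity above, $D_t=(-\underline{X'}_\delta-R_t)^+$, where $R_t:=X_t-\underline X_t\geq0$ is independent of $X'$. By the duality lemma, $R_t\overset{\rm d}=\overline X_t$. Since $X$ drifts to $-\infty$, $\overline X_t\uparrow\overline X_\infty<\infty$ a.s., so $R_t$ converges in distribution to $\overline X_\infty$. Conditioning on $R_t$,
\beqnn
\mathbf E[D_t]=\mathbf E\big[g(R_t)\big],\qquad g(r):=\mathbf E\big[(-\underline X_\delta-r)^+\big].
\eeqnn
The function $g$ is bounded by $\mathbf E[-\underline X_\delta]$ and is continuous, indeed 1-Lipschitz. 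Weak convergence therefore gives
\beqnn
\mathbf E[D_t]\to \mathbf E\big[g(\overline X_\infty)\big]=\mathbf E\big[(-\underline{X'}_\delta-\overline X_\infty)^+\big],
\eeqnn
with $X'$ independent of $\overline X_\infty$. The remaining and main task is to show that this stationary quantity equals $\beta\delta$.

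For this identification I will use a telescoping/ergodic argument instead of a direct computation. Put $c:=\lim_t\mathbf E[D_t]$, which exists by the previous step. Summing over a grid gives
\beqnn
\mathbf E\big[-\underline X_{n\delta}\big]=\sum_{k=0}^{n-1}\mathbf E\big[D_{k\delta}\big].
\eeqnn
Dividing by $n\delta$ and applying Cesàro, the right-hand side tends to $c/\delta$. By Lemma~\ref{Lemma.3001}, $\underline X_t/t\to-\beta$ in $L^1$, so the left-hand side tends to $\beta$. Hence $c=\beta\delta$. The only delicate point is existence of the limit of $\mathbf E[D_t]$ along all $t$, not just along a grid. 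This is exactly what the weak-convergence argument supplies, since the limit it produces depends only on the law of $\overline X_\infty$ and not on how $t\to\infty$. This completes the plan.
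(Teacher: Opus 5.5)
Your proof is correct. The uniform-integrability part matches the paper: both of you dominate $\underline X_t-\underline X_{t+\delta}$ pathwise by $-\inf_{0\le r\le\delta}(X_{t+r}-X_t)\overset{\rm d}=-\underline X_\delta$ and then invoke \eqref{eqn.206}.

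For the mean you take a genuinely different route. The paper integrates Spitzer's identity $\mathbf E[\underline X_t]=-\int_0^t \mathbf E[X_s^-]/s\,ds$ over $[t,t+\delta]$. In one line this gives the explicit bound $|\mathbf E[\underline X_t-\underline X_{t+\delta}]-\beta\delta|\le\delta\sup_{s\ge t}|\mathbf E[X_s^-]/s-\beta|$. That bound is quantitative, and after dividing by $\delta$ it holds uniformly over all $\delta>0$.

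You instead write the increment as $(-\underline{X'}_\delta-R_t)^+$, with $R_t=X_t-\underline X_t$ independent of the post-$t$ increments. The duality lemma together with $\overline X_t\uparrow\overline X_\infty<\infty$ then gives existence of the limit, namely $\mathbf E[(-\underline X_\delta-\overline X_\infty)^+]$ with independent factors, via a bounded $1$-Lipschitz test function. You identify this limit as $\beta\delta$ by telescoping over the grid $k\delta$, applying Cesàro, and using the $L^1$ law of large numbers for $\underline X_t/t$ from Lemma~\ref{Lemma.3001}.

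This is a stationarity argument for the reflected process. It yields the stationary identity $\mathbf E[(-\underline X_\delta-\overline X_\infty)^+]=\beta\delta$ as a by-product. On the other hand, it gives no rate and no uniformity in $\delta$. It also does not really avoid Spitzer's identity: the paper's proof of the $L^1$ convergence in Lemma~\ref{Lemma.3001} uses that identity to show $\mathbf E[\overline X_t]=o(t)$. So at bottom both proofs rest on the same fluctuation identity.
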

 \proof The independent and stationary increments of $X$ yield that
 \beqnn
 0 \leq
 \underline X_t-\underline X_{t+\delta}
 \leq 
 -\inf_{0\leq s\leq\delta}
 \big(
 X_{t+s}-X_t
 \big) \overset{\rm d}= -\underline{X}_\delta.
 \eeqnn
 Hence, the uniform integrability of $\big\{  \underline{X}_t-\underline{X}_{t+\delta}\big\}_{t\geq 0}$ follows from \eqref{eqn.206}. 
 By   Spitzer's identity for L\'evy process; see Theorem~5 in \cite[p.160]{Bertoin1996}, we  have for all $t\geq 0$,
 \beqnn
 \mathbf{E}\big[ \underline{X}_t \big] = -\int_0^t \frac{\mathbf{E}[X^-_s]}{s}\,ds .
 \eeqnn
 This induces that  
 \beqnn
 \Big|   \mathbf{E}\big[ \underline{X}_t -\underline{X}_{t+\delta}\big] -\beta\delta \Big| 
 \leq   \int_{t}^{t+\delta} \Big| \frac{\mathbf E[X_s^-]}{s}-\beta\Big|\,ds 
 \leq \sup_{s\geq t} \Big| \frac{\mathbf E[X_s^-]}{s}-\beta \Big| \cdot \delta ,
 \eeqnn 
 which goes to $0$ as $t\to\infty$ by Lemma~\ref{Lemma.3001}.  
 \qed

 \section{Proof of Theorem \ref{MainThm01}} 
 \label{Sec.ProofThm01}
 \setcounter{equation}{0} 
 
 This section is devoted to the proof of the local limit theorem for the lifetime $\zeta$ stated in Theorem~\ref{MainThm01}.  
 For any $t,\delta >0$, we have 
 $\underline{n}\big(\zeta\in t+\varDelta_\delta\big)=  \underline{n}(\zeta >t)- \underline{n}(\zeta >t+\delta)$. 
 In view of \eqref{eqn.209}, this difference admits the representation
 \beqlb \label{eqn.4011}
 \underline{n}\big(\zeta\in t+\varDelta_\delta\big)
 %\ar=\ar  \underline{n}(\zeta >t)    - \underline{n}(\zeta >t+\delta)
 = \frac{1}{t}\int_0^t \mathbf{P}(X_{t-s}>0) \,\underline{n}(\zeta>s)\, ds
 -\frac{1}{t+\delta}\int_0^{t+\delta} \mathbf{P}(X_{t+\delta-s}>0) \,\underline{n}(\zeta>s)\, ds . 
 \eeqlb
 Thus, the long-time asymptotic behavior of
 $\underline n\big(\zeta\in t+\varDelta_\delta\big)$
 is closely related to that of the probability difference 
 \beqlb\label{eqn.402}
 \mathbf{P}(X_{t}>0)-\mathbf{P}(X_{t+\delta}>0)
 \ar=\ar \mathbf{P}(X_{t}>0, X_{t+\delta}\leq 0) - \mathbf{P}(X_{t}\leq 0, X_{t+\delta}>0).
 \eeqlb
 The two probabilities on the right-hand side describe the two possible changes of sign of $X$ over the time interval $(t,t+\delta]$: the first corresponds to a transition from the positive half-line to the nonpositive half-line, whereas the second corresponds to a transition from the nonpositive half-line to the positive half-line. Accordingly, the analysis of these two crossing probabilities will play a central role in deriving the desired local asymptotics for the lifetime $\zeta$.
 
 \begin{proposition}\label{Prop.301}
 	For any  $\delta>0$, we have as $t\to\infty$,
 	\beqlb\label{eqn.301}
 	\frac{\mathbf{P}\big(X_t>0,X_{t+\delta}\leq 0\big)}{\alpha/\beta\cdot\overline{\nu}(\beta t)\cdot\mathbf{E}\big[X_\delta^-\big]}\to 1.
 	\eeqlb
 \end{proposition}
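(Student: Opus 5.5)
Conditioning on $X_t$ and using independent increments, we write
\beqnn
\mathbf P(X_t>0,X_{t+\delta}\le0)=\int_{(0,\infty)}\mathbf P(X_t\in dz)\,\mathbf P\big(X_\delta\le -z\big),
\eeqnn
where $X_\delta$ is an independent copy. Since $\int_0^\infty\mathbf P(X_\delta\le -z)\,dz=\mathbf E[X_\delta^-]<\infty$, the target reduces to a local statement: the law of $X_t$ restricted to $(0,K]$ has density approximately $\alpha/\beta\cdot\overline\nu(\beta t)$, uniformly in $z$ on bounded sets. To see this, write $\mathbf P(X_t\in z+\varDelta_h)=\mathbf P(X_t+\beta t\in \beta t+z+\varDelta_h)$ and apply Lemma~\ref{Lemma.203} with $s=t$, $x=\beta t$, $y=-z$, $|z|\le K$. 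This gives
\beqnn
\mathbf P(X_t\in z+\varDelta_h)\sim t\,\nu(\beta t+\varDelta_h)\sim t\cdot \alpha\,\frac{\overline\nu(\beta t)}{\beta t}\,h=\frac{\alpha}{\beta}\,\overline\nu(\beta t)\,h,
\eeqnn
uniformly for $z\in[0,K]$ and each fixed $h\ge \delta_0$, using \eqref{eqn.30}. We also need a small-scale version. Splitting $(0,K]$ into cells of fixed width $h$ and using the monotonicity of $z\mapsto\mathbf P(X_\delta\le -z)$, we get Riemann-sum upper and lower bounds
\beqnn
\frac{\alpha}{\beta}\overline\nu(\beta t)\,h\sum_{k}\mathbf P(X_\delta\le -(k+1)h)\lesssim \int_{(0,K]}\cdots\lesssim \frac{\alpha}{\beta}\overline\nu(\beta t)\,h\sum_k \mathbf P(X_\delta\le -kh).
\eeqnn
Letting $t\to\infty$, then $h\to0$ (the two sums differ by at most $h\cdot\mathbf P(X_\delta\le 0)\le h$), then $K\to\infty$, we obtain the main term $\frac{\alpha}{\beta}\overline\nu(\beta t)\int_0^K\mathbf P(X_\delta\le -z)\,dz\to \frac{\alpha}{\beta}\overline\nu(\beta t)\,\mathbf E[X_\delta^-]$. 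The lower bound only needs $h$ fixed before $t\to\infty$, so Lemma~\ref{Lemma.203} with $\delta_0=h$ suffices.

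The main obstacle is the tail $z>K$: we must show $\int_{(K,\infty)}\mathbf P(X_t\in dz)\mathbf P(X_\delta\le -z)=o(1)\cdot\overline\nu(\beta t)$ uniformly as $K\to\infty$. We plan to cover $(K,\infty)$ by unit cells $k+\varDelta_1$, $k\ge K$, and bound each cell mass by
\beqnn
\mathbf P(X_t\in k+\varDelta_1)=\mathbf P\big(X_t+\beta t\in \beta t+k+\varDelta_1\big).
\eeqnn
For $k\le \vartheta_t$ with $\vartheta$ sublinear, Lemma~\ref{Lemma.203} together with Corollary~\ref{Corollary.202} gives the bound $C\,t\,\overline\nu(\beta t)/(\beta t)\le C'\overline\nu(\beta t)$, uniformly in $k$. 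For larger $k$, Lemma~\ref{Lemma.203} with $x=\beta t+k\ge\beta t$ (and $y=0$) still gives $\mathbf P\le 2t\,\nu(\beta t+k+\varDelta_1)\le C\overline\nu(\beta t)$ by Corollary~\ref{Corollary.202}. Hence every cell has mass at most $C\overline\nu(\beta t)$, uniformly in $k\ge0$ and large $t$, and the tail is bounded by
\beqnn
C\,\overline\nu(\beta t)\sum_{k\ge K}\mathbf P(X_\delta\le -k)\le C\,\overline\nu(\beta t)\int_{K-1}^\infty \mathbf P(X_\delta\le -z)\,dz,
\eeqnn
which is $o(\overline\nu(\beta t))$ as $K\to\infty$ because $\mathbf E[X_\delta^-]<\infty$. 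The cell near $z=0$ also contains $z\in(0,h]$ and is handled by the same uniform bound. Combining the three steps yields \eqref{eqn.301}.
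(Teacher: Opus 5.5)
Your proposal is correct and follows essentially the same route as the paper: condition on $X_t$, cut the positive half-line into cells, use the monotonicity of $y\mapsto\mathbf P(X_\delta\le -y)$ to get Riemann-sum bounds, and evaluate the cell masses via Lemma~\ref{Lemma.203} and \eqref{eqn.30}. The only difference is that you truncate at a fixed level $K$ and control $z>K$ with a uniform cell bound from Corollary~\ref{Corollary.202}, whereas the paper applies the uniform estimate to all cells $k\varepsilon+\varDelta_\varepsilon$, $k\geq0$; it bounds $\overline\nu(\beta t+k\varepsilon)/(\beta t+k\varepsilon)$ by its value at $k=0$ for the upper bound and keeps only $k\le[t]$ for the lower bound.
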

 
 \proof
 Fix $\varepsilon>0$. By the Markov property and the stationary independent increments of $X$,
 \beqnn
 \frac{\mathbf{P}\big(X_t>0,X_{t+\delta}\leq 0\big)}
 {\alpha/\beta\cdot\overline{\nu}(\beta t)\cdot\mathbf{E}\big[X_\delta^-\big]}
 \ar=\ar
 \int_{0}^{\infty}
 \frac{\mathbf{P}(X_\delta\leq-y)}
 {\mathbf{E}\big[X_\delta^-\big]}
 \cdot
 \frac{\mathbf{P}(X_t\in dy)}
 {\alpha/\beta\cdot\overline{\nu}(\beta t)}
 \cr\cr
 \ar=\ar
 \sum_{k=0}^\infty
 \int_{k\varepsilon}^{(k+1)\varepsilon}
 \frac{\mathbf{P}(X_\delta\leq-y)}
 {\mathbf{E}\big[X_\delta^-\big]}
 \cdot
 \frac{\mathbf{P}(X_t\in dy)}
 {\alpha/\beta\cdot\overline{\nu}(\beta t)}.
 \eeqnn
 Since $y\mapsto\mathbf P(X_\delta\leq-y)$ is non-increasing, the last expression is bounded from above and below, respectively, by
 \beqlb\label{eqn.3011}
 \sum_{k=0}^\infty
 \frac{\varepsilon\cdot\mathbf{P}(X_\delta\leq-k\varepsilon)}
 {\mathbf{E}\big[X_\delta^-\big]}
 \cdot
 \frac{\mathbf{P}\big(X_t\in k\varepsilon+\varDelta_\varepsilon\big)}
 {\alpha/\beta\cdot\overline{\nu}(\beta t)\cdot\varepsilon}
 \quad\mbox{and}\quad
 \sum_{k=0}^\infty
 \frac{\varepsilon\cdot\mathbf{P}(X_\delta\leq-(k+1)\varepsilon)}
 {\mathbf{E}\big[X_\delta^-\big]}
 \cdot
 \frac{\mathbf{P}\big(X_t\in k\varepsilon+\varDelta_\varepsilon\big)}
 {\alpha/\beta\cdot\overline{\nu}(\beta t)\cdot\varepsilon}.
 \eeqlb
 Using Lemma~\ref{Lemma.203} with $\lambda=\beta$ and \eqref{eqn.30} yields that as $t\to\infty$,
 \beqnn
 \sup_{k\geq 0}
 \bigg|
 \frac{\mathbf{P}\big(X_t\in k\varepsilon+\varDelta_\varepsilon\big)}
 {t\cdot\alpha\cdot\overline{\nu}(\beta t+k\varepsilon)/(\beta t+k\varepsilon)\cdot\varepsilon}
 -1
 \bigg|
 \ar\leq\ar
 \sup_{x\geq 0}
 \bigg|
 \frac{\mathbf{P}\big(X_t+\beta t\in\beta t+x+\varDelta_\varepsilon\big)}
 {t\cdot\alpha\cdot\overline{\nu}(\beta t+x)/(\beta t+x)\cdot\varepsilon}
 -1
 \bigg|
 \to 0.
 \eeqnn
 Consequently, there exists a non-negative and non-increasing function $\rho_\varepsilon$ on $\mathbb{R}_+$ such that $\rho_\varepsilon(t)\to 0$ as $t\to\infty$, and, for all $t>0$, 
 \beqlb\label{eqn.3012}
 1-\rho_\varepsilon(t)\leq\inf_{k\geq0}\frac{\mathbf{P}\big(X_t\in k\varepsilon+\varDelta_\varepsilon\big)}
 {t\cdot\alpha\cdot\overline{\nu}(\beta t+k\varepsilon)/(\beta t+k\varepsilon)\cdot\varepsilon}\leq
 \sup_{k\geq0}
 \frac{\mathbf{P}\big(X_t\in k\varepsilon+\varDelta_\varepsilon\big)}
 {t\cdot\alpha\cdot\overline{\nu}(\beta t+k\varepsilon)/(\beta t+k\varepsilon)\cdot\varepsilon}
 \leq1+\rho_\varepsilon(t).
 \eeqlb
 Applying the upper bound to the first term in \eqref{eqn.3011} and then using the monotonicity of $\overline{\nu}(x)$, we obtain that uniformly in  $t>0$,
 \beqnn
 \frac{\mathbf{P}\big(X_t>0,X_{t+\delta}\leq0\big)}
 {\alpha/\beta\cdot\overline{\nu}(\beta t)\cdot\mathbf{E}\big[X_\delta^-\big]}
 \ar\leq\ar
 \big(1+\rho_\varepsilon(t)\big)
 \sum_{k=0}^\infty
 \frac{\varepsilon\cdot\mathbf{P}(X_\delta\leq-k\varepsilon)}
 {\mathbf{E}\big[X_\delta^-\big]}
 \cdot
 \frac{\beta t}{\beta t+k\varepsilon}
 \cdot
 \frac{\overline{\nu}(\beta t+k\varepsilon)}
 {\overline{\nu}(\beta t)}
 \cr\cr
 \ar\leq\ar
 \frac{1+\rho_\varepsilon(t)}
 {\mathbf{E}\big[X_\delta^-\big]}
 \sum_{k=0}^\infty
 \varepsilon\cdot\mathbf{P}(X_\delta\leq-k\varepsilon)
 \cr\cr
 \ar\leq\ar
 \frac{1+\rho_\varepsilon(t)}
 {\mathbf{E}\big[X_\delta^-\big]}
 \bigg(
 \varepsilon\cdot\mathbf{P}(X_\delta\leq0)
 +\int_0^\infty\mathbf{P}(X_\delta\leq-y)\,dy
 \bigg)
 \cr\cr
 \ar\leq\ar
 \big(1+\rho_\varepsilon(t)\big)
 \cdot
 \Big(
 \frac{\varepsilon}{\mathbf{E}\big[X_\delta^-\big]}+1
 \Big)
 \cr\cr
 \ar\leq\ar
 \big(1+\rho_\varepsilon(t)\big)
 \cdot
 \bigg(
 \frac{\varepsilon}{\beta\delta}+1
 \bigg).
 \eeqnn
 Here the last inequality follows from
 $\mathbf{E}\big[X_\delta^-\big]
 =\beta\delta+\mathbf{E}\big[X_\delta^+\big]
 \geq\beta\delta$. 
 For the lower bound, applying the lower bound in \eqref{eqn.3012} and retaining only the terms with $0\leq k\leq[t]$ yields, for all sufficiently large $t$,
 \beqnn
 \frac{\mathbf{P}\big(X_t>0,X_{t+\delta}\leq0\big)}
 {\alpha/\beta\cdot\overline{\nu}(\beta t)\cdot\mathbf{E}\big[X_\delta^-\big]}
 \ar\geq\ar
 \big(1-\rho_\varepsilon(t)\big)
 \sum_{k=0}^{[t]}
 \frac{\varepsilon\cdot\mathbf{P}(X_\delta\leq-(k+1)\varepsilon)}
 {\mathbf{E}\big[X_\delta^-\big]}
 \cdot
 \frac{\beta t}{\beta t+k\varepsilon}
 \cdot
 \frac{\overline{\nu}(\beta t+k\varepsilon)}
 {\overline{\nu}(\beta t)}
 \cr\cr
 \ar\geq\ar
 \big(1-\rho_\varepsilon(t)\big)
 \cdot
 \frac{\beta t}{\beta t+[t]\varepsilon}
 \cdot
 \frac{\overline{\nu}(\beta t+[t]\varepsilon)}
 {\overline{\nu}(\beta t)}
 \cdot
 \sum_{k=0}^{[t]}
 \frac{\varepsilon\cdot\mathbf{P}(X_\delta\leq-(k+1)\varepsilon)}
 {\mathbf{E}\big[X_\delta^-\big]}.
 \eeqnn
 Since
 $\mathbf{P}(X_\delta\leq-(k+1)\varepsilon)
 =\mathbf{P}(X_\delta^-\geq(k+1)\varepsilon)$,
 similarly as before we have
 \beqnn
 \sum_{k=0}^{[t]}
 \frac{\varepsilon\cdot\mathbf{P}(X_\delta\leq-(k+1)\varepsilon)}
 {\mathbf{E}\big[X_\delta^-\big]}
 \ar\geq\ar
 1-\frac{\varepsilon}{\beta\delta}
 -\sum_{k=[t]+1}^\infty
 \frac{\varepsilon\cdot\mathbf{P}\big(X_\delta^-\geq(k+1)\varepsilon\big)}
 {\beta\delta}
 \cr\cr
 \ar\geq\ar
 1-\frac{\varepsilon}{\beta\delta}
 -\frac{1}{\beta\delta}
 \int_{([t]+1)\varepsilon}^\infty
 \mathbf{P}(X_\delta^-\geq y)\,dy
 \cr\cr
 \ar\geq\ar
 1-\frac{\varepsilon}{\beta\delta}
 -\frac{1}{\beta\delta}
 \mathbf{E}\Big[
 X_\delta^-;\,
 X_\delta^-\geq([t]+1)\varepsilon
 \Big].
 \eeqnn
 Since $\mathbf{E}\big[X_\delta^-\big]<\infty$, the last expectation goes to $0$ as $t\to\infty$. Combining these two estimates together and then using the fact that
 $\overline{\nu}\in\mathrm{RV}_{-\alpha}^\infty$,
 \beqnn
 \big(1+\varepsilon/\beta\big)^{-\alpha-1}
 \cdot
 \bigg(1-\frac{\varepsilon}{\beta\delta}\bigg)
 \ar\leq\ar
 \liminf_{t\to\infty}
 \frac{\mathbf{P}\big(X_t>0,X_{t+\delta}\leq0\big)}
 {\alpha/\beta\cdot\overline{\nu}(\beta t)\cdot\mathbf{E}\big[X_\delta^-\big]}
 \cr\cr
 \ar\leq\ar
 \limsup_{t\to\infty}
 \frac{\mathbf{P}\big(X_t>0,X_{t+\delta}\leq0\big)}
 {\alpha/\beta\cdot\overline{\nu}(\beta t)\cdot\mathbf{E}\big[X_\delta^-\big]}
 \leq1+\frac{\varepsilon}{\beta\delta}.
 \eeqnn
 The desired limit \eqref{eqn.301} follows directly by letting $\varepsilon\to0+$.
 \qed
 
 \begin{proposition}\label{Prop.302}
 	For any $\delta>0$, we have as $t\to\infty$,
 	\beqlb\label{eqn.302}
 	\frac{\mathbf{P}\big(X_t\leq0,X_{t+\delta}>0\big)}{\alpha/\beta\cdot\overline{\nu}(\beta t)\cdot\big(\mathbf{E}\big[X_\delta^+\big]+\beta\delta/\alpha\big)}\to1.
 	\eeqlb
 \end{proposition}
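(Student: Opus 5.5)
The plan is to condition on $X_t$, as in the proof of Proposition~\ref{Prop.301}. By the Markov property,
\beqnn
\mathbf P\big(X_t\le0,X_{t+\delta}>0\big)=\int_{[0,\infty)}\mathbf P(X_\delta>x)\,\mathbf P(-X_t\in dx),
\eeqnn
and I would split the range of $x=-X_t$ into three regions. Fix $K>0$ large and $\eta\in(0,\beta/2)$ small, and take
\beqnn
\mbox{(i) } x\in[0,K],\qquad \mbox{(ii) } x\in(K,(\beta-\eta)t],\qquad \mbox{(iii) } x>(\beta-\eta)t.
\eeqnn
The heuristic is that (i) produces the term $\alpha/\beta\cdot\overline\nu(\beta t)\cdot\mathbf E[X_\delta^+]$. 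In (i), $X_t$ sits near zero because of an earlier big jump of size about $\beta t$, and a small positive increment then crosses zero. Region (iii) produces the new term $\delta\cdot\overline\nu(\beta t)$, which equals $\alpha/\beta\cdot\overline\nu(\beta t)\cdot\beta\delta/\alpha$. Here $X_t$ is at its typical position near $-\beta t$, and a fresh jump of size about $\beta t$ occurs during $(t,t+\delta]$. Region (ii) should be negligible as $t\to\infty$ and then $K\to\infty$, $\eta\to0$.

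For region (i), I would repeat the $\varepsilon$-grid argument of Proposition~\ref{Prop.301} verbatim with $X_\delta^-$ replaced by $X_\delta^+$. Lemma~\ref{Lemma.203} with $\lambda=\beta/2$ gives, uniformly in $k\varepsilon\le K$,
\beqnn
\mathbf P\big(X_t\in -k\varepsilon-\varDelta_\varepsilon\big)\sim t\,\nu\big(\beta t-k\varepsilon-\varDelta_\varepsilon\big)\sim \alpha/\beta\cdot\overline\nu(\beta t)\,\varepsilon.
\eeqnn
Here the window is reflected, but the same lemma covers it via a bounded shift $y$. Summing against $\mathbf P(X_\delta>k\varepsilon)$ yields
\beqnn
\alpha/\beta\cdot\overline\nu(\beta t)\int_0^K\mathbf P(X_\delta>y)\,dy,
\eeqnn
up to a factor $1+O(\varepsilon)$. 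This tends to $\alpha/\beta\cdot\overline\nu(\beta t)\cdot\mathbf E[X_\delta^+]$ as $K\to\infty$.

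For region (iii), I would use the standard one-big-jump tail $\mathbf P(X_\delta>x)\sim\delta\,\overline\nu(x)$ as $x\to\infty$, which holds since $\overline\nu$ is regularly varying (alternatively, Lemma~\ref{Lemma.203} with $s=\delta$ and $\delta=\infty$). This gives the region-(iii) contribution as $(1+o(1))\,\delta\,\mathbf E\big[\overline\nu(-X_t);\,-X_t>(\beta-\eta)t\big]$. Two facts then handle this expectation. First, $\overline\nu(-X_t)\le\overline\nu((\beta-\eta)t)\le C_\eta\overline\nu(\beta t)$ on this event. Second, $-X_t/t\to\beta$ in probability by Lemma~\ref{Lemma.3001}. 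Together with regular variation and dominated convergence, they give $\mathbf E[\overline\nu(-X_t)/\overline\nu(\beta t);\,-X_t>(\beta-\eta)t]\to1$. This produces the term $\delta\,\overline\nu(\beta t)$.

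The main obstacle is region (ii), where I need a local bound on the density of $X_t$ uniformly over $x\in(K,(\beta-\eta)t]$. I would again apply Lemma~\ref{Lemma.203} on unit windows: for $X_t\in -x-\varDelta_1$ we have $X_t+\beta t$ in a unit window located at $\beta t-x-1\ge \eta t/2$, so the lemma with $\lambda=\eta/2$ applies. Corollary~\ref{Corollary.202} then gives
\beqnn
\mathbf P(X_t\in -x-\varDelta_1)\le 2t\,\nu(\beta t-x-\varDelta_1)\le C\,t\,\overline\nu(\eta t/2)/(\eta t)\le C_\eta\,\overline\nu(\beta t).
\eeqnn
Summing over unit windows, the contribution of (ii) is at most
\beqnn
C_\eta\,\overline\nu(\beta t)\sum_{j\ge[K]}\mathbf P(X_\delta>j)\le C_\eta\,\overline\nu(\beta t)\,\mathbf E\big[X_\delta^+;X_\delta^+\ge K-1\big].
\eeqnn
This vanishes relative to $\overline\nu(\beta t)$ as $K\to\infty$ for each fixed $\eta$. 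The order of limits (first $t\to\infty$, then $K\to\infty$, then $\eta\to0$) must be taken with care, because $C_\eta$ blows up as $\eta\to0$ while the region-(iii) estimate only needs $\eta$ fixed. Letting $\varepsilon\to0$ in (i) then completes the argument.
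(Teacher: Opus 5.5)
Your proof is correct, but you partition the range of $-X_t$ differently from the paper. The paper cuts at the two macroscopic levels $\varepsilon t$ and $(\beta-\varepsilon)t$.

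\begin{itemize}
\item On $-\varepsilon t<X_t\le 0$ it runs the $\varepsilon$-grid over the whole macroscopic window. Lemma~\ref{Lemma.203} still applies there, but only up to the distortion factor $(\beta/(\beta-\varepsilon))^{\alpha+1}$ caused by the variation of $\overline\nu(\beta t-k\varepsilon)/(\beta t-k\varepsilon)$. The truncated Riemann sums then recover all of $\mathbf E[X_\delta^+]$ as $t\to\infty$.
\item The middle range is handled by the crude product bound $\mathbf P(X_t>(\varepsilon-\beta)t)\cdot\mathbf P(X_\delta>\varepsilon t)=o(1)\cdot O_\varepsilon(\overline\nu(\beta t))$. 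This uses the law of large numbers, not local information.
\item The far range is sandwiched between $\mathbf P(X_\delta>(\beta\pm\varepsilon)t)$, which forces $\varepsilon\to0$.
\end{itemize}

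You instead make three different choices.

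\begin{itemize}
\item You keep the near-zero window $[0,K]$ bounded. There Lemma~\ref{Lemma.203} is needed only with bounded shifts and gives the sharp constant with no distortion.
\item You control the whole range $(K,(\beta-\eta)t]$ by a uniform local bound $C_\eta\,\overline\nu(\beta t)$ on unit windows (Lemma~\ref{Lemma.203} plus Corollary~\ref{Corollary.202}), summed against the tail $\sum_{j\ge K}\mathbf P(X_\delta>j)$. This step needs no law of large numbers.
\item Dominated convergence gives the far-range contribution $\delta\,\overline\nu(\beta t)$ exactly for each fixed $\eta$. So your final limit $\eta\to0$ is superfluous; only $t\to\infty$, then $\varepsilon\to0$ and $K\to\infty$, is needed.
\end{itemize}

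The cost of your route is the extra cutoff $K$ and the use of Corollary~\ref{Corollary.202}. The paper's route mirrors the proof of Proposition~\ref{Prop.301} more closely.

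One cosmetic point: in region (i) the Riemann-sum error is additive, at most $\varepsilon\,\mathbf P(X_\delta>0)$, rather than a multiplicative factor $1+O(\varepsilon)$. This is harmless.
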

 
 \proof
 For any fixed $\varepsilon\in(0,\beta/2)$, by the law of total probability we decompose $\mathbf{P}(X_t\leq0,X_{t+\delta}>0)$ as
 \beqlb\label{eqn.3021}
 \mathbf{P}\big(X_t\leq(\varepsilon-\beta)t,X_{t+\delta}>0\big)
 +\mathbf{P}\big((\varepsilon-\beta)t<X_t\leq-\varepsilon t,X_{t+\delta}>0\big)
 +\mathbf{P}\big(-\varepsilon t<X_t\leq0,X_{t+\delta}>0\big).
 \eeqlb
 We now analyze the asymptotic behaviour of the three probabilities one-by-one.
 
 \medskip
 
 \textbf{(i)} For the first probability in \eqref{eqn.3021}, using the Markov property, the stationarity of increments, and spatial homogeneity, we obtain
 \beqnn
 \mathbf{P}\big(X_t\leq (\varepsilon-\beta)t,X_{t+\delta}>0\big)
 \ar=\ar \int_{-\infty}^{(\varepsilon-\beta)t} \mathbf{P}_y\big( X_{\delta}>0\big)\, \mathbf{P}\big( X_t\in dy \big)\cr
 \ar=\ar \int_{-\infty}^{(\varepsilon-\beta)t} \mathbf{P}\big( X_{\delta}>-y\big)\, \mathbf{P}\big( X_t\in dy \big).
 \eeqnn
 Since $y\mapsto\mathbf{P}(X_\delta>-y)$ is non-decreasing, we have
 \beqnn
 \mathbf{P}\big( X_{\delta}>(\beta+\varepsilon)t\big) \cdot \mathbf{P}\big( |X_t+\beta t|\leq \varepsilon t \big) 
 \leq  \mathbf{P}\big(X_t\leq (\varepsilon-\beta)t,X_{t+\delta}>0\big)
 \leq \mathbf{P}\big( X_{\delta}>(\beta-\varepsilon)t\big).
 \eeqnn 
 For the lower bound, Lemma~\ref{Lemma.3001} induces that
 $\mathbf{P}\big(|X_t+\beta t|\leq\varepsilon t\big)\to1$
 as $t\to\infty$.
 Moreover, using Lemma~\ref{Lemma.203} with
 $\lambda=\beta-\varepsilon$ and $\varDelta_\infty=(0,\infty)$ and $\overline{\nu}\in\mathrm{RV}_{-\alpha}^\infty$, we have as $t\to\infty$,
 \beqnn
 \frac{\mathbf{P}\big(X_\delta>(\beta+\varepsilon)t\big)}
 {\delta\cdot\overline{\nu}(\beta t)}
 \ar=\ar
 \frac{\mathbf{P}\big(X_\delta+\beta\delta>
 	\beta\delta+(\beta+\varepsilon)t\big)}
 {\delta\cdot\overline{\nu}\big(\beta\delta+(\beta+\varepsilon)t\big)}
 \cdot
 \frac{\overline{\nu}\big(\beta\delta+(\beta+\varepsilon)t\big)}
 {\overline{\nu}(\beta t)}
 \to
 \bigg(1+\frac{\varepsilon}{\beta}\bigg)^{-\alpha}.
 \eeqnn
 For the upper bound, an analogous argument gives
 \beqlb\label{eqn.401}
 \lim_{t\to\infty}
 \frac{\mathbf{P}\big(X_\delta>(\beta-\varepsilon)t\big)}
 {\delta\cdot\overline{\nu}(\beta t)}
 =
 \lim_{t\to\infty}
 \frac{\mathbf{P}\big(X_\delta+\beta\delta>
 	(\beta-\varepsilon)t+\beta\delta\big)}
 {\delta\cdot\overline{\nu}\big((\beta-\varepsilon)t+\beta\delta\big)}
 \cdot
 \frac{\overline{\nu}\big((\beta-\varepsilon)t+\beta\delta\big)}
 {\overline{\nu}(\beta t)}
 =\bigg(1-\frac{\varepsilon}{\beta}\bigg)^{-\alpha}.
 \eeqlb
 Combining these estimates, we obtain
 \beqnn
 \bigg(1+\frac{\varepsilon}{\beta}\bigg)^{-\alpha}
 \ar\leq\ar
 \liminf_{t\to\infty}
 \frac{\mathbf{P}\big(X_t\leq(\varepsilon-\beta)t,X_{t+\delta}>0\big)}
 {\delta\cdot\overline{\nu}(\beta t)}
 \cr\cr
 \ar\leq\ar
 \limsup_{t\to\infty}
 \frac{\mathbf{P}\big(X_t\leq(\varepsilon-\beta)t,X_{t+\delta}>0\big)}
 {\delta\cdot\overline{\nu}(\beta t)}
 \leq
 \bigg(1-\frac{\varepsilon}{\beta}\bigg)^{-\alpha}.
 \eeqnn
 Furthermore, by the arbitrariness of $\varepsilon$ we finally obtain
 \beqlb\label{eqn.3022}
 \lim_{\varepsilon\to0+}\limsup_{t\to\infty}
 \bigg|
 \frac{\mathbf{P}\big(X_t\leq(\varepsilon-\beta)t,X_{t+\delta}>0\big)}
 {\delta\cdot\overline{\nu}(\beta t)}
 -1
 \bigg|=0.
 \eeqlb
 
 \medskip
 
 \textbf{(ii)} For the second probability in \eqref{eqn.3021}, using the Markov property and stationary increments gives
 \beqnn
 \frac{\mathbf{P}\big((\varepsilon-\beta)t<X_t\leq-\varepsilon t,
 	X_{t+\delta}>0\big)}
 {\delta\cdot\overline{\nu}(\beta t)}
 \ar\leq\ar
 \mathbf{P}\big(X_t>(\varepsilon-\beta)t\big)
 \cdot
 \frac{\mathbf{P}\big(X_\delta>\varepsilon t\big)}
 {\delta\cdot\overline{\nu}(\beta t)}.
 \eeqnn
 Arguing as in \eqref{eqn.401}, we have
 \beqnn
 \lim_{t\to\infty}
 \frac{\mathbf{P}\big(X_\delta>\varepsilon t\big)}
 {\delta\cdot\overline{\nu}(\beta t)}=
 \lim_{t\to\infty}
 \frac{\mathbf{P}\big(X_\delta+\beta\delta>
 	\varepsilon t+\beta\delta\big)}
 {\delta\cdot\overline{\nu}(\varepsilon t+\beta\delta)}
 \cdot
 \frac{\overline{\nu}(\varepsilon t+\beta\delta)}
 {\overline{\nu}(\beta t)}=
 \Big(\frac{\varepsilon}{\beta}\Big)^{-\alpha}.
 \eeqnn
 Since
 $\mathbf{P}(X_t>(\varepsilon-\beta)t)
 =\mathbf{P}(X_t+\beta t>\varepsilon t)\to0$
 as $t\to\infty$, it follows that
 \beqlb\label{eqn.3023}
 \lim_{t\to\infty}
 \frac{\mathbf{P}\big((\varepsilon-\beta)t<X_t\leq-\varepsilon t,
 	X_{t+\delta}>0\big)}
 {\delta\cdot\overline{\nu}(\beta t)}
 =0.
 \eeqlb
 
 \medskip
 
 \textbf{(iii)} For the third probability in \eqref{eqn.3021}, using the Markov property and stationary increments again,
 \beqnn
 \frac{\mathbf{P}\big(-\varepsilon t<X_t\leq0,X_{t+\delta}>0\big)}
 {\alpha/\beta\cdot\overline{\nu}(\beta t)\cdot
 	\mathbf{E}\big[X_\delta^+\big]}
 \ar=\ar
 \int_{-\varepsilon t}^{0}
 \frac{\mathbf{P}\big(X_\delta>-y\big)}
 {\mathbf{E}\big[X_\delta^+\big]}
 \cdot
 \frac{\mathbf{P}\big(X_t\in dy\big)}
 {\alpha/\beta\cdot\overline{\nu}(\beta t)}.
 \eeqnn
 Similarly to the proof of Proposition~\ref{Prop.301}, we have
 \beqnn
 \frac{\mathbf{P}\big(-\varepsilon t<X_t\leq0,X_{t+\delta}>0\big)}
 {\alpha/\beta\cdot\overline{\nu}(\beta t)\cdot
 	\mathbf{E}\big[X_\delta^+\big]}
 \ar\leq\ar
 \sum_{k=0}^{[t]}
 \frac{\varepsilon\cdot\mathbf{P}\big(X_\delta>k\varepsilon\big)}
 {\mathbf{E}\big[X_\delta^+\big]}
 \cdot
 \frac{\mathbf{P}\big(X_t\in-(k+1)\varepsilon+\varDelta_\varepsilon\big)}
 {\alpha/\beta\cdot\overline{\nu}(\beta t)\cdot\varepsilon}
 \cr\cr
 \ar\leq\ar
 \sup_{k=0,\cdots,[t]}
 \frac{\mathbf{P}\big(X_t\in-(k+1)\varepsilon+\varDelta_\varepsilon\big)}
 {\alpha/\beta\cdot\overline{\nu}(\beta t)\cdot\varepsilon}
 \cdot
 \sum_{j=0}^{[t]}
 \frac{\varepsilon\cdot\mathbf{P}\big(X_\delta>j\varepsilon\big)}
 {\mathbf{E}\big[X_\delta^+\big]}
 \eeqnn
 and
 \beqnn
 \frac{\mathbf{P}\big(-\varepsilon t<X_t\leq0,X_{t+\delta}>0\big)}
 {\alpha/\beta\cdot\overline{\nu}(\beta t)\cdot
 	\mathbf{E}\big[X_\delta^+\big]}
 \ar\geq\ar
 \sum_{k=0}^{[t]-1}
 \frac{\varepsilon\cdot\mathbf{P}\big(X_\delta>(k+1)\varepsilon\big)}
 {\mathbf{E}\big[X_\delta^+\big]}
 \cdot
 \frac{\mathbf{P}\big(X_t\in-(k+1)\varepsilon+\varDelta_\varepsilon\big)}
 {\alpha/\beta\cdot\overline{\nu}(\beta t)\cdot\varepsilon}
 \cr\cr
 \ar\geq\ar
 \inf_{k=0,\cdots,[t]-1}
 \frac{\mathbf{P}\big(X_t\in-(k+1)\varepsilon+\varDelta_\varepsilon\big)}
 {\alpha/\beta\cdot\overline{\nu}(\beta t)\cdot\varepsilon}
 \cdot
 \sum_{j=0}^{[t]-1}
 \frac{\varepsilon\cdot\mathbf{P}\big(X_\delta>(j+1)\varepsilon\big)}
 {\mathbf{E}\big[X_\delta^+\big]}.
 \eeqnn
 Using the monotonicity of $\overline{\nu}$,
 Lemma~\ref{Lemma.203} with $\lambda=\beta-2\varepsilon>0$ and \eqref{eqn.30}, we obtain
 \beqnn
 \lefteqn{
 	\liminf_{t\to\infty}\inf_{k=0,\cdots,[t]-1}
 	\frac{\mathbf{P}\big(X_t\in-(k+1)\varepsilon+\varDelta_\varepsilon\big)}
 	{\alpha/\beta\cdot\overline{\nu}(\beta t)\cdot\varepsilon}
 }
 \cr\cr
 \ar=\ar
 \liminf_{t\to\infty}\inf_{k=0,\cdots,[t]-1}
 \frac{\mathbf{P}\big(X_t+\beta t\in
 	\beta t-(k+1)\varepsilon+\varDelta_\varepsilon\big)}
 {t \alpha\cdot
 	\overline{\nu}\big(\beta t-(k+1)\varepsilon\big)/
 	\big(\beta t-(k+1)\varepsilon\big)\cdot\varepsilon}
 \cdot
 \frac{\overline{\nu}\big(\beta t-(k+1)\varepsilon\big)}
 {\beta t-(k+1)\varepsilon} 
 \frac{\beta t}{\overline{\nu}(\beta t)}
 \geq1.
 \eeqnn
 and
 \beqnn
 \lefteqn{
 	\limsup_{t\to\infty}\sup_{k=0,\cdots,[t]}
 	\frac{\mathbf{P}\big(X_t\in-(k+1)\varepsilon+\varDelta_\varepsilon\big)}
 	{\alpha/\beta\cdot\overline{\nu}(\beta t)\cdot\varepsilon} }
 \cr\cr
 \ar\leq\ar
 \limsup_{t\to\infty}\sup_{k=0,\cdots,[t]}
 \frac{\mathbf{P}\big(X_t+\beta t\in
 	\beta t-(k+1)\varepsilon+\varDelta_\varepsilon\big)}
 {t \alpha\cdot
 	\overline{\nu}\big(\beta t-(k+1)\varepsilon\big)/
 	\big(\beta t-(k+1)\varepsilon\big)\cdot\varepsilon}
 \cdot
 \frac{\overline{\nu}\big(\beta t-([t]+1)\varepsilon\big)}
 {\beta t-([t]+1)\varepsilon} 
 \frac{\beta t}{\overline{\nu}(\beta t)}=
 \bigg(\frac{\beta}{\beta-\varepsilon}\bigg)^{\alpha+1}.
 \eeqnn
 Since $y\mapsto\mathbf{P}\big(X_\delta>y\big)$ is non-increasing, we have
 \beqnn
 1-\varepsilon\cdot
 \frac{\mathbf{P}\big(X_\delta>0\big)}
 {\mathbf{E}\big[X_\delta^+\big]}
 \ar\leq\ar
 \liminf_{t\to\infty}
 \sum_{j=0}^{[t]-1}
 \frac{\varepsilon\cdot\mathbf{P}\big(X_\delta>(j+1)\varepsilon\big)}
 {\mathbf{E}\big[X_\delta^+\big]}
 \cr\cr
 \ar\leq\ar
 \limsup_{t\to\infty}
 \sum_{j=0}^{[t]}
 \frac{\varepsilon\cdot\mathbf{P}\big(X_\delta>j\varepsilon\big)}
 {\mathbf{E}\big[X_\delta^+\big]}
 \leq
 1+\varepsilon\cdot
 \frac{\mathbf{P}\big(X_\delta>0\big)}
 {\mathbf{E}\big[X_\delta^+\big]}.
 \eeqnn
 Combining the preceding estimates together yields
 \beqnn
 1-\varepsilon\cdot
 \frac{\mathbf{P}\big(X_\delta>0\big)}
 {\mathbf{E}\big[X_\delta^+\big]}
 \ar\leq\ar
 \liminf_{t\to\infty}
 \frac{\mathbf{P}\big(-\varepsilon t<X_t\leq0,X_{t+\delta}>0\big)}
 {\alpha/\beta\cdot\overline{\nu}(\beta t)\cdot
 	\mathbf{E}\big[X_\delta^+\big]}
 \cr\cr
 \ar\leq\ar
 \limsup_{t\to\infty}
 \frac{\mathbf{P}\big(-\varepsilon t<X_t\leq0,X_{t+\delta}>0\big)}
 {\alpha/\beta\cdot\overline{\nu}(\beta t)\cdot
 	\mathbf{E}\big[X_\delta^+\big]}
 \leq \bigg(\frac{\beta}{\beta-\varepsilon}\bigg)^{\alpha+1}
 \cdot
 \Big(
 1+\varepsilon\cdot
 \frac{\mathbf{P}\big(X_\delta>0\big)}
 {\mathbf{E}\big[X_\delta^+\big]}
 \Big).
 \eeqnn
 By the fact $\mathbf{E}\big[X_\delta^+\big]\leq \mathbf{E}\big[|X_\delta|\big]<\infty$ and the arbitrariness of $\varepsilon$, we obtain
 \beqlb\label{eqn.3024}
 \lim_{\varepsilon\to0+}\limsup_{t\to\infty}
 \bigg|
 \frac{\mathbf{P}\big(-\varepsilon t<X_t\leq0,X_{t+\delta}>0\big)}
 {\alpha/\beta\cdot\overline{\nu}(\beta t)\cdot
 	\mathbf{E}\big[X_\delta^+\big]}
 -1
 \bigg|=0.
 \eeqlb
 Finally, combining \eqref{eqn.3022}-\eqref{eqn.3024} together,
 we can get the desired limit \eqref{eqn.302} immediately.
 \qed

 \begin{corollary}\label{Corollary.303}
 	For any $\delta>0$, we have as $t\to\infty$,  
 	\beqlb\label{eqn.403}
 	\frac{\mathbf{P}(X_t>0)-\mathbf{P}(X_{t+\delta}>0)}{(\alpha-1)\cdot \overline{\nu}(\beta t)\cdot\delta}\to 1. 
 	\eeqlb 
 \end{corollary}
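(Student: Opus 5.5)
The corollary follows by combining \eqref{eqn.402} with Propositions~\ref{Prop.301} and~\ref{Prop.302}. By \eqref{eqn.402},
\beqnn
\mathbf{P}(X_t>0)-\mathbf{P}(X_{t+\delta}>0)=\mathbf{P}(X_t>0,X_{t+\delta}\leq 0)-\mathbf{P}(X_t\leq 0,X_{t+\delta}>0).
\eeqnn
Proposition~\ref{Prop.301} identifies the first term as $\frac{\alpha}{\beta}\overline\nu(\beta t)\,\mathbf E[X_\delta^-]\,(1+o(1))$. Proposition~\ref{Prop.302} identifies the second as $\frac{\alpha}{\beta}\overline\nu(\beta t)\,\big(\mathbf E[X_\delta^+]+\beta\delta/\alpha\big)(1+o(1))$. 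Both are of the common order $\overline\nu(\beta t)$, so the difference equals
\beqnn
\frac{\alpha}{\beta}\,\overline\nu(\beta t)\Big(\mathbf E[X_\delta^-]-\mathbf E[X_\delta^+]-\frac{\beta\delta}{\alpha}\Big)+o\big(\overline\nu(\beta t)\big).
\eeqnn

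The key algebraic step uses $\mathbf E[X_\delta^-]-\mathbf E[X_\delta^+]=-\mathbf E[X_\delta]=\beta\delta$, which holds because $\mathbf E[X_1]=-\beta$ and integrability is guaranteed by \eqref{eqn.LevyMeasure}. The bracket therefore becomes $\beta\delta-\beta\delta/\alpha=\beta\delta(\alpha-1)/\alpha$. Multiplying by $\alpha/\beta$ gives the main term $(\alpha-1)\,\delta\,\overline\nu(\beta t)$.

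It remains to check that the $o(1)$ errors are negligible relative to this main term. They are, because the main coefficient $(\alpha-1)\delta$ is strictly positive, since $\alpha>1$. Consequently both error terms, each of size $o(\overline\nu(\beta t))$ times a fixed constant depending on $\delta$, are $o\big((\alpha-1)\delta\,\overline\nu(\beta t)\big)$. Dividing through yields \eqref{eqn.403}.

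The only potential obstacle would be cancellation between the two crossing probabilities at leading order. This is excluded precisely because $\alpha>1$, so no further estimate beyond the two propositions is required.
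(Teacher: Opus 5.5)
Your proposal is correct and follows the paper's proof: you substitute the asymptotics of Propositions~\ref{Prop.301} and~\ref{Prop.302} into \eqref{eqn.402} and use $\mathbf E[X_\delta^-]-\mathbf E[X_\delta^+]=\beta\delta$ to get the leading term $(\alpha-1)\,\delta\,\overline\nu(\beta t)$. Your explicit check that $\alpha>1$ keeps this term nonzero, so the two $o(\overline\nu(\beta t))$ errors are negligible, is a detail the paper leaves implicit.
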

 \proof 
 Substituting the limits obtained in Propositions~\ref{Prop.301} and~\ref{Prop.302} into \eqref{eqn.402}, and then applying the elementary identity
 \beqnn
 \frac{\alpha}{\beta}\cdot  \overline{\nu} (\beta t)\cdot \mathbf{E}\big[X_\delta^-\big] -\frac{\alpha}{\beta}\cdot \overline{\nu} (\beta t)\cdot \Big(\mathbf{E}\big[X_\delta^+\big]+ \frac{\beta \delta}{\alpha} \Big)
 \ar=\ar (\alpha-1)\cdot \delta \cdot  \overline{\nu} (\beta t),
 \eeqnn
 which follows from $\mathbf{E}[X_\delta^+] -\mathbf{E}[X_\delta^-]= -\beta\delta$, yields the desired limit \eqref{eqn.403} immediately. 
 \qed   
 
 \textbf{\textit{Proof of Theorem~\ref{MainThm01}. }} We first prove the asymptotic equivalence \eqref{eqn.MainThm01} for any fixed $\delta>0$. In view of \eqref{eqn.4011}, for any $\varepsilon\in(0,1/2)$, we can further write it as
 \beqlb\label{eqn.305}
 \underline{n}\big(\zeta\in t+\varDelta_\delta\big)
 =\frac{\delta}{t+\delta}\cdot\underline{n}(\zeta>t)+\sum_{i=1}^3 J_i(t,\varepsilon,\delta)
 \eeqlb
 with the three terms in the last sum given by
 \beqnn 
 J_1(t,\varepsilon,\delta)\ar:=\ar \frac{1}{t+\delta}\int_{0}^{\varepsilon t}\big(\mathbf{P}(X_{t-s}>0)-\mathbf{P}(X_{t+\delta-s}>0)\big)\,\underline{n}(\zeta>s)\, ds, \cr
 J_2(t,\varepsilon,\delta)\ar:=\ar \frac{1}{t+\delta}\int_{\varepsilon t}^{t-\varepsilon t}\big(\mathbf{P}(X_{t-s}>0)-\mathbf{P}(X_{t+\delta-s}>0)\big)\,\underline{n}(\zeta>s)\, ds,\cr
 J_3(t,\varepsilon,\delta)\ar:=\ar  \frac{1}{t+\delta}\int_{t-\varepsilon t}^t  \mathbf{P}(X_{t-s}>0) \,\underline{n}(\zeta>s)\, ds -   \frac{1}{t+\delta} \int_{t-\varepsilon t}^{t+\delta}\mathbf{P}(X_{t+\delta-s}>0) \,\underline{n}(\zeta>s)\, ds. 
 \eeqnn
 We now analyze the asymptotic behaviour of the four terms on the right-hand side of \eqref{eqn.305}.
 
 \medskip
 
 \textbf{(i)} By \eqref{CWX01}, we have, as $t\to\infty$,
 \beqlb\label{eqn.311}
 \frac{\delta}{t+\delta}\cdot\underline{n}(\zeta>t)
 \sim\frac{\delta}{t}\cdot\underline{n}(\zeta)
 \cdot\overline{\nu}(\beta t).
 \eeqlb
 
 \textbf{(ii)} We now prove that
 $J_1(t,\varepsilon,\delta)
 \sim(\alpha-1)\cdot\underline{n}(\zeta)
 \cdot\overline{\nu}(\beta t)/t\cdot\delta$
 as $t\to\infty$.
 By the triangle inequality, we first have
 \beqlb\label{eqn.312}
 \lefteqn{
 	\bigg|
 	\frac{J_1(t,\varepsilon,\delta)}
 	{(\alpha-1)\underline{n}(\zeta)
 		\cdot\overline{\nu}(\beta t)/t\cdot\delta}
 	-1
 	\bigg|
 }
 \cr\cr
 \ar=\ar
 \bigg|
 \frac{t}{t+\delta}
 \int_0^{\varepsilon t}
 \frac{\mathbf{P}(X_{t-s}>0)-\mathbf{P}(X_{t+\delta-s}>0)}
 {(\alpha-1)\cdot\overline{\nu}\big(\beta(t-s)\big)\cdot\delta}
 \cdot
 \frac{\overline{\nu}\big(\beta(t-s)\big)}
 {\overline{\nu}(\beta t)}
 \cdot
 \frac{\underline{n}(\zeta>s)}{\underline{n}(\zeta)}
 \,ds-1
 \bigg|
 \cr\cr
 \ar\leq\ar
 \int_0^{\varepsilon t}
 \bigg|
 \frac{\mathbf{P}(X_{t-s}>0)-\mathbf{P}(X_{t+\delta-s}>0)}
 {(\alpha-1)\cdot\overline{\nu}\big(\beta(t-s)\big)\cdot\delta}
 -1
 \bigg|
 \cdot
 \frac{\overline{\nu}\big(\beta(t-s)\big)}
 {\overline{\nu}(\beta t)}
 \cdot
 \frac{\underline{n}(\zeta>s)}{\underline{n}(\zeta)}
 \,ds
 \cr\cr
 \ar\ar+
 \int_0^{\varepsilon t}
 \bigg|
 \frac{\overline{\nu}\big(\beta(t-s)\big)}
 {\overline{\nu}(\beta t)}-1
 \bigg|
 \cdot
 \frac{\underline{n}(\zeta>s)}{\underline{n}(\zeta)}
 \,ds+
 \bigg|
 \frac{t}{t+\delta}
 \int_0^{\varepsilon t}
 \frac{\underline{n}(\zeta>s)}{\underline{n}(\zeta)}
 \,ds-1
 \bigg|.
 \eeqlb
 Since $\overline{\nu}\in\mathrm{RV}_{-\alpha}^\infty$
 is non-increasing, for any fixed $K>0$, we have as $t\to\infty$,
 \beqnn
 \sup_{r\in[0,K]}
 \bigg|
 \frac{\overline{\nu}\big(\beta(t-r)\big)}
 {\overline{\nu}(\beta t)}-1
 \bigg|\to0
 \quad\mbox{and}\quad
 \sup_{s\in[0,\varepsilon t]}
 \frac{\overline{\nu}\big(\beta(t-s)\big)}
 {\overline{\nu}(\beta t)}
 \leq
 \frac{\overline{\nu}\big((1-\varepsilon)\beta t\big)}
 {\overline{\nu}(\beta t)}
 \to(1-\varepsilon)^{-\alpha}.
 \eeqnn
 The first term on the right-hand side of the inequality in \eqref{eqn.312} can be bounded, for all sufficiently large $t$, by
 \beqnn
 C\cdot
 \int_0^{\varepsilon t}
 \bigg|
 \frac{\mathbf{P}(X_{t-s}>0)-\mathbf{P}(X_{t+\delta-s}>0)}
 {(\alpha-1)\cdot\overline{\nu}\big(\beta(t-s)\big)\cdot\delta}
 -1
 \bigg|
 \cdot
 \frac{\underline{n}(\zeta>s)}{\underline{n}(\zeta)}
 \,ds,
 \eeqnn
 which vanishes as $t\to\infty$ by Corollary~\ref{Corollary.303}.
 Moreover, the second term can be decomposed as
 \beqnn
 \lefteqn{\int_{0}^K \bigg|  \frac{\overline{\nu}\big(\beta (t-s)\big)}{\overline{\nu}(\beta t)} -1\bigg| \cdot \frac{\underline{n}(\zeta>s)}{\underline{n}(\zeta)}\, ds 
 	+  \int_K^{\varepsilon t}  \bigg|  \frac{\overline{\nu}\big(\beta (t-s)\big)}{\overline{\nu}(\beta t)} -1\bigg| \cdot \frac{\underline{n}(\zeta>s)}{\underline{n}(\zeta)}\, ds }\ar\ar\cr
 \ar\leq\ar 
 \sup_{r\in [0,K]} \bigg|  \frac{\overline{\nu}\big(\beta (t-r)\big)}{\overline{\nu}(\beta t)} -1\bigg|\cdot \int_{0}^\infty   \frac{\underline{n}(\zeta>s)}{\underline{n}(\zeta)}\, ds  +\sup_{r\in [K, \varepsilon t]} \bigg|  \frac{\overline{\nu}\big(\beta (t-r)\big)}{\overline{\nu}(\beta t)} -1\bigg|\cdot \int_K^{\varepsilon t}    \frac{\underline{n}(\zeta>s)}{\underline{n}(\zeta)}\, ds,
 \eeqnn
 which tends to $0$ as $t\to\infty$ and then $K\to\infty$.
 For the third term in \eqref{eqn.312}, we have
 \beqnn
 \bigg|
 \frac{t}{t+\delta}
 \int_0^{\varepsilon t}
 \frac{\underline{n}(\zeta>s)}{\underline{n}(\zeta)}
 \,ds-1
 \bigg|
 \ar\leq\ar
 \frac{\delta}{t+\delta}
 \int_0^{\varepsilon t}
 \frac{\underline{n}(\zeta>s)}{\underline{n}(\zeta)}
 \,ds
 +
 \int_{\varepsilon t}^\infty
 \frac{\underline{n}(\zeta>s)}{\underline{n}(\zeta)}
 \,ds,
 \eeqnn
 which clearly goes to $0$ as $t\to\infty$.
 Combining these estimates yields
 \beqlb\label{eqn.314}
 \lim_{t\to\infty}
 \bigg|
 \frac{J_1(t,\varepsilon,\delta)}
 {(\alpha-1)\underline{n}(\zeta)
 	\cdot\overline{\nu}(\beta t)/t\cdot\delta}
 -1
 \bigg|=0.
 \eeqlb
 
 \textbf{(iii)} We now prove that
 $J_2(t,\varepsilon,\delta)
 =o\big(\overline{\nu}(\beta t)/t\cdot\delta\big)$
 as $t\to\infty$. Firstly,
 \beqnn
 \bigg|
 \frac{J_2(t,\varepsilon,\delta)}
 {\overline{\nu}(\beta t)/t\cdot\delta}
 \bigg|
 \ar\leq\ar
 \frac{t}{t+\delta}
 \int_{\varepsilon t}^{t-\varepsilon t}
 \frac{\big|\mathbf{P}(X_{t-s}>0)
 	-\mathbf{P}(X_{t+\delta-s}>0)\big|}
 {\overline{\nu}(\beta t)\cdot\delta}
 \,\underline{n}(\zeta>s)\,ds
 \cr\cr
 \ar\leq\ar
 \int_{\varepsilon t}^{t-\varepsilon t}
 \frac{\big|\mathbf{P}(X_{t-s}>0)
 	-\mathbf{P}(X_{t+\delta-s}>0)\big|}
 {\overline{\nu}(\beta t)\cdot\delta}
 \cdot\underline{n}(\zeta>s)\,ds.
 \eeqnn
 By using Corollary~\ref{Corollary.303},
 then \eqref{CWX01} and finally the monotonicity of
 $\overline{\nu}\in\mathrm{RV}_{-\alpha}^\infty$,
 we have, as $t\to\infty$,
 \beqlb\label{eqn.315}
 \bigg|
 \frac{J_2(t,\varepsilon,\delta)}
 {\overline{\nu}(\beta t)/t\cdot\delta}
 \bigg|
 \ar\leq\ar
 C\int_{\varepsilon t}^{t-\varepsilon t}
 \frac{\overline{\nu}\big(\beta(t-s)\big)}
 {\overline{\nu}(\beta t)}
 \cdot\underline{n}(\zeta>s)\,ds
 \leq 
 C\cdot
 \frac{\overline{\nu}(\varepsilon\beta t)}
 {\overline{\nu}(\beta t)}
 \int_{\varepsilon t}^\infty
 \overline{\nu}(\beta s)\,ds
 \to0.
 \eeqlb
 
 \textbf{(iv)} For $J_3(t,\varepsilon,\delta)$, using the change of variables, we may write
 \beqnn
 J_3(t,\varepsilon,\delta) 
 \ar=\ar \frac{1}{t+\delta} \int_{0}^{\varepsilon t}  \mathbf{P}(X_s>0) \,\underline{n}(\zeta>t-s)\, ds   -  \frac{1}{t+\delta}  \int_0^{\delta+\varepsilon t}\mathbf{P}(X_s>0) \,\underline{n}(\zeta>t+\delta-s)\, ds  \cr
 \ar=\ar  \frac{1}{t+\delta} \int_{0}^{\varepsilon t}  \mathbf{P}(X_s>0) \,\underline{n}\big(\zeta \in  t-s+\varDelta_\delta\big)\, ds -\frac{1}{t+\delta} \int_{\varepsilon t}^{\delta+\varepsilon t}\mathbf{P}(X_s>0) \,\underline{n}(\zeta>t+\delta-s)\, ds.
 \eeqnn
 Since $z\mapsto\underline{n}(\zeta>z)$ is non-increasing, we have
 \beqnn
 \big|J_3(t,\varepsilon,\delta)\big|
 \ar\leq\ar
 \frac{1}{t+\delta}
 \int_0^{\varepsilon t}
 \underline{n}\big(\zeta\in t-s+\varDelta_\delta\big)\,ds+
 \frac{\delta}{t+\delta}
 \cdot\underline{n}\big(\zeta>(1-\varepsilon)t\big)
 \cdot
 \sup_{\varepsilon t<s\leq\varepsilon t+\delta}
 \mathbf{P}(X_s>0),
 \eeqnn
 and also for all sufficiently large $t$,
 \beqnn
 \int_0^{\varepsilon t}
 \underline{n}\big(\zeta\in t-s+\varDelta_\delta\big)\,ds
 \ar=\ar
 \int_{t-\varepsilon t}^t
 \underline{n}(\zeta>s)\,ds
 -
 \int_{t-\varepsilon t+\delta}^{t+\delta}
 \underline{n}(\zeta>s)\,ds
 \cr\cr
 \ar=\ar
 \int_{t-\varepsilon t}^{t-\varepsilon t+\delta}
 \underline{n}(\zeta>s)\,ds
 -
 \int_t^{t+\delta}
 \underline{n}(\zeta>s)\,ds
 \cr\cr
 \ar\leq\ar
 \delta\cdot
 \big(
 \underline{n}(\zeta>t-\varepsilon t)
 -\underline{n}(\zeta>t+\delta)
 \big).
 \eeqnn
 Combining these estimates yields
 \beqnn
 \bigg|
 \frac{J_3(t,\varepsilon,\delta)}
 {\overline{\nu}(\beta t)/t\cdot\delta}
 \bigg|
 \ar\leq\ar
 \frac{t}{t+\delta}
 \cdot
 \frac{\underline{n}\big(\zeta>(1-\varepsilon)t\big)}
 {\overline{\nu}(\beta t)}
 \cdot
 \Big(
 1-
 \frac{\underline{n}(\zeta>t+\delta)}
 {\underline{n}\big(\zeta>(1-\varepsilon)t\big)}
 +
 \sup_{\varepsilon t<s\leq\varepsilon t+\delta}
 \mathbf{P}(X_s>0)
 \Big).
 \eeqnn
 Using the monotonicity of $\underline{n}(\zeta>z)$ in $z$ again,
 \beqnn
 \bigg|
 \frac{J_3(t,\varepsilon,\delta)}
 {\overline{\nu}(\beta t)/t\cdot\delta}
 \bigg|
 \ar\leq\ar
 \frac{\underline{n}\big(\zeta>(1-\varepsilon)t\big)}
 {\overline{\nu}(\beta t)}
 \cdot
 \Big(
 1-
 \frac{\underline{n}(\zeta>t+\delta)}
 {\underline{n}\big(\zeta>(1-\varepsilon)t\big)}
 +
 \sup_{s>\varepsilon t}\mathbf{P}(X_s>0)
 \Big).
 \eeqnn
 The fact $X_t\overset{\rm a.s.}\to-\infty$ induces that
 $\sup_{s>\varepsilon t}\mathbf{P}(X_s>0)
 \leq\mathbf{P}\big(\sup_{s>\varepsilon t}X_s>0\big)\to0$
 as $t\to\infty$.
 Furthermore, by \eqref{CWX01} we have as $t\to\infty$,
 \beqnn
 \frac{\underline{n}\big(\zeta>(1-\varepsilon)t\big)}
 {\overline{\nu}(\beta t)}
 \to
 \underline{n}(\zeta)\cdot(1-\varepsilon)^{-\alpha}
 \quad\mbox{and}\quad
 \frac{\underline{n}(\zeta>t+\delta)}
 {\underline{n}\big(\zeta>(1-\varepsilon)t\big)}
 \to(1-\varepsilon)^\alpha.
 \eeqnn
 Consequently,
 \beqlb\label{eqn.316}
 \lim_{\varepsilon\to0+}\limsup_{t\to\infty}
 \bigg|
 \frac{J_3(t,\varepsilon,\delta)}
 {\overline{\nu}(\beta t)/t\cdot\delta}
 \bigg|
 \ar\leq\ar
 \lim_{\varepsilon\to0+}
 \underline{n}(\zeta)
 \cdot\big((1-\varepsilon)^{-\alpha}-1\big)
 =0.
 \eeqlb
 Finally, substituting \eqref{eqn.311}, \eqref{eqn.314},
 \eqref{eqn.315} and \eqref{eqn.316} into \eqref{eqn.305}
 yields as $t\to\infty$,
 \beqnn
 \underline{n}\big(\zeta\in t+\varDelta_\delta\big)
 \sim
 \frac{\alpha\delta}{t}\cdot
 \underline{n}(\zeta)\cdot\overline{\nu}(\beta t)
 \sim
 \underline{n}(\zeta)\cdot
 \nu\big(\beta(t+\varDelta_\delta)\big),
 \eeqnn
 where the second asymptotic equivalence follows from
 \eqref{eqn.30}.
 
 We now extend \eqref{eqn.MainThm01}  to hold uniformly for $\delta\in[\delta_0,\infty]$.
 For any large integer $n\geq1/\delta_0$ and
 $\delta\in[\delta_0,\infty]$, we have
 \beqnn
 \sum_{k=0}^{[n\delta]-1}
 \underline{n}\big(\zeta\in t+k/n+\varDelta_{1/n}\big)
 \ar\leq\ar
 \underline{n}\big(\zeta\in t+\varDelta_\delta\big)
 \leq
 \sum_{k=0}^{[n\delta]}
 \underline{n}\big(\zeta\in t+k/n+\varDelta_{1/n}\big).
 \eeqnn
 Applying the  asymptotic equivalence \eqref{eqn.MainThm01} with $\delta=1/n$,
 we obtain a positive function $\rho_n$ on $\mathbb{R}_+$
 that decreases to $0$ at infinity and satisfies,
 for all sufficiently large $t$ and all $k\geq0$,
 \beqnn
 \big(1-\rho_n(t)\big)\cdot
 \nu\big(\beta(t+k/n+\varDelta_{1/n})\big)\leq 
 \frac{
 	\underline{n}\big(\zeta\in t+k/n+\varDelta_{1/n}\big)
 }{
 	\underline{n}(\zeta)
 }
 \leq 
 \big(1+\rho_n(t)\big)\cdot
 \nu\big(\beta(t+k/n+\varDelta_{1/n})\big).
 \eeqnn
 Combining these two estimates together yields
 \beqnn
 \frac{
 	\underline{n}\big(\zeta\in t+\varDelta_\delta\big)
 }{
 	\underline{n}(\zeta)\cdot
 	\nu\big(\beta(t+\varDelta_\delta)\big)
 }
 \leq
 \frac{1+\rho_n(t)}
 {\nu\big(\beta(t+\varDelta_\delta)\big)}
 \sum_{k=0}^{[n\delta]}
 \nu\big(\beta(t+k/n+\varDelta_{1/n})\big)
 \leq
 \big(1+\rho_n(t)\big)\cdot
 \frac{
 	\nu\big(\beta(t+\varDelta_{\delta+1/n})\big)
 }{
 	\nu\big(\beta(t+\varDelta_\delta)\big)
 }.
 \eeqnn
 In view of the monotonicity of $\nu(x+\varDelta_\delta)$ in $\delta$ and
 \eqref{eqn.30}, this yields
 \beqnn
 \limsup_{t\to\infty}
 \sup_{\delta\in[\delta_0,\infty]}
 \frac{
 	\underline{n}\big(\zeta\in t+\varDelta_\delta\big)
 }{
 	\underline{n}(\zeta)\cdot
 	\nu\big(\beta(t+\varDelta_\delta)\big)
 }\ar\leq\ar 
 \limsup_{t\to\infty}
 \sup_{\delta\in[\delta_0,\infty]}
 \frac{
 	\nu\big(\beta(t+\varDelta_{\delta+1/n})\big)
 }{
 	\nu\big(\beta(t+\varDelta_\delta)\big)
 }
 \cr\cr
 \ar\leq\ar
 1+
 \limsup_{t\to\infty}
 \sup_{\delta\in[\delta_0,\infty]}
 \frac{
 	\nu\big(\beta(t+\delta)+\varDelta_{\beta/n}\big)
 }{
 	\nu\big(\beta t+\varDelta_{\beta\delta_0}\big)
 }
 \leq 
 1+\frac{1}{n\delta_0}.
 \eeqnn
 On the other hand, an analogous argument gives
 \beqnn
 \liminf_{t\to\infty}
 \inf_{\delta\in[\delta_0,\infty]}
 \frac{
 	\underline{n}\big(\zeta\in t+\varDelta_\delta\big)
 }{
 	\underline{n}(\zeta)\cdot
 	\nu\big(\beta(t+\varDelta_\delta)\big)
 }\geq
 \liminf_{t\to\infty}
 \inf_{\delta\in[\delta_0,\infty]}
 \frac{
 	\nu\big(\beta(t+\varDelta_{\delta-1/n})\big)
 }{
 	\nu\big(\beta(t+\varDelta_\delta)\big)
 }\geq
 1-\frac{1}{n\delta_0}.
 \eeqnn
 Combining these two bounds together and then
 letting $n\to\infty$ yields the desired uniform
 asymptotic equivalence \eqref{eqn.MainThm01}.
 \qed

 \section{Proof of Theorem \ref{MainThm02}} 
 \label{Sec.ProofThm02}
 \setcounter{equation}{0}

 This section is dedicated to the proof of Theorem~\ref{MainThm02}, which rests on the following key observation.
 For an excursion attaining a large maximum, the dominant scenario is that a unique large jump carries the excursion to a level close to, but slightly below, its eventual maximum, after which only a relatively small additional upward displacement is needed for the excursion to reach its maximum.  
 To make this idea precise, for $x,\delta,K>0$, define
 \beqlb \label{eqn.5033}
 \mathcal{A}_{\delta}(x,K):= \Big\{ s\in (0,\zeta):  \Delta\epsilon_s>0,\, \epsilon_{s-}\leq K,\, \sup_{s\leq r<\zeta} \epsilon_r \in x+\varDelta_\delta, \,  \sup_{s\leq r<\zeta} \epsilon_r- \epsilon_s \leq K\Big\}. 
 \eeqlb
 Thus, $\mathcal A_\delta(x,K)$ consists of those positive jumps whose  pre-jump position is bounded by $K$, and for which the subsequent maximum lies in $x+\varDelta_\delta$ and exceeds the post-jump position by at most $K$. In particular, when $x$ is large compared with $K$, every jump belonging to $\mathcal A_\delta(x,K)$ must itself be large and carry the excursion close to its subsequent maximum. Indeed, for every $s\in\mathcal A_\delta(x,K)$,
 \beqlb\label{eqn.5018}
 \Delta\epsilon_s = \epsilon_s-\epsilon_{s-} = \sup_{s\leq r<\zeta} \epsilon_r-\Big( \sup_{s\leq r<\zeta} \epsilon_r- \epsilon_s\Big) -\epsilon_{s-} \in (x-2K,x+\delta].
 \eeqlb
 Consequently, when $x>2K$, all jumps in  $\mathcal A_\delta(x,K)$ have size at least $x-2K>0$, and hence 
 \beqnn
 \underline n\big[  \#\mathcal A_\delta(x,K) \big]
 \leq  \underline n\bigg[ \sum_{0<s\leq\zeta} \mathbf 1_{\{\Delta\epsilon_s>x-2K\}}\bigg]
 =\underline n(\zeta)\cdot  \overline\nu(x-2K) <\infty,
 \eeqnn
 which implies that  $\underline{n}\big( \# \mathcal A_\delta(x,K)=\infty \big)=0$. 
 Here and throughout,  $\#A$ denotes the cardinality of the set $A$.
 
 We now state four auxiliary lemmas that constitute the main ingredients in the proof of Theorem~\ref{MainThm02}; their proofs are given in the subsequent subsections. 
 The first lemma shows that, on the scale of $\nu(x+\varDelta_\delta)$, an excursion whose maximum lies in $x+\varDelta_\delta$ must contain a jump belonging to $\mathcal A_\delta(x,K)$, provided $K$ is subsequently taken sufficiently large. The second lemma controls the converse implication. The third lemma shows that the occurrence of two or more such large jumps is asymptotically negligible. Finally, the fourth lemma identifies the first-order asymptotics of the expected number of jumps in $\mathcal A_\delta(x,K)$ and, in view of the third lemma, of the probability that exactly one such jump occurs. 
 
 \begin{lemma} \label{Lemma.501}
 	For any $\delta >0$, we have  
 	\beqlb\label{eqn.5016}
 	\lim_{K\to\infty}\limsup_{x\to\infty}	\frac{	\underline{n}\big( \# \mathcal{A}_{\delta}(x,K)=0,\,\overline{\epsilon}\in x+\varDelta_\delta\big)}{\nu\big(x+ \varDelta_\delta\big)} =0. 
 	\eeqlb
 \end{lemma}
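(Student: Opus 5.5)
We need to show that excursions with maximum in $x+\varDelta_\delta$ but with no candidate jump in $\mathcal A_\delta(x,K)$ have mass $o(\nu(x+\varDelta_\delta))$, in the iterated limit $x\to\infty$ and then $K\to\infty$. The plan is to split the bad event according to how the maximum is reached. Let $\sigma$ be the first time the excursion exceeds level $x$; on $\{\overline\epsilon\in x+\varDelta_\delta\}$ we have $\sigma<\zeta$. We distinguish:

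\begin{itemize}
\item[(a)] no jump of size larger than some $\eta x$ (with $\eta\in(0,1)$ small) occurs before $\zeta$;
\item[(b)] at least two jumps larger than $\eta x$ occur;
\item[(c)] exactly one jump larger than $\eta x$ occurs, at time $s$ say, but $s\notin\mathcal A_\delta(x,K)$.
\end{itemize}

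For (b), \eqref{Prop.203} with $t\to\infty$ gives a bound $\underline n(\zeta)\overline\nu(\eta x)^2$. This is $o(\overline\nu(x)/x)$ because $\overline\nu(x)\,x\to0$ when $\alpha>1$. Hence, by \eqref{eqn.30}, it is $o(\nu(x+\varDelta_\delta))$.

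For (a), truncate positive jumps at $\eta x$. The truncated process, started from the entrance law, must climb to level $x$ against drift $-\beta$. Standard exponential (Fuk--Nagaev type) bounds for upward passage of a truncated process with negative drift give a bound of order $\underline n(\zeta\wedge\cdot)\,(C/x)^{1/\eta'}$ for some $\eta'$ proportional to $\eta$. Choosing $\eta$ small makes this $o(x^{-\alpha-1})$, hence negligible.

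Case (c) is the main case. Here the single big jump at time $s$ fails one of three conditions.

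\begin{itemize}
\item[(c1)] $\epsilon_{s-}>K$. The pre-jump path reaching above $K$ costs a factor that is small as $K\to\infty$. Concretely, by the compensation formula \eqref{eqn.Compensation} the mass is at most
\[
\int_0^\infty\underline n\big(\epsilon_{r}>K,\zeta>r\big)\sup_{z}\nu\big(x-z+\varDelta_{\delta+\cdot}\big)\,dr,
\]
with the jump size forced into a window of width $\delta$ plus the post-jump ascent. Using Corollary~\ref{Corollary.202}, this is at most $C\frac{\overline\nu(x)}{x}\int_0^\infty\underline n(\epsilon_r>K,\zeta>r)\,dr=C\frac{\overline\nu(x)}{x}\big(\underline n(\zeta)-V(K)\big)$, which vanishes as $K\to\infty$. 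Two complications must be handled: $\epsilon_{s-}$ may be as large as order $x$ (covered by (a)-type truncation of the pre-jump segment), and the post-jump ascent must be controlled jointly with the window.

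\item[(c2)] The post-jump ascent $\sup_{r\ge s}\epsilon_r-\epsilon_s$ exceeds $K$. Integrate out the post-jump path with the Markov property \eqref{eqn.n-Markov}. Given a post-jump position $y$, the probability that the subsequent sup exceeds $y+K$ and lies in $x+\varDelta_\delta$ is bounded by $\mathbf P(\overline X_\infty>K)$, uniformly over where the final sup lands. Compensation in the jump size then yields a bound
\[
\underline n(\zeta)\cdot\sup_{w}\int \nu(dy)\,\mathbf P\big(\overline X_\infty\in x-y+\varDelta_\delta,\ \overline X_\infty>K\big).
\]
Using Corollary~\ref{Corollary.202} after conditioning on $\overline X_\infty$, this is at most $C\frac{\overline\nu(x)}{x}\,\delta\,\mathbf P(\overline X_\infty>K)$. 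This tends to $0$ as $K\to\infty$ because $\overline X_\infty<\infty$ a.s., and the restriction to the post-jump ascent being at most order $\eta x$ is again handled by truncation.

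\item[(c3)] The maximum is attained before $s$. Then the pre-jump path already exceeded $x$ without any big jump, which falls under (a).
\end{itemize}

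The main obstacle is (c1)/(c2). One must convert a window condition on the overall maximum into a window condition on the jump size that is compatible with Corollary~\ref{Corollary.202}, uniformly in the random pre- and post-jump displacements. I would do this by conditioning on those displacements: fix the pre-jump position $z$ and the post-jump ascent $h$, so that the jump must lie in $x-z-h+\varDelta_\delta$, whose $\nu$-mass is uniformly $O(\overline\nu(x)/x)$. Dominated convergence over $(z,h)$ then gives the factor $\big(\underline n(\zeta)-V(K)\big)+\underline n(\zeta)\mathbf P(\overline X_\infty>K)$, which vanishes as $K\to\infty$.
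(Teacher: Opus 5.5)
Your case (b) does not go through. The estimate \eqref{Prop.203} carries the horizon factor $t$, so letting $t\to\infty$ gives no bound. In fact no bound of the form $C\,\overline\nu(\eta x)^2$ can hold. After a first jump above $\eta x$ the excursion sits at height at least $\eta x$ and survives for a time of order $x/\beta$, during which a second such jump occurs with probability of order $x\overline\nu(x)$. Hence the $\underline n$-mass of excursions with two jumps above $\eta x$ is of order $x\overline\nu(x)^2$, and $x\overline\nu(x)^2/\nu(x+\varDelta_\delta)\asymp x^2\overline\nu(x)\to\infty$ when $\alpha<2$. Multiplicity can only be discarded by using the height window.

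The paper needs no two-jump estimate here. It considers the big jumps $s$ with $\sup_{s\le r<\zeta}\epsilon_r\in x+\varDelta_\delta$. On the bad event, either there are none, or all of them violate $\epsilon_{s-}\le K$ or the ascent condition. The first alternative forces passage above $x$ before any big jump; this, rather than your event (a) as defined, is also what (c3) reduces to. The second alternative is controlled by the expected counts $\underline n(\#\mathcal A^{(1)}_{\delta,\eta}(x,K))$ and $\underline n(\#\mathcal A^{(2)}_{\delta,\eta}(x,K))$ via compensation. Counting also makes your restriction to exactly one big jump in (c) unnecessary.

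The second gap is the conversion of the window on the supremum into a window on the jump size in (c1)/(c2). With pre-jump position $z$ and jump $w$, the post-jump ascent is $h(w)=\overline X_{\tau^-_{-(z+w)}}$ for $X$ started at $0$. Its law depends on $w$ through the killing level, so you cannot fix $h$ and integrate $\nu(dw)$ over $x-z-h+\varDelta_\delta$. Replacing $h(w)$ by $\overline X_\infty$ inside a local event, as in (c2), is not an inequality either. The paper discretizes the ascent into cells $j+\varDelta_1$ and uses the uniform local bound $\sup_{y\le0}\mathbf P(\overline X_{\tau^-_y}\in j+\varDelta_1)\le C\overline\nu(j)$ of Proposition~\ref{Prop.504}, which is summable in $j$. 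The crude bound $\mathbf P(\overline X_\infty>j)$ is not summable when $\alpha<2$.

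Within your setup there is a cheaper repair. Pathwise, $w\mapsto w+h(w)$ is nondecreasing with increments at least those of $w$. So the admissible $w$ lie in an interval of length at most $\delta$ inside $(\eta x,\infty)$, whose $\nu$-mass is at most $C\overline\nu(x)/x$ by Corollary~\ref{Corollary.202}, uniformly in $z$ and in the path. In (c2) one may add the factor $\mathbf 1_{\{\overline X_\infty>K\}}$ because $h\le\overline X_\infty$. This yields exactly your final bounds for (c1) and (c2). Your linear threshold is what spares you the paper's split into $I_1,I_2,I_3$, which is needed there because with truncation at $x^{\eta}$, $\eta\in(1/\alpha,1)$, the jump window can approach the origin when $z+j$ is comparable to $x$.

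The price is paid in (a). The supermartingale argument of Proposition~\ref{Prop.503} with truncation at $\eta x$ only gives a bound of order $e^{-(1-\eta)/\eta}$, which does not decay in $x$. You must optimize the exponent: $\lambda\asymp\log x/x$ gives roughly $x^{-(\alpha-1)(1-\eta)/\eta}$, negligible against $x^{-\alpha-1}$ once $\eta$ is small. You must also localize the entrance under $\underline n$, e.g.\ via $\underline n(\tau^+_1<\zeta)<\infty$ and the strong Markov property as in the paper.
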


 \begin{lemma}\label{Lemma.502}
 	For any $\delta>0$  and $K>0$, we have as $x\to\infty$,
 	\beqlb\label{eqn.5017}
 	\frac{\underline{n}\big( \# \mathcal{A}_\delta(x,K)\geq 1,\, \overline{\epsilon}\notin x+\varDelta_\delta  \big)}{ \nu\big(x+ \varDelta_\delta\big)} 
 	\leq 
 	\frac{\underline{n}\big(\# \mathcal{A}_\delta(x,K), \overline{\epsilon}\notin x+\varDelta_\delta  \big)  }{ \nu\big(x+ \varDelta_\delta\big)}\to 0. 
 	\eeqlb
 \end{lemma}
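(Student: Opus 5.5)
The first inequality in \eqref{eqn.5017} is immediate, since $\mathbf 1_{\{\#\mathcal A_\delta(x,K)\geq1\}}\leq \#\mathcal A_\delta(x,K)$. The work lies in showing that the expected count on $\{\overline\epsilon\notin x+\varDelta_\delta\}$ is $o(\nu(x+\varDelta_\delta))$.

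The plan is to split the event $\overline\epsilon\notin x+\varDelta_\delta$ according to where the excursion's maximum lies, relative to a candidate jump $s\in\mathcal A_\delta(x,K)$. For such $s$ the post-jump supremum $\sup_{s\le r<\zeta}\epsilon_r$ lies in $x+\varDelta_\delta$. Hence $\overline\epsilon\notin x+\varDelta_\delta$ forces $\overline\epsilon>x+\delta$, and this can only happen through the pre-jump supremum: $\sup_{r<s}\epsilon_r>x+\delta$. So
\[
\underline n\big(\#\mathcal A_\delta(x,K),\,\overline\epsilon\notin x+\varDelta_\delta\big)\le \underline n\Big(\sum_{0<s<\zeta}\mathbf 1_{\{\Delta\epsilon_s>x-2K,\ \epsilon_{s-}\le K,\ \sup_{r<s}\epsilon_r>x+\delta\}}\Big),
\]
where I have used \eqref{eqn.5018} for the jump size. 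The integrand is predictable in $s$ apart from the jump-size indicator. By the compensation formula \eqref{eqn.Compensation}, the right side equals
\[
\overline\nu(x-2K)\cdot\underline n\Big(\int_0^\zeta \mathbf 1_{\{\epsilon_{s}\le K,\ \sup_{r\le s}\epsilon_r>x+\delta\}}\,ds\Big).
\]
(I would keep $\overline\nu(x-2K)$ rather than the finer window mass, since this is only an upper bound.)

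It remains to show that the time integral is $o\big(\nu(x+\varDelta_\delta)/\overline\nu(x-2K)\big)=o(1/x)$. By \eqref{eqn.30} and regular variation, that ratio is of order $\alpha\delta/x$. The time integral is the expected time spent below $K$ after the excursion has already exceeded $x+\delta$. By the Markov property \eqref{eqn.n-Markov} at the first passage time $\tau$ above $x+\delta$, it is at most
\[
\underline n(\overline\epsilon>x+\delta)\cdot\sup_{y>x}\mathbf E_y\Big[\int_0^{\tau_0^-}\mathbf 1_{\{X_u\le K\}}\,du\Big].
\]
By \eqref{CWX01}, the first factor is $\sim\underline n(\zeta)\overline\nu(x)$, which is already $o(1/x)$ because $\alpha>1$ gives $x\,\overline\nu(x)\to0$. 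The second factor is bounded uniformly in $y$: the occupation time of $[0,K]$ before killing is dominated by the renewal-type bound $\sup_y \mathbf E_y[\int_0^{\tau_0^-}\mathbf 1_{\{X_u\le K\}}du]\le \underline n(\zeta)\,\widehat V(K)$-type estimates. Equivalently, the time for the process started anywhere to accumulate occupation of $[0,K]$ is controlled by starting from the first entrance into $[0,K]$ and applying \eqref{Asytau}: $\mathbf E_z[\tau_0^-]=\underline n(\zeta)\widehat V(z)\le \underline n(\zeta)\widehat V(K)$.

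Combining these gives the bound $C_K\,\overline\nu(x-2K)\,\overline\nu(x)=o(\nu(x+\varDelta_\delta))$, since $\overline\nu(x)\cdot x\to0$.

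The main obstacle is the rigorous measurability and predictability bookkeeping. The condition on the post-jump supremum is not predictable, and I have dropped it. One must check carefully that a candidate jump cannot itself overshoot above $x+\delta$ while still counting; this is excluded by definition. The uniform occupation-time bound via the strong Markov property at the first entrance into $(0,K]$ also needs care. If first passage is awkward under $\underline n$, I would instead use \eqref{Prop.203}-style double-jump bounds: reaching $x+\delta$ before the candidate jump requires, with overwhelming probability, another jump of size $\gtrsim x$, giving a bound of order $\overline\nu(x)^2\ll\nu(x+\varDelta_\delta)$. The exceptional case without such a jump is handled by the exponential bounds for the truncated process mentioned in the methodology.
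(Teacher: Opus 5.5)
Your proof is correct, but the middle step takes a different route from the paper's. The two proofs share their start and finish. Both begin from the same observation: for $s\in\mathcal A_\delta(x,K)$ the post-jump supremum lies in $x+\varDelta_\delta$, so $\overline\epsilon\notin x+\varDelta_\delta$ forces $\sup_{r<s}\epsilon_r>x+\delta$. Both end with the strong Markov property at the first passage above $x+\delta$ and the occupation bound $\sup_y\mathbf E_y[\int_0^{\tau_0^-}\mathbf 1_{\{X_u\le K\}}du]\le\underline n(\zeta)\widehat V(K)$, which is exactly Proposition~\ref{Prop.509}.

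The difference is how the post-jump constraints are handled.
\begin{itemize}
\item The paper keeps them and integrates them out with the Markov property. This produces the kernel $G_\delta(x,K,z)$ of \eqref{eqn.50123}, which is $O(\nu(x+\varDelta_\delta))$ uniformly in $z\in[0,K]$ by Proposition~\ref{Prop.508} and Lemma~\ref{Lemma.301}. The leftover factor then only has to tend to zero, and it does, being at most $\underline n(\zeta)\widehat V(K)\,\underline n(\overline\epsilon>x)$.
\item You drop them and pay the full rate $\overline\nu(x-2K)$. This exceeds $\nu(x+\varDelta_\delta)$ by a factor of order $x/\delta$. You recover the loss because the occupation factor is itself $O(\overline\nu(x))$ and $x\overline\nu(x)\to0$.
\end{itemize}

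What each buys: your version makes the lemma independent of the $G_\delta$ analysis, needing only \eqref{CWX01}, \eqref{eqn.30} and Proposition~\ref{Prop.509}. The cost is that you use $x\overline\nu(x)\to0$. This holds since $\alpha>1$, and in fact already from monotonicity of $\overline\nu$ and its integrability on $[1,\infty)$, which \eqref{eqn.LevyMeasure} gives. The paper's version has more slack and reuses Proposition~\ref{Prop.508}, which it needs anyway for Lemma~\ref{Lemma.504}.

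The fallback in your last paragraph (double-jump bounds, truncated exponential estimates) is unnecessary. The strong Markov property at $\tau^+_{x+\delta}$ under $\underline n$ is standard, and the paper uses it at exactly this point.
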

 
 \begin{lemma}\label{Lemma.503}
 	For any $\delta>0$ and $K>0$, we have as $x\to\infty$,
 	\beqnn
 	\frac{\underline{n}\big( \# \mathcal{A}_\delta(x,K) \geq 2  \big) }{\nu\big(x+ \varDelta_\delta\big)} 
 	\leq \frac{\underline{n}\big(( \# \mathcal{A}_\delta(x,K) -1)^+ \big) }{\nu\big(x+ \varDelta_\delta\big)}  
 	\to 0. 
 	\eeqnn
 \end{lemma}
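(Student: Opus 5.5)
The first inequality is immediate, since $\mathbf 1_{\{N\geq2\}}\leq (N-1)^+$ for $N=\#\mathcal A_\delta(x,K)\in\mathbb N$. For the second, I will bound $(N-1)^+$ by the number of ordered pairs $s<r$ in $\mathcal A_\delta(x,K)$. By \eqref{eqn.5018}, every element of $\mathcal A_\delta(x,K)$ is a jump of size in $(x-2K,x+\delta]$, so
\beqnn
\underline n\big((N-1)^+\big)\leq \underline n\bigg(\sum_{0<s<r\leq\zeta}\mathbf 1_{\{s,r\in\mathcal A_\delta(x,K)\}}\bigg).
\eeqnn
A crude bound via \eqref{Prop.203} only yields $\overline\nu(x-2K)^2$ over a time horizon, and the lifetime is not bounded, so instead I exploit the structure of a pair. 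If $s<r$ both lie in $\mathcal A_\delta(x,K)$, then $\epsilon_{r-}\leq K$, while $\epsilon_s>x-K$ (post-jump position within $K$ of a maximum exceeding $x$). Hence between $s$ and $r$ the path must descend from above $x-K$ to below $K$ and remain alive, after which a second jump of size in $(x-2K,x+\delta]$ occurs.

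I then apply the compensation formula \eqref{eqn.Compensation} to the second jump $r$ with the predictable indicator
\beqnn
\mathbf 1_{\{\epsilon_{r-}\leq K,\ \exists\, s<r:\ \Delta\epsilon_s>x-2K\}},
\eeqnn
which gives
\beqnn
\underline n\big((N-1)^+\big)\leq \nu\big((x-2K,x+\delta]\big)\cdot \underline n\bigg(\int_0^\zeta \mathbf 1_{\{\epsilon_{u}\leq K,\ \exists\, s<u:\ \Delta\epsilon_s>x-2K\}}\,du\bigg).
\eeqnn
By Lemma~\ref{Lemma.301} (or \eqref{eqn.30}), $\nu((x-2K,x+\delta])\leq C\,\nu(x+\varDelta_\delta)$ for fixed $K,\delta$. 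It therefore suffices to show that the last integral tends to $0$. I apply compensation once more, now to the first jump $s$, and use the Markov property \eqref{eqn.n-Markov} after $s$. The integral is then at most
\beqnn
\underline n\bigg(\int_0^\zeta du\int_{x-2K}^\infty \mathbf E_{\epsilon_u+y}\Big[\int_0^{\tau_0^-}\mathbf 1_{\{X_v\leq K\}}\,dv\Big]\nu(dy)\bigg)\leq \underline n(\zeta)\,\overline\nu(x-2K)\cdot \sup_{z>0}\mathbf E_z\Big[\int_0^{\tau_0^-}\mathbf 1_{\{X_v\leq K\}}dv\Big].
\eeqnn
The supremum is the occupation time below $K$ of the process killed at $0$. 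It is bounded uniformly in $z$ by the strong Markov property at the first entrance into $[0,K]$, which gives the bound $\sup_{w\leq K}\mathbf E_w[\tau_0^-]=\underline n(\zeta)\widehat V(K)<\infty$ by \eqref{Asytau}. Since $\overline\nu(x-2K)\to0$, the ratio vanishes.

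I expect the main obstacle to be making the compensation with two nested jumps rigorous: choosing predictable integrands and applying \eqref{eqn.n-Markov} at the random jump time $s$. I would handle this by first compensating in $s$, with the post-$s$ functional written through the killed semigroup, exactly as the paper describes for computing the expected candidate count. Only after that would I compensate the inner jump $r$ under $\mathbf P_{\epsilon_s}$ via \eqref{eqn.CompenstationX}, which avoids any predictability issue.
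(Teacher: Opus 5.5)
Your proof is correct and follows essentially the paper's route. Both use two applications of the compensation formula, the Markov property at the first large jump, and the uniform bound $\underline n(\zeta)\widehat V(K)$ on the occupation time below $K$ (Proposition~\ref{Prop.509}). The ordering in your last paragraph is exactly the paper's. Because the paper keeps the window $(x-2K,x+\delta]$ for the first jump too, it obtains $C\,\underline n(\zeta)\,\nu((x-2K,x+\delta])^2$ rather than your $C\,\nu((x-2K,x+\delta])\,\overline\nu(x-2K)$; both are $o(\nu(x+\varDelta_\delta))$.

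One slip needs fixing. The predictable-indicator compensation in your main line does not bound the pair count $\sum_{s<r}$, which can be larger. It bounds $(N-1)^+$ directly, since $(N-1)^+$ equals the number of $r\in\mathcal A_\delta(x,K)$ having a predecessor in $\mathcal A_\delta(x,K)$. So the intermediate display with the pair sum should be dropped.
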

 
 \begin{lemma}\label{Lemma.504}
 	For any $\delta>0$, we have  
 	\beqnn
 	\lim_{K\to\infty}\limsup_{x\to\infty}\bigg|\frac{ \underline{n}\big( \#\mathcal{A}_\delta(x,K)\big) }{\nu\big(x+ \varDelta_\delta\big)} - \underline{n}(\zeta) \bigg| =0
 	\quad \mbox{and}\quad 
 	\lim_{K\to\infty}\limsup_{x\to\infty} \bigg|\frac{ \underline{n}\big( \#\mathcal{A}_\delta(x,K)=1\big) }{\nu\big(x+ \varDelta_\delta\big)} - \underline{n}(\zeta) \bigg| = 0 .
 	\eeqnn
 	
 \end{lemma}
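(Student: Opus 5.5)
The plan is to compute $\underline n(\#\mathcal A_\delta(x,K))$ with the compensation formula \eqref{eqn.Compensation} and the Markov property \eqref{eqn.n-Markov}, and then read off the second assertion from Lemma~\ref{Lemma.503}.

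\textbf{Step 1 (integrating out the post-jump path).} Fix $K$ and take $x>2K$. The condition ``$s\in\mathcal A_\delta(x,K)$'' depends on the pre-jump position $\epsilon_{s-}$ and the jump size $y=\Delta\epsilon_s$. It also depends on the path after time $s$, which starts at $\epsilon_{s-}+y$ and evolves as $X$ killed on entering $(-\infty,0]$. By the strong Markov structure at jump times, I write
\[
\underline n\big(\#\mathcal A_\delta(x,K)\big)=\underline n\bigg(\int_0^\zeta ds\int_0^\infty \mathbf 1_{\{\epsilon_{s-}\le K\}}\, g_{x,K}(\epsilon_{s-}+y)\,\nu(dy)\bigg),
\]
with $g_{x,K}(z):=\mathbf P_z\big(\sup_{r<\tau_0^-}X_r\in x+\varDelta_\delta,\ \sup_{r<\tau_0^-}X_r-z\le K\big)$. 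By spatial homogeneity, $g_{x,K}(z)=\mathbf P\big(\overline X_{\tau^-_{-z}}\in x-z+\varDelta_\delta,\ \overline X_{\tau^-_{-z}}\le K\big)$. Only $z\in(x-K,x+\delta]$ contributes.

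\textbf{Step 2 (the inner integral).} For $u=\epsilon_{s-}\in[0,K]$, I substitute $w=u+y-x$ and get
\[
\int g_{x,K}(u+y)\,\nu(dy)=\int_{(-K,\delta]}\mathbf P\big(\overline X_{\tau^-_{-x-w}}\in -w+\varDelta_\delta,\ \overline X_{\tau^-_{-x-w}}\le K\big)\,\nu(x-u+dw).
\]
As $x\to\infty$, $\tau^-_{-x-w}\to\infty$ a.s. Hence $\overline X_{\tau^-_{-x-w}}\to\overline X_\infty$ a.s., uniformly in the bounded shift $w$; this holds because $X$ drifts to $-\infty$, so the all-time supremum is attained before passage below $-x-w$ with probability tending to one.

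Local regular variation (Condition~\ref{Main.Condition}, Lemma~\ref{Lemma.301}, Corollary~\ref{Corollary.202}) says that $\nu(x-u+dw)$ on the bounded window $w\in(-K,\delta]$ is asymptotically $\alpha\overline\nu(x)/x\cdot dw$, uniformly in $u\le K$. I approximate with step functions on a mesh of width $\eta$ in $w$. This gives
\[
\frac{1}{\alpha\overline\nu(x)/x}\int g\,d\nu\to\int_{-K}^{\delta}\mathbf P\big(\overline X_\infty\in -w+\varDelta_\delta,\ \overline X_\infty\le K\big)\,dw=\mathbf E\big[\delta\wedge(\cdots)\big],
\]
and the right-hand side equals $\delta\,\mathbf P(\overline X_\infty\le K)$ up to an error that vanishes as $K\to\infty$. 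Indeed, $\int_{\mathbb R}\mathbf 1\{v\in -w+\varDelta_\delta\}\,dw=\delta$ for each $v\ge0$, and the truncation at $w>-K$ only removes $v\ge K-\delta$. Since $\nu(x+\varDelta_\delta)\sim\alpha\overline\nu(x)/x\cdot\delta$, this ratio tends to $\mathbf P(\overline X_\infty\le K)+o_K(1)$, where $o_K(1)\to0$ as $K\to\infty$. To pass to the limit under the measure in $(\omega,s)$ I use dominated convergence. The domination comes from Corollary~\ref{Corollary.202} together with $\underline n(\zeta)<\infty$.

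\textbf{Step 3 (outer integral and conclusion).} The outer integral is $\underline n\big(\int_0^\zeta\mathbf 1_{\{\epsilon_s\le K\}}ds\big)=V(K)\to V(\infty)=\underline n(\zeta)$ as $K\to\infty$. So the limit is $V(K)\cdot\mathbf P(\overline X_\infty\le K)$ plus an error vanishing in $K$, and letting $K\to\infty$ yields $\underline n(\zeta)$. This proves the first assertion. For the second, note that $\mathbf 1_{\{\#\mathcal A=1\}}$ lies between $\#\mathcal A-2(\#\mathcal A-1)^+$ and $\#\mathcal A$. Lemma~\ref{Lemma.503} makes $\underline n((\#\mathcal A-1)^+)=o(\nu(x+\varDelta_\delta))$, and the second assertion follows.

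\textbf{Main obstacle.} The hardest step is Step 2: the uniform convergence of the inner integral in $u\le K$ against a measure $\nu$ that may have atoms. I would handle it by writing $\nu$ on the window as a sum over mesh cells of length $\eta$, using monotonicity of $v\mapsto\mathbf P(\overline X\in -w+\varDelta_\delta)$ in cell endpoints plus continuity points of the law of $\overline X_\infty$. The boundary effects are $O(\eta)$ by Lemma~\ref{Lemma.301}.
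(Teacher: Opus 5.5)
Your proposal is correct and follows essentially the same route as the paper. Compensation plus the Markov property reduce $\underline n(\#\mathcal A_\delta(x,K))$ to $\int_0^\infty ds\int_0^K G_\delta(x,K,z)\,\underline n_s(dz)$. The uniform limit $G_\delta(x,K,z)/\nu(x+\varDelta_\delta)\to\mathbf P(\overline X_\infty\le K)$ (Proposition~\ref{Prop.508}) is proved with the same a.s.\ convergence $\overline X_{\tau^-_{-x+w}}\to\overline X_\infty$ and a discretization, which the paper does in the value of $\overline X_\infty$ rather than in $w$. This yields $V(K)\,\mathbf P(\overline X_\infty\le K)\to\underline n(\zeta)$, and Lemma~\ref{Lemma.503} gives the second claim. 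One small correction: the truncation $w>-K$ removes nothing, because for every $v\le K$ the window $(-v,\delta-v]$ already lies in $(-K,\delta]$, so the limit is exactly $\mathbf P(\overline X_\infty\le K)$ with no $o_K(1)$ error.
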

 
 \medskip
 
 \textbf{\textit{Proof of Theorem~\ref{MainThm02}.}} We first prove  the asymptotic equivalence \eqref{eqn.MainThm02}  for any fixed $\delta>0$.  
 By the law of total probability, $\underline{n}(\overline{\epsilon}\in x+\varDelta_\delta)$ admits the decomposition
 \beqnn
 \underline{n}\big(\overline{\epsilon}\in x+\varDelta_\delta,  \# \mathcal{A}_\delta(x,K)=0\big)  + \underline{n}\big( \# \mathcal{A}_\delta(x,K)\geq 1\big)  -\underline{n}\big(\overline{\epsilon}\notin x+\varDelta_\delta, \# \mathcal{A}_\delta(x,K)\geq 1\big) .
 \eeqnn
 By Lemmas~\ref{Lemma.501} and~\ref{Lemma.502}, both the first and third terms are $o(\nu(x+\varDelta_\delta))$ as $x\to\infty$ and then $K\to\infty$. For the second term, using the elementary identity  $\mathbf{1}_{\{ \# \mathcal{A}_\delta(x,K)\geq 1 \}}=  \# \mathcal{A}_\delta(x,K)- \big( \# \mathcal{A}_\delta(x,K)-1 \big)^+$, 
 \beqnn
 \underline{n}\big( \# \mathcal{A}_\delta(x,K)\geq 1\big)  
 \ar=\ar \underline{n}\big( \# \mathcal{A}_\delta(x,K)\big)  - \underline{n}\big(  ( \# \mathcal{A}_\delta(x,K)-1  )^+\big)  . 
 \eeqnn
 By Lemma~\ref{Lemma.503}, the second term on the right-hand side is also $o(\nu(x+\varDelta_\delta))$ as $x\to\infty$. Consequently, for sufficiently large $x$ and $K$,  
 \beqnn 
 \bigg|\frac{ \underline{n}\big(\overline{\epsilon}\in x+\varDelta_\delta\big) }{\nu\big(x+ \varDelta_\delta\big)} - \underline{n}(\zeta) \bigg|
 \ar\leq\ar 
 \bigg|\frac{ \underline{n}\big( \#\mathcal{A}_\delta(x,K)\big) }{\nu\big(x+ \varDelta_\delta\big)} - \underline{n}(\zeta) \bigg|
 +
 \frac{	\underline{n}\big( \# \mathcal{A}_{\delta}(x,K)=0,\,\overline{\epsilon}\in x+\varDelta_\delta\big)}{\nu\big(x+ \varDelta_\delta\big)}\cr\cr
 \ar+\ar
 \frac{\underline{n}\big( \# \mathcal{A}_\delta(x,K)\geq 1,\, \overline{\epsilon}\notin x+\varDelta_\delta  \big)}{ \nu\big(x+ \varDelta_\delta\big)}
 +
 \frac{ \underline{n}\big(( \# \mathcal{A}_\delta(x,K) -1)^+ \big)}{\nu\big(x+ \varDelta_\delta\big)}.
 \eeqnn 
 The right-hand side vanishes as $x\to\infty$ and then $K\to\infty$ by  Lemma~\ref{Lemma.501}-\ref{Lemma.504}. Hence the asymptotic equivalence \eqref{eqn.MainThm02} holds for any fixed $\delta>0$. The extension to uniformity in $\delta\in[\delta_0,\infty]$
 follows by the same discretization argument as in the proof
 of Theorem~\ref{MainThm01}. 
 \qed
 
 \subsection{Proof of Lemma~\ref{Lemma.501}}

 \begin{proposition}\label{Prop.503}
 	For any $\vartheta \in \boldsymbol{\varTheta}$ such that $\vartheta_x\to\infty$ as $x\to\infty$, there exists a constant $x_0>0$ such that for all $x\geq x_0$ and $y\in [0,x]$, 
 	\beqnn
 	\mathbf{P}_y \Big( \tau_x^+<\tau_0^-,\,
 	\sup_{0<s\leq\tau_x^+}\Delta X_s\leq \vartheta_x \Big) 
 	\leq
 	\exp\Big\{
 	-\frac{x-y}{\vartheta_x} \Big\}.
 	\eeqnn  
 \end{proposition}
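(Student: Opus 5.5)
The plan is an exponential martingale argument for the process with truncated jumps. Put $\vartheta=\vartheta_x$. Let $X^{(\vartheta)}$ be obtained from $X$ by deleting all jumps larger than $\vartheta$. On the event $\{\sup_{0<s\leq\tau_x^+}\Delta X_s\leq\vartheta\}$ the paths of $X$ and $X^{(\vartheta)}$ agree up to $\tau_x^+$. The probability in question is therefore bounded by $\mathbf P_y\big(\tau^{(\vartheta),+}_x<\infty\big)$, the probability that $X^{(\vartheta)}$ ever exceeds $x$. Killing at $\tau_0^-$ only shrinks this event, so we drop it.

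The truncated process has exponential moments $\mathbf E[e^{\lambda X^{(\vartheta)}_1}]=e^{\psi_\vartheta(\lambda)}$, where
\[
\psi_\vartheta(\lambda)=-\beta_\vartheta\lambda+\tfrac{\sigma^2}{2}\lambda^2+\int_{(-\infty,\vartheta]}\big(e^{\lambda y}-1-\lambda y\big)\nu(dy)
\]
and $-\beta_\vartheta=-\beta-\int_\vartheta^\infty y\,\nu(dy)\leq-\beta$ is its mean. We take $\lambda=1/\vartheta$. Then $e^{\lambda X^{(\vartheta)}_t-t\psi_\vartheta(\lambda)}$ is a martingale. If $\psi_\vartheta(1/\vartheta)\leq0$, then $e^{X^{(\vartheta)}_t/\vartheta}$ is a nonnegative supermartingale. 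Optional stopping at $\tau^{(\vartheta),+}_x\wedge T$, followed by $T\to\infty$ (Fatou), gives
\[
e^{x/\vartheta}\,\mathbf P_y\big(\tau^{(\vartheta),+}_x<\infty\big)\leq e^{y/\vartheta},
\]
because at the passage time the process is above $x$. This is exactly the claimed bound $\exp\{-(x-y)/\vartheta_x\}$.

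The main step is to check that $\psi_\vartheta(1/\vartheta)\leq0$ for all large $x$. We use $e^{u}-1-u\leq u^2$ for $u\leq1$ and split the integral.
\begin{itemize}
\item On $|y|\leq1$, the integral is at most $\vartheta^{-2}\int_{|y|\leq1}y^2\nu(dy)=O(\vartheta^{-2})$.
\item On $y<-1$, we use $e^{u}-1-u\leq|u|$ for $u\leq0$, together with dominated convergence, since $\int_{y<-1}|y|\nu(dy)<\infty$. After multiplying by $\vartheta$, this piece is $o(\vartheta^{-1})$.
\item On $1<y\leq\vartheta$, the integral is at most $\vartheta^{-2}\int_1^\vartheta y^2\nu(dy)$. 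Writing $\int_1^\vartheta y^2\nu(dy)\leq\int_1^\vartheta y\cdot y\,\nu(dy)$ and splitting at $\sqrt\vartheta$, this is at most $\sqrt\vartheta\int_1^\infty y\,\nu(dy)+\vartheta\int_{\sqrt\vartheta}^\infty y\,\nu(dy)=o(\vartheta)$. The piece is therefore $o(\vartheta^{-1})$.
\end{itemize}
Together with the Gaussian term $\sigma^2/(2\vartheta^2)$, this gives $\psi_\vartheta(1/\vartheta)\leq-\beta/\vartheta+o(1/\vartheta)<0$ once $\vartheta_x$ is large. Here we use $\vartheta_x\to\infty$, and only the finiteness condition \eqref{eqn.LevyMeasure}.

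The uniformity in $y\in[0,x]$ is automatic, since the bound holds for every starting point. The only care needed is the justification of optional stopping at an unbounded time, which the truncation-at-$T$ and Fatou step handles. Monotonicity of $\vartheta$ and $\vartheta_x/x\to0$ are not needed for this step.
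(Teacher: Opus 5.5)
Your proposal is correct and follows the paper's proof essentially step for step. Like the paper, you truncate jumps above $\vartheta_x$ and show $\varPsi^{(\vartheta_x)}(1/\vartheta_x)\le-\beta/\vartheta_x+o(1/\vartheta_x)<0$, so that $\exp\{X^{(\vartheta_x)}_t/\vartheta_x\}$ is a supermartingale, and conclude by optional stopping plus a limiting argument. The only differences are cosmetic: the paper stops at the two-sided exit time $\tau^{(\vartheta_x),+}_x\wedge\tau^{(\vartheta_x),-}_0$ rather than dropping the killing (which, as you note, plays no role in the bound), and it handles the integral term in one stroke via $e^w-1-w\le|w|^2\wedge|w|$ for $w\le1$ and dominated convergence against $\int(|z|\wedge|z|^2)\,\nu(dz)<\infty$ instead of your three-way split.
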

 \proof For any $x\geq 0$, we introduce an auxiliary L\'evy process $X^{(\vartheta_x)}$ defined by moving all jumps larger than $\vartheta_x$ away from $X$, i.e.,   
 \beqnn
 X_t^{(\vartheta_x)}:=X_t-\sum_{0\leq s\leq t}\Delta X_s\cdot \mathbf{1}_{\{\Delta X_s>\vartheta_x\}},\quad t\geq 0. 
 \eeqnn
 By Theorem~25.17 in \cite[p.165]{Sato1999}, the Laplace exponent $\varPsi^{(\vartheta_x)}(\lambda):=  \log\mathbf E\big[e^{\lambda X_1^{(\vartheta_x)}}\big] $ is well-defined  for all $\lambda \geq 0$  and admits the following representation  
 \beqlb\label{eqn.511}
 \varPsi^{(\vartheta_x)}(\lambda) = - \Big(\beta+ \int_{\vartheta_x}^\infty z\,\nu(dz) \Big)\cdot \lambda +  \frac{\sigma^2}{2}\cdot \lambda^2 + \int_{-\infty}^{\vartheta_x} \big( e^{\lambda z}-1-\lambda z \big)\,\nu(dz)  . 
 \eeqlb
 Denote by $\tau^{(\vartheta_x),-}_z$ and $\tau^{(\vartheta_x),+}_z$ the first passage times of $X^{(\vartheta_x)}$ into $(-\infty,z)$ and $(z,\infty)$ respectively. 
 For any sufficiently large $x$ and $y\in[0,x]$, under $\mathbf{P}_y$, conditioned on $\tau_x^+<\tau_0^-$ and $\sup_{s\leq\tau_x^+}\Delta X_s\leq \vartheta_x$, we have $X_s^{(\vartheta_x)}=X_s $ for $ 0\leq s \leq \tau_x^+$ and $X_{\tau_x^+}^{(\vartheta_x)}=X_{\tau_x^+}\geq x $, which induces that  $  \tau^{(\vartheta_x),+}_x \leq \tau^+_x < \tau^{(\vartheta_x),-}_0$. Consequently,   
 \beqnn
 \mathbf{P}_y\Big( \tau_x^+<\tau_0^-,\, \sup_{s\leq\tau_x^+}\Delta X_s\leq \vartheta_x \Big)
 \leq \mathbf{P}_y\Big(  \tau^{(\vartheta_x),+}_x   < \tau^{(\vartheta_x),-}_0  \Big). 
 \eeqnn
 It suffices to prove that  uniformly for sufficiently large $x$ and all $y\in[0,x]$,
 \beqlb\label{eqn.505}
 \mathbf{P}\Big(  \tau^{(\vartheta_x),+}_{x-y}  < \tau^{(\vartheta_x),-}_{-y}  \Big) \leq \exp\Big\{ -\frac{x-y}{\vartheta_x} \Big\}.
 \eeqlb
 
 We now start to prove \eqref{eqn.505}. 
 Let $\sigma^x:=\tau^{(\vartheta_x),+}_x  \wedge  \tau^{(\vartheta_x),-}_0$. 
 Note that  $X_{\sigma^x}^{(\vartheta_x)}  \geq x$ when $\tau^{(\vartheta_x),+}_x  < \tau^{(\vartheta_x),-}_0 $, then
 \beqlb\label{eqn.512}
 \mathbf{E}_y \Big[  \exp \Big\{ X_{\sigma^x}^{(\vartheta_x)}/\vartheta_x\Big\} \Big] 
 \geq \mathbf{E}_y \Big[  \exp \Big\{ X_{\sigma^x}^{(\vartheta_x)}/\vartheta_x\Big\}, \tau^{(\vartheta_x),+}_x  < \tau^{(\vartheta_x),-}_0 \Big] \geq e^{x/\vartheta_x}\cdot  \mathbf{P}\Big(  \tau^{(\vartheta_x),+}_{x-y}  < \tau^{(\vartheta_x),-}_{-y}  \Big).\ 
 \eeqlb
 We now claim and will prove later that for sufficiently large $x$, 
 \beqlb\label{eqn.513}
 \mbox{$\Big\{   \exp \big\{  X_t^{(\vartheta_x)}/\vartheta_x\big\}:t\geq 0 \Big\} $ is a $\mathbf{P}_y$-supermartingale.}
 \eeqlb 
 Using  Fatou's theorem and then applying the optional stopping theorem at the stopping time $\sigma^x\wedge n$ yields that  
 \beqnn
 \mathbf{E}_y \Big[  \exp \Big\{ X_{\sigma^x}^{(\vartheta_x)}/\vartheta_x\Big\} \Big] 
 \ar=\ar \mathbf{E}_y \Big[ \liminf_{n\to\infty} \exp \Big\{ X_{\sigma^x\wedge n}^{(\vartheta_x)}/\vartheta_x\Big\} \Big] 
 \leq  \liminf_{n\to\infty} \mathbf{E}_y \Big[ \exp \Big\{ X_{\sigma^x\wedge n}^{(\vartheta_x)}/\vartheta_x\Big\} \Big]  \leq e^{y/\vartheta_x}.
 \eeqnn
 Combining this together with \eqref{eqn.512} immediately yields that the upper bound \eqref{eqn.505} holds for all sufficiently large $x$.
 
 It remains to prove our preceding claim \eqref{eqn.513}. In view of \eqref{eqn.511}, we have 
 \beqlb\label{eqn.514}
 \varPsi^{(\vartheta_x)}(1/\vartheta_x) \leq - \frac{\beta}{\vartheta_x} +  \frac{\sigma^2}{2|\vartheta_x|^2}  + \int_{-\infty}^{\vartheta_x} \big( e^{z/\vartheta_x}-1-z/\vartheta_x \big)\,\nu(dz) .
 \eeqlb
 By the inequality $0\leq e^{w}-1-w\leq |w|^2\wedge |w| $ for all $w\leq 1$, we have for all sufficiently large $x$,
 \beqnn
 \vartheta_x\cdot \big( e^{z/\vartheta_x}-1-z/\vartheta_x \big) \leq \frac{|z|^2}{\vartheta_x}\wedge |z|, \quad  \forall\, z\leq \vartheta_x,
 \eeqnn
 which goes to $0$ as $x\to\infty$. 
 By the dominated convergence theorem and \eqref{eqn.LevyMeasure}, 
 \beqnn
 \lim_{x\to\infty} \vartheta_x\int_{-\infty}^{\vartheta_x} \big( e^{z/\vartheta_x}-1-z/\vartheta_x \big)\,\nu(dz)
 \ar\leq\ar \lim_{x\to\infty} \int_{-\infty}^{\vartheta_x}  \Big(\frac{|z|^2}{\vartheta_x}\wedge |z| \Big)\,\nu(dz)\cr
 \ar\leq\ar \lim_{x\to\infty} \int_{-\infty}^\infty \Big(\frac{|z|^2}{\vartheta_x}\wedge |z| \Big)\,\nu(dz) =0.
 \eeqnn
 Substituting this into the right-hand side of \eqref{eqn.514} induces that as $x\to\infty$,
 \beqnn
 \varPsi^{(\vartheta_x)}(1/\vartheta_x) \leq - \frac{\beta}{\vartheta_x}  + o\big(1/\vartheta_x\big). 
 \eeqnn
 This indicates that $\varPsi^{(\vartheta_x)}(1/\vartheta_x)<0$ for all sufficiently large $x$ and then for any $0\leq s\leq t$,
 \beqnn
 \mathbf{E}_y\Big[ \exp \big\{  X_t^{(\vartheta_x)}/\vartheta_x\big\}\,\big|\,\mathscr{F}_s\Big] 
 =\exp \big\{  X_s^{(\vartheta_x)}/\vartheta_x\big\} \cdot \exp\Big\{(t-s)\cdot \varPsi^{(\vartheta_x)}(1/\vartheta_x)  \Big\} 
 \leq  \exp \big\{  X_s^{(\vartheta_x)}/\vartheta_x\big\}.
 \eeqnn
 Hence our claim \eqref{eqn.513} holds. 
 \qed

 \begin{proposition}\label{Prop.501}
 	Let $\varPi(dy)$ denote the jump measure associated with the ascending ladder height process $H$.   
 	For any constant $\delta>0$, we have as $x\to\infty$,
 	\beqlb\label{eqn.5001}
 	\frac{\varPi \big(x+\varDelta_\delta\big) }{\overline{\nu}(x)}\to  \frac{\delta}{\beta \underline{n}(\zeta)}  .
 	\eeqlb
 \end{proposition}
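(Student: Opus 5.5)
The plan is to express $\varPi$ through the L\'evy measure $\nu$ and the descending renewal function $\widehat V$ via Vigon's \emph{\'equation amicale invers\'ee}, and then evaluate the resulting integral using the local regular variation of $\nu$ together with the elementary renewal theorem for $\widehat H$.

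\textbf{Step 1 (representation).} Under the paired normalization of the local times fixed in Section~\ref{Sec.Preliminaries}, Vigon's identity gives, for every $z>0$,
\beqnn
\overline\varPi(z):=\varPi\big((z,\infty)\big)=\int_{[0,\infty)}\overline\nu(z+y)\,\widehat V(dy).
\eeqnn
Differencing at $z=x$ and $z=x+\delta$ yields
\beqnn
\varPi(x+\varDelta_\delta)=\int_{[0,\infty)}\nu\big(x+y+\varDelta_\delta\big)\,\widehat V(dy).
\eeqnn
I would check that the constant in this identity is exactly $1$ under the normalization $\mathbf E\big[\int_0^\infty e^{-t}\,d\widehat L_t\big]=1$ paired with $L$. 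If it is not, a constant factor is absorbed through the Wiener--Hopf factorization $\kappa\widehat\kappa=\lambda$, which is already used in \eqref{eqn.501}.

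\textbf{Step 2 (renewal asymptotics).} Since $\mathbf E[\widehat H_1]=\beta\underline n(\zeta)<\infty$ by \eqref{eqn.EH0}, the elementary renewal theorem gives
\beqnn
\widehat V(y)\sim \frac{y}{\beta\underline n(\zeta)}\quad\mbox{as } y\to\infty .
\eeqnn
A possible killing of $\widehat H$ is excluded because $X$ drifts to $-\infty$. The theorem also gives the uniform bound $\widehat V(y)\le C(1+y)$.

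\textbf{Step 3 (splitting the integral).} Fix $\eta\in(0,1)$ and $M>1$, and split the $y$-integral into three ranges.

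\emph{(a) The range $y\le\eta x$.} By Corollary~\ref{Corollary.202}, $\nu(x+y+\varDelta_\delta)\le C\,\overline\nu(x)/x$ on this range. Hence the contribution is at most
\beqnn
C\,\frac{\overline\nu(x)}{x}\,\widehat V(\eta x)\le C\eta\,\overline\nu(x).
\eeqnn

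\emph{(b) The range $y>Mx$.} The contribution is at most
\beqnn
\int_{(Mx,\infty)}\nu\big(x+y+\varDelta_\delta\big)\,\widehat V(dy).
\eeqnn
Integrating by parts with $\widehat V(y)\le C(1+y)$ and using Corollary~\ref{Corollary.202}, this is bounded by
\beqnn
C\int_{Mx}^\infty \frac{\overline\nu(x+y)}{x+y}\,dy+C\,\overline\nu(Mx)=O\big(M^{-\alpha}\big)\,\overline\nu(x),
\eeqnn
which is negligible relative to $\overline\nu(x)$ once $M\to\infty$.

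\emph{(c) The middle range $\eta x<y\le Mx$.} Here $\nu(x+y+\varDelta_\delta)\sim\alpha\delta\,\overline\nu(x+y)/(x+y)$ uniformly in $y$, by Lemma~\ref{Lemma.301} with $\lambda=0$ applied at the level $x+y\ge x$. The function $y\mapsto \overline\nu(x+y)/(x+y)$ is monotone and varies on scale $x$, while $\widehat V(y)/y\to 1/(\beta\underline n(\zeta))$. Therefore a Riemann--Stieltjes comparison, namely integration by parts on $[\eta x,Mx]$ followed by the substitution $y=xu$ and regular variation of $\overline\nu$, replaces $\widehat V(dy)$ by $dy/(\beta\underline n(\zeta))$ up to a factor $1+o(1)$. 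The contribution becomes
\beqnn
\frac{\alpha\delta}{\beta\underline n(\zeta)}\int_\eta^M\frac{\overline\nu\big(x(1+u)\big)}{1+u}\,du
\sim\frac{\alpha\delta\,\overline\nu(x)}{\beta\underline n(\zeta)}\int_\eta^M(1+u)^{-\alpha-1}\,du .
\eeqnn

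\textbf{Step 4 (limit).} Let $x\to\infty$, then $\eta\to0$ and $M\to\infty$. Since $\int_0^\infty\alpha(1+u)^{-\alpha-1}\,du=1$, the ratio $\varPi(x+\varDelta_\delta)/\overline\nu(x)$ tends to $\delta/(\beta\underline n(\zeta))$, which is \eqref{eqn.5001}.

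\textbf{Main obstacle.} I expect the main difficulty to be Step 1: pinning down the exact normalization constant in Vigon's identity so that it is compatible with $V(\infty)=\underline n(\zeta)$ and with \eqref{Asytau}. Writing the identity for the Laplace exponent $\kappa(0,\cdot)$ and comparing with the Wiener--Hopf factorization $\kappa\widehat\kappa=\lambda$ should settle it. A second, milder point is that only the elementary renewal theorem is available, not Blackwell's theorem, since $\widehat H$ may be lattice. This is why the argument in Step 3(c) deliberately uses integration by parts against a function varying on scale $x$ rather than local renewal estimates.
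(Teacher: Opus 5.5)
Your proof is correct. It starts from the same representation as the paper, namely Vigon's identity (Theorem~7.8 in Kyprianou), which the paper also applies with constant $1$ under the paired normalization consistent with $\mathbf E[\widehat H_1]=\beta\underline n(\zeta)$. Where the two arguments differ is in how they evaluate $\int_{[0,\infty)}\nu(x+y+\varDelta_\delta)\,\widehat V(dy)$.

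\textbf{The paper's route.} It replaces $\nu(x+y+\varDelta_\delta)$ by $\alpha\delta\,\overline\nu(x+y)/(x+y)$ uniformly in $y\ge0$. It then discards $[0,K]$ for a fixed $K$, since that piece is $O(\widehat V(K)/x)$ relative to $\overline\nu(x)$. The remaining range $[K,\infty)$ is discretized into unit cells. Two inputs close the argument: the Blackwell form of the renewal theorem, $\widehat V(k+1)-\widehat V(k)\to 1/(\beta\underline n(\zeta))$, and Karamata's theorem $\int_x^\infty \overline\nu(z)z^{-1}\,dz\sim\overline\nu(x)/\alpha$.

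\textbf{Your route.} You split at the macroscopic scales $\eta x$ and $Mx$. The two outer ranges are controlled by Corollary~\ref{Corollary.202} and the linear bound $\widehat V(y)\le C(1+y)$. On the middle range you transfer $\widehat V(dy)$ to $dy/(\beta\underline n(\zeta))$ by integrating by parts against the monotone kernel $\overline\nu(x+y)/(x+y)$. You finish with the uniform convergence theorem for regular variation and $\alpha\int_0^\infty(1+u)^{-\alpha-1}\,du=1$.

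\textbf{What each buys.} The paper's route is shorter. However, the local renewal statement it invokes implicitly needs $\widehat V$ to have non-lattice support, and the paper does not verify this. In a lattice case with span $h$ and $1/h\notin\mathbb N$, the supremum and infimum over integer cells of $\widehat V(j+1)-\widehat V(j)$ do not converge to $1/\mathbf E[\widehat H_1]$. Your argument uses only the elementary renewal theorem $\widehat V(y)\sim y/\mathbf E[\widehat H_1]$, which holds with no lattice restriction. The cost is the three-range splitting and the two integrations by parts.

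\textbf{One point for the write-up.} Because $\overline\nu$ and $\widehat V$ may jump at common points, state the integration by parts with $\widehat V(y-)$, or derive it via Fubini. This changes nothing asymptotically, since $\widehat V(y-)\sim\widehat V(y)\sim y/(\beta\underline n(\zeta))$ uniformly on $y\ge\eta x$.
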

 \proof
 By Theorem 7.8 of \cite[p.213]{Kyprianou2014},  we first have 
 \beqnn
 \varPi \big(x+\varDelta_\delta\big)  
 \ar=\ar\int_{0-}^\infty \nu \big(x+z+\varDelta_\delta\big)\,d\widehat{V}(z).
 \eeqnn
 By \eqref{eqn.30}, there exists a non-negative function $\rho$ on $\mathbb{R}_+$ that decreases to $0$ at infinity such that, for all $x>0$ and $z\geq 0$, 
 \beqnn
 (1-\rho_x) \cdot \frac{\overline\nu(x+z)}{x+z} \cdot \alpha \delta
 \leq  \nu \big(x+z+\varDelta_\delta\big) \leq (1+\rho_x) \cdot \frac{\overline\nu(x+z)}{x+z} \cdot \alpha \delta.
 \eeqnn
 Consequently, 
 \beqlb\label{eqn.3013} 
 (1-\rho_x)\cdot \frac{\alpha \delta }{\overline\nu(x)} \int_{0-}^\infty
 \frac{\overline\nu(x+z)}{x+z}\, d\widehat{V}(z) 
 \leq \frac{\varPi \big(x+\varDelta_\delta\big)}{\overline\nu(x)} \leq (1+\rho_x)\cdot \frac{\alpha \delta }{\overline\nu(x)} \int_{0-}^\infty
 \frac{\overline\nu(x+z)}{x+z}\, d\widehat{V}(z).
 \eeqlb
 For any integer $K\geq 1$, we decompose the last integral at the point $K$:
 \beqnn 
 \frac{\alpha \delta }{\overline\nu(x)} \int_{0-}^\infty
 \frac{\overline\nu(x+z)}{x+z}\, d\widehat{V}(z)
 =\frac{\alpha \delta }{\overline\nu(x)} \int_{0-}^K \frac{\overline\nu(x+z)}{x+z} \,d\widehat{V}(z)  +
 \frac{\alpha \delta }{\overline\nu(x)}\int_K^\infty \frac{\overline\nu(x+z)}{x+z}\, d\widehat{V} (z).
 \eeqnn
 For the first term on the right-hand side, by the monotonicity of $\overline\nu$, we have  as $x\to\infty$,
 \beqlb \label{eqn.502}
 \frac{\alpha \delta }{\overline\nu(x)} \int_{0-}^K \frac{\overline\nu(x+z)}{x+z} \,d\widehat{V}(z)  
 \leq \int_{0-}^K \frac{ \alpha \delta }{x+z} \,\widehat{V}(dz) \leq \frac{ \alpha \delta  }{x}\cdot \widehat{V}(K) \to 0. 
 \eeqlb
 For the second term,  using the monotonicity of $\overline\nu(y)$ twice again in the next first and third inequalities gives that
 \beqnn
 \frac{\alpha \delta }{\overline\nu(x)} \int_K^\infty \frac{\overline\nu(x+z)}{x+z}\,d\widehat{V}(z)
 \ar=\ar  \frac{\alpha \delta }{\overline\nu(x)} \cdot \sum_{k=K}^{\infty} \int_k^{k+1} \frac{\overline\nu(x+z)}{x+z}\,d\widehat{V}(z) \cr
 \ar\leq\ar
 \frac{\alpha \delta }{\overline\nu(x)} \cdot\sum_{k=K}^{\infty}
 \frac{\overline\nu(x+k)}{x+k} \big( \widehat{V}(k+1)-\widehat{V}(k) \big)\cr
 \ar\leq\ar \sup_{j\geq K} \big( \widehat{V}(j+1)-\widehat{V}(j) \big)\cdot \frac{\alpha \delta }{\overline\nu(x)} \sum_{k=K}^{\infty}
 \frac{\overline\nu(x+k)}{x+k}\cr
 \ar\leq\ar \sup_{j\geq K} \big( \widehat{V}(j+1)-\widehat{V}(j) \big)\cdot \frac{\alpha \delta }{\overline\nu(x)}   \int_{K-1}^\infty \frac{\overline{\nu}(x+z)}{x+z} \,dz .
 \eeqnn 
 By Karamata's theorem; see e.g. Proposition 1.5.10.  in \cite[p.27]{BinghamGoldieTeugels1987}, we have as $x\to\infty$,
 \beqnn
 \frac{\alpha \delta }{\overline\nu(x)}   \int_{K-1}^\infty \frac{\overline{\nu}(x+z)}{x+z} \,dz 
 =\frac{\alpha \delta }{\overline\nu(x)}   \int_{x+K-1}^\infty \frac{\overline{\nu}(z)}{z} \,dz 
 \sim \delta\cdot \frac{\overline{\nu}(x+K-1)}{\overline{\nu}(x)} \to  \delta.
 \eeqnn 
 Furthermore, Theorem 5.1 in \cite[p.117]{Kyprianou2014} yields that
 \beqnn
 \lim_{k\to\infty}\big(\widehat{V}(k+1)-\widehat{V}(k)\big) = \frac{1}{\beta \underline{n}(\zeta)}
 \quad \mbox{and hence}\quad
 \lim_{K\to\infty}\sup_{j\geq K} \big( \widehat{V}(j+1)-\widehat{V}(j) \big) =\frac{1}{\beta \underline{n}(\zeta)}.
 \eeqnn
 Combining the preceding estimates together gives that
 \beqnn
 \limsup_{K\to\infty}\limsup_{x\to\infty}  \frac{\alpha \delta }{\overline\nu(x)}  \int_K^\infty \frac{\overline\nu(x+z)}{x+z}\, d\widehat{V} (z) \leq \frac{\delta}{\beta \underline{n}(\zeta)}.
 \eeqnn
 On the other hand, a completely analogous argument gives the matching lower bound:
 \beqnn
 \liminf_{K\to\infty} \liminf_{x\to\infty}  \frac{\alpha \delta }{\overline\nu(x)} \int_K^\infty \frac{\overline\nu(x+z)}{x+z}\, d\widehat{V} (z) \geq \frac{\delta}{\beta \underline{n}(\zeta)}.
 \eeqnn
 Therefore,
 \beqnn
 \lim_{x\to\infty}\frac{\alpha \delta }{\overline\nu(x)} \int_{0-}^\infty
 \frac{\overline\nu(x+z)}{x+z}\, d\widehat{V}(z)
 = \frac{\delta}{\beta \underline{n}(\zeta)}. 
 \eeqnn
 Combining this limit with \eqref{eqn.502} and then substituting into \eqref{eqn.3013}, and finally letting $x\to\infty$ (and recalling that $\rho_x\to0$), yields the desired limit \eqref{eqn.5001}. 
 \qed

 \begin{proposition}\label{Prop.502}
 	For any $\delta>0$, there exists a constant $C>0$ such that, for all $x\geq 0$,
 	\beqnn
 	V(x+\delta)-V(x)\leq C\cdot \big( \overline{\nu}(x)  \wedge 1\big). 
 	\eeqnn 
 \end{proposition}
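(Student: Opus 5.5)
We need to bound $V(x+\delta)-V(x)$, where $V(x)=\int_0^\infty \mathbf P(H_s\le x)\,ds$ is the renewal function of the (killed) ascending ladder height process $H$. Since $X$ drifts to $-\infty$, $H$ is killed at rate $\kappa(0,0)>0$, and $V(\infty)=\underline n(\zeta)<\infty$. The plan has three steps: a first-passage decomposition reducing $V(x+\delta)-V(x)$ to one jump of $H$, a bound on the overshoot mass via Proposition~\ref{Prop.501}, and an assembly step.

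\textbf{Step 1 (renewal decomposition).} Introduce the first passage time $T_x:=\inf\{s:H_s>x\}$. The time $H$ spends in $(x,x+\delta]$ after $T_x$ can be controlled using the strong Markov property at $T_x$:
\begin{equation*}
V(x+\delta)-V(x)=\mathbf E\Big[\int_{T_x}^\infty \mathbf 1_{\{H_s\in x+\varDelta_\delta\}}\,ds\Big]\le \mathbf P\big(T_x<\infty,\ H_{T_x}\le x+\delta\big)\cdot V(\delta).
\end{equation*}
The inequality holds because, started from a point of $(x,x+\delta]$, the process stays in that window for an expected time at most $V(\delta)$. If $H_{T_x}>x+\delta$, the window is never visited.

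\textbf{Step 2 (bounding the landing probability).} Since $V$ is bounded by $V(\infty)=\underline n(\zeta)$, the increment is trivially at most $\underline n(\zeta)$. This already gives the factor $1$ in $\overline\nu(x)\wedge 1$, so it suffices to treat large $x$.

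Passage into $(x,x+\delta]$ occurs either by creeping or by a jump of $H$. Creeping happens only if $H$ has a positive drift, and in that case it lands exactly at $x$, not in $(x,x+\delta]$. A jump landing in the window comes from some pre-jump position $z\le x$. By the compensation formula for $H$,
\begin{equation*}
\mathbf P\big(T_x<\infty,\ H_{T_x}\in x+\varDelta_\delta\big)\le \int_{[0,x]} V(dz)\,\varPi\big(x-z+\varDelta_\delta\big).
\end{equation*}

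\textbf{Step 3 (estimating the integral; the main obstacle).} We split the integral at $z=x/2$.
\begin{itemize}
\item For $z\le x/2$: Proposition~\ref{Prop.501}, extended to a uniform bound $\varPi(y+\varDelta_\delta)\le C\overline\nu(y)$ for large $y$, together with the monotonicity of $\overline\nu$ and $\overline\nu\in\mathrm{RV}^\infty_{-\alpha}$, gives a contribution at most $C\,\overline\nu(x/2)\,V(\infty)\le C\,\overline\nu(x)$.
\item For $z\in(x/2,x]$: here we only use $\varPi(w+\varDelta_\delta)\le \varPi(w,\infty)$ for $w=x-z\ge0$. The contribution is then at most
\begin{equation*}
\int_{(x/2,x]} \overline\varPi(x-z)\,V(dz)\le \mathbf P\big(\text{$H$ ever exceeds } x/2\big)\le \mathbf P\big(\overline X_\infty> x/2\big),
\end{equation*}
where $\overline\varPi(w):=\varPi\big((w,\infty)\big)$. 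The first inequality reflects that this integral is the probability of first passage above $x$ from a position already above $x/2$. The known tail $\mathbf P(\overline X_\infty>y)\sim \underline n(\zeta)\cdot\!\int_y^\infty\overline\nu(r)\,dr\,/\ldots$ is of order $y\,\overline\nu(y)$, so this route is too crude.
\end{itemize}
The refinement is to handle the near range using the local structure instead of the tail. Bound $\varPi(x-z+\varDelta_\delta)\le C(\overline\nu(x-z)\wedge 1)$ uniformly in $z$, using Proposition~\ref{Prop.501} for large arguments and finiteness of $\varPi$ on $(w_0,\infty)$ for $w\ge w_0$. For small arguments, replace $\varPi$ by the trivial bound on the landing probability. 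Then cover $(x/2,x]$ by unit intervals, each carrying $V$-mass at most $V(1)$ by the subadditivity of renewal functions. This yields the sum $\sum_k V(1)\,(\overline\nu(k)\wedge1)$, which is bounded but not small.

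So the real obstacle is the near range. There I would rather use $V(dz)$-mass near $x$ times a bounded jump probability, which itself is $O(\overline\nu(x))$ by induction or bootstrap: the quantity $\sup_{y\in[x/2,x]}(V(y+1)-V(y))$ appears on both sides. One then closes the bound through the summable kernel $\int(\overline\nu(w)\wedge1)\,dw<\infty$, which is finite since $\alpha>1$, combined with the fact that the total jump rate of $H$ times $V(\infty)$ is less than the killing normalization.
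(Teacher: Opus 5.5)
Your Steps 1 and 2 are valid reductions, apart from one omission noted below. However, the whole content of the proposition lies in the near range of Step 3, and there the proposal stops exactly at the difficulty. For $z\in(x/2,x]$ your own estimates give only $O(x\overline\nu(x))$ or $O(1)$. The bootstrap you then describe is asserted, not carried out.

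An inequality $M\le C+cM$ for $M:=\sup_y (V(y+1)-V(y))/\overline\nu(y)$ closes only if $c<1$. Every constant your near-range bound puts in front of the unknown increment is uncontrolled:
the factor $\overline\nu(x/2)/\overline\nu(x)\to 2^{\alpha}$ that comes from splitting at $x/2$;
the masses $V(1)$ and $V(\delta)$;
the sum $\sum_k(\overline\nu(k)\wedge 1)$, which is finite but not small.

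The contraction you invoke, that the total jump rate of $H$ times $V(\infty)$ is below the killing normalization, is false. The total jump rate $\varPi((0,\infty))$ may be infinite. Even for jumps above $1$, $\gamma V(\infty)=\gamma/q$ with $\gamma=\varPi((1,\infty))$ and $q=\kappa(0,0)$, and this need not be less than $1$.

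Separately, Step 2 discards creeping. With $T_x=\inf\{s:H_s>x\}$, a creep gives $H_{T_x}=x$, which belongs to your event $\{H_{T_x}\le x+\delta\}$. After it, $H$ spends positive time in $(x,x+\delta]$. When $H$ has drift $d>0$, this event has probability $d\,u(x)$, with $u$ the density of $V$, which is essentially the quantity you are trying to bound.

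What is missing is a mechanism that makes the self-referential coefficient genuinely smaller than what the killing can absorb. The paper starts from $V(x+\delta)-V(x)=\int_0^\infty e^{-qt}\mathbf P(\xi_t\in x+\varDelta_\delta)\,dt$ and splits $\xi$ into two parts:
drift plus jumps $\le 1$, which has all exponential moments and contributes $O(e^{-\lambda_0 x/2})$;
a compound Poisson part $S_{N_t}$ whose step law $G$ is locally regularly varying by Proposition~\ref{Prop.501}, hence $\varDelta_\delta$-subexponential.
Kesten's bound $\mathbf P(S_n\in z+\varDelta_\delta)\le C_0(1+\varepsilon)^n G(z+\varDelta_\delta)$, with $\varepsilon>0$ arbitrary, then yields a factor $e^{\varepsilon\gamma t}$. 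The factor $e^{-qt}$ absorbs it once $\varepsilon\gamma<q$.

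If you want to keep your renewal route, you can repair it as follows.
\begin{enumerate}
\item Decompose at the first jump larger than $1$: $V=V^{(1)}+K*V$. Here $V^{(1)}$ is the potential of the drift-plus-small-jumps part killed at rate $q+\gamma$, and $K=\gamma V^{(1)}*G$ has total mass $\gamma/(q+\gamma)<1$. This is the correct contraction, and it also absorbs creeping.
\item Split the convolution at a fixed large $W$ rather than at $x/2$. The coefficient of $M$ then tends to $\gamma/(q+\gamma)$, because $\overline\nu(x-w)/\overline\nu(x)\to1$ uniformly in $w\le W$.
\item Control the remaining ranges using $K((W,\infty))\to0$, the local bound $K(y+\varDelta_1)=O(\overline\nu(y))$, and $\sum_k\overline\nu(k)<\infty$ (this is where $\alpha>1$ enters).
\item Run the bootstrap on the a priori finite quantity $\sup_{x_0\le x\le X}(V(x+\delta)-V(x))/\overline\nu(x)$.
\end{enumerate}
This amounts to proving Kesten's bound by hand. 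Without it, or an equivalent argument, the proposal does not establish the claim.
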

 \proof By the monotonicity of $V$ and $\overline{\nu}$, it suffices to prove the bound for all sufficiently large $x$. 
 Let $\xi$ be a subordinator with drift $\underline{a}$ and L\'evy measure $\varPi$. 
 Then $H$ is equal in law to $\xi$ killed at rate $q:=\kappa(0,0)>0$, and $V$ is the distribution function of the $q$-resolvent measure of $\xi$, i.e., 
 \beqnn
 V(x) = \int_0^\infty e^{-q t}\, \mathbf{P}\big( \xi_t \leq x \big)\,dt,\quad x\geq 0.
 \eeqnn
 Consequently, 
 \beqnn
 V(x+\delta)-V(x) = \int_0^\infty e^{-q t}\, \mathbf{P}\big( \xi_t  \in x+\varDelta_\delta \big)\,dt.
 \eeqnn
 The L\'evy-It\^o decomposition theorem allows us to split $\xi$ into  two independent subordinators: 
 \beqnn
 \xi^{(1)}_t:=\underline{\tt a}\cdot t+\int_0^t \int_0^1y\, N_H(ds, dy)
 \quad\text{and}\quad
 \xi^{(2)}_t:=\int_0^t \int_1^\infty y\, N_H(ds, dy),
 \eeqnn
 where $N_H(ds,dz)$ is a Poisson random measure on $(0,\infty)^2$ with intensity $ds\,\varPi(dz)$.  
 The law of total probability induces that $\mathbf{P}\big( \xi_t  \in x+\varDelta_\delta \big) \leq 
 \mathbf{P}\big( \xi^{(1)}_t> x/2 \big) + \mathbf{P}\big(  \xi^{(1)}_t + \xi^{(2)}_t  \in x+\varDelta_\delta ,  \xi^{(1)}_t\leq x/2\big) $
 and 
 \beqlb \label{eqn.503}
 V(x+\delta)-V(x) 
 \leq \int_0^\infty e^{-q t}\,   \mathbf{P}\big( \xi^{(1)}_t> x/2 \big)\,dt       
 + \int_0^\infty e^{-q t}\,\mathbf{P}\big(  \xi^{(1)}_t + \xi^{(2)}_t  \in x+\varDelta_\delta ,  \xi^{(1)}_t\leq x/2\big)\,dt. 
 \eeqlb
 We now show that any of the two integrals on the right-hand side is bounded by $C\cdot \overline{\nu}(x)$ uniformly for large $x$:
 
 \textbf{(i)} By Theorem~25.17 in \cite[p.165]{Sato1999},   $\xi^{(1)}$ has finite exponential moments of any order, i.e.,
 \beqnn
 \varPhi^{(1)}(\lambda) := \log \mathbf{E}\Big[ \exp\big\{ \lambda \cdot \xi^{(1)}_1 \big\}  \Big]<\infty, \quad \forall \,\lambda \in \mathbb{R}. 
 \eeqnn
 Since $ \varPhi^{(1)}$ is smooth and $\varPhi^{(1)}(0)=0$, we may choose  $\lambda_0>0$ such that $\varPhi^{(1)}(\lambda_0)<q $.   Chebyshev's inequality then gives
 \beqnn
 \mathbf{P}\big(\xi^{(1)}_t>x/2\big) \leq \exp\big\{-\lambda_0x/2 \big\} \cdot \mathbf{E}\big[ \exp\{\lambda_0\cdot \xi^{(1)}_t \} \big] = \exp\big\{-\lambda_0x/2 + t\cdot \varPhi^{(1)}(\lambda_0)\big\} . 
 \eeqnn
 Therefore, for some constant $C>0$, 
 \beqnn
 \int_0^\infty e^{-qt}\mathbf{P}\big(\xi^{(1)}_t>x/2\big)\,dt
 \leq e^{-\lambda_0 x/2}\int_0^\infty \exp\big\{(\varPhi^{(1)}(\lambda_0)-q)\cdot t\big\} \,dt 
 \leq  C \cdot \exp\big\{-\lambda_0 x/2 \big\},
 \eeqnn
 which is clearly $o\big(\overline{\nu}(x)\big)$ as $x\to\infty$. 
 
 \textbf{(ii)} For the second integral in \eqref{eqn.503}, we first note that
 \beqnn	
 \mathbf{P}\big(  \xi^{(1)}_t + \xi^{(2)}_t  \in x+\varDelta_\delta ,  \xi^{(1)}_t\leq  x/2\big)  
 \leq \sup_{z\in(0,x/2]}\mathbf{P}\big(    \xi^{(2)}_t  \in x-z+\varDelta_\delta \big)
 =\sup_{z\in[x/2,x)}\mathbf{P}\big(    \xi^{(2)}_t  \in z+\varDelta_\delta \big).
 \eeqnn
 Since $\gamma:= \varPi\big((1,\infty)\big)<\infty$, the subordinator $\xi^{(2)}$  is a compound Poisson process: 
 \beqnn
 \xi^{(2)}_t\overset{\rm d}= S_{N_t},\quad t\geq 0,
 \eeqnn
 where $N$  is a Poisson process with  rate $\gamma$, and $S$ is a random walk with i.i.d. steps $\{ Y_j \}_{j\geq 1}$ having distribution $	G(dz):=\mathbf{1}_{\{z>1\}}\cdot \gamma^{-1}\cdot  \varPi(dz) $. 
 Thus, 
 \beqnn
 \mathbf{P}\big(    \xi^{(2)}_t  \in z+\varDelta_\delta \big) 
 \ar=\ar e^{-\gamma t} \cdot\sum_{n=1}^\infty \frac{(\gamma t)^n}{n!}\cdot  \mathbf{P}\big( S_n \in z+\varDelta_\delta \big),\quad z\geq 0.
 \eeqnn
 By Proposition~\ref{Prop.501}, we have  $G(\cdot+\varDelta_\delta)\in\mathrm{RV}^\infty_{-\alpha}$. 
 Hence the law $G$ is $\varDelta_\delta$-subexponential by Proposition~9 in \cite{AsmussenFossKorshunov2003}. 
 For any $\varepsilon\in(0,q/\gamma)$,  Proposition~4 in \cite{AsmussenFossKorshunov2003} tells that 
 there exist two constants $C_0,z_0>0$ such that
 for all $z\geq z_0$ and $n\geq1$,
 \beqnn
 \mathbf{P}\big(S_n\in z+\varDelta_\delta\big)
 \leq C_0\cdot(1+\varepsilon)^n
 \cdot G\big(z+\varDelta_\delta\big)
 \leq C_0\cdot(1+\varepsilon)^n
 \cdot\overline\nu(z).
 \eeqnn
 Consequently, uniformly for $x\geq 2z_0$ and $t>0$,  
 \beqnn
 \mathbf{P}\big(  \xi^{(1)}_t + \xi^{(2)}_t  \in x+\varDelta_\delta ,  \xi^{(1)}_t\leq  x/2\big)  
 \ar\leq\ar \sup_{z\in [x/2,x)}\mathbf{P}\big(    \xi^{(2)}_t  \in z+\varDelta_\delta \big) \cr\cr
 \ar\leq \ar C_0\cdot \overline{\nu}(x/2)\cdot e^{-\gamma t} \cdot   \sum_{n=1}^\infty \frac{(\gamma t)^n}{n!}\cdot    (1+\varepsilon)^n
 \leq C\cdot e^{\varepsilon\gamma t } \cdot \overline{\nu}(x).
 \eeqnn
 The last inequality follows from the Taylor expansion of the exponential function and the fact that $\overline{\nu}(x/2)/\overline{\nu}(x)\to 2^{\alpha}$ as $x\to\infty$. 
 Substituting this estimate into the second integral in \eqref{eqn.503} yields, uniformly for $x\geq 2z_0$, 
 \beqnn
 \int_0^\infty e^{-q t}\,\mathbf{P}\big(  \xi^{(1)}_t + \xi^{(2)}_t  \in x+\varDelta_\delta ,  \xi^{(1)}_t\leq x/2\big)\,dt
 \ar\leq\ar C\cdot \overline{\nu}(x) \int_0^\infty e^{\varepsilon\gamma t-q t}\, dt \leq C\cdot \overline{\nu}(x) ,
 \eeqnn
 since $\varepsilon\gamma<q$.  
 \qed

 \begin{proposition}\label{Prop.504}
 	For any $\delta>0$, there exists a constant $C>0$ such that for all $x \geq 0$,
 	\beqlb\label{eqn.504}
 	\sup_{y\leq0} \mathbf P\big( \overline{X}_{\tau_{y}^-}\in x+\varDelta_\delta\big)
 	\leq C\cdot \overline\nu(x).
 	\eeqlb
 \end{proposition}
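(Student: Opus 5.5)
We prove the bound through the Wiener--Hopf (ladder) decomposition of the path before $\tau_y^-$. Since $X$ drifts to $-\infty$, the overall supremum $\overline X_\infty$ is attained at an ascending ladder epoch, and $\overline X_\infty \overset{\rm d}= H_{\mathbf e_q}$, where $H$ is the ascending ladder height process killed at rate $q=\kappa(0,0)$. Hence $\mathbf P(\overline X_\infty\in dz)=q\,V(dz)$, and by Proposition~\ref{Prop.502},
\beqnn
\mathbf P\big(\overline X_\infty\in x+\varDelta_\delta\big)=q\,\big(V(x+\delta)-V(x)\big)\le C\,\overline\nu(x).
\eeqnn
This settles the case $y=-\infty$. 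The task is to make the bound uniform over the killing level $y\le0$.

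We use the ladder representation with killing. The excursions of $\overline X-X$ away from $0$ form a Poisson point process indexed by local time, and the path before $\tau_y^-$ is obtained by running the ladder process until the first excursion whose depth exceeds $\overline X$ at that local time minus $y$. Denote this killing local time by $\ell_y$. Then $\overline X_{\tau_y^-}=H_{\ell_y-}$, and in particular $\overline X_{\tau_y^-}$ is a value taken by the ladder height process at some local time $u$. By the compensation formula for the excursion point process, we obtain the exact identity
\beqnn
\mathbf P\big(\overline X_{\tau_y^-}\in x+\varDelta_\delta\big)
=\mathbf E\Big[\int_0^\infty \mathbf 1_{\{H_u\in x+\varDelta_\delta,\ u<\ell_y\}}\;\overline n\big(\overline\epsilon> H_u-y\big)\,du\Big].
\eeqnn
Here $\overline n(\overline\epsilon>z)$ is the rate of excursions of $\overline X-X$ deep enough to kill. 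We then bound $\mathbf 1_{\{u<\ell_y\}}\le \mathbf 1_{\{u<\zeta_H\}}$ and $\overline n(\overline\epsilon>H_u-y)\le \overline n(\overline\epsilon>x)$, using $H_u>x$ and $y\le0$.

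This gives the bound $\overline n(\overline\epsilon> x)\cdot (V(x+\delta)-V(x))$, which is fine whenever $\overline n(\overline\epsilon>x)$ is finite, for instance for $x\ge1$. However, the unkilled formulation double counts: $\overline n(\overline\epsilon>x)$ is bounded only for large $x$, so small $x$ must be handled separately. For $x$ in a compact range we use the trivial bound $1\le C\,\overline\nu(x)$ on $[0,x_0]$. To double-check the normalization, we verify the case $y\to-\infty$: there $\ell_y$ becomes the killing time of $H$, and the identity must reproduce $q\,V(dz)$. The killing rate at level $z$ therefore tends to $q$, consistent with the stated bound.

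The main obstacle is justifying the compensation identity rigorously and tracking the normalization of $\overline n$ against the local time of $V$. If this becomes delicate, the fallback argument is simpler. By monotonicity in $y$ (larger killing depth only extends the path), we have $\overline X_{\tau_y^-}\le \overline X_\infty$. We then decompose on $\{\overline X_{\tau_y^-}\in x+\varDelta_\delta\}$ according to whether $\overline X_\infty=\overline X_{\tau_y^-}$ or the process later rises above $x$ after $\tau_y^-$. The first event lies in $\{\overline X_\infty\in x+\varDelta_\delta\}$, which is bounded by $C\overline\nu(x)$ as above. The second event requires reaching $(x,\infty)$ again from below $y\le 0$ after $\tau_y^-$. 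By the strong Markov property at $\tau_y^-$, its probability is at most $\sup_{w\le 0}\mathbf P_w(\overline X_\infty> x)\le \mathbf P(\overline X_\infty>x)$. This is only $O(\overline\nu(x)\cdot x)$, so we also need the joint event to have $\overline X_{\tau_y^-}$ in a window. We handle this by conditioning on $\mathscr F_{\tau_x^+}$: the event is contained in $\{\overline X_{\tau^+_{x}}\in x+\varDelta_\delta\text{-type overshoot}\}$ followed by a return below $0$ and a further rise, and we bound the local overshoot mass by $C\overline\nu(x)$ using $\varPi(x+\varDelta_\delta)\sim \delta\overline\nu(x)/(\beta\underline n(\zeta))$ from Proposition~\ref{Prop.501}.
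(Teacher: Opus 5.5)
Your main argument is correct, but it takes a genuinely different route from the paper. You get an exact formula for the law of $\overline X_{\tau_y^-}$ from the compensation formula for the Poisson point process of excursions of $\overline X-X$, indexed by local time at the supremum. On $\{H_u\in x+\varDelta_\delta\}$ the killing excursion arrives at rate $\overline n(\overline\epsilon>H_u-y)\le\overline n(\overline\epsilon>x)$, and this is at most $\overline n(\overline\epsilon>1)<\infty$ for $x\ge1$. Integrating the occupation of $x+\varDelta_\delta$ by the killed ladder height process produces $V(x+\delta)-V(x)$, Proposition~\ref{Prop.502} finishes, and $x<1$ is trivial.

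This is sound under three conditions. First, $\overline n$ must carry the mass $q$ of the infinite final excursion; your $y\to-\infty$ check is precisely this convention. Second, $H_{u-}$ and $\mathbf 1_{\{u\le\ell_y\}}$ must be predictable for the excursion filtration. Third, the local-time normalizations of $\overline n$ and $V$ must agree; this is harmless, because the product $\overline n(\overline\epsilon>x)\,(V(x+\delta)-V(x))$ is invariant under rescaling local time.

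The paper avoids excursion theory for $\overline X-X$ altogether. It only uses that $\overline X_{\tau_y^-}$ is a value or left limit of the monotone process $H$. Hence $\{\overline X_{\tau_y^-}\in x+\varDelta_\delta\}$ is contained, for all $y\le0$ simultaneously, in the event that $H$ ever enters $x+\varDelta_\delta$. The paper applies the strong Markov property of $H$ at that entrance time and notes that $H$ afterwards spends expected time at least $V(1)$ in $x+\varDelta_{\delta+1}$. This gives the bound $(V(x+\delta+1)-V(x))/V(1)$, to which Proposition~\ref{Prop.502} applies for every $x\ge0$.

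Your route gives more: the law of $\overline X_{\tau_y^-}$ has density at most $\overline n(\overline\epsilon>z)$ with respect to $V(dz)$ on $(0,\infty)$. The paper's route is more elementary and needs neither the finiteness of $\overline n(\overline\epsilon>x)$ nor a separate small-$x$ case.

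Your fallback, on the other hand, does not work as written. Set $p:=\mathbf P(\overline X_{\tau_y^-}\in x+\varDelta_\delta)$. The second event is the intersection of $\{\overline X_{\tau_y^-}\in x+\varDelta_\delta\}\in\mathscr F_{\tau_y^-}$ with a post-$\tau_y^-$ event started from $X_{\tau_y^-}\le0$. So the strong Markov property at $\tau_y^-$ bounds it by $p\cdot\mathbf P(\overline X_\infty>x)$, not merely by $\mathbf P(\overline X_\infty>x)$.

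Dropping the factor $p$ is what forces your overshoot detour, and Proposition~\ref{Prop.501} cannot carry that out on its own. For pre-passage positions just below $x$, the overshoot law involves the small-jump part of $\varPi$, and controlling it requires renewal-type bounds of exactly the kind the paper uses.

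If you keep the factor, $p\le\mathbf P(\overline X_\infty\in x+\varDelta_\delta)+p\,\mathbf P(\overline X_\infty>x)$ rearranges to $p\le C\,\overline\nu(x)/\mathbf P(\overline X_\infty\le x_0)$ for $x\ge x_0>0$. Together with the trivial bound for $x<x_0$, this is already a complete proof.
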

 \proof 
 Let $T_{x,\delta}^H:= \inf\{ t\geq 0: H_t\in x+\varDelta_\delta \}$ be the first entrance time of  the ascending ladder height process $H$ into the interval $x+\varDelta_\delta$ with $\inf\emptyset=\infty$ for convenience.  
 The monotonicity of $H$ shows that $\overline{X}_{\tau_{y}^-}\in x+\varDelta_\delta$ for some $y\leq0$ implies  $T_{x,\delta}^H<\infty$.
 This induces that 
 \beqnn
 \sup_{y\leq 0} \mathbf{P}\big( \overline{X}_{\tau_{y}^-}\in x+\varDelta_\delta\big) 
 \leq  \mathbf{P}\big(\cup_{y\leq 0} \big\{ \overline{X}_{\tau_{y}^-}\in x+\varDelta_\delta \big\}\big) 
 \leq \mathbf{P}\big( T_{x,\delta}^H<\infty\big).
 \eeqnn 
 Thus it suffices to prove that the last probability can be bounded by $C\cdot \overline{\nu}(x)$ uniformly for all $x\geq 0$.  
 By the defining formula \eqref{eqn.DefV} for $V$, we have
 \beqnn
 V(x+\delta+1)-V(x)= \mathbf{E}\bigg[\int_0^\infty \mathbf{1}_{\{H_t \in x+\varDelta_{\delta+1} \}} dt\bigg] 
 \geq \mathbf{E}\bigg[ \mathbf{1}_{\{T_{x,\delta}^H<\infty \}}\cdot  \int_{T_{x,\delta}^H}^\infty \mathbf{1}_{\{H_t \in x+\varDelta_{\delta+1} \}} dt\bigg] . 
 \eeqnn 
 Conditioned on  $T_{x,\delta}^H<\infty$,  the monotonicity of $H$ implies that $ \{H_t-H_{T_{x,\delta}^H}\leq 1\} \subset \{H_t\in x+\varDelta_{\delta+1}\}$ for all $t\geq T_{x,\delta}^H$.
 Applying the strong Markov property at $T_{x,\delta}^H$ and then the independent increments of $H$ to the last expectation,
 \beqnn
 V(x+\delta+1)-V(x) 
 \ar\geq\ar  \mathbf{E}\bigg[ \mathbf{1}_{\{T_{x,\delta}^H<\infty \}}\cdot  \int_{T_{x,\delta}^H}^\infty \mathbf{1}_{\{H_t-H_{T_{x,\delta}^H}\leq 1 \}} dt\bigg] \cr\cr
 \ar=\ar \mathbf{P}\big(  T_{x,\delta}^H<\infty\big) \cdot \mathbf{E}\bigg[  \int_0^\infty \mathbf{1}_{\{H_t \leq 1 \}} dt\bigg] =   \mathbf{P}\big(  T_{x,\delta}^H<\infty\big) \cdot V(1).
 \eeqnn 
 In conclusion, we have 
 \beqnn
 \sup_{y\leq 0} \mathbf{P}\big( \overline{X}_{\tau_{y}^-}\in x+\varDelta_\delta\big)  
 \leq \frac{V(x+\delta+1)-V(x) }{V(1)},
 \eeqnn
 and then the desired uniform upper bound \eqref{eqn.504} follows directly from Proposition~\ref{Prop.502}. 
 \qed

 \begin{proposition}\label{Prop.506}
 	For any $\delta>0$, there exists a constant $C>0$ such that, for all $x\geq 0$,
 	\beqnn
 	\int_0^\infty \underline{n}_s\big( x+\varDelta_\delta \big)\, ds \leq C\cdot\big( \overline{\nu}(x) \wedge 1\big).
 	\eeqnn
 \end{proposition}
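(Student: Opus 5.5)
The integral is recognized as a renewal-function increment. As recalled in Section~\ref{Sec.Preliminaries},
\beqnn
V(x)=\int_0^\infty \underline{n}(\epsilon_s\leq x,\zeta>s)\,ds ,
\eeqnn
which expresses $V$ as the occupation measure of the excursion under $\underline n$. Since $\underline n_s(dz)=\underline n(\epsilon_s\in dz,\zeta>s)$, Fubini gives
\beqnn
\int_0^\infty \underline{n}_s\big(x+\varDelta_\delta\big)\,ds
=\int_0^\infty \underline n\big(\epsilon_s\in(x,x+\delta],\,\zeta>s\big)\,ds
= V(x+\delta)-V(x).
\eeqnn
Once this identity is justified, the claim follows immediately from Proposition~\ref{Prop.502}, which gives $V(x+\delta)-V(x)\leq C(\overline\nu(x)\wedge 1)$ for all $x\geq0$.

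The identity itself needs some care. First, I will check the normalizations. The representation of $V$ through $\underline n$ uses the paired normalization of local times fixed in Section~\ref{Sec.Preliminaries}: $\widehat L$ is normalized and $V(\infty)=\underline n(\zeta)$. The same normalization of $\underline n$ also underlies the entrance law $\underline n_s$, so no extra constant appears. If one prefers not to rely on that display, it can be derived directly. By the Wiener--Hopf/duality argument, the time-reversed excursion of $X-\underline X$ corresponds to the ascending ladder structure, so that $\int_0^\infty \underline n(\epsilon_s\in dz,\zeta>s)\,ds$ is the potential measure of $H$. This is exactly the identity quoted (cf.\ the analogous formula for $\widehat V$ with $\overline n$, and (16) in \cite{DoneyRivero2013}).

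Second, I will note that the half-open interval $(x,x+\delta]$ matches $V(x+\delta)-V(x)$ because $V$ is defined with $\leq$.

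The only potential obstacle is this identification of the occupation measure of $\underline n$ with $dV$, including the constant. If there were any doubt, a fallback is available. Use the bound $V(x+\delta)-V(x)\leq V(\infty)=\underline n(\zeta)<\infty$ for small $x$. For large $x$, rely on Proposition~\ref{Prop.502}, whose proof is in terms of the killed subordinator $H$; this is consistent with the representation of $V$ as the $q$-resolvent of $\xi$ with $q=\kappa(0,0)$ and $V(\infty)=1/\kappa(0,0)=\underline n(\zeta)$ by \eqref{eqn.501}. Matching total masses confirms that the normalization constant is $1$. Combining the two regimes yields $\int_0^\infty \underline n_s(x+\varDelta_\delta)\,ds\leq C(\overline\nu(x)\wedge1)$ for all $x\geq0$.
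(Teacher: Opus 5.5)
Your argument is correct, and it follows a different and shorter route than the paper.

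You use the occupation-measure identity $V(x)=\int_0^\infty \underline n(\epsilon_s\le x,\zeta>s)\,ds$ from Section~\ref{Sec.Preliminaries} to rewrite the integral exactly as the renewal increment $V(x+\delta)-V(x)$, and then apply Proposition~\ref{Prop.502}. There is no circularity, since Proposition~\ref{Prop.502} rests on Proposition~\ref{Prop.501} and local subexponential bounds, not on Proposition~\ref{Prop.506}. Your concern about normalization is unnecessary: a constant factor would be absorbed into $C$, and a possible atom of $V$ at $0$ is already excluded by your half-open interval.

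The paper argues directly at the excursion level instead. For small $x$ it uses the trivial bound $\underline n(\zeta)$. For large $x$ it applies the strong Markov property at the first entrance time $T^\delta_x$ into $(x,x+\delta]$. This bounds the integral by $\underline n(\zeta>T^\delta_x)$ times the potential of $[-\delta,\delta]$ for the transient process $X$. Finally, $\underline n(\zeta>T^\delta_x)\le\underline n(\overline\epsilon>x)\sim\underline n(\zeta)\overline\nu(x)$ by \eqref{CWX01}.

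Your route is shorter and shows that Propositions~\ref{Prop.502} and~\ref{Prop.506} are the same statement. However, it inherits the heavier machinery behind Proposition~\ref{Prop.502}: Vigon's identity, the renewal theorem for $\widehat V$, and the Asmussen--Foss--Korshunov estimates. The paper's route needs only transience and the tail asymptotic \eqref{CWX01}. Combined with your identity, it would give a more elementary proof of Proposition~\ref{Prop.502} itself.
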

 \proof Since $\underline{n}_s\big(x+\varDelta_\delta \big) \leq \underline{n}(\zeta>s)$ for all $x\geq0$, by \eqref{eqn.501} we have the trivial uniform bound
 \beqlb\label{eqn.3033}
 \int_0^\infty \underline{n}_s\big( x+\varDelta_\delta \big)\, ds 
 \leq \int_0^\infty \underline{n}(\zeta>s)\, ds =\underline{n}(\zeta)<\infty .
 \eeqlb
 Thus it remains to show that the integral is $O(\overline{\nu}(x))$ as $x\to\infty$.
 Define the stopping time $T_{x}^\delta := \inf\big\{ 0<s<\zeta: \epsilon_s\in x+\varDelta_\delta \big\}$ with the convention $\inf \emptyset =\infty$. 
 By the strong Markov property at $T_{x}^\delta $,
 \beqnn
 \int_0^\infty \underline{n}_s\big( x+\varDelta_\delta \big)\, ds
 \ar=\ar \underline{n}\bigg(\int_{T_{x}^\delta }^{\zeta}
 \mathbf{1}_{\{\epsilon_s\in x+\varDelta_\delta\}}\,ds,\,
 \zeta>T_{x}^\delta  \bigg) \cr
 \ar=\ar
 \underline{n}\bigg(
 \mathbf{1}_{\{T_{x}^\delta <\zeta\}}\cdot
 \mathbf{E}_{\epsilon_{T_{x}^\delta }}\Big[ \int_0^{\tau_0^-} \mathbf{1}_{\{X_r\in  x+\varDelta_\delta\}}\,dr
 \Big] \bigg) \cr 
 \ar\leq\ar \sup_{y\in x+\varDelta_\delta}\mathbf{E}_y\bigg[ \int_0^\infty \mathbf{1}_{\{X_r\in  x+\varDelta_\delta\}}\,dr
 \bigg] \cdot \underline{n}\big(  T_{x}^\delta <\zeta\big).
 \eeqnn 
 By spatial homogeneity of the Lévy process $X$,  
 \beqnn
 \sup_{y\in x+\varDelta_\delta}\mathbf{E}_y\bigg[ \int_0^\infty \mathbf{1}_{\{X_r\in  x+\varDelta_\delta\}}\,dr
 \bigg] 
 \ar\leq\ar \int_0^\infty \sup_{y\in x+\varDelta_\delta}\mathbf{P}_y\big(X_r\in  x+\varDelta_\delta\big)\,dr 
 \leq \int_0^\infty  \mathbf{P}\big(X_r\in   [-\delta,\delta]\big)\,dr .
 \eeqnn
 The last integral is finite, since $X_t\overset{\rm a.s.}\to-\infty$  and hence its potential measure is finite on compact sets; see, e.g., \cite[Chapter~I.4]{Bertoin1996}.  
 It remains to estimate $\underline{n}(T_x^\delta<\zeta)$. On the event $\{T_x^\delta<\infty\}$, we have $\overline{\epsilon}>x$. 
 Therefore, the asymptotic property \eqref{CWX01} induces that as $x\to\infty$,
 \beqnn
 \underline{n}\big(\zeta> T_{x}^\delta\big) \leq \underline{n}(\overline{\epsilon}>x) \sim \underline{n}(\zeta) \cdot \overline{\nu}(x)
 \quad \mbox{and then}\quad 
 \int_0^\infty \underline{n}_s\big( x+\varDelta_\delta \big)\, ds =O\big(\overline{\nu}(x)\big) . 
 \eeqnn 
 Together with the uniform bound \eqref{eqn.3033}, this gives the desired estimate. 
 \qed
 
 \medskip
 
 \textbf{\textit{Proof of Lemma~\ref{Lemma.501}.}}  Without loss of generality, we may assume that $K$ is a positive integer. 
 Given a constant  $\eta\in(1/\alpha,1)$, one can easily identify that 
 \beqlb\label{eqn.50122}
 x\cdot \overline{\nu}\big(x^\eta\big) \to 0,\quad \mbox{as $x\to\infty$.}
 \eeqlb 
 We introduce three classes of large jumps, distinguished according to the behavior of the excursion before and after the jump: 
 \beqnn
 \mathcal{A}_{\delta, \eta } (x)\ar:=\ar \Big\{0<s\leq\zeta:  \Delta\epsilon_s> x^\eta, \, \sup_{s\leq r<\zeta} \epsilon_{r}  \in x+ \varDelta_\delta \Big\},\cr
 \mathcal{A}_{\delta, \eta }^{(1)} (x,K)
 \ar:=\ar \Big\{0<s\leq\zeta:   \Delta\epsilon_s> x^\eta,\,  \sup_{s\leq r<\zeta} \epsilon_{r}  \in x+\varDelta_\delta ,\, \sup_{s\leq r<\zeta} \epsilon_{r}-\epsilon_s> K\Big\},\cr
 \mathcal{A}_{\delta, \eta }^{(2)} (x,K)
 \ar:=\ar \Big\{0<s\leq\zeta: \Delta\epsilon_s>x^\eta,\, \epsilon_{s-}>K,\, \sup_{s\leq r<\zeta} \epsilon_{r} 
 \in x+\varDelta_\delta,\, \sup_{s\leq r<\zeta} \epsilon_{r}-\epsilon_s\leq K \Big\}. 
 \eeqnn 
 In view of \eqref{eqn.5018}, for sufficiently large $x$, it is immediate from the definitions that $\mathcal{A}_{\delta}(x,K)$, $\mathcal{A}_{\delta, \eta }^{(1)} (x,K)$, $ \mathcal{A}_{\delta, \eta }^{(2)} (x,K)$ are mutually disjoint and together form a partition of $ \mathcal{A}_{\delta, \eta } (x)$.
 Consequently,  
 \beqnn
 \big\{\mathcal{A}_{\delta}(x,K) =\emptyset  \big\}
 = \big\{ \mathcal{A}_{\delta, \eta } (x) =\emptyset  \big\} \cup \big\{ \mathcal{A}_{\delta, \eta } (x)=\mathcal{A}_{\delta, \eta }^{(1)} (x,K)\cup \mathcal{A}_{\delta, \eta }^{(2)} (x,K) \neq  \emptyset \big\} . 
 \eeqnn 
 Then $ \underline{n}\big(\overline{\epsilon}\in x+\varDelta_\delta,   \mathcal{A}_{\delta}(x,K)= \emptyset\big) $ can be decomposed into the following two terms:
 \beqlb\label{eqn.5011}
 \underline{n}\big(\overline{\epsilon}\in x+\varDelta_\delta, \mathcal{A}_{\delta, \eta } (x)= \emptyset\big) 
 \quad \mbox{and}\quad \underline{n}\big(\overline{\epsilon}\in x+\varDelta_\delta, \mathcal{A}_{\delta, \eta }(x)=\mathcal{A}_{\delta, \eta }^{(1)}(x,K)\cup \mathcal{A}_{\delta, \eta }^{(2)}(x,K) \neq  \emptyset\big) . 
 \eeqlb
 
 \textbf{(i)}  For the first term, conditionally on $\overline{\epsilon}\in x+\varDelta_\delta $ and $ \mathcal{A}_{\delta, \eta }(x)= \emptyset$, we observe that $\tau^+_x<\zeta $ and $\Delta \epsilon_s \leq x^\eta $ for all $s\leq \tau^+_x$.
 This induces that 
 \beqnn
 \underline{n}\big(\overline{\epsilon}\in x+\varDelta_\delta,   \mathcal{A}_{\delta, \eta }(x)= \emptyset\big)  
 \leq \underline{n}\Big( \tau^+_x<\zeta ,\sup_{s\leq\tau_x^+} \Delta \epsilon_s \leq x^\eta \Big) . 
 \eeqnn
 By using the strong Markov property at the passage time $\tau_1^+$, 
 \beqnn
 \underline{n}\Big( \tau^+_x<\zeta ,\sup_{s\leq\tau_x^+} \Delta \epsilon_s \leq x^\eta \Big) 
 \ar=\ar \underline{n}\Big( \mathbf{P}_{\epsilon_{\tau_1^+}} \Big( \tau^+_x<\tau_0^-, \sup_{s\leq\tau_x^+} \Delta X_s \leq x^\eta \Big) ,\tau^+_1<\zeta ,\sup_{s\leq\tau_1^+} \Delta \epsilon_s \leq x^\eta \Big)\cr
 \ar\leq\ar \underline{n}\big( \tau^+_1<\zeta  \big) \cdot  \mathbf{P}_{x^\eta+1} \Big( \tau^+_x<\tau_0^-, \sup_{s\leq\tau_x^+} \Delta X_s \leq x^\eta \Big).
 \eeqnn
 Here, the inequality follows from the fact that the probability is non-decreasing as a function of $\epsilon_{\tau_1^+}$, together with the bound $\epsilon_{\tau_1^+} \leq x^\eta+1$.  
 By using the fact that $\underline{n}\big( \tau^+_1<\zeta  \big)<\infty$ and then Proposition~\ref{Prop.503}, we have as $x\to\infty$,
 \beqlb\label{eqn.5015}
 \frac{  \underline{n}\big(\overline{\epsilon}\in x+\varDelta_\delta,   \mathcal{A}_{\delta,\eta}(x)= \emptyset\big)
 }{\nu\big(x+ \varDelta_\delta\big)} 
 \ar\leq\ar C\cdot  \frac{\exp\big\{ - ( x- x^\eta-1 )/x^\eta\big\} }{\nu\big(x+ \varDelta_\delta\big)} \to0.
 \eeqlb
 
 \textbf{(ii)}   We now consider the second term in \eqref{eqn.5011}. Obviously, it can be bounded by  
 \beqnn
 \underline{n}\big( \mathcal{A}_{\delta, \eta }^{(1)}(x,K) \neq  \emptyset\big)  + \underline{n}\big( \mathcal{A}_{\delta, \eta }^{(2)}(x,K) \neq  \emptyset\big) 
 \leq \underline{n}\big(\# \mathcal{A}_{\delta, \eta }^{(1)}(x,K) \big)  + \underline{n}\big(\# \mathcal{A}_{\delta, \eta }^{(2)}(x,K) \big).
 \eeqnn
 Here, this inequality follows from the inequality $\mathbf{1}_{\{\mathcal{A}_{\delta, \eta }^{(i)}(x,K) \neq  \emptyset\}} \leq \# \mathcal{A}_{\delta, \eta }^{(i)}(x,K)  $ for $i=1,2$. 
 Using the Markov property and then the spatial homogeneity of $X$ to the first term on the right-hand side,
 \beqlb\label{eqn.301111}
 \underline{n}\big(\#\mathcal{A}_{\delta, \eta }^{(1)}(x,K)\big)
 \ar=\ar \underline{n}\bigg(\sum_{0<s\leq \zeta} \mathbf{1}_{\{ \Delta\epsilon_s>x^\eta,\,  \sup_{s\leq r<\zeta} \epsilon_{r}  \in x+\varDelta_\delta ,\, \sup_{s\leq r<\zeta} \epsilon_{r}-\epsilon_s> K \}}  \bigg)\cr
 \ar=\ar  \underline{n}\bigg(\sum_{0<s\leq \zeta} \mathbf{1}_{\{ \Delta\epsilon_s>x^\eta\}}\, \mathbf{P}_{\epsilon_s}\Big(  \overline{X}_{\tau_0^-}  \in x+\varDelta_\delta ,\, \overline{X}_{\tau_0^-}  -\epsilon_s> K \Big)   \bigg)\cr
 \ar=\ar  \underline{n}\bigg(\sum_{0<s\leq \zeta} \mathbf{1}_{\{ \Delta\epsilon_s>x^\eta\}}\, \mathbf{P}\Big(     \overline{X}_{\tau_{-\epsilon_s}^-} +\epsilon_s \in x+\varDelta_\delta ,\, \overline{X}_{\tau_{-\epsilon_s}^-}> K \Big)   \bigg) .
 \eeqlb
 By the compensation formula \eqref{eqn.Compensation},  the last expression equals 
 \beqlb\label{eqn.301112}
 \lefteqn{\int_0^\infty ds \int_0^\infty \underline{n}_s(dz) \int_{x^\eta}^\infty \mathbf{P}\Big(  \overline{X}_{\tau_{-z-w}^-}  \in x-z-w+\varDelta_\delta ,\,  \overline{X}_{\tau_{-z-w}^-}  > K \Big)\,\nu(dw)}\ar\ar\cr
 \ar=\ar\int_0^\infty ds \int_0^\infty \underline{n}_s(dz) \int_{x^\eta}^\infty  \sum_{j=K}^\infty\mathbf{P}\Big(   w  \in x-z- \overline{X}_{\tau_{-z-w}^-}  +\varDelta_\delta ,\,  \overline{X}_{\tau_{-z-w}^-}   \in j +\varDelta_1 \Big)\,\nu(dw).
 \eeqlb
 Note that $x -z- \overline{X}_{\tau_{-z-w}^-}  +\varDelta_\delta \subset x-z-j-1+\varDelta_{1+\delta}$ whenever $ \overline{X}_{\tau_{-z-w}^-}   \in j +\varDelta_1 $. 
 Consequently,  
 \beqlb\label{eqn.501331}
 \underline{n}\big(\# \mathcal{A}_{\delta, \eta }^{(1)} (x,K)\big)
 \ar\leq\ar  \int_0^\infty ds \int_0^\infty \underline{n}_s(dz) \sum_{j=K}^\infty \int_{(x-z-j-1+\varDelta_{1+\delta})\cap(x^\eta,\infty)} \mathbf{P}\big(\overline{X}_{\tau_{-z-w}^-} \in j +\varDelta_1 \big)\,\nu(dw) . \qquad
 \eeqlb
 Applying Proposition~\ref{Prop.504} to the integrand probability, there exists a constant $C>0$ depending only on $\delta$ such that 
 \beqlb\label{eqn.50133}
 \underline{n}\big(\# \mathcal{A}_{\delta, \eta }^{(1)}(x,K)\big) 
 \leq C  \int_0^\infty \sum_{j=K}^\infty \overline{\nu}(j)\cdot \nu\big(  (x-z-j-1+\varDelta_{1+\delta})\cap(x^\eta,\infty)\big) \,  \int_0^\infty  \underline{n}_s(dz) \,ds.
 \eeqlb
 For any  $\varepsilon\in(0,1/2)$, the last double integral can be bounded by the sum of the following three terms: 
 \beqnn
 I_1(x,K,\varepsilon)\ar:=\ar    \int_0^\infty \sum_{j=K}^\infty \mathbf{1}_{\{z+j\leq \varepsilon x\}}\cdot \overline{\nu}(j)\cdot \nu\big(  (x-z-j-1+\varDelta_{1+\delta})\cap(x^\eta,\infty)\big) \,  \int_0^\infty  \underline{n}_s(dz) \,ds,\cr
 I_2(x,K,\varepsilon)\ar:=\ar    \int_0^\infty \sum_{j=K}^\infty \mathbf{1}_{\{j> \varepsilon x/2\}}\cdot \overline{\nu}(j)\cdot \nu\big(  (x-z-j-1+\varDelta_{1+\delta})\cap(x^\eta,\infty)\big) \,  \int_0^\infty  \underline{n}_s(dz) \,ds,\cr
 I_3(x,K,\varepsilon)\ar:=\ar    \int_0^\infty \sum_{j=K}^\infty \mathbf{1}_{\{z>\varepsilon x/2\geq j\}}\cdot \overline{\nu}(j)\cdot \nu\big(  (x-z-j-1+\varDelta_{1+\delta})\cap(x^\eta,\infty)\big) \,  \int_0^\infty  \underline{n}_s(dz) \,ds.
 \eeqnn
 For $  I_1(x,K,\varepsilon)$, by using the equality $\underline{n}_s(\mathbb{R}_+)= \underline{n}(\zeta>s)$, then  \eqref{eqn.501}  and finally Lemma~\ref{Lemma.301}, we have uniformly for sufficiently large $x$, 
 \beqlb\label{eqn.5013}
 I_1(x,K,\varepsilon) 
 \ar\leq\ar \sum_{j=K}^\infty   \overline{\nu}(j)\cdot \sup_{0<y\leq \varepsilon x}\nu\big(x-y-1+\varDelta_{1+\delta}\big) \,  \int_0^\infty ds \int_0^\infty  \underline{n}_s(dz)  \cr
 \ar=\ar  \underline{n}(\zeta) \cdot \sum_{j=K}^\infty   \overline{\nu}(j)\cdot \sup_{0<y\leq \varepsilon x}\nu\big(x-y-1+\varDelta_{1+\delta}\big)  
 \leq C   \cdot  \nu\big(x+ \varDelta_{1+\delta}\big) \cdot \sum_{j=K}^\infty   \overline{\nu}(j).  
 \eeqlb
 For $  I_2(x,K,\varepsilon)$,  by the monotonicity of $\overline\nu$,  
 \beqnn
 I_2(x,K,\varepsilon) 
 \ar\leq\ar    \overline{\nu}\big(\varepsilon x/2\big)\cdot \int_0^\infty \sum_{j=K}^\infty \nu\big(  (x-z-j-1+\varDelta_{1+\delta})\cap(x^\eta,\infty)\big) \,  \int_0^\infty  \underline{n}_s(dz) \,ds \cr
 \ar=\ar  \overline{\nu}\big(\varepsilon x/2\big)\cdot \int_0^\infty   \int_0^\infty  \underline{n}_s(dz) \,ds \int_0^\infty \sum_{j=K}^\infty \mathbf{1}_{\{y\in (x-z-j-1+\varDelta_{1+\delta})\cap(x^\eta,\infty)\}}\, \nu(dy). 
 \eeqnn   
 Note that   $\sum_{j=K}^\infty  \mathbf{1}_{\{y\in (x-z-j-1+\varDelta_{1+\delta})\cap(x^\eta,\infty)\}} \leq C\cdot  \mathbf{1}_{\{y\in (x^\eta,\infty)\}}$ for some $C>0$ and all $y\geq 0$, we have 
 \beqlb\label{eqn.5012}
 I_2(x,K,\varepsilon) \leq  C \cdot   \underline{n}(\zeta) \cdot   \overline{\nu}\big(\varepsilon x/2\big)\cdot  \overline{\nu}\big(x^\eta \big) . 
 \eeqlb
 Finally, we consider  $I_3(x,K,\varepsilon)$, which can be discretized into
 \beqnn
 I_3(x,K,\varepsilon)\ar=\ar   \sum_{j=K}^{[\varepsilon x/2]} \overline{\nu}(j)\cdot  \int_{\varepsilon x/2}^\infty   \nu\big(  (x-z-j-1+\varDelta_{1+\delta})\cap(x^\eta,\infty)\big) \,  \int_0^\infty  \underline{n}_s(dz) \,ds\cr
 \ar\leq\ar   \sum_{j=K}^{[\varepsilon x/2]} \overline{\nu}(j)\cdot  \int_{\varepsilon x/2}^\infty  \sum_{i=[x^\eta]}^\infty  \nu\big(  (x-z-j-1+\varDelta_{1+\delta})\cap (i+\varDelta_1) \big) \,  \int_0^\infty  \underline{n}_s(dz) \,ds.
 \eeqnn
 Note that $(x-z-j-1+\varDelta_{1+\delta})\cap(i+\varDelta_1) = \emptyset$ if $z\notin x -j-i-2+\varDelta_{2+\delta} $, we have 
 \beqnn
 I_3(x,K,\varepsilon)
 \ar\leq\ar \sum_{j=K}^{[\varepsilon x/2]} \overline{\nu}(j)\cdot    \sum_{i=[x^\eta]}^\infty  \nu\big(  i+\varDelta_1 \big) \,  \int_0^\infty  \underline{n}_s\big((x -j-i-2+\varDelta_{2+\delta})\cap(\varepsilon x/2,\infty)\big) \,ds.
 \eeqnn 
 By Proposition~\ref{Prop.506} and the monotonicity of $\overline{\nu}$, the last integral always can be bounded by $\overline{\nu}\big(\varepsilon x/2 \big)$. 
 The asymptotic equivalence in \eqref{eqn.30} yields that $ \nu\big(  i+\varDelta_1 \big) \leq C\cdot\overline{\nu}(i)/i $ uniformly in $i\geq [x^\eta]$ for sufficiently large $x$.  
 Consequently, there exists some constant $C>0$ such that uniformly for sufficiently large $x$,
 \beqnn
 I_3(x,K,\varepsilon)
 \ar\leq\ar C\cdot \sum_{j=K}^{[\varepsilon x/2]} \overline{\nu}(j)\cdot    \sum_{i=[x^\eta]}^\infty  \frac{\overline{\nu}(i)}{i}\cdot\overline{\nu}\big(\varepsilon x/2 \big) 
 \leq C\cdot  \sum_{j=K}^\infty\overline{\nu}(j)\cdot  \overline{\nu}(x^\eta)\cdot \overline{\nu}\big(\varepsilon x/2 \big) .
 \eeqnn
 Here the second inequality follows from the discrete version of Karamata's theorem. 
 Taking this, \eqref{eqn.5013} and  \eqref{eqn.5012} back into \eqref{eqn.50133}, and then using \eqref{eqn.30} along with \eqref{eqn.50122}, we have as $x\to\infty$, 
 \beqlb\label{eqn.5014}
 \frac{  \underline{n}\big(\#\mathcal{A}_{\delta, \eta }^{(1)} (x,K)\big)
 }{\nu\big(x+ \varDelta_\delta\big) } 
 \ar\leq\ar C   \cdot  \frac{\nu\big(x+ \varDelta_{1+\delta}\big) }{\nu\big(x+ \varDelta_\delta\big) }  \sum_{j=K}^\infty   \overline{\nu}(j)  + C\cdot   \frac{\overline{\nu}\big(\varepsilon x/2\big)}{\overline{\nu}(x)}\cdot  x\cdot \overline{\nu}\big(x^\eta \big) \to  C   \cdot    \sum_{j=K}^\infty   \overline{\nu}(j),
 \eeqlb
 which goes to $0$ as $K\to\infty$, since $\overline\nu$ is integrable over $[1,\infty)$.

 We now consider
 $\underline n\big(\#\mathcal A_{\delta,\eta}^{(2)}(x,K)\big)$.
 Similarly as in \eqref{eqn.301111}-\eqref{eqn.301112}, we have
 \beqnn
 \lefteqn{\underline n\big(\#\mathcal A_{\delta,\eta}^{(2)}(x,K)\big)
 	= \int_0^\infty ds \int_K^\infty\underline n_s(dz)
 	\int_{x^\eta}^\infty \mathbf P\Big( \overline X_{\tau_{-z-w}^-}\leq K,\, \overline X_{\tau_{-z-w}^-}+z+w \in x+\varDelta_\delta
 	\Big)\nu(dw)}\qquad \cr\cr
 \ar=\ar \sum_{j=0}^{K-1} \int_0^\infty ds  \int_K^\infty\underline n_s(dz)  \int_{x^\eta}^\infty \mathbf P\Big( \overline X_{\tau_{-z-w}^-}\in j+\varDelta_1,\,
 \overline X_{\tau_{-z-w}^-}+z+w  \in x+\varDelta_\delta \Big)\nu(dw).
 \eeqnn
 Arguing as in \eqref{eqn.501331} and then applying
 Proposition~\ref{Prop.504} yield 
 \beqlb\label{eqn.400}
 \underline n\big(\#\mathcal A_{\delta,\eta}^{(2)}(x,K)\big)
 \ar\leq\ar  \sum_{j=0}^{K-1} \int_0^\infty ds \int_K^\infty\underline n_s(dz)
 \int_{(x-z-j-1+\varDelta_{1+\delta}) \cap(x^\eta,\infty)} \mathbf{P}\Big( \overline X_{\tau_{-z-w}^-}\in j+\varDelta_1 \Big)\nu(dw)\cr
 \ar\leq\ar C\cdot \sum_{j=0}^{K-1} a_j\cdot \int_0^\infty ds
 \int_K^\infty  \nu\big(
 (x-z-j-1+\varDelta_{1+\delta})
 \cap(x^\eta,\infty)
 \big)\, \underline n_s(dz),
 \eeqlb
 where $a_0=1$ and $a_j= \overline\nu(j)$ for $j=1,\cdots, K-1$.  
 For sufficiently large $x>8K$, we split the double integral in the last sum into the following two parts:
 \beqnn
 J_{j,1}(K,x)\ar:=\ar \int_0^\infty ds \int_K^{x/2}  \nu\big( (x-z-j-1+\varDelta_{1+\delta}) \cap(x^\eta,\infty) \big)\, \underline n_s(dz),\cr
 J_{j,2}(K,x)\ar:=\ar \int_0^\infty ds \int_{x/2}^\infty  \nu\big( (x-z-j-1+\varDelta_{1+\delta}) \cap(x^\eta,\infty)\big)\, \underline n_s(dz). 
 \eeqnn
 By \eqref{eqn.30} and
 $\overline{\nu}\in \mathrm{RV}^\infty_{-\alpha}$,
 there exists a constant $C>0$, independent of $K$,
 such that for any large $K$ and  $x\geq 8K$,
 \beqnn
 \sup_{0\leq j\leq K-1}\sup_{K<z\leq x/2}
 \frac{\nu\big(x-z-j-1+\varDelta_{1+\delta}\big)}
 {\nu\big(x+\varDelta_\delta\big)}\leq
 \sup_{x/4\leq y\leq x}
 \frac{\nu\big(y+\varDelta_{1+\delta}\big)}
 {\nu\big(x+\varDelta_\delta\big)}
 \leq C,
 \eeqnn
 which implies that 
 \beqlb\label{eqn.4000}
 \sup_{0\leq j\leq K-1} \frac{ J_{j,1}(K,x)}{\nu\big(x+\varDelta_\delta\big)} \leq C\cdot  \int_0^\infty
 \underline n(\epsilon_s>K,\zeta>s)\,ds . 
 \eeqlb
 For any sufficiently large $x$, using Fubini's theorem to $J_{j,2}(K,x)$ yields that 
 \beqnn
 J_{j,2}(K,x)
 \ar=\ar \int_{x^\eta}^{x/2+\delta}\nu(dw) \int_0^\infty \underline n_s\big( (x-w-j-1+\varDelta_{1+\delta})  \cap(x/2,\infty) \big)\,ds \cr
 \ar\leq\ar  \int_{x^\eta}^{x/2+\delta}\nu(dw) \sum_{k=0}^{[\delta]+1} \int_0^\infty
 \underline n_s\big( x-w-j-1+k+\varDelta_1 \big)\,ds,
 \eeqnn
 By Proposition~\ref{Prop.506} and monotonicity of $\overline\nu$,  there exists some constant $C>0$ independent of  $K$ and $x$ such that for all $j=0,\cdots, K-1$ and all sufficiently large $x>8K$, 
 \beqnn
 J_{j,2}(K,x) \leq  C\int_{x^\eta}^{x/2+\delta}\sum_{k=0}^{[\delta]+1}  \overline\nu(x-w-j-1+k)\, \nu(dw) 
 \leq C\cdot \overline\nu\big(x/4-\delta\big) \cdot \overline\nu\big(x^\eta\big).
 \eeqnn
 Substituting this and \eqref{eqn.4000} back into the last sum in \eqref{eqn.400} yields that uniformly for all large $K$ and $x\geq 8K$, 
 \beqnn
 \frac{\underline n\big(\#\mathcal A_{\delta,\eta}^{(2)}(x,K)\big) }{\nu\big(x+\varDelta_\delta\big)} \leq
 C\Big(1+\sum_{j=1}^\infty\overline\nu(j)\Big) \cdot 
 \bigg[
 \int_0^\infty \underline n(\epsilon_s>K,\zeta>s)\,ds + \frac{ \overline\nu(x/4-\delta)\overline\nu(x^\eta) }{ \nu\big(x+\varDelta_\delta\big)}
 \bigg],
 \eeqnn 
 which tends to $0$ as $x\to\infty$ and then $K\to\infty$ by $\overline\nu\in \mathrm{RV}^\infty_{-\alpha}$ and \eqref{eqn.501}. 
 Finally, combining this together with \eqref{eqn.5015} and \eqref{eqn.5014} induces \eqref{eqn.5016} immediately. The proof ends. 
 \qed 
 
 \subsection{Proof of Lemma~\ref{Lemma.502}}

 \begin{proposition}\label{Prop.508}
 	Given two constants $\delta, K>0$ and a function $\vartheta \in \boldsymbol{\varTheta}$,  consider the quantity
 	\beqlb\label{eqn.50123}
 	G_\delta(x,K,z):= \int_{\vartheta_x}^\infty  \mathbf{P}_{z+w}\Big( \overline{X}_{\tau_0^-} \in x+\varDelta_\delta,   \overline{X}_{\tau_0^-} - z-w\leq K \Big)\, \nu(dw),\quad x>0.
 	\eeqlb
 	We always have as $x\to\infty$,
 	\beqnn
 	\sup_{z\in[0,K]} \bigg|\frac{G_\delta(x,K,z)}{\nu\big(x+ \varDelta_\delta\big)} - \mathbf{P}\big(\overline{X}_\infty\leq K \big)\bigg| \to 0.
 	\eeqnn 
 \end{proposition}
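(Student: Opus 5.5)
By spatial homogeneity, $\mathbf{P}_{z+w}(\overline X_{\tau_0^-}\in x+\varDelta_\delta,\ \overline X_{\tau_0^-}-z-w\leq K)=\mathbf{P}\big(w\in x-z-M_{z+w}+\varDelta_\delta,\ M_{z+w}\leq K\big)$, where $M_{y}:=\overline X_{\tau_{-y}^-}$. The plan is to replace $M_{z+w}$ by $\overline X_\infty$, which is legitimate because $z+w\geq\vartheta_x\to\infty$, and then integrate in $w$ against $\nu$ using local regular variation.

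\textbf{Step 1 (coupling).} We have $M_y\leq \overline X_\infty$, and $M_y=\overline X_\infty$ on the event $\{\tau^-_{-y}>\sigma\}$, where $\sigma$ is the a.s.\ finite time at which the overall supremum is attained (approached). Hence
\[
\sup_{y\geq\vartheta_x}\mathbf P\big(M_y\neq\overline X_\infty\big)\leq \mathbf P\big(\underline X_{\sigma}\leq-\vartheta_x\big)\to0 .
\]
The estimate will not compare the two integrals pointwise in $w$, since we need an error that is $o(\nu(x+\varDelta_\delta))$. Instead we write $G_\delta$ by Fubini:
\[
G_\delta(x,K,z)=\mathbf E\Big[\int_{\vartheta_x}^\infty\mathbf 1_{\{M_{z+w}\leq K,\ w\in x-z-M_{z+w}+\varDelta_\delta\}}\,\nu(dw)\Big].
\]
The difficulty is that $M_{z+w}$ depends on $w$. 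This is handled by monotonicity: $y\mapsto M_y$ is non-decreasing and converges to $\overline X_\infty$. For $w\geq\vartheta_x$ we therefore have the sandwich $M_{\vartheta_x}\leq M_{z+w}\leq\overline X_\infty$. On the event $E_x:=\{M_{\vartheta_x}=\overline X_\infty\}$, the quantity $M_{z+w}=\overline X_\infty=:M$ is constant in $w$. The integrand then becomes $\mathbf 1_{\{M\leq K\}}\,\nu\big((x-z-M+\varDelta_\delta)\cap(\vartheta_x,\infty)\big)$. On $E_x^c$, the relevant $w$ satisfy $w\in x-z-j-1+\varDelta_{1+\delta}$ for some $j\leq K$, so the integrand is bounded by $\nu\big(x-K-1-z+\varDelta_{K+1+\delta}\big)\leq C\,\nu(x+\varDelta_\delta)$ uniformly in $z\in[0,K]$, by Lemma~\ref{Lemma.301}/\eqref{eqn.30}. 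This gives an error of at most $C\,\mathbf P(E_x^c)\,\nu(x+\varDelta_\delta)=o(\nu(x+\varDelta_\delta))$.

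\textbf{Step 2 (main term).} On $\{M\leq K\}$ and $z\in[0,K]$, the shift $z+M$ lies in $[0,2K]$ and $x-2K>\vartheta_x$ for large $x$, so the truncation is inactive. By \eqref{eqn.30} and regular variation,
\[
\sup_{0\leq u\leq 2K}\Big|\frac{\nu(x-u+\varDelta_\delta)}{\nu(x+\varDelta_\delta)}-1\Big|\to0 .
\]
This follows because $\nu(x-u+\varDelta_\delta)\sim\alpha\,\overline\nu(x-u)\,\delta/(x-u)$ uniformly over bounded $u$, which in turn follows from P\'olya/uniform convergence as in the proof of Lemma~\ref{Lemma.301}. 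Hence the main term equals $\big(1+o(1)\big)\,\nu(x+\varDelta_\delta)\,\mathbf P(M\leq K,\ E_x)$ uniformly in $z$. Finally, $\mathbf P(M\leq K,\ E_x)\to\mathbf P(\overline X_\infty\leq K)$.

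\textbf{Main obstacle.} The delicate point is the exact identity $M_y=\overline X_\infty$ with probability tending to one, uniformly over $y\geq\vartheta_x$. The plan is to use $\mathbf P\big(M_y\neq\overline X_\infty\big)\leq\mathbf P\big(\overline X_\infty>\sup_{s\leq\tau^-_{-y}}X_s\big)$. On this event the process, after first passing below $-y$, must climb above its previous maximum, which is at least $0$. By the strong Markov property at $\tau^-_{-y}$, this probability is at most $\mathbf P(\overline X_\infty\geq y)\to0$ as $y\to\infty$, since $X$ drifts to $-\infty$. This bound is uniform and monotone in $y$, so it suffices to evaluate it at $y=\vartheta_x$. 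Here we need $\vartheta_x\to\infty$; if this is not assumed in $\boldsymbol\varTheta$, I would restrict to that case as in Proposition~\ref{Prop.503}. Everything else reduces to the local regular variation bounds already available.
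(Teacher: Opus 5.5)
Your argument is sound, but as written it proves the statement only when $\vartheta_x\to\infty$, and you should not restrict to that case. The class $\boldsymbol{\varTheta}$ contains bounded functions such as $\vartheta\equiv1$; this is why Proposition~\ref{Prop.503} lists $\vartheta_x\to\infty$ as an extra hypothesis. For such $\vartheta$, your event $E_x=\{M_{\vartheta_x}=\overline X_\infty\}$ does not have probability tending to one.

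The repair is already contained in your Step~2. The integrand vanishes unless $w>x-z-M_{z+w}\geq x-z-K$, i.e.\ unless $z+w>x-K$. The same is true after replacing $M_{z+w}$ by $\overline X_\infty$. So couple at level $x-K$ instead, with $E_x':=\{M_{x-K}=\overline X_\infty\}$:
\begin{itemize}
\item On $E_x'$ your monotone sandwich gives $M_{z+w}=\overline X_\infty$ for every relevant $w$, so the two integrands agree for all $w\geq\vartheta_x$.
\item Your strong Markov bound gives $\mathbf P\big((E_x')^c\big)\leq\mathbf P(\overline X_\infty\geq x-K)\to0$, whatever $\vartheta$ is.
\item The lower limit $\vartheta_x$ then only has to make the truncation inactive, which $\vartheta_x=o(x)$ guarantees.
\end{itemize}

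Relatedly, your remark that a pointwise-in-$w$ comparison cannot give an $o(\nu(x+\varDelta_\delta))$ error is mistaken. Only $w$ in the window $(x-z-K,x-z+\delta]$ contribute, and that window has $\nu$-mass $O(\nu(x+\varDelta_\delta))$. A uniformly small pointwise error therefore suffices, and this is exactly what the paper does.

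With that change your proof is complete, and it takes a somewhat different route from the paper.
\begin{itemize}
\item \textbf{The paper's route.} It substitutes $w\mapsto x-z-w$ so that only $w\in[-\delta,K]$ remains. It then replaces the law of $\overline X_{\tau^-_{-x+w}}$ by that of $\overline X_\infty$ pointwise in $w$. Finally it evaluates $\int\mathbf P\big(\overline X_\infty\in(w^+,(w+\delta)^+\wedge K]\big)\,\nu(x-z-dw)$ by splitting the law of $\overline X_\infty$ into cells of length $1/n$ and applying Lemma~\ref{Lemma.301} to intervals of widths $\delta\pm1/n$.
\item \textbf{Your route.} You apply Fubini first and couple pathwise on a single event. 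The main term becomes $\mathbf E\big[\mathbf 1_{\{\overline X_\infty\leq K\}}\,\nu(x-z-\overline X_\infty+\varDelta_\delta)\big]$, in which the interval width stays exactly $\delta$. A uniform-in-shift consequence of \eqref{eqn.30} then suffices, and the $1/n$-discretization disappears.
\end{itemize}

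Your bound $\mathbf P(M_y\neq\overline X_\infty)\leq\mathbf P(\overline X_\infty\geq y)$ also makes explicit what the paper's step actually uses. The paper passes from a.s.\ convergence $\overline X_{\tau^-_{-y}}\to\overline X_\infty$ to uniform convergence of interval probabilities. That step needs eventual equality, not mere convergence, and your bound supplies it.
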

 \proof By the spatial-homogeneity of $X$, we first have
 \beqnn
 G_\delta(x,K,z) \ar=\ar   \int_{\vartheta_x}^\infty  \mathbf{P}\Big(    \overline{X}_{\tau_{-z-w}^-} \in  x-z-w+\varDelta_\delta ,  \overline{X}_{\tau_{-z-w}^-} \in[0, K]\big]  \Big)\, \nu(dw)\cr
 \ar=\ar \int_{\vartheta_x}^\infty  \mathbf{P}\Big(\overline{X}_{\tau_{-z-w}^-} \in \big((x-z-w)^+, (x-z-w+\delta)^+ \wedge K\big]  \Big)\, \nu(dw).
 \eeqnn
 Note that the last integrand probability equals $0$ unless $x-z- K< w\leq x-z+\delta$. 
 Since $z\in (0,K]$ and $\vartheta_x=o(x)$, there exists some constant $x_{K}>0$ such that $x-z- K\geq \vartheta_x$ for all $x\geq x_{K}$ and then
 \beqnn
 \frac{G_\delta(x,K,z)}{\nu\big(x + \varDelta_\delta\big)} 
 \ar=\ar  \int_{x-z- K}^{x-z+\delta} \mathbf{P}\Big( \overline{X}_{\tau_{-z-w}^-} \in \big((x-z-w)^+, (x-z-w+\delta)^+ \wedge K\big]  \Big)\, \frac{\nu(dw)}{\nu\big(x+ \varDelta_\delta\big)}\cr
 \ar=\ar \int_{-\delta}^K \mathbf{P}\Big(   \overline{X}_{\tau_{-x+w}^-}\in \big(w^+, (w+\delta)^+ \wedge K\big]  \Big)\, \frac{\nu(x-z -dw)}{\nu\big(x+ \varDelta_\delta\big)},
 \eeqnn
 Let $\varepsilon(x,K,w):= \mathbf{P}\big(   \overline{X}_{\tau_{-x+w}^-}\in \big(w^+, (w+\delta)^+ \wedge K\big]  \big) - \mathbf{P}\big(   \overline{X}_\infty \in \big(w^+, (w+\delta)^+ \wedge K\big]  \big)$. Then the last integral can be written as 
 \beqlb\label{eqn.506}
 \int_{-\delta}^{K} \mathbf{P}\Big(   \overline{X}_\infty \in \big(w^+, (w+\delta)^+ \wedge K\big]  \Big)\, \frac{\nu(x-z -dw)}{\nu\big(x+ \varDelta_\delta\big)} 
 +  \int_{-\delta}^{K} \varepsilon(x,K,w)\, \frac{\nu(x-z -dw)}{\nu\big(x + \varDelta_\delta\big)} . 
 \eeqlb 
 Since $ \overline{X}_{\tau_{-x+w}^-} \overset{\rm a.s.}\to  \overline{X}_\infty$ as $x\to\infty$  uniformly in   $w\in[-\delta,K]$, we have 
 \beqnn
 \limsup_{x\to\infty}\sup_{w\in[-\delta,K]} \big| \varepsilon(x,K,w) \big| = 0. 
 \eeqnn 
 This along with Lemma~\ref{Lemma.301} yields that   as $x\to\infty$,
 \beqlb\label{eqn.507}
 \lefteqn{ \sup_{z\in[0,K]} \bigg|  \int_{- \delta}^{K} \varepsilon(x,K,w)\, \frac{\nu(x-z -dw)}{\nu\big(x + \varDelta_\delta\big)} \bigg|
 }\cr
 \ar\leq\ar   \sup_{w\in[-\delta,K]} \big| \varepsilon(x,K,w) \big|
 \cdot  \sup_{z\in[0,K]} \frac{\nu\big(x-z-K +\varDelta_{\delta+K}\big)}{\nu\big(x+ \varDelta_\delta\big)} \to 0. 
 \eeqlb
 We now consider the first term in \eqref{eqn.506}, which is denoted as $I(x,K,z)$ for convenience. 
 For any sufficiently large integer $n\geq 1$, we have 
 \beqnn
 I(x,K,z) 
 \ar=\ar \sum_{k=0}^{nK-1}  \int_{-\delta}^{K} \mathbf{P}\Big(   \overline{X}_\infty \in \big(w^+, (w+\delta)^+ \big]\cap\Big(\frac{k}{n},\frac{k+1}{n}\Big]\Big)\, \frac{\nu( x-z -dw)}{\nu\big( x+ \varDelta_\delta\big)} .
 \eeqnn
 It can be bounded from below and above respectively by 
 \beqnn 
 \sum_{k=0}^{nK-1}   \mathbf{P}\Big(   \overline{X}_\infty \in  \frac{k}{n}+\varDelta_{1/n}\Big) \cdot  \frac{\nu\big( x-z -\frac{k}{n}+\varDelta_{\delta-1/n}\big)}{\nu\big( x+ \varDelta_\delta\big)}  
 \eeqnn
 and 
 \beqnn
 \sum_{k=0}^{nK-1}   \mathbf{P}\Big(   \overline{X}_\infty \in  \frac{k}{n}+\varDelta_{1/n}\Big) \cdot  \frac{  \nu\big( x-z -\frac{k+1}{n}+\varDelta_{\delta+1/n}\big) }{\nu\big( x + \varDelta_\delta\big)} .  
 \eeqnn
 By Lemma~\ref{Lemma.301}, we have 
 \beqnn
 1-\frac{1/n}{\delta} 
 \ar\leq\ar \liminf_{x\to\infty}  
 \inf_{k=0,\cdots, nK}\inf_{z\in [0,K]}   \frac{\nu\big(x-z -\frac{k}{n}+\varDelta_{\delta-1/n}\big)}{\nu\big(x + \varDelta_\delta\big)}
 \cr
 \ar\leq\ar  \limsup_{x\to\infty} 
 \sup_{k=0,\cdots, nK}\sup_{z\in [0,K]} \frac{\nu\big(x-z -\frac{k+1}{n}+\varDelta_{\delta+1/n}\big)}{\nu\big(x + \varDelta_\delta\big)}
 \leq 1+\frac{1/n}{\delta},
 \eeqnn
 which follows that 
 \beqnn
 \Big( 1-\frac{1/n}{\delta} \Big) \cdot \mathbf{P}\Big(   \overline{X}_\infty \leq K\Big)
 \ar\leq\ar \liminf_{x\to\infty} \inf_{z\in [0,K]}I(x,K,z)  \cr
 \ar\leq\ar \limsup_{x\to\infty} \sup_{z\in [0,K]} I (x,K,z)  
 \leq  \Big( 1+\frac{1/n}{\delta} \Big) \cdot \mathbf{P}\Big(   \overline{X}_\infty \leq K\Big).
 \eeqnn
 This along with the arbitrariness of $n$ induces that 
 \beqnn
 \limsup_{x\to\infty} \sup_{z\in [0,K]}\bigg|I(x,K,z) -\mathbf{P}\Big(   \overline{X}_\infty \leq K\Big) \bigg| =0.
 \eeqnn
 The desired limit follows by taking this and \eqref{eqn.507}  back into \eqref{eqn.506}.  
 \qed

 \medskip
 
 \textbf{\textit{Proof of Lemma~\ref{Lemma.502}.}} Firstly, the inequality in \eqref{eqn.5017} follows directly from the fact   $\mathbf{1}_{\{\# \mathcal{A}_\delta(x,K)\geq 1\}} \leq \# \mathcal{A}_\delta(x,K)$.  
 We now prove the limit. 
 For $\eta\in(0,1)$, conditioned on $\mathcal{A}_\delta(x,K) \neq \emptyset$ and $\overline{\epsilon} \notin x +\varDelta_\delta$ for sufficiently large $x$, we  observe that $\overline{\epsilon}>x+\delta$ and the maximum is attained before the first jump in $\mathcal{A}_\delta(x,K) $. 
 This along with  \eqref{eqn.5018} indicates that $\underline{n}\big(\# \mathcal{A}_\delta(x,K), \overline{\epsilon}\notin x+\varDelta_\delta  \big) $ can be bounded by 
 \beqnn
 \underline{n}\bigg( \sum_{0<s\leq \zeta} \mathbf{1}_{\{\sup_{0<r<s} \epsilon_r>x+\delta\}} \cdot \mathbf{1}_{\{\epsilon_{s-}\leq K, \Delta\epsilon_s>x^\eta \}} \cdot  \mathbf{1}_{ \{ \sup_{s\leq r<\zeta} \epsilon_r \in x+\varDelta_\delta,   \sup_{s\leq r<\zeta} \epsilon_r- \epsilon_s \leq K\}}  \bigg).
 \eeqnn
 Recall the quantities $G_\delta(x,K,z)$ defined in \eqref{eqn.50123} with $\vartheta_x=x^\eta$. 
 Using the Markov property and the compensation formula \eqref{eqn.Compensation} to the last expression, it equals
 \beqnn
 \int_0^\infty ds \int_0^K G_\delta(x,K,z) \, \underline{n}\Big( \sup_{0<r<s} \epsilon_r> x+\delta, \epsilon_{s-}\in dz ,\,\zeta>s \Big).
 \eeqnn
 Consequently, 
 \beqnn
 \frac{\underline{n}\big(\# \mathcal{A}_\delta(x,K), \overline{\epsilon}\notin x+\varDelta_\delta  \big) }{\nu\big(x + \varDelta_\delta\big)} 
 \leq \int_0^\infty ds \int_0^K \frac{G_\delta(x,K,z)}{\nu\big(x+ \varDelta_\delta\big)} \, \underline{n}\Big( \sup_{0<r<s} \epsilon_r>x +\delta, \epsilon_{s-}\in dz ,\,\zeta>s
 \Big) .
 \eeqnn
 By Proposition~\ref{Prop.508} and  Lemma~\ref{Lemma.301}, it can further be bounded uniformly for sufficiently large $x$ by 
 \beqnn
 \lefteqn{C\cdot \int_0^\infty ds \int_0^K \frac{\nu\big(x -z-K +\varDelta_{K+\delta}\big)}{\nu\big(x + \varDelta_\delta\big)} \underline{n}\Big( \sup_{0<r<s} \epsilon_r> x+\delta , \epsilon_s\in dz ,\,\zeta>s
 	\Big)}\ar\ar\cr
 \ar\leq\ar C\cdot \sup_{z\in[0,K]} \frac{\nu(x -z-K +\varDelta_{K+\delta})}{\nu\big(x+ \varDelta_\delta\big)} \cdot \int_0^\infty  \underline{n}\Big( \sup_{0<r<s} \epsilon_r>x+\delta , \epsilon_s\leq K,\,\zeta>s
 \Big)\, ds  . 
 \eeqnn 
 The last supremum can be bounded uniformly for all sufficiently large  $x$ by  \eqref{eqn.30}. 
 The last integral  equals 	
 \beqnn
 \underline{n}\bigg(\int_0^\zeta \mathbf{1}_{\{s>\tau_{x+\delta}^+\}}\mathbf{1}_{\{\epsilon_s\leq K\}}ds\bigg)
 \ar=\ar
 \underline{n}\bigg(\mathbf{1}_{\{\zeta>\tau_{x+\delta}^+\}}\int_{\tau_{x+\delta}^+}^\zeta \mathbf{1}_{\{\epsilon_s\leq K\}}ds\bigg)\cr
 \ar=\ar 
 \underline{n}\bigg(\mathbf{1}_{\{\zeta>\tau_{x+\delta}^+\}}\mathbf{E}_{\epsilon_{\tau_{x+\delta}^+}}\Big[\int_{0}^{\tau_0^-}\mathbf{1}_{\{X_{s}\leq K\}}\,ds\Big]\bigg).
 \eeqnn
 By Proposition~\ref{Prop.509}, it can be bounded by
 $
 \underline{n}(\zeta) \cdot \widehat{V}(K)\cdot
 \underline {n}\big(\overline{\epsilon}>x\big),
 $
 which vanishes as $x\to\infty$ and then the limit in \eqref{eqn.5017} follows. 
 \qed

 \subsection{Proof of Lemma~\ref{Lemma.503}}
 
 \begin{proposition}\label{Prop.509}
 	The next inequality holds for all $K\geq 0$ and $x \in \mathbb{R}$,  
 	\beqnn
 	\mathbf{E}_x\bigg[\int_{0}^{\tau_0^-}\mathbf{1}_{\{X_{s}\leq K\}}\,ds\bigg] \leq \underline{n}(\zeta) \cdot \widehat{V}(K) <\infty .
 	\eeqnn
 \end{proposition}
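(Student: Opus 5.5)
The plan is to reduce the statement to the identity $\mathbf{E}_{y}[\tau_0^-]=\underline{n}(\zeta)\cdot\widehat{V}(y)$ in \eqref{Asytau}, which says that the expected time to first passage below $0$ starting from $y$ is $\underline n(\zeta)\widehat V(y)$. We combine it with the monotonicity of $\widehat V$ and a strong Markov argument at the first time the process enters $(-\infty,K]$. The finiteness on the right-hand side is immediate, since $\underline n(\zeta)<\infty$ by \eqref{eqn.501} and $\widehat V(K)<\infty$ for the renewal function of a (possibly killed) subordinator.

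First, consider starting points $x\le K$. If $x<0$ then $\tau_0^-=0$ and the integral vanishes. For $x\in(0,K]$ we simply drop the indicator:
\[
\mathbf{E}_x\Big[\int_0^{\tau_0^-}\mathbf{1}_{\{X_s\le K\}}\,ds\Big]\le \mathbf{E}_x[\tau_0^-]=\underline{n}(\zeta)\,\widehat V(x)\le \underline n(\zeta)\,\widehat V(K),
\]
using \eqref{Asytau} and the fact that $\widehat V$ is non-decreasing, being a distribution function by \eqref{eqn.DefV}. For $x=0$ we use the pathwise monotonicity of $y\mapsto\tau_0^-$ under $\mathbf P_y$, via the coupling $X+y$. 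This gives $\mathbf{E}_0[\tau_0^-]\le \mathbf{E}_y[\tau_0^-]$ for every $y\in(0,K]$, so the same bound holds. Thus
\[
\sup_{y\le K}\mathbf{E}_y[\tau_0^-]\le \underline n(\zeta)\widehat V(K).
\]

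Next, consider $x>K$. Let $T:=\inf\{s\ge0: X_s\le K\}$, a stopping time for the right-continuous filtration satisfying the usual hypotheses. On $[0,T)$ we have $X_s>K$, so the indicator vanishes there. If $T\ge\tau_0^-$ the integral is therefore zero. On $\{T<\tau_0^-\}$ we have $X_T\le K$ by right-continuity. The strong Markov property at $T$ then yields
\[
\mathbf{E}_x\Big[\int_0^{\tau_0^-}\mathbf{1}_{\{X_s\le K\}}ds\Big]
=\mathbf{E}_x\Big[\mathbf 1_{\{T<\tau_0^-\}}\,\mathbf{E}_{X_T}\Big[\int_0^{\tau_0^-}\mathbf{1}_{\{X_s\le K\}}ds\Big]\Big]
\le \sup_{y\le K}\mathbf{E}_y[\tau_0^-]\le \underline n(\zeta)\widehat V(K).
\]

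The only delicate point is the boundary cases, namely the start at exactly $0$ and making sure the post-$T$ position lies in $(-\infty,K]$ rather than only in $(-\infty,K)$. Defining $T$ with a non-strict inequality and using the monotone coupling in the starting point handles both, so I expect no real obstacle.
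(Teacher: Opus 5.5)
Your argument is correct and is essentially the paper's proof. For starting points in $[0,K]$ you bound the occupation time by $\tau_0^-$ and use $\mathbf{E}_y[\tau_0^-]=\underline n(\zeta)\widehat V(y)$ together with monotonicity of $\widehat V$; for $x>K$ you apply the strong Markov property at the first entrance below level $K$. Your two refinements are improvements: the closed-set entrance time avoids the Lebesgue-null time spent exactly at $K$ that the paper's strict $\tau_K^-$ silently discards, and the coupling at $x=0$ covers a point where the paper applies an identity stated only for $x>0$.

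One small slip: for $K=0$ the interval $(0,K]$ is empty, so your step at $x=0$ does not go through as written. There you should let $y\downarrow 0$ in $\mathbf{E}_0[\tau_0^-]\le \mathbf{E}_y[\tau_0^-]=\underline n(\zeta)\widehat V(y)$ and use the right-continuity of $\widehat V$.
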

 \proof If $x<0$, then the process $X$ starts below $0$ and $\tau_0^-=0$, so the expectation is trivially $0$. 
 For $0\leq x\leq K$, by \eqref{Asytau} and the monotonicity of $\widehat{V}$ we have 
 \beqlb\label{eqn9.4} 
 \mathbf{E}_x\Big[\int_{0}^{\tau_0^-}\mathbf{1}_{\{X_{s}\leq K\}}\,ds\Big]
 \leq  \mathbf E_x[\tau_0^-]
 = \underline{n}(\zeta) \cdot \widehat{V}(x)\leq  \underline{n}(\zeta) \cdot \widehat{V}(K) . 
 \eeqlb
 It remains to consider the case of $x>K$.  By the strong Markov property of $X$ at  the first passage time $\tau_K^-:=\inf\{ t\geq 0: X_t<K\}$,
 \beqnn
 \mathbf E_x\Big[  \int_0^{\tau_0^-}
 \mathbf 1_{\{X_{s-}\le K\}}\,ds\Big]
 \ar=\ar
 \mathbf E_x\Big[
 \mathbf 1_{\{\tau_K^-<\tau_0^-\}}\cdot 
 \int_{\tau_K^-}^{\tau_0^-}
 \mathbf 1_{\{X_{s-}\leq K\}}\,ds
 \Big] \cr
 \ar=\ar
 \mathbf{E}_x\bigg[
 \mathbf 1_{\{\tau_K^-<\tau_0^-\}} \cdot
 \mathbf E_{X_{\tau_K^-}}\Big[
 \int_0^{\tau_0^-}
 \mathbf 1_{\{X_{u-}\leq K\}}\,du  \Big]\bigg]
 \leq 
 \underline{n}(\zeta) \cdot \widehat{V}(K). 
 \eeqnn
 Here the last inequality follows from \eqref{eqn9.4} and the fact that  $X_{\tau_K^-}\in [0,K]$ when $\tau_K^-<\tau_0^-$.    
 \qed

 \textbf{\textit{Proof of Lemma~\ref{Lemma.503}.}}  Firstly, one can easily identify that 
 \beqnn
 \underline{n}\big( \# \mathcal{A}_\delta(x,K) \geq 2  \big) \leq \underline{n}\big(\big( \# \mathcal{A}_\delta(x,K) -1\big)^+ \big) \leq  \underline{n}\big(\# \mathcal{A}_\delta(x,K)\cdot\big(\# \mathcal{A}_\delta(x,K)-1\big) \big).
 \eeqnn
 Moreover, in view of \eqref{eqn.5018}, we also have 
 \beqnn
 \mathcal{A}_\delta(x,K)\subset \widetilde{\mathcal{A}}_\delta(x,K) := \big\{ 0<s\leq \zeta: \epsilon_{s-}\leq K, x -2K <\Delta \epsilon_s  \leq x +\delta \big\}.
 \eeqnn
 Hence it suffices to prove that for any $K>0$, 
 \beqlb\label{eqn.510}
 \limsup_{x\to\infty} \frac{\underline{n}\big( \#\widetilde{\mathcal{A}}_\delta(x,K) \cdot \big(\#\widetilde{\mathcal{A}}_\delta(x,K) -1\big) \big)}{\nu\big(x+ \varDelta_\delta\big)} =0.
 \eeqlb
 An application of the square of a sum formula shows that 
 \beqnn
 \underline{n}\Big( \#\widetilde{\mathcal{A}}_\delta(x,K) \cdot \big(\#\widetilde{\mathcal{A}}_\delta(x,K) -1\big) \Big)=  
 2\cdot
 \underline{n}\Big( \sum_{0<s<r\leq \zeta}  \mathbf{1}_{\{ \epsilon_{s-}\leq K,\, x -2K <\Delta \epsilon_s  \leq x +\delta \}} \cdot  \mathbf{1}_{\{ \epsilon_{r-}\leq K, \,x -2K <\Delta \epsilon_r  \leq x +\delta   \}} \Big).
 \eeqnn
 By using the Markov property,  the two compensation formulas \eqref{eqn.CompenstationX} and \eqref{eqn.Compensation}, it equals
 \beqnn
 \lefteqn{ 2\cdot \underline{n}\bigg(  \sum_{0<s\leq \zeta}  \mathbf{1}_{\{ \epsilon_{s-}\leq K,\, x -2K <\Delta \epsilon_s  \leq x +\delta \}}  \cdot
 	\mathbf{E}_{\epsilon_s}\Big[ \sum_{0<r\leq\tau_0^-}  \mathbf{1}_{\{ X_{r-}\le K,\, x -2K <\Delta X_r  \leq x +\delta \}}
 	\Big] \bigg)}\cr
 \ar=\ar 2\cdot \nu\big((x-2K,x+\delta]\big)\cdot \underline{n}\bigg(
 \sum_{0<s\leq\zeta} \mathbf 1_{\{ \epsilon_{s-}\leq K,\, x -2K <\Delta \epsilon_s  \leq x +\delta\}} \cdot \mathbf{E}_{\epsilon_s}\Big[ \int_0^{\tau_0^-} \mathbf 1_{\{X_{r-}\le K\}}\,dr\Big]\bigg).
 \eeqnn
 By Proposition~\ref{Prop.509} and the compensation formula \eqref{eqn.Compensation} again, it can be bounded by
 \beqnn
 C \cdot \nu\big((x-2K,x+\delta]\big)\cdot \underline{n}\bigg(
 \sum_{0<s\leq\zeta}   \mathbf 1_{\{  \Delta\epsilon_s\in(x-2K,x+\delta]\}}
 \bigg) 
 = C \cdot \underline{n}(\zeta) \cdot \big| \nu\big((x-2K,x+\delta]\big) \big|^2,
 \eeqnn
 for some constant $C>0$ depending only on $K$. 
 By \eqref{eqn.30}, the right-hand side is $o\big(\nu(x+ \varDelta_\delta) \big)$ as $x\to\infty$.  The limit \eqref{eqn.510} follows.
 The proof ends.
 \qed

 \subsection{Proof of Lemma~\ref{Lemma.504}}
 
 Recall \eqref{eqn.5018}. 
 For any $\eta\in(0,1)$, by the Markov property and then the compensation formula, we obtain that for sufficiently large $x$, 
 \beqlb\label{eqn.509}
 \underline{n}\big( \# \mathcal{A}_\delta(x,K) \big) \ar=\ar \underline{n}\bigg( \sum_{s\in( 0, \zeta]} \mathbf{1}_{\{\epsilon_{s-}\leq K, \Delta\epsilon_s>x^\eta \}} \mathbf{P}_{\epsilon_{s-}+\Delta \epsilon_s}\Big( \overline{X}_{\tau_0^-} \in x+\varDelta_\delta,   \overline{X}_{\tau_0^-} - X_0 \leq K\Big) \bigg)\cr
 \ar=\ar \int_0^\infty  ds \int_0^K\underline{n}_s(dz) \int_{x^\eta}^\infty  \mathbf{P}_{z+w}\Big( \overline{X}_{\tau_0^-} \in x+\varDelta_\delta,   \overline{X}_{\tau_0^-} - z-w\leq K \Big)\, \nu(dw).
 \eeqlb
 Recall the quantities $G_\delta(x,K,z)$ defined in \eqref{eqn.50123} with $\vartheta_x=x^\eta$. We have 
 \beqlb\label{eqn.5091}
 \bigg|\frac{\underline{n}\big( \# \mathcal{A}_\delta(x,K) \big)}{\nu\big(x+ \varDelta_\delta\big)}-\underline{n}(\zeta)\bigg|
 \ar=\ar \bigg|  \int_0^\infty  ds \int_0^K \frac{G_\delta(x,K,z)}{\nu\big(x+ \varDelta_\delta\big)}\,\underline{n}_s(dz) -\underline{n}(\zeta)\bigg|.  
 \eeqlb
 By the triangle inequality, it can be bounded by 
 \beqnn
 \int_0^\infty  ds \int_0^K \Big|\frac{G_\delta(x,K,z)}{\nu\big(x+ \varDelta_\delta\big)} - \mathbf{P}\big(\overline{X}_\infty\leq K \big)\Big|\,\underline{n}_s(dz) + \bigg| \mathbf{P}\big(\overline{X}_\infty\leq K \big) \int_0^\infty \underline{n}\big( \epsilon_s\leq K,\zeta >s \big)\,ds -\underline{n}(\zeta)\bigg|.
 \eeqnn
 Since $\overline{X}_\infty<\infty$ a.s., the second absolute value goes to $0$ as $K\to\infty$. 
 Moreover, the first absolute value can be bounded by 
 \beqnn
 \sup_{z\in[0,K]} \bigg|\frac{G_\delta(x,K,z)}{\nu\big(x+ \varDelta_\delta\big)} - \mathbf{P}\big(\overline{X}_\infty\leq K \big)\bigg| \cdot  \int_0^\infty  \underline{n}_s([0,K]) \,ds 
 \leq \underline{n}(\zeta)\cdot \sup_{z\in[0,K]} \bigg|\frac{G_\delta(x,K,z)}{\nu\big(x+ \varDelta_\delta\big)} - \mathbf{P}\big(\overline{X}_\infty\leq K \big)\bigg|,
 \eeqnn
 which goes to $0$ as $x\to\infty$ by Proposition~\ref{Prop.508}. Hence the first desired limit holds.
 The second one follows directly from the first one and Lemma~\ref{Lemma.503}. The proof ends. 
 \qed 
 
 \section{Proof of Theorem \ref{MainThm03}} 
 \label{Sec.ProofThm03}
 \setcounter{equation}{0} 
 
 In this section, we prove Theorem~\ref{MainThm03}, which states that, as $t\to\infty$, the simultaneous occurrence of the two events
 \beqnn
 \big\{ \overline{\epsilon} \in \beta (t+\varDelta_{\delta_1}) \big\}
 \quad \mbox{and}\quad 
 \big\{ \zeta \in t+\varDelta_{\delta_2} \big\}
 \eeqnn
 becomes asymptotically negligible. 
 The proof follows a strategy similar to that used for Theorem~\ref{MainThm02}, but requires an additional decomposition according to the occurrence time of the large jump. 
 
 Recall the set $\mathcal A_\delta(x,K)$ defined in \eqref{eqn.5033}. 
 The main idea is as follows.
 When the two events above occur simultaneously,  the dominant scenario necessarily involves a large jump belonging to $\mathcal A_{\beta\delta_1}(\beta t,K)$. Moreover, such a large jump must occur sufficiently early. 
 On the other hand, we show that the occurrence of such an early large jump is asymptotically incompatible with the excursion having lifetime close to $t$. Together, these observations explain why the two events above are asymptotically unlikely to occur simultaneously.
 The following three lemmas make this argument precise. To this end, for any $R>0$, we decompose $\mathcal A_\delta(x,K)$ according to the occurrence times of its jumps by setting
 \beqlb\label{eqn.600}
 \mathcal{A}_\delta^{\leq R}(x,K) :=  \mathcal{A}_\delta(x,K)\cap(0,R]
 \quad \mbox{and}\quad 
 \mathcal{A}_\delta^{>R}(x,K)  :=  \mathcal{A}_\delta(x,K)\cap(R,\infty).
 \eeqlb
 
 \begin{lemma}\label{Lemma.600}
 	For any constant $\delta>0$, we have 
 	\beqnn
 	\limsup_{R\to\infty}  \limsup_{K\to\infty} \limsup_{t\to\infty}\frac{\underline{n}\big( \#\mathcal{A}_{\beta\delta}^{> R} (\beta t,K )\big) }{ \nu\big(\beta t+\varDelta_{\beta\delta}\big)}  =0 .
 	\eeqnn 
 \end{lemma}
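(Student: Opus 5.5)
The plan is to repeat the computation of Lemma~\ref{Lemma.504}, but to keep the restriction $s>R$ on the jump time. By the Markov property and the compensation formula \eqref{eqn.Compensation}, exactly as in \eqref{eqn.509}, for $x=\beta t$ large we obtain
\beqnn
\underline n\big(\#\mathcal A^{>R}_{\beta\delta}(\beta t,K)\big)
=\int_R^\infty ds\int_0^K G_{\beta\delta}(\beta t,K,z)\,\underline n_s(dz),
\eeqnn
where $G_{\beta\delta}$ is the quantity \eqref{eqn.50123} with $\vartheta_x=x^\eta$. The restriction to $s>R$ passes through the compensation formula, because the indicator $\mathbf 1_{\{s>R\}}$ is predictable.

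We then apply Proposition~\ref{Prop.508} with $x=\beta t$ and $\delta$ replaced by $\beta\delta$. For fixed $K$ it gives $\sup_{z\in[0,K]}G_{\beta\delta}(\beta t,K,z)\le 2\,\nu(\beta t+\varDelta_{\beta\delta})$ for all sufficiently large $t$. Alternatively, one can bound $G_{\beta\delta}(\beta t,K,z)/\nu(\beta t+\varDelta_{\beta\delta})$ by $\mathbf P(\overline X_\infty\le K)+o(1)\le 1+o(1)$, uniformly in $z\in[0,K]$. Consequently
\beqnn
\limsup_{t\to\infty}\frac{\underline n\big(\#\mathcal A^{>R}_{\beta\delta}(\beta t,K)\big)}{\nu(\beta t+\varDelta_{\beta\delta})}
\le \int_R^\infty \underline n_s([0,K])\,ds\le \int_R^\infty\underline n(\zeta>s)\,ds .
\eeqnn

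The right-hand side does not depend on $K$, so taking $\limsup_{K\to\infty}$ leaves it unchanged. It tends to $0$ as $R\to\infty$ because $\underline n(\zeta)=\int_0^\infty\underline n(\zeta>s)\,ds<\infty$ by \eqref{eqn.501}. This gives the lemma.

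The only point requiring care is the uniformity in $z\in[0,K]$ in Proposition~\ref{Prop.508}. That uniformity is exactly what lets the $\nu(\beta t+\varDelta_{\beta\delta})$ factor be pulled outside the $s$-integral before the limit in $t$ is taken. The uniformity holds for each fixed $K$, and this fixed-$K$ version is all we need, since the limits are taken in the order $t$, then $K$, then $R$. No further obstacle is expected; the lemma is essentially a tail-truncation of Lemma~\ref{Lemma.504}.
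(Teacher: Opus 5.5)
Your proposal is correct and matches the paper's proof. The paper also rewrites $\underline n\big(\#\mathcal A^{>R}_{\beta\delta}(\beta t,K)\big)$ via the Markov property and the compensation formula as $\int_R^\infty ds\int_0^K G_{\beta\delta}(\beta t,K,z)\,\underline n_s(dz)$ with $\vartheta_x=x^\eta$, bounds it using the uniform-in-$z\in[0,K]$ convergence of Proposition~\ref{Prop.508}, and then lets $R\to\infty$ using $\int_R^\infty\underline n(\zeta>s)\,ds\to0$.
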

 \proof  For any $\eta\in(0,1)$, in view of \eqref{eqn.5018} we have
 $\Delta \epsilon_s\in(\beta t-2K,\beta t+\beta\delta] \subset \big((\beta t)^\eta,\infty\big)$
 and $\epsilon_{s-}\leq K$ for sufficiently large $t$, which follows that 
 \beqnn
 \underline{n}\Big( \#\mathcal{A}_{\beta\delta}^{> R} (\beta t,K )\Big) 
 \ar=\ar \underline{n}\bigg(  \sum_{R<s< \zeta} \mathbf{1}_{\{\Delta\epsilon_s>(\beta t)^\eta,\,\epsilon_{s-}\leq K,\, \sup_{s\leq r<\zeta} \epsilon_r \in \beta(t+\varDelta_{\delta}), \,  \sup_{s\leq r<\zeta} \epsilon_r- \epsilon_s \leq K \}} \bigg).
 \eeqnn
 Define   $G_{\beta\delta}(x,K,z)$ as in \eqref{eqn.50123} with $\vartheta_x=x^\eta$.  
 An argument similar as in \eqref{eqn.509}-\eqref{eqn.5091} yields  
 \beqnn
 \frac{\underline{n}\big( \#\mathcal{A}_{\beta\delta}^{> R} (\beta t,K )\big) }{\nu\big(\beta t+\varDelta_{\beta\delta}\big)} 
 \ar=\ar  \int_R^\infty ds \int_0^K \frac{G_{\beta\delta}(\beta t,K,z)}{\nu\big(\beta t+\varDelta_{\beta\delta}\big)} \, \underline{n}_s (dz)
 \leq  \sup_{z\in[0,K]}  \frac{G_{\beta\delta}(\beta t,K,z)}{\nu\big(\beta t+\varDelta_{\beta\delta}\big)} \cdot \int_R^\infty   \underline{n}(\zeta >s)\, ds.
 \eeqnn 
 By  Proposition~\ref{Prop.508},  there exists a constant $C>0$ independent of $R$ and $K$ such that 
 \beqnn
 \limsup_{K\to\infty}\limsup_{t\to\infty} 	\frac{\underline{n}\big( \#\mathcal{A}_{\beta\delta}^{> R} (\beta t,K )\big) }{ \nu\big(\beta t+\varDelta_{\beta\delta}\big)}  
 \leq C\cdot 	\limsup_{K\to\infty}\mathbf{P}\big( \overline{X}_\infty\leq K \big)\cdot \int_R^\infty    \underline{n}(\zeta >s)\, ds 
 \leq C \int_R^\infty    \underline{n}(\zeta >s)\, ds ,
 \eeqnn
 which vanishes as $R\to\infty$ by \eqref{eqn.501}. 
 \qed 
 
 \begin{lemma}\label{Lemma.601}
 	For any constants $\delta_1,\delta_2>0$, we have 
 	\beqlb\label{eqn.601}
 	\limsup_{R\to\infty} \limsup_{K\to\infty} \limsup_{t\to\infty} 	
 	\underline{n}\Big( \#\mathcal{A}_{\beta\delta_1}^{\leq R}\big(\beta t,K\big)=0 ,\, \overline{\epsilon}\in \beta \big(t+\varDelta_{\delta_1}\big) \,\Big|\, \zeta \in t+\varDelta_{\delta_2}\Big) =0.
 	\eeqlb
 \end{lemma}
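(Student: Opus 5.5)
The plan is to discard the lifetime constraint in the numerator and bound the joint mass by quantities already controlled in Section~\ref{Sec.ProofThm02} and Lemma~\ref{Lemma.600}. The denominator $\underline n(\zeta\in t+\varDelta_{\delta_2})$ is handled by Theorem~\ref{MainThm01}.

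First, fix $R,K>0$ and use the elementary inclusion
\beqnn
\big\{\#\mathcal A_{\beta\delta_1}^{\leq R}(\beta t,K)=0\big\}
\subset
\big\{\#\mathcal A_{\beta\delta_1}(\beta t,K)=0\big\}\cup\big\{\#\mathcal A_{\beta\delta_1}^{> R}(\beta t,K)\geq 1\big\}.
\eeqnn
Dropping the event $\{\zeta\in t+\varDelta_{\delta_2}\}$ in the numerator, and using $\mathbf 1_{\{N\geq1\}}\leq N$, this gives
\beqnn
\underline n\Big(\#\mathcal A_{\beta\delta_1}^{\leq R}(\beta t,K)=0,\,\overline\epsilon\in\beta(t+\varDelta_{\delta_1}),\,\zeta\in t+\varDelta_{\delta_2}\Big)
\leq \underline n\Big(\#\mathcal A_{\beta\delta_1}(\beta t,K)=0,\,\overline\epsilon\in\beta t+\varDelta_{\beta\delta_1}\Big)
+\underline n\Big(\#\mathcal A^{>R}_{\beta\delta_1}(\beta t,K)\Big).
\eeqnn
The two terms on the right are treated separately, both on the scale $\nu(\beta t+\varDelta_{\beta\delta_1})$:
\begin{itemize}
\item By Lemma~\ref{Lemma.501}, applied with $x=\beta t$ and window width $\beta\delta_1$, the first term divided by $\nu(\beta t+\varDelta_{\beta\delta_1})$ has $\limsup_{t\to\infty}$ tending to $0$ as $K\to\infty$. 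This bound does not depend on $R$.
\item By Lemma~\ref{Lemma.600}, the second term divided by $\nu(\beta t+\varDelta_{\beta\delta_1})$ vanishes after taking $\limsup_{t\to\infty}$, then $\limsup_{K\to\infty}$, then $\limsup_{R\to\infty}$.
\end{itemize}

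Second, for the denominator, Theorem~\ref{MainThm01} gives $\underline n(\zeta\in t+\varDelta_{\delta_2})\sim\underline n(\zeta)\,\nu(\beta t+\varDelta_{\beta\delta_2})$. By \eqref{eqn.30}, both $\nu(\beta t+\varDelta_{\beta\delta_2})$ and $\nu(\beta t+\varDelta_{\beta\delta_1})$ are asymptotically proportional to $\alpha\,\overline\nu(\beta t)/(\beta t)$, with factors $\beta\delta_2$ and $\beta\delta_1$ respectively. Hence
\beqnn
\frac{\nu(\beta t+\varDelta_{\beta\delta_1})}{\underline n(\zeta\in t+\varDelta_{\delta_2})}\to\frac{\delta_1}{\underline n(\zeta)\,\delta_2}\in(0,\infty).
\eeqnn
Multiplying the bounds from the first step by this bounded ratio, and taking limits in the order $t\to\infty$, then $K\to\infty$, then $R\to\infty$, yields \eqref{eqn.601}. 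The order is consistent with both lemmas, since the Lemma~\ref{Lemma.501} term is free of $R$.

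I expect no serious obstacle. The only points to check are that the inclusion above is valid, which is immediate because $\mathcal A=\mathcal A^{\leq R}\cup\mathcal A^{>R}$, and that the ratio of the two window masses converges to a finite positive constant, which follows from \eqref{eqn.30}. The real content lies in Lemmas~\ref{Lemma.501} and~\ref{Lemma.600}. The lifetime conditioning enters only through the normalization, because both windows have local masses of the same order.
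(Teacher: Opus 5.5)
Your proposal is correct and matches the paper's proof essentially step for step. The paper uses the same inclusion $\{\#\mathcal A^{\leq R}_{\beta\delta_1}(\beta t,K)=0\}\subset\{\#\mathcal A_{\beta\delta_1}(\beta t,K)=0\}\cup\{\#\mathcal A^{>R}_{\beta\delta_1}(\beta t,K)\geq 1\}$, drops the lifetime constraint in the numerator, controls the two pieces by Lemma~\ref{Lemma.501} and Lemma~\ref{Lemma.600}, and normalizes the denominator via Theorem~\ref{MainThm01} and \eqref{eqn.30}, with the limits taken in the same order.
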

 \proof Firstly, the law of total probability induces that 
 \beqnn
 \big\{\#\mathcal{A}_{\beta\delta_1}^{\leq R}\big(\beta t,K\big)=0 \big\} 
 \ar=\ar
 \big\{\#\mathcal{A}_{\beta\delta_1}\big(\beta t,K\big)=0 \big\} \cup \big\{\#\mathcal{A}_{\beta\delta_1}\big(\beta t,K\big)=\#\mathcal{A}_{\beta\delta_1}^{> R}\big(\beta t,K\big)\geq 1 \big\} \cr\cr
 \ar\subset\ar  \big\{\#\mathcal{A}_{\beta\delta_1}\big(\beta t,K\big)=0 \big\} \cup \big\{ \#\mathcal{A}_{\beta\delta_1}^{> R}\big(\beta t,K\big)\geq 1 \big\}.
 \eeqnn
 By using Theorem~\ref{MainThm01} along with \eqref{eqn.30}, and then Lemma~\ref{Lemma.501}, we have 
 \beqlb\label{eqn.6011}
 \lefteqn{\lim_{K\to\infty}\limsup_{t\to\infty}\underline{n}\Big( \#\mathcal{A}_{\beta\delta_1}\big(\beta t,K\big)=0 ,\, \overline{\epsilon}\in \beta \big(t+\varDelta_{\delta_1}\big) \,\Big|\, \zeta \in t+\varDelta_{\delta_2}\Big)}\cr
 \ar\leq\ar \lim_{K\to\infty}\limsup_{t\to\infty}\frac{ \underline{n}\big( \#\mathcal{A}_{\beta\delta_1}\big(\beta t,K\big)=0 ,\, \overline{\epsilon}\in \beta \big(t+\varDelta_{\delta_1}\big)\big)}{ \underline{n}\big(\zeta \in t+\varDelta_{\delta_2}\big)}\cr
 \ar=\ar  \limsup_{t\to\infty} \frac{  \nu\big(\beta (t+\varDelta_{\delta_1} )\big)}{\underline{n}\big(\zeta \in t+\varDelta_{\delta_2}\big)} \cdot \lim_{K\to\infty}\limsup_{t\to\infty}\frac{ \underline{n}\big( \#\mathcal{A}_{\beta\delta_1}\big(\beta t,K\big)=0 ,\, \overline{\epsilon}\in \beta \big(t+\varDelta_{\delta_1}\big)\big)}{   \nu\big(\beta (t+\varDelta_{\delta_1} )\big)}
 =0.
 \eeqlb
 Moreover, since $\mathbf{1}_{\#\mathcal{A}_{\beta\delta_1}^{> R}\big(\beta t,K\big)\geq 1 } \leq \#\mathcal{A}_{\beta\delta_1}^{> R}\big(\beta t,K\big)$, we have 
 \beqnn
 \underline{n}\Big( \#\mathcal{A}_{\beta\delta_1}^{> R} (\beta t,K )\geq 1 ,\, \overline{\epsilon}\in \beta (t+\varDelta_{\delta_1} ) \,\Big|\, \zeta \in t+\varDelta_{\delta_2}\Big)
 \leq \frac{\underline{n}\big( \#\mathcal{A}_{\beta\delta_1}^{> R} (\beta t,K )\big)}{ \underline{n}\big(\zeta \in t+\varDelta_{\delta_2}\big)}
 \eeqnn
 By using Theorem~\ref{MainThm01} along with \eqref{eqn.30} and then Lemma~\ref{Lemma.600}, we have as $t\to\infty$ and then $K,R\to\infty$, 
 \beqnn
 \frac{\underline{n}\big( \#\mathcal{A}_{\beta\delta_1}^{> R} (\beta t,K )\big) }{\underline{n}\big(\zeta \in t+\varDelta_{\delta_2}\big)} 
 \ar=\ar \frac{ \nu\big(\beta t+ \varDelta_{\beta\delta_1}\big)}{\underline{n}\big(\zeta \in t+\varDelta_{\delta_2}\big)} 	\frac{\underline{n}\big( \#\mathcal{A}_{\beta\delta_1}^{> R} (\beta t,K )\big) }{\nu\big(\beta t+\varDelta_{\beta\delta_1}\big)  }  \to 0.
 \eeqnn
 This together with \eqref{eqn.6011} yields the desired limit \eqref{eqn.601} directly.
 \qed  
 
 \begin{lemma}\label{Lemma.602}
 	For any constants $\delta_1,\delta_2>0$, we have 
 	\beqnn
 	\limsup_{R\to\infty}\limsup_{K\to\infty} \limsup_{t\to\infty} 	
 	\underline{n}\Big(\# \mathcal{A}_{\beta \delta_1}^{\leq R}\big(\beta t,K\big)  \,\Big|\, \zeta \in t+\varDelta_{\delta_2}\Big) =0.
 	\eeqnn
 \end{lemma}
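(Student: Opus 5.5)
We need to bound $\underline n(\#\mathcal A^{\le R}_{\beta\delta_1}(\beta t,K);\,\zeta\in t+\varDelta_{\delta_2})$ by $o\big(\underline n(\zeta\in t+\varDelta_{\delta_2})\big)$. By Theorem~\ref{MainThm01} and \eqref{eqn.30}, the denominator is comparable to $\nu(\beta t+\varDelta_{\beta\delta_1})$. It therefore suffices to show that for fixed $R,K$ the ratio
\[
\frac{\underline n\big(\#\mathcal A^{\le R}_{\beta\delta_1}(\beta t,K),\,\zeta\in t+\varDelta_{\delta_2}\big)}{\nu(\beta t+\varDelta_{\beta\delta_1})}
\]
tends to $0$ as $t\to\infty$.

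\textbf{Reduction to an integral.} As in \eqref{eqn.509}, I apply the Markov property at each candidate jump time $s\le R$ and then the compensation formula \eqref{eqn.Compensation}. This gives
\[
\int_0^R ds\int_0^K \underline n_s(dz)\int_{(\beta t)^\eta}^\infty \mathbf P_{z+w}\Big(\overline X_{\tau_0^-}\in\beta(t+\varDelta_{\delta_1}),\ \overline X_{\tau_0^-}-z-w\le K,\ \tau_0^-\in t-s+\varDelta_{\delta_2}\Big)\nu(dw).
\]
By \eqref{eqn.5018}, $w$ ranges over an interval of length at most $2K+\beta\delta_1$ around $\beta t$. Dropping the height constraints except $z+w\in\beta t-2K+\varDelta_{2K+\beta\delta_1}$ bounds the inner integral by
\[
\nu\big(\beta t-2K-z+\varDelta_{2K+\beta\delta_1}\big)\cdot\sup_{y\in \beta t-2K+\varDelta_{3K+\beta\delta_1}}\ \sup_{u\in[t-R,t]}\mathbf P_y\big(\tau_0^-\in u+\varDelta_{\delta_2}\big).
\]
By Lemma~\ref{Lemma.301}, the $\nu$-factor is at most $C_K\,\nu(\beta t+\varDelta_{\beta\delta_1})$ uniformly in $z\le K$. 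Integrating $\underline n_s([0,K])$ over $s\le R$ is bounded by $\underline n(\zeta)$. So everything reduces to the anti-concentration claim
\[
\sup_{|y-\beta t|\le C}\ \sup_{|u-t|\le C}\ \mathbf P_y\big(\tau_0^-\in u+\varDelta_{\delta_2}\big)\to0 \qquad (t\to\infty),
\]
for each fixed $C$.

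\textbf{The anti-concentration step.} This is the main obstacle, and I would prove it through the running infimum. By spatial homogeneity,
\[
\mathbf P_y\big(\tau_0^-\in u+\varDelta_{\delta_2}\big)\le \mathbf P\big(\underline X_u\ge -y,\ \underline X_{u+\delta_2}<-y\big)=\mathbf P\big(-\underline X_{u+\delta_2}>y\ge-\underline X_u\big).
\]
On this event,
\[
\underline X_u+\beta u\in \beta u-y-(\underline X_u-\underline X_{u+\delta_2})+[0,\ \underline X_u-\underline X_{u+\delta_2}),
\]
which is an interval of length $\underline X_u-\underline X_{u+\delta_2}$.

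I split according to whether $\underline X_u-\underline X_{u+\delta_2}\le M$ or not.
\begin{itemize}
\item On $\{\underline X_u-\underline X_{u+\delta_2}> M\}$: the probability is at most $\mathbf P(-\underline X_{\delta_2}>M)$, uniformly in $u$, by the stochastic domination used in Lemma~\ref{Lemma.310}. This is small for large $M$.
\item On $\{\underline X_u-\underline X_{u+\delta_2}\le M\}$: we have $\underline X_u+\beta u\in \beta u-y-M+\varDelta_{2M}$. Here $\beta u-y$ stays bounded, and Proposition~\ref{Prop.2011} gives
\[
\sup_{v}\mathbf P\big(\underline X_u+\beta u\in v+\varDelta_{2M}\big)\to0
\]
as $u\to\infty$, uniformly over the relevant range since $u\ge t-R\to\infty$.
\end{itemize}
Letting $t\to\infty$ and then $M\to\infty$ yields the claim. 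Combining this with the reduction, the limit in $t$ is zero for each fixed $R$ and $K$, which proves Lemma~\ref{Lemma.602}.
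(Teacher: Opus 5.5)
Your proposal is correct and follows essentially the same route as the paper: the compensation formula at early candidate jumps with $\Delta\epsilon_s\in(\beta t-2K,\beta t+\beta\delta_1]$ and $\epsilon_{s-}\le K$, comparison of the $\nu$-factor with $\underline n(\zeta\in t+\varDelta_{\delta_2})$ via Theorem~\ref{MainThm01} and \eqref{eqn.30}, and then the anti-concentration of $\tau_{-y}^-$. That last step splits on the size of $\underline X_u-\underline X_{u+\delta_2}$ and uses the domination by $-\underline X_{\delta_2}$ together with Proposition~\ref{Prop.2011}. One harmless slip: on $\{\underline X_u\ge -y>\underline X_{u+\delta_2}\}$ one actually has $\underline X_u+\beta u\in[\beta u-y,\ \beta u-y+\underline X_u-\underline X_{u+\delta_2})$, not the interval you wrote, but your window $\beta u-y-M+\varDelta_{2M}$ still contains it, so the conclusion stands.
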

 \proof For any $s \in \mathcal{A}_{\beta \delta_1}(\beta t,K) $, in view of \eqref{eqn.5018}  we have $\Delta \epsilon_s \in(\beta t-2K,\beta t+\beta\delta_1] $ and $\epsilon_{s-}\leq K$. This induces that for  sufficiently large $t$, 
 \beqnn
 \underline{n}\big(\#\mathcal{A}_{\beta \delta_1}^{\leq R}(\beta t,K) \,\big|\, \zeta \in t+\varDelta_{\delta_2}\big) 
 \leq \frac{\underline{n}\big( \sum_{0<s\leq R} \mathbf{1}_{\{ \Delta \epsilon_s \in(\beta t-2K,\beta t+\beta\delta_1],\,\epsilon_{s-}\leq K,\, \zeta \in t+\varDelta_{\delta_2} \}}\big)}{\underline{n}\big( \zeta \in t+\varDelta_{\delta_2} \big)},
 \eeqnn
 Similarly as in \eqref{eqn.509}, the right-hand side equals
 \beqnn
 \frac{1}{\underline{n}\big( \zeta \in t+\varDelta_{\delta_2} \big)}\cdot\int_0^R ds \int_0^K	\underline{n}_s (dz ) \int_{\beta t-2K}^{\beta t+\beta\delta_1} \mathbf{P}\big(  \tau_{-z-w}^- \in t-s+\varDelta_{\delta_2}
 \big)\,\nu(dw),
 \eeqnn
 which, for any fixed $K,R>0$ and sufficiently large $t$, is bounded by
 \beqnn
 \frac{\nu\big(\beta t-2K +\varDelta_{2K+\beta \delta_1}\big) }{\underline{n}\big( \zeta \in t+\varDelta_{\delta_2} \big)}\cdot   \sup_{r\in[t-R,t]}  \sup_{\beta t-2K\leq y\leq \beta t+K+\beta\delta_1}
 \mathbf{P} \big(  \tau_{-y}^- \in r+\varDelta_{\delta_2} \big) \cdot  	\int_0^R \underline{n}(\epsilon_s\leq K, \zeta >s) \, ds.
 \eeqnn
 By Theorem~\ref{MainThm01} and \eqref{eqn.30}, the fraction is bounded uniformly for sufficiently large $t$. Moreover, the last integral is smaller than $\underline{n}(\zeta)<\infty$.  
 Then it remains to prove that for any $R,K>0$,
 \beqlb\label{eqn.5023}
 \lim_{t\to\infty} \sup_{r\in[t-R,t]}\sup_{\beta t-2K\leq y\leq \beta t+K+\beta\delta_1}
 \mathbf{P} \big(  \tau_{-y}^- \in r+\varDelta_{\delta_2} \big)  =0.
 \eeqlb
 Firstly, for any $\eta>0$ we have
 \beqlb\label{eqn.5024}
 \mathbf{P} \big(  \tau_{-y}^- \in r+\varDelta_{\delta_2} \big)
 \ar\leq \ar \mathbf{P} \big(  \underline{X}_r - \underline{X}_{r+\delta_2} >\eta \big) + \mathbf{P} \big(  \tau_{-y}^- \in r+\varDelta_{\delta_2},\underline{X}_r - \underline{X}_{r+\delta_2} \leq \eta \big) . 
 \eeqlb
 For the first probability on the right-hand side, by the stationary increments of $X$ we have 
 \beqlb\label{eqn.7003}
 \underline{X}_r - \underline{X}_{r+\delta_2} 
 = \Big(  \underline{X}_{r} -   \inf_{r\leq u\leq r+\delta_2}X_u  \Big)^+ 
 \ar \leq\ar X_r  - \inf_{r\leq u\leq r+\delta_2}X_u  \cr
 \ar=\ar    -\inf_{0\leq s\leq\delta_2} \big( X_{r+s}-X_{r} \big)
 \overset{\rm d}=\  -\underline{X}_{\delta_2}  ,
 \eeqlb
 and then
 \beqnn
 \limsup_{\eta\to\infty}\sup_{r\in[t-R,t]}   \mathbf{P} \big( \underline{X}_r - \underline{X}_{r+\delta_2} >\eta\big) 
 \ar\leq \ar \limsup_{\eta\to\infty} \mathbf{P} \big(  -\underline{X}_{\delta_2} > \eta \big)  =0.
 \eeqnn
 For the second probability,   conditioned on $ \tau_{-y}^- \in r+\varDelta_{\delta_2}$ and $0<\underline{X}_r - \underline{X}_{r+\delta_2} \leq \eta$, we have  $\underline{X}_r \geq -y> \underline{X}_{r+\delta_2}$  and $\underline{X}_r    \in  -y +\varDelta_\eta$, which yields that 
 \beqnn
 \lefteqn{\sup_{r\in[t-R,t]}\sup_{\beta t-2K\leq y\leq \beta t+K+\beta\delta_1}  
 	\mathbf{P} \big(  \tau_{-y}^- \in r+\varDelta_{\delta_2},\underline{X}_r - \underline{X}_{r+\delta_2} \leq \eta \big) }\ar\ar\cr
 \ar\leq\ar \sup_{r\in[t-R,t]}\sup_{\beta t-2K\leq y\leq \beta t+K+\beta\delta_1} 
 \mathbf{P}\big( \underline{X}_r    \in  -y +\varDelta_\eta \big)
 \leq \sup_{r\in[t-R,t]}\sup_{y\in \mathbb{R}} \mathbf{P}\big( \underline{X}_r +\beta r   \in  y +\varDelta_\eta \big),
 \eeqnn
 which goes to $0$ as $t\to\infty$ by Proposition~\ref{Prop.2011}.  
 The limit \eqref{eqn.5023} follows directly by taking the preceding two limits back into \eqref{eqn.5024}. The proof ends. 
 \qed  
 
 \medskip
 
 \textbf{\textit{Proof of Theorem \ref{MainThm03}.}} 
 Firstly, by Theorem~\ref{MainThm01} and \ref{MainThm02} we have 
 \beqnn
 \lim_{t\to\infty}\frac{	\underline{n}\big(  \overline\epsilon\in \beta (t+\varDelta_{\delta_1}) \,|\,  \zeta\in t+\varDelta_{\delta_2} \big)  }{  \underline{n}\big(   \zeta\in t+\varDelta_{\delta_2} \,|\, \overline\epsilon\in \beta (t+\varDelta_{\delta_1})\big)}
 \ar=\ar \lim_{t\to\infty} \frac{\underline{n}\big(   \overline\epsilon\in \beta (t+\varDelta_{\delta_1})\big)}{\underline{n}\big(   \zeta\in t+\varDelta_{\delta_2} \big)}
 = \frac{\delta_1}{\delta_2}. 
 \eeqnn 
 Moreover,  the law of total probability induces that 
 \beqnn
 \lefteqn{\underline{n}\Big(  \overline\epsilon\in \beta \big(t+\varDelta_{\delta_1}\big) \,\Big|\,  \zeta\in t+\varDelta_{\delta_2} \Big)}\ar\ar\cr\cr
 \ar\leq \ar  \underline{n}\Big( \#\mathcal{A}_{\beta \delta_1}^{\leq R}\big(\beta t,K\big)=0, \, \overline\epsilon\in \beta \big(t+\varDelta_{\delta_1}\big) \,\Big|\,  \zeta\in t+\varDelta_{\delta_2} \Big)  +  \underline{n}\Big( \#\mathcal{A}_{\beta \delta_1}^{\leq R}\big(\beta t,K\big)\geq 1 \,\Big|\,  \zeta\in t+\varDelta_{\delta_2} \Big),
 \eeqnn 
 which vanishes  as $t\to\infty$, then $K\to\infty$ and finally $R\to\infty$ by Lemma~\ref{Lemma.601} and~\ref{Lemma.602}.  
 \qed

 \section{Proof of Theorem~\ref{MainThm04}} 
 \label{Sec.ProofThm04}
 \setcounter{equation}{0}

 This section is devoted to the proof of Theorem~\ref{MainThm04}, which provides second-order localization estimates for the height and lifetime of a long excursion.
 
 To prove the first limit in \eqref{eqn.MainThm04}, we recall a key observation from our study of the tail asymptotics of the excursion lifetime in \cite{CuiWuXu2025}: for an excursion to survive for a long time, a large jump must typically occur at an early stage of the excursion. Moreover, owing to the negative drift of the underlying L\'evy process, the excursion is unlikely to reach a high level before the occurrence of this first large jump.
 
 On the other hand, the proof of Theorem~\ref{MainThm02} reveals a complementary feature of excursions attaining a large maximum. Namely, when the maximum of an excursion lies in a high-level interval, the dominant scenario is that a large jump carries the excursion close to its eventual maximum, after which only a relatively small additional upward displacement occurs. More precisely, the difference between the subsequent maximum and the post-jump position is asymptotically negligible relative to the size of the large jump.
 
 These two observations suggest that the first large jump is the key quantity linking the lifetime and the maximum of a long excursion: its occurrence time determines, to first order, the remaining lifetime of the excursion, while its size determines, to first order, the height eventually attained by the excursion. To formulate this mechanism precisely, for $x\geq0$ and $\epsilon$,  define 
 \beqlb\label{eqn.70}
 \mathcal{B}^x(\epsilon):= \big\{ 0<s\leq \zeta: \Delta \epsilon_s>x \big\} 
 \quad \mbox{and}\quad 
 \mathcal{J}^x(\epsilon) := \inf\, \mathcal{B}^x(\epsilon) ,
 \eeqlb 
 with the convention that $\inf\emptyset=\infty$. When there is no risk of confusion, we suppress the dependence on $\epsilon$, and simply write $\mathcal{B}^x$ and $\mathcal{J}^x$. 
 For any $\lambda\in(0,\beta)$ and sufficiently large constants $R,K>0$, the preceding two observations suggest that a long-lived excursion typically exhibits the following behavior: its first jump of size greater than $\lambda t$ occurs before time $R$; prior to this jump, the excursion remains below level $K$; the size of the jump is within $\theta_t/2$ of $\beta t$; and, after the jump, the excursion makes at most an additional upward displacement of size $K$. Accordingly, we introduce the event
 \beqnn
 \boldsymbol{J}(t,K,R):=\Big\{  \mathcal{J}^{\lambda t}\leq R, \, \sup_{0\leq r< \mathcal{J}^{\lambda t}}\epsilon_r\leq K, \, \big| \Delta\epsilon_{ \mathcal{J}^{\lambda t}}-\beta t \big|
 \leq \frac{\theta_t}{2} ,\,
 \sup_{ \mathcal{J}^{\lambda t}\leq r<\zeta}  \epsilon_{r} - \epsilon_{ \mathcal{J}^{\lambda t}}  \leq K \Big\}. 
 \eeqnn
 To prove the first limit in \eqref{eqn.MainThm04},  in the next four lemmas we show that, conditional on $\zeta\in t+\varDelta_\delta$, the complementary event  of  $\boldsymbol{J}(t,K,R)$ occurs with probability tending to zero.

 \begin{lemma}\label{Lemma.701}
 	For any constants $\delta, R>0$ and $\lambda\in (0,\beta)$, we have  as $t\to\infty$,
 	\beqnn
 	\bigg| \underline{n}\big(\mathcal{J}^{\lambda t}\leq R \,\big|\,\zeta\in t+\varDelta_\delta   \big) -  \frac{\underline{n}(R\wedge \zeta)}{\underline{n}(\zeta)} \bigg| \to0.
 	\eeqnn 
 \end{lemma}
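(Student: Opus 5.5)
We compute $\underline n(\mathcal J^{\lambda t}\le R,\ \zeta\in t+\varDelta_\delta)$ directly and compare it with $\underline n(\zeta\in t+\varDelta_\delta)\sim\underline n(\zeta)\,\nu(\beta(t+\varDelta_\delta))\sim \underline n(\zeta)\,\alpha\delta\,\overline\nu(\beta t)/(\beta t)$ from Theorem~\ref{MainThm01} and \eqref{eqn.30}. We work with the first jump exceeding $\lambda t$. On $\{\mathcal J^{\lambda t}\le R\}$ we write
\[
\mathbf 1_{\{\mathcal J^{\lambda t}\le R,\ \zeta\in t+\varDelta_\delta\}}=\sum_{0<s\le R\wedge\zeta}\mathbf 1_{\{\Delta\epsilon_s>\lambda t,\ \mathcal J^{\lambda t}\ge s\}}\,\mathbf 1_{\{\zeta\in t+\varDelta_\delta\}} .
\]
The indicator $\{\mathcal J^{\lambda t}\ge s\}$ is predictable, so the Markov property \eqref{eqn.n-Markov} at $s$ and the compensation formula \eqref{eqn.Compensation} give
\[
\int_0^R ds\int_0^\infty \underline n\big(\epsilon_{s-}\in dz,\ \mathcal J^{\lambda t}\ge s,\ \zeta>s\big)\int_{\lambda t}^\infty \mathbf P_{z+w}\big(\tau_0^-\in t-s+\varDelta_\delta\big)\,\nu(dw).
\]
The inner quantity is the local first-passage mass of the process started from the high level $z+w$.

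\textbf{Key step.} We show that for each fixed $R$ and $K$, uniformly in $s\in[0,R]$ and $z\in[0,K]$,
\[
\int_{\lambda t}^\infty \mathbf P_{z+w}\big(\tau_0^-\in t-s+\varDelta_\delta\big)\nu(dw)\sim \nu\big(\beta(t+\varDelta_\delta)\big).
\]
The heuristic is that, by the law of large numbers, the process started at $y$ needs time about $y/\beta$ to descend to zero, so $\tau_0^-\in t-s+\varDelta_\delta$ forces $w\approx\beta t$. The left-hand side is a convolution of $\nu$ with the law of $-\underline X_{t-s}$ near level $\beta t$. We write
\[
\mathbf P_{y}\big(\tau_0^-\in r+\varDelta_\delta\big)=\mathbf P\big(\underline X_r\ge -y>\underline X_{r+\delta}\big)
\]
and integrate over $y=z+w$ against $\nu(dw)$. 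Fubini turns this into
\[
\mathbf E\Big[\nu\big((-\underline X_r-z,\,-\underline X_{r+\delta}-z]\cap(\lambda t,\infty)\big)\Big].
\]
Since $-\underline X_r/r\to\beta$ in $L^1$ (Lemma~\ref{Lemma.3001}), on the event $\{|\underline X_r+\beta r|\le\varepsilon r\}$ Lemma~\ref{Lemma.301} (local regular variation with uniformity in the window width) gives
\[
\nu\approx \alpha\,\frac{\overline\nu(\beta t)}{\beta t}\,\big(\underline X_r-\underline X_{r+\delta}\big)(1+O(\varepsilon)).
\]
Taking expectations and using Lemma~\ref{Lemma.310}, $\mathbf E[\underline X_r-\underline X_{r+\delta}]\to\beta\delta$ by uniform integrability, which gives $\alpha\delta\,\overline\nu(\beta t)/t\sim\nu(\beta(t+\varDelta_\delta))$. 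The complementary event $\{|\underline X_r+\beta r|>\varepsilon r\}$ must be shown negligible. There the interval length is at most $\underline X_r-\underline X_{r+\delta}\le$ an independent copy of $-\underline X_\delta$, and Corollary~\ref{Corollary.202} gives $\nu\le C\,(\overline\nu(\lambda t)/(\lambda t))\,(1+\text{length})$, with $\overline\nu(\lambda t)/(\lambda t)\asymp\overline\nu(\beta t)/t$. Since the increment after time $r$ is independent of $\mathscr F_r$, this contribution is at most $C\,(\overline\nu(\beta t)/t)\,\mathbf P(|\underline X_r+\beta r|>\varepsilon r)\,(1+\mathbf E[-\underline X_\delta])=o(\overline\nu(\beta t)/t)$. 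This uniform convolution estimate is the main obstacle, but the ingredients above appear sufficient. Uniformity in $s\le R$ and $z\le K$ only shifts the centre by $O(1)$.

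\textbf{Pre-jump factor.} Next we handle $\int_0^R ds\,\underline n(\epsilon_{s-}\in dz,\ \mathcal J^{\lambda t}\ge s,\ \zeta>s)$. We restrict to $z\le K$. The contribution from $z>K$ is bounded by the same uniform bound (Corollary~\ref{Corollary.202} with $y=z$) times $\int_0^R\underline n(\epsilon_s>K,\zeta>s)\,ds$, which vanishes as $K\to\infty$. As $t\to\infty$, $\underline n(\mathcal J^{\lambda t}<s,\ \zeta>s)\le R\,\underline n(\zeta)\,\overline\nu(\lambda t)\to0$ on the finite-mass region, so the pre-jump mass tends to $\int_0^R\underline n(\zeta>s)\,ds=\underline n(R\wedge\zeta)$.

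Dividing by $\underline n(\zeta\in t+\varDelta_\delta)\sim\underline n(\zeta)\,\nu(\beta(t+\varDelta_\delta))$ yields the limit $\underline n(R\wedge\zeta)/\underline n(\zeta)$. This holds after letting $t\to\infty$ and then $K\to\infty$, and since $K$ is auxiliary the claimed statement follows.
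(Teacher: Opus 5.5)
Your proposal is correct and follows the paper's route. The key step is Proposition~\ref{Prop.7011}: Fubini to $\mathbf E[\nu(\cdot)]$ of the random interval, Lemma~\ref{Lemma.301} on the typical event, and Corollary~\ref{Corollary.202} with Lemma~\ref{Lemma.310} elsewhere. Your $K$-truncation is Proposition~\ref{Prop.7015}, and your pre-jump limit is Proposition~\ref{Prop.7012}. The one variation is how you isolate the first large jump: you encode it exactly through the predictable indicator $\mathbf 1_{\{\mathcal J^{\lambda t}\ge s\}}$ and remove it with the first-moment bound $\underline n(\mathcal J^{\lambda t}<s,\zeta>s)\le\underline n(\zeta)\,\overline\nu(\lambda t)$, whereas the paper compares with the jump count $\#\mathcal B^{\lambda t}_{\le R}(K)$ via the two-jump bound \eqref{Prop.203}.

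One detail is missing. Lemma~\ref{Lemma.301} only controls windows whose width is bounded below by some $\varepsilon_0>0$. On the typical event you must therefore separately discard widths $\underline X_r-\underline X_{r+\delta}<\varepsilon_0$; both $\nu$ of such windows and your comparison term are $O(\varepsilon_0\,\overline\nu(\beta t)/t)$. The paper does this with the event $A^{(2)}_z$.
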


 \begin{lemma}\label{Lemma.702}
 	For any $\delta, R>0$ and $\lambda \in (0,\beta)$, we have 
 	\beqnn
 	\limsup_{K\to\infty}\limsup_{t\to\infty} \underline n\Big( \mathcal{J}^{\lambda t}\leq R,\, \sup_{0\leq r<\mathcal{J}^{\lambda t}}\epsilon_r>K \,\Big|\,\zeta\in t+\varDelta_\delta  \Big) =0.
 	\eeqnn  
 \end{lemma}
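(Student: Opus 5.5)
The plan is to bound the probability by splitting according to whether the excursion exceeds a level $K$ before time $R$, and to show that both the low-path contribution and the high-path contribution are negligible relative to $\underline n(\zeta\in t+\varDelta_\delta)\asymp\nu(\beta(t+\varDelta_\delta))$. We may assume $t$ is large.

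Let $T_K:=\inf\{r>0:\epsilon_r>K\}$. On the event $\{\mathcal J^{\lambda t}\le R,\ \sup_{0\le r<\mathcal J^{\lambda t}}\epsilon_r>K\}$ we have $T_K<\mathcal J^{\lambda t}\le R$, so at the stopping time $T_K$ the excursion lies above $K$ and no jump larger than $\lambda t$ has yet occurred. The strong Markov property at $T_K$ then bounds the numerator by
\[
\underline n\Big(T_K<R\wedge\zeta,\ \mathbf P_{\epsilon_{T_K}}\big(\tau_0^-\in t-T_K+\varDelta_\delta\big)\Big).
\]
The difficulty is that $\epsilon_{T_K}$ may be moderately large, because it includes the overshoot of a jump of size up to $\lambda t$. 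We therefore split once more, according to whether $\epsilon_{T_K}\le \varepsilon t$ or $\epsilon_{T_K}>\varepsilon t$.

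For starting points $y\le\varepsilon t$ and remaining times $r\in[t-R,t]$, the event $\tau_0^-\in r+\varDelta_\delta$ essentially requires a big jump. The local lifetime estimate from the proof of Theorem~\ref{MainThm01} should extend to a uniform bound
\[
\mathbf P_y\big(\tau_0^-\in r+\varDelta_\delta\big)\le C\,(1+y)\,\nu\big(\beta t+\varDelta_{\beta\delta}\big).
\]
To prove it, apply the compensation formula for the first jump that sends the process to height about $\beta(r-u)$ at a time $u$. Combine Lemma~\ref{Lemma.203}, Corollary~\ref{Corollary.202} and $\mathbf E_y[\tau_0^-]=\underline n(\zeta)\widehat V(y)\le C(1+y)$, the latter holding by \eqref{Asytau} because $\widehat V$ grows at most linearly. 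This reduces the low-start contribution to a bound of order
\[
C\,\nu\big(\beta(t+\varDelta_\delta)\big)\,\underline n\big((1+\epsilon_{T_K})\mathbf 1_{\{T_K<R\wedge\zeta,\ \epsilon_{T_K}\le\varepsilon t\}}\big).
\]
It then suffices to show that this last expectation tends to $0$ as $K\to\infty$, uniformly in large $t$. We have $\underline n(T_K<\zeta)\le\underline n(\overline\epsilon>K)\to0$. For the overshoot part, bound it via Proposition~\ref{Prop.206} and the compensation formula \eqref{eqn.Compensation}: the expected size of the crossing jump is controlled by $\int_0^R\underline n(\zeta>s)\,ds\cdot\int_{K/2}^\infty\overline\nu(w)\,dw\to0$, where the integral is finite since $\alpha>1$. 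Adding the contribution of the continuous part and of small jumps, which is of order $\underline n(T_K<\zeta)\to 0$, gives the required uniform smallness.

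For the high start $\epsilon_{T_K}>\varepsilon t$, the excursion has made a jump of size at least $\varepsilon t-K$ before time $R$ but none larger than $\lambda t$. To reach lifetime about $t$ it then needs a second large jump, whose size is at least of order $(\beta-\lambda)t$, or else an abnormally slow descent. Using \eqref{Prop.203}, the compensation formula and the tail bound \eqref{Asytau} for $\mathbf P_y(\tau_0^->\cdot)$, this contribution is of order $R\,\overline\nu(\varepsilon t)\,\overline\nu(ct)=o(\overline\nu(t)/t)$, because $\alpha>1$.

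The main obstacle is the uniform local first-passage bound $\mathbf P_y(\tau_0^-\in r+\varDelta_\delta)\lesssim (1+y)\,\nu(\beta t+\varDelta_{\beta\delta})$ over $r\in[t-R,t]$ and $y\le\varepsilon t$. I would attempt it first through the fluctuation identity underlying \eqref{eqn.209} for $X$ started at $y$, combined with the crossing-probability estimates of Propositions~\ref{Prop.301} and~\ref{Prop.302}. If that route fails, the fallback is a one-big-jump decomposition using Lemma~\ref{Lemma.203}.
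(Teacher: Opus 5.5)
There is a genuine gap, at exactly the step you flag as the main obstacle. The bound
\[
\mathbf P_y\big(\tau_0^-\in r+\varDelta_\delta\big)\leq C\,(1+y)\,\nu\big(\beta t+\varDelta_{\beta\delta}\big),\qquad y\leq \varepsilon t,\ r\in[t-R,t],
\]
is asserted but not proved, and it is considerably harder than the lemma itself. None of the results you cite gives it. The identity \eqref{eqn.209} and Propositions~\ref{Prop.301}--\ref{Prop.302} concern $\underline n$ and $X$ started at $0$ with a fixed window. They give no uniformity over starting points up to $\varepsilon t$.

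Your fallback, a one-big-jump decomposition, runs into the fact that after the strong Markov step at $T_K$ nothing constrains when the decisive jump occurs, or how many jumps of order $t$ occur before time $t$. Those scenarios would have to be shown negligible on the local scale $\overline\nu(t)/t$, and there the tail tools lose a factor of order $t$. Counting pairs of jumps of order $t$ over a horizon of length $t$, as in \eqref{Prop.203}, only gives $O\big((1+y)\,t\,\overline\nu(t)^2\big)$. This is not $o\big((1+y)\,\overline\nu(t)/t\big)$ when $\alpha<2$, and \eqref{Asytau} is a tail estimate, not a local one.

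Your high-start case is fine, since the event already contains two jumps of order $t$ before time $R$. In the low-start case, the small-crossing-jump part is of order $K\,\underline n(\overline\epsilon>K)$ rather than $\underline n(T_K<\zeta)$. It still vanishes because $\alpha>1$.

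The difficulty comes from stopping at $T_K$, which discards the constraint $\mathcal J^{\lambda t}\leq R$. Keep the big jump instead. The event is contained in the existence of some $s\leq R$ with $\Delta\epsilon_s>\lambda t$, $\sup_{0\leq r<s}\epsilon_r>K$ and $\zeta\in t+\varDelta_\delta$. The Markov property at that jump and the compensation formula \eqref{eqn.Compensation} then bound the numerator by
\[
\int_0^R ds\int_0^\infty \underline n\Big(\sup_{0\leq r<s}\epsilon_r>K,\ \epsilon_{s-}\in dz,\ \zeta>s\Big)\int_{\lambda t}^\infty \mathbf P\big(\tau^-_{-z-w}\in t-s+\varDelta_\delta\big)\,\nu(dw).
\]
Since $\tau^-_{-z-w}\in u+\varDelta_\delta$ iff $-\underline X_u\leq z+w<-\underline X_{u+\delta}$, Fubini turns the inner integral into the expected $\nu$-mass of an interval lying to the right of $\lambda t$, of length at most $\underline X_u-\underline X_{u+\delta}$. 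By Corollary~\ref{Corollary.202} and Lemma~\ref{Lemma.310}, this is at most $C\,\nu(\beta(t+\varDelta_\delta))$, uniformly in $z\geq0$ and $s\in[0,R]$; this is Proposition~\ref{Prop.7011}. Uniformity in the pre-jump position is what makes your $\varepsilon t$ split, the overshoot bookkeeping and the factor $(1+y)$ unnecessary.

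The remaining factor $\int_0^\infty \underline n(\sup_{0\leq r<s}\epsilon_r>K,\ \zeta>s)\,ds$ tends to $0$ as $K\to\infty$ by dominated convergence, since $\underline n(\zeta)<\infty$. Theorem~\ref{MainThm01} then turns the bound into the conditional statement.
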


 \begin{lemma}\label{Lemma.703}
 	For any $\delta, R>0$ and $\lambda \in (0,\beta)$, we have 
 	\beqnn
 	\limsup_{K\to\infty}\limsup_{t\to\infty}  \underline n\Big( \mathcal{J}^{\lambda t}\leq R,\, \sup_{0\leq r<\mathcal{J}^{\lambda t}}\epsilon_r\leq K,\, \sup_{\mathcal{J}^{\lambda t}\leq r<\zeta} \epsilon_{r} - \epsilon_{\mathcal{J}^{\lambda t}} >K\,\Big|\, \zeta\in t+\varDelta_\delta  \Big)=0.
 	\eeqnn 
 \end{lemma}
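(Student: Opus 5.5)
The plan is to bound the numerator directly by the compensation formula, in the same way as in \eqref{eqn.509}, and then show that it is at most $C\cdot\mathbf P(\overline X_\infty>K)\cdot\overline\nu(\beta t)/t$. By Theorem~\ref{MainThm01} and \eqref{eqn.30}, the denominator $\underline n(\zeta\in t+\varDelta_\delta)$ is of exact order $\overline\nu(\beta t)/t$. So the conditional probability will be $O\big(\mathbf P(\overline X_\infty>K)\big)$, which tends to $0$ as $K\to\infty$ because $\overline X_\infty<\infty$ a.s.

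\textbf{Step 1: reduce to a single large jump.} On the event in question, $s=\mathcal J^{\lambda t}\leq R$ is a jump with $\Delta\epsilon_s=w>\lambda t$, pre-jump position $\epsilon_{s-}=z\leq K$, and the post-jump process, started at $z+w$, must
\begin{itemize}
\item[(a)] rise by more than $K$ above its starting point, and
\item[(b)] pass below $0$ at a time in $t-s+\varDelta_\delta$.
\end{itemize}
Dropping the requirement that $s$ be the \emph{first} such jump only enlarges the event. The Markov property \eqref{eqn.n-Markov} and the compensation formula \eqref{eqn.Compensation} then bound the numerator by
\begin{equation*}
\int_0^R ds\int_0^K \underline n_s(dz)\int_{\lambda t}^\infty \mathbf P\Big(\overline X_{\tau^-_{-z-w}}>K,\ \tau^-_{-z-w}\in t-s+\varDelta_\delta\Big)\,\nu(dw).
\end{equation*}
Since $\int_0^R \underline n_s([0,K])\,ds\le\underline n(\zeta)$, it suffices to prove that
\begin{equation*}
\sup_{z\in[0,K]}\ \sup_{r\in[t-R,t]}\int_{\lambda t}^\infty \mathbf P\Big(\overline X_{\tau^-_{-z-w}}>K,\ \tau^-_{-z-w}\in r+\varDelta_\delta\Big)\,\nu(dw)\ \leq\ C\,\mathbf P(\overline X_\infty>K)\,\frac{\overline\nu(\lambda t)}{\lambda t},
\end{equation*}
with $C$ independent of $K$ and $t$.

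\textbf{Step 2: decouple the rise from the jump size.} I will apply the strong Markov property at $T_K:=\inf\{u:X_u>K\}$, for $X$ started at $0$. The jump size $w$ is independent of the path of $X$. On $\{T_K<\tau^-_{-z-w}\}$, the remaining passage below level $-z-w-X_{T_K}$ must occur at a time in $r-T_K+\varDelta_\delta$. Since $X_{T_K}>K\ge 0$, the level $z+w+X_{T_K}$ exceeds $\lambda t$. I then use Fubini and integrate over $w$ first, conditionally on $\mathscr F_{T_K}$. What remains is to prove the uniform estimate
\begin{equation*}
\sup_{u\ge0}\ \sup_{r'\ge0}\int_{\lambda t}^\infty \mathbf P\big(\tau^-_{-(w+u)}\in r'+\varDelta_\delta\big)\,\nu(dw)\ \le\ C\,\frac{\overline\nu(\lambda t)}{\lambda t}.
\end{equation*}
Multiplying by $\mathbf P(T_K<\infty)=\mathbf P(\overline X_\infty>K)$ then gives the required bound.

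\textbf{Step 3: the uniform local passage bound.} This is the main obstacle, and it is where the window width has to be converted into a $\nu$-density. I will write
\begin{equation*}
\{\tau^-_{-y}\in r'+\varDelta_\delta\}=\{\underline X_{r'}\ge -y>\underline X_{r'+\delta}\}.
\end{equation*}
Then, by Fubini, the $w$-integral equals
\begin{equation*}
\mathbf E\Big[\nu\big(\{w>\lambda t:\ -\underline X_{r'}-u\le w<-\underline X_{r'+\delta}-u\}\big)\Big].
\end{equation*}
The random $w$-set is an interval contained in $(\lambda t,\infty)$, and its length is $\underline X_{r'}-\underline X_{r'+\delta}$. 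Corollary~\ref{Corollary.202}, applied with $x=\lambda t$, bounds its $\nu$-mass by
\begin{equation*}
C\,\frac{\overline\nu(\lambda t)}{\lambda t}\,\big(1+\underline X_{r'}-\underline X_{r'+\delta}\big),
\end{equation*}
uniformly in the left endpoint. By \eqref{eqn.7003} we have $\underline X_{r'}-\underline X_{r'+\delta}\overset{\rm d}\le-\underline X_\delta$, and $\mathbf E[-\underline X_\delta]<\infty$ by \eqref{eqn.206}. Taking expectations therefore gives the bound uniformly in $r'$ and $u$, with no need to split according to whether $\underline X_{r'}$ is typical.

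Finally, $\overline\nu(\lambda t)/\overline\nu(\beta t)\to(\beta/\lambda)^\alpha$ because $\overline\nu\in\mathrm{RV}^\infty_{-\alpha}$. Combining this with Theorem~\ref{MainThm01} yields
\begin{equation*}
\limsup_{t\to\infty}\ \underline n\big(\cdots\,\big|\,\zeta\in t+\varDelta_\delta\big)\le C_{\lambda,\delta}\,\mathbf P(\overline X_\infty>K),
\end{equation*}
and letting $K\to\infty$ completes the argument.
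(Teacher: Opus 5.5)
Your proof is correct. Its skeleton matches the paper's: the compensation formula, Fubini to convert the $w$-integral into $\mathbf E[\nu(\text{random interval})]$, and Corollary~\ref{Corollary.202} with $x=\lambda t$. The genuine difference is how you decouple the rise of the post-jump path above $K$ from the local passage constraint.

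\textbf{The paper's route.} It does not split the path. It uses the inclusion $\overline X_{\tau^-_{-z-w}}\le\overline X_{t-s+\delta}$ on $\{\tau^-_{-z-w}\in t-s+\varDelta_\delta\}$, which leaves the correlated quantity $\mathbf E[\underline X_{t-R}-\underline X_{t+\delta}+1;\,\overline X_{t+\delta}>K]$. It then truncates the increment at an auxiliary level $M$. The large part is bounded by $\mathbf E[1-\underline X_{R+\delta};\,-\underline X_{R+\delta}>M]$ via \eqref{eqn.7003}, and the small part by $(M+1)\,\mathbf P(\overline X_\infty>K)$. This requires the iterated limit $K\to\infty$, then $M\to\infty$.

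\textbf{Your route.} The strong Markov step at $T_K$ factors out $\mathbf P(T_K<\infty)=\mathbf P(\overline X_\infty>K)$ exactly. The remaining passage estimate is uniform in the shifted level $u=z+X_{T_K}$ and in the time window, and Corollary~\ref{Corollary.202} supplies this uniformity at no cost. You therefore get the explicit bound $O\big(\mathbf P(\overline X_\infty>K)\big)$ for the conditional probability, with a single limit in $K$ and no truncation. The price is one extra strong Markov decomposition.

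\textbf{One point to tidy.} The residual window $r-T_K+\varDelta_\delta$ starts at a negative time when $T_K>r$, so state your uniform estimate for all $r'\in\mathbb R$, not only $r'\ge0$. This is immediate. The post-$T_K$ passage time below $-(u+w)<0$ is strictly positive, so the event is empty for $r'\le-\delta$ and is contained in $\{\tau^-_{-(u+w)}\in\varDelta_\delta\}$ for $r'\in(-\delta,0)$. Also, in general only $X_{T_K}\ge K$ holds (the process may creep over $K$), but your argument only needs $X_{T_K}\ge 0$.
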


 \begin{lemma}\label{Lemma.704}
 	For any constants $\delta,R,K>0$, $\lambda \in (0,\beta)$ and any positive function $\theta$ on $\mathbb{R}_+$ such that $\big|X_t+\beta t\big|/\theta_t\overset{\rm p}\to0$, we have 
 	\beqnn
 	\limsup_{t\to\infty}  \underline n\Big( \mathcal{J}^{\lambda t}\leq R,\, \sup_{0\leq r<\mathcal{J}^{\lambda t}}\epsilon_r\leq K,\,   \big| \Delta\epsilon_{\mathcal{J}^{\lambda t}}-\beta t \big| > \frac{\theta_t}{2} \,\Big|\, \zeta\in t+\varDelta_\delta  \Big)=0.
 	\eeqnn  
 \end{lemma}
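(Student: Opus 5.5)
The statement to prove is the last one displayed, Lemma~\ref{Lemma.704}. The plan is to write the event as a sum over jumps via the compensation formula, as in the proof of Lemma~\ref{Lemma.602}. On the event in question, $s=\mathcal J^{\lambda t}\le R$ satisfies $\epsilon_{s-}\le K$, $\Delta\epsilon_s>\lambda t$ and $|\Delta\epsilon_s-\beta t|>\theta_t/2$. Moreover, the post-jump process started at $\epsilon_s=z+w$ must have $\tau_0^-\in t-s+\varDelta_\delta$.

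Bounding the indicator of the first such jump by the sum over all such jumps, and applying \eqref{eqn.Compensation} with the Markov property \eqref{eqn.n-Markov}, the numerator is at most
\beqnn
\int_0^R ds\int_0^K \underline n_s(dz)\int_{\lambda t}^\infty \mathbf 1_{\{|w-\beta t|>\theta_t/2\}}\,\mathbf P\big(\tau^-_{-z-w}\in t-s+\varDelta_\delta\big)\,\nu(dw).
\eeqnn
The denominator $\underline n(\zeta\in t+\varDelta_\delta)$ is of order $\overline\nu(\beta t)/t$ by Theorem~\ref{MainThm01} and \eqref{eqn.30}. We may assume $\theta_t=o(t)$ (larger $\theta$ only shrinks the event), while $\theta_t\to\infty$ is automatic since $|X_t+\beta t|$ does not converge to $0$ in probability when $X$ is nondegenerate.

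Next I split the $w$-range into two parts. For $w\in(\lambda t,\beta t-\theta_t/2)$, the killing time $\tau^-_{-y}$ with $y=z+w\le \beta t-\theta_t/2+K$ must lie in $(t-R,t+\delta]$. This forces $\underline X_{t-R}\ge -y$, hence $X_{t-R}+\beta(t-R)\ge \theta_t/2-K-\beta R$, which is $\ge\theta_t/3$ for large $t$. For $w>\beta t+\theta_t/2$, the event $\tau^-_{-y}\le t+\delta$ gives $X_{t+\delta}+\beta(t+\delta)\le -\theta_t/3$. So
\beqnn
\mathbf P\big(\tau^-_{-y}\in t-s+\varDelta_\delta\big)\le \mathbf P\big(|X_u+\beta u|\ge \theta_t/3\big)
\eeqnn
for some $u\in\{t-R,\,t+\delta\}$. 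By the hypothesis, and since $\theta_u\approx\theta_t$ (which I handle by monotonicity, or by restating the hypothesis along shifted times), this probability tends to $0$.

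This uniform-in-$w$ bound alone is too crude: it must be integrated against $\nu(dw)$ over a set of mass $\overline\nu(\lambda t)$, which is far larger than $\overline\nu(\beta t)/t$. This is the main obstacle, and I would handle it with a local decomposition, the key being that $\tau^-_{-y}$ lands in a window of length $\delta+R$. I would partition $w$ into unit cells $\beta t+j+\varDelta_1$ and use
\beqnn
\mathbf P\big(\tau^-_{-y}\in r+\varDelta_\delta\big)\le \mathbf P\big(\underline X_{r+\delta}<-y\le \underline X_r\big).
\eeqnn
Summing over cells $j$ then gives at most $(1+\delta+R)$ times a bounded overlap. Concretely, by Corollary~\ref{Corollary.202}, $\nu$ of each unit cell is at most $C\overline\nu(\lambda t)/t$. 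Hence the $w$-integral is bounded by $C\,t^{-1}\overline\nu(\lambda t)\,\mathbf E\big[(\underline X_{r}-\underline X_{r+\delta}+1);\,|\underline X_{r}+\beta r|\gtrsim\theta_t/3\big]$, since $y$ far from $\beta t$ forces $\underline X_r$ or $\underline X_{r+\delta}$ far from $-\beta r$. Lemma~\ref{Lemma.310} supplies uniform integrability of $\underline X_r-\underline X_{r+\delta}$. Lemma~\ref{Lemma.3001}, combined with the duality $X_t-\underline X_t\overset{\rm d}=\overline X_t$ (tight), converts the hypothesis on $X$ into $|\underline X_r+\beta r|/\theta_t\overset{\rm p}\to0$. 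Together these make the expectation $o(1)$.

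Finally, the factor $\int_0^R\underline n(\epsilon_s\le K,\zeta>s)\,ds\le \underline n(\zeta)$ is finite, and $\overline\nu(\lambda t)/\overline\nu(\beta t)\to(\beta/\lambda)^\alpha$ is bounded. Hence the ratio tends to $0$, which proves the lemma.
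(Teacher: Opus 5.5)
Your proposal is correct and is essentially the paper's proof. Both use the compensation formula, then Fubini to turn the $w$-integral into $\mathbf E\big[\nu\big([-\underline X_r-z,-\underline X_{r+\delta}-z)\cap(\lambda t,\infty)\cap\{w:|w-\beta t|>\theta_t/2\}\big)\big]$, then Corollary~\ref{Corollary.202} to bound this by $C\,t^{-1}\overline\nu(\lambda t)$ times $\underline X_r-\underline X_{r+\delta}+1$ on a bad event, and finally uniform integrability (Lemma~\ref{Lemma.310}) together with concentration of $\underline X$ on scale $\theta_t$ (Proposition~\ref{Prop.709}). The only difference is cosmetic: the paper enlarges to the single interval $[-\underline X_{t-R}-K,-\underline X_{t+\delta})$ to get uniformity in $s$ and $z$ at once.

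Two small slips need fixing. First, the bad event must be $\{|\underline X_r+\beta r|\geq c\,\theta_t\}\cup\{\underline X_r-\underline X_{r+\delta}\geq c\,\theta_t\}$; your sentence says this, but your displayed bound keeps only the first event. Second, for $w>\beta t+\theta_t/2$, the constraint $\tau^-_{-y}\leq t+\delta$ controls $\underline X_{t+\delta}$, not $X_{t+\delta}$.
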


 \textbf{\textit{Proof of Theorem~\ref{MainThm04}.}  Part~I.} 
 For all sufficiently large $t$, on the event $\boldsymbol{J}(t,K,R)$,
 we have $\mathcal{J}^{\lambda t}\leq R<t$, and 
 \beqnn
 \big|\overline{\epsilon}-\beta t\big| 
 \ar\leq\ar \epsilon_{\mathcal{J}^{\lambda t}-} + \Big|  \Delta\epsilon_{\mathcal{J}^{\lambda t}}-\beta t \Big|+  \sup_{\mathcal{J}^{\lambda t}\leq r<\zeta}  \epsilon_{r} - \epsilon_{\mathcal{J}^{\lambda t}}  
 \leq \frac{\theta_t}{2} +2K \leq \theta_t ,
 \eeqnn
 where the last inequality holds for all sufficiently large $t$, since $\theta_t\to\infty$. 
 Hence $\boldsymbol J(t,K,R) \subset  \big\{|\overline\epsilon-\beta t|\leq\theta_t\big\}$ for all sufficiently large $t$. 
 It follows that 
 \beqlb\label{eqn.702}
 1\geq \underline n\Big( \big|\overline\epsilon-\beta t \big|  \leq \theta_t \,\Big|\, \zeta\in t+\varDelta_\delta \Big)
 \ar\geq\ar  \underline n\Big(  \boldsymbol{J}(t,K,R) \,\Big|\, \zeta\in t+\varDelta_\delta \Big)
 = 1-  I_1(t,R)- \sum_{i=2}^4  I_i(t,R,K) , \quad 
 \eeqlb
 where  $ I_1(t,R) := \underline n\big( \mathcal{J}^{\lambda t}>R \,\big|\, \zeta\in t+\varDelta_\delta \big)$ and
 \beqnn 
 I_2(t,R,K) \ar:=\ar \underline n\Big( \mathcal{J}^{\lambda t}\leq R,\, \sup_{0\leq r<\mathcal{J}^{\lambda t}}\epsilon_r>K \,\Big|\,\zeta\in t+\varDelta_\delta  \Big) ,\cr
 I_3(t,R,K) \ar:=\ar \underline n\Big( \mathcal{J}^{\lambda t}\leq R,\, \sup_{0\leq r<\mathcal{J}^{\lambda t}}\epsilon_r\leq K,\, \sup_{\mathcal{J}^{\lambda t}\leq r<\zeta} \epsilon_{r} - \epsilon_{\mathcal{J}^{\lambda t}} >K\,\Big|\, \zeta\in t+\varDelta_\delta  \Big),\cr
 I_4(t,R,K) \ar:=\ar	\underline n\Big( \mathcal{J}^{\lambda t}\leq R,\, \sup_{0\leq r<\mathcal{J}^{\lambda t}}\epsilon_r\leq K,\, \sup_{\mathcal{J}^{\lambda t}\leq r<\zeta} \epsilon_{r} - \epsilon_{\mathcal{J}^{\lambda t}} \leq K, \big| \Delta\epsilon_{\mathcal{J}^{\lambda t}}-\beta t \big| > \frac{\theta_t}{2} \,\Big|\, \zeta\in t+\varDelta_\delta  \Big).
 \eeqnn
 By Lemma~\ref{Lemma.701}-\ref{Lemma.703}, we have $I_1(t,R) \to 0$ as $t\to\infty$ and then $R\to\infty$, and also $I_2(t,R,K)  + I_3(t,R,K) \to 0$ as $t\to\infty$ and $K\to\infty$.  
 Finally, $I_4(t,R,K) $ can be bounded by 
 \beqnn
 \underline n\Big( \mathcal{J}^{\lambda t}\leq R,\, \sup_{0\leq r<\mathcal{J}^{\lambda t}}\epsilon_r\leq K,\, \big| \Delta\epsilon_{\mathcal{J}^{\lambda t}}-\beta t \big| > \frac{\theta_t}{2} \,\Big|\, \zeta\in t+\varDelta_\delta  \Big) \to 0,
 \eeqnn
 as $t\to\infty$, by Lemma~\ref{Lemma.704}.  
 Substituting these estimates into \eqref{eqn.702}, and then taking the successive limits in $t$, $K$, and $R$, we can immediately get the first desired limit in \eqref{eqn.MainThm04}. 
 \qed

 We now turn to the proof of the second limit in Theorem~\ref{MainThm04}. Recall the sets $\mathcal A_\delta^{\leq R}(x,K)$ and $\mathcal A_\delta^{>R}(x,K)$ defined in \eqref{eqn.600}, which consist, respectively, of the jumps in $\mathcal A_\delta(x,K)$ occurring no later than time $R$ and those occurring after time $R$.
 
 Lemmas~\ref{Lemma.501}, \ref{Lemma.503}, and \ref{Lemma.600} together show that, conditional on ${\overline\epsilon\in\beta(t+\varDelta_\delta)}$, the dominant scenario for large $t$ is the occurrence of a unique large jump at an early stage of the excursion, which carries the excursion close to its eventual maximum. 
 Furthermore, owing to the early occurrence of the large jump, the contribution of the path segment preceding the large jump to the total lifetime of the excursion is asymptotically negligible. Consequently, the lifetime of the excursion is asymptotically determined by its post-jump path, which,  by the strong Markov property, evolves as the underlying L\'evy process started from a large initial position and run until it first enters the negative half-line. 
 
 The following lemma therefore provides the key estimate for proving the second limit in \eqref{eqn.MainThm04}. It shows that, uniformly over initial positions within a bounded distance of $\beta t$, the first-passage time into the negative half-line is asymptotically concentrated within a small window  around $t$.
 
 \begin{lemma}\label{Lemma.705}
 	For any constant $K>0$ and any positive function $\theta$ on $\mathbb{R}_+$ such that $\big|X_t+\beta t\big|/\theta_t\overset{\rm p}\to0$, we have as $t\to\infty$,
 	\beqlb\label{eqn.7051}
 	\sup_{|y-\beta t|\leq K}  \mathbf{P}_y \big(  |\tau_0^- -t| >\theta_t \big) \to 0.
 	\eeqlb
 \end{lemma}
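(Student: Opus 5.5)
By spatial homogeneity, $\mathbf P_y(\tau_0^->r)=\mathbf P(\underline X_r\geq -y)$ up to the boundary convention, which does not affect the limit. Writing $y=\beta t+u$ with $|u|\leq K$, it therefore suffices to bound the two one-sided probabilities
\beqnn
\mathbf P\big(\underline X_{t+\theta_t}\geq -\beta t-u\big)
\quad\mbox{and}\quad
\mathbf P\big(\underline X_{t-\theta_t}< -\beta t-u\big),
\eeqnn
uniformly in $|u|\leq K$. Their sum equals $\mathbf P_y(|\tau_0^--t|>\theta_t)$ up to the event $\tau_0^-=t\pm\theta_t$, which is harmless because we only need an upper bound. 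If $\theta_t\geq t$ the second probability vanishes, so we may assume $\theta_t<t$ there. Since $\theta_t\to\infty$ is implicit in $|X_t+\beta t|/\theta_t\overset{\rm p}\to0$ (otherwise the centered fluctuations could not be negligible, as $X$ is not deterministic), we may absorb $K$ into $\theta_t/4$ for large $t$. The plan is to reduce both events to deviations of $\underline X_s+\beta s$ and of $X_s+\beta s$ on the scale $\theta_t$.

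\emph{Late passage.} On $\{\tau_0^->t+\theta_t\}$ we have $X_{t+\theta_t}\geq \underline X_{t+\theta_t}\geq -\beta t-K$. Writing $s=t+\theta_t$, this gives $X_s+\beta s\geq \beta\theta_t-K\geq \beta\theta_t/2$. The obstacle is that the hypothesis is stated at time $t$, not at time $s=t+\theta_t$. I split at time $t$: $X_s+\beta s=(X_t+\beta t)+(X_s-X_t+\beta\theta_t)$, where the second summand is independent of the first and distributed as $X_{\theta_t}+\beta\theta_t$. The first term is $o_{\rm p}(\theta_t)$ by hypothesis. The second is $o_{\rm p}(\theta_t)$ by the weak law of large numbers (Lemma~\ref{Lemma.3001}), because $\theta_t\to\infty$. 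Hence the late-passage probability tends to $0$.

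\emph{Early passage.} On $\{\tau_0^-\leq t-\theta_t\}$ we have $\underline X_{t-\theta_t}<-\beta t+K$. This is the main obstacle, because it concerns the running infimum at an intermediate time rather than $X_t$ itself. Set $r=t-\theta_t$ and $I_r:=\inf_{v\leq r}(X_v+\beta v)$. Since $\underline X_r\geq I_r-\beta r$, the event forces $I_r<-\beta\theta_t+K\leq -\beta\theta_t/2$. So I must show $\mathbf P\big(\inf_{v\leq r}(X_v+\beta v)<-c\,\theta_t\big)\to0$, a maximal inequality on the scale $\theta_t$. The main difficulty is that the hypothesis only gives one-time information at time $t$.

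\emph{Maximal inequality.} I intend to use the reflection-type (Ottaviani/Etemadi) inequality for the Lévy process $Z_v=X_v+\beta v$ with independent increments:
\beqnn
\mathbf P\Big(\sup_{v\leq t}|Z_v|>3a\Big)\leq 3\sup_{v\leq t}\mathbf P(|Z_v|>a).
\eeqnn
This reduces the problem to showing $\sup_{v\leq t}\mathbf P(|Z_v|>a)\to0$ with $a=c\theta_t/3$. From the hypothesis we get $\mathbf P(|Z_t|>a)\to0$. To pass to intermediate times $v\leq t$, I use symmetrization: $Z_t=Z_v+(Z_t-Z_v)$ with independent summands, and the standard inequality $\mathbf P(|Z_v-m_v|>2a)\leq 2\mathbf P(|Z_t|>a)$ with $m_v$ a median. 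The medians are controlled by the law of large numbers on small times, together with $\theta_t\to\infty$ for bounded $v$. For $v$ in the remaining range, I would expect the medians to be $o(\theta_t)$ via the same symmetrization, but I would verify this carefully.

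Alternatively, if this argument proves fragile, I would handle the early passage by the strong Markov property at $\tau^-_{-\beta t+K}$. On that event, the process at time $\tau\leq r$ sits near $-\beta t$ up to an undershoot, which is tight by Lemma~\ref{Lemma.310}-type bounds. Its future increment over $t-\tau\geq\theta_t$ is distributed as $X_{t-\tau}$, which by the law of large numbers drifts further down by about $\beta(t-\tau)$. Hence $X_t+\beta t\lesssim-\beta\theta_t/2$ with conditional probability bounded below by $1/2$ for large $t$. This yields $\mathbf P(\tau_0^-\leq r)\leq 2\,\mathbf P(X_t+\beta t<-\beta\theta_t/4)+o(1)\to0$, using the hypothesis directly. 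This second route avoids the maximal inequality and is the one I would try first.
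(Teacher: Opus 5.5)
Your proposal is correct once you commit to the second (strong Markov) route for the early passage. There it genuinely differs from the paper; the late passage uses the same inclusion as the paper.

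\textbf{Early passage.} The paper bounds the early event by $\mathbf P\big(-\underline X_{t-\theta_t}-\beta(t-\theta_t)>\beta\theta_t-K\big)$ and closes it with Proposition~\ref{Prop.709}, i.e.\ by duality. Since $X_r-\underline X_r\overset{\rm d}=\overline X_r\leq\overline X_\infty$ is tight, the running infimum at $r=t-\theta_t$ is $X_r+O_{\rm p}(1)$, so only a one-time fluctuation matters. You instead stop at $\tau:=\tau^-_{-\beta t+K}$. On $\{\tau\leq t-\theta_t\}$ you have $X_\tau\leq-\beta t+K$ and $t-\tau\geq\theta_t$. Because $\theta_t\to\infty$, the almost sure law of large numbers gives $\inf_{u\geq\theta_t}\mathbf P(X_u+\beta u\leq\beta u/2-K)\geq 1/2$ for large $t$. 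Hence $\mathbf P(\tau\leq t-\theta_t)\leq 2\,\mathbf P(X_t+\beta t\leq-\beta\theta_t/2)\to0$. No tightness of the undershoot is needed, since it only pushes $X_\tau$ lower.

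What your route buys is that the hypothesis is used exactly at time $t$, where it is assumed. You need no duality and no preliminary reduction to $\theta_t=o(t)$. The paper's route needs $|X_r+\beta r|/\theta_t\overset{\rm p}\to0$ at the shifted time $r=t-\theta_t$. That holds (write $X_t+\beta t$ as $X_r+\beta r$ plus an independent copy of $X_{\theta_t}+\beta\theta_t$), but the paper does not spell it out.

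\textbf{Late passage.} Your splitting at time $t$ is precisely the argument required. Lemma~\ref{Lemma.3001}, which the paper cites at this step, controls $X_{t+\theta_t}+\beta(t+\theta_t)$ only on the scale $o(t)$, not $o(\theta_t)$.

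\textbf{Remaining points.} You should drop the maximal-inequality route. The bound $\underline X_r\geq\inf_{v\leq r}(X_v+\beta v)-\beta r$ discards the drift and forces a uniform-in-time estimate stronger than necessary. Moreover, the inequality $\mathbf P(|Z_v-m_v|>2a)\leq 2\,\mathbf P(|Z_t|>a)$ as you state it holds only if $|m_v+m'_{t-v}|\leq a$, where $m'_{t-v}$ is a median of $Z_{t-v}$. That median control is exactly what you leave unverified.

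Finally, the claim $\theta_t\to\infty$ deserves one rigorous line. By \eqref{eqn.5021}, the concentration function of $X_t+\beta t$ tends to zero. So $X_t+\beta t$ cannot converge to $0$ in probability along any sequence $t_n\to\infty$ on which $\theta_{t_n}$ stays bounded.
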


 \textbf{\textit{Proof of Theorem~\ref{MainThm04}.} Part~II.}  It remains to prove the second limit in \eqref{eqn.MainThm04}, or equivalently,
 \beqlb\label{eqn.7013}
 \underline n\big( |\zeta-t|> \theta_t \,\big|\, \bar\epsilon\in \beta (t+\varDelta_\delta)\big) \to 0. 
 \eeqlb
 Fix $K,R>0$. We decompose the event
 $\{|\zeta-t|>\theta_t\}$ into the following three mutually disjoint events:
 \beqnn
 \quad B_1(t,K) :=\{|\zeta-t|> \theta_t,\, \#\mathcal{A}_{\beta \delta}(\beta t,K)=0  \}, \quad 
 B_2(t,K,R):= \{|\zeta-t|> \theta_t,\,   \# \mathcal{A}_{\beta \delta}^{>R}(\beta t,K) \geq 1 \}
 \eeqnn
 and 
 \beqnn
 B_3(t,K,R) :=  \{|\zeta-t|> \theta_t,\, \# \mathcal{A}_{\beta \delta}^{\leq R}(\beta t,K)\geq 1,  \#  \mathcal{A}_{\beta \delta}^{>R}(\beta t,K)=0 \}.
 \eeqnn 
 By Lemmas~\ref{Lemma.501} and \ref{Lemma.600}, respectively, we have as $t\to\infty$ and then $K\to\infty$,
 \beqlb\label{eqn.7011}
 \underline{n}\big( B_1(t,K)   \,\big|\, \bar\epsilon\in \beta (t+\varDelta_\delta) \big) 
 \leq   \underline n\big( \#\mathcal{A}_{\beta \delta}(\beta t,K)=0  \,\big|\, \bar\epsilon\in \beta (t+\varDelta_\delta) \big) \to 0 , 
 \eeqlb
 and also as $t\to\infty$, then $K\to\infty$ and finally $R\to\infty$,
 \beqlb\label{eqn.7012}
 \underline n\big(  B_2(t,K,R) \,\big|\, \bar\epsilon\in \beta (t+\varDelta_\delta) \big)  
 \ar\leq\ar  \underline n\big(  \# \mathcal{A}_{\beta \delta}^{>R}(\beta t,K)  \,\big|\, \bar\epsilon\in \beta (t+\varDelta_\delta) \big) \to0.
 \eeqlb 
 It remains to control $B_3(t,K,R)$. On this event, there exists a jump time $s\leq R$ belonging to $\mathcal A_{\beta\delta}^{\leq R}(\beta t,K)$. 
 By \eqref{eqn.5018}, for all sufficiently large $t$, 
 \beqnn
 \#\big\{0<s\leq R:  \Delta \epsilon_s\in(\beta t-2K,\beta t +\beta\delta] ,\,  \epsilon_{s-}\leq K,\,   |\zeta -t|>\theta_t \big\} \geq 1. 
 \eeqnn
 Consequently,
 \beqnn
 \underline{n}\big( B_3(t,K,R) \big)
 \ar\leq\ar  \underline{n}\bigg( \sum_{0<s\leq R} \mathbf{1}_{\{ \Delta \epsilon_s\in(\beta t-2K,\beta t +\beta\delta] ,\,  \epsilon_{s-}\leq K,\,   |\zeta -t|>\theta_t \}} \bigg).
 \eeqnn
 Similarly as in \eqref{eqn.509}, the right-hand side is equal to
 \beqnn
 \int_0^R ds \int_0^K \underline{n}_s(dz) \int_{\beta t-2K}^{\beta (t+\delta)} \mathbf{P}_{z+w} \big(   |\tau_0^- -(t-s)| >\theta_t \big)\,\nu(dw) .
 \eeqnn
 This along with \eqref{eqn.501} yields that 
 \beqnn
 \underline{n}\big( B_3(t,K,R) \big)
 \ar\leq\ar \underline{n}(\zeta)\cdot\nu\big(  \beta t-2K +\varDelta_{2K+\beta \delta} \big)\cdot \sup_{y\in (\beta t-2K, \beta(t+\delta)+K]}\sup_{s\in[t-R,t]} \mathbf{P}_y \big(  |\tau_0^- -s| >\theta_t \big) . 
 \eeqnn 
 By Theorem~\ref{MainThm02}, Lemma~\ref{Lemma.301}, and the regular variation $\overline\nu\in\mathrm{RV}_{-\alpha}^{\infty}$, for each fixed $K>0$ there exists $t_0>0$ such that
 \beqnn
 \sup_{t\geq t_0}\frac{ \nu\big(  \beta t-2K +\varDelta_{2K+\beta \delta} \big)}{ \underline{n}\big(  \bar\epsilon\in \beta (t+\varDelta_\delta) \big) }<\infty. 
 \eeqnn 
 Moreover, since $\theta_t\to\infty$, for fixed $R>0$ we have $R\leq\theta_t/2$ for all sufficiently large $t$ and hence $ \{ |\tau_0^- -s| >\theta_t  \} \subset \{ |\tau_0^- -t| >\theta_t/2\}$ for every $s\in[t-R,t]$.  Therefore, 
 \beqnn
 \sup_{y\in (\beta t-2K, \beta(t+\delta)+K]}\sup_{s\in[t-R,t]} \mathbf{P}_y \big(  |\tau_0^- -s| >\theta_t \big) 
 \leq  \sup_{|y-\beta t|\leq \beta \delta+2K}  \mathbf{P}_y \big(  |\tau_0^- -t| >\theta_t/2 \big).
 \eeqnn
 which goes to $0$ as $t\to\infty$ by Lemma~\ref{Lemma.705}.  Consequently, for every fixed $K,R>0$, we have as $t\to\infty$,
 \beqnn
 \underline{n}\big( B_3(t,K,R)  \,\big|\, \bar\epsilon\in \beta (t+\varDelta_\delta) \big)  
 \leq \frac{ \underline{n}\big( B_3(t,K,R)\big)}{\underline{n}\big(  \bar\epsilon\in \beta (t+\varDelta_\delta) \big) }  \to 0.
 \eeqnn
 Combining this estimate with \eqref{eqn.7011} and \eqref{eqn.7012}, and then letting $K\to\infty$ and $R\to\infty$ in the prescribed order, yields \eqref{eqn.7013}. This completes the proof.
 \qed

 \subsection{Proofs of Lemma~\ref{Lemma.701} and \ref{Lemma.702}}
 
 \begin{proposition}\label{Prop.7011}
 	For any constants $\delta>0$ and $\lambda \in(0,\beta)$, there exist two constants $C,t_0\geq0$ such that for any $t\geq t_0$,
 	\beqlb\label{eqn.70111}
 	\sup_{z\geq 0}  \int_{\lambda t}^\infty  \mathbf{P} \big(\tau_{-z-w}^-\in  t+\varDelta_\delta\big)\,\nu(dw)  
 	\leq C\cdot \nu\big(\beta (t+\varDelta_\delta) \big) . 
 	\eeqlb 
 	Moreover, for any constant $K>0$,  we have as $t\to\infty$,
 	\beqlb\label{eqn.70112}
 	\sup_{z\in [0,K]} \bigg| \frac{1}{ \nu\big(\beta (t +\varDelta_\delta)\big)} \int_{\lambda t}^\infty 	\mathbf{P}\big( \tau_{-z-w}^-\in  t+\varDelta_\delta \big)\,\nu(dw) -1 \bigg| \to 0.
 	\eeqlb 
 \end{proposition}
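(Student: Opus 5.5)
We rewrite the integral using the first-passage law of the process started at a high level. By spatial homogeneity, $\mathbf{P}(\tau^-_{-z-w}\in t+\varDelta_\delta)=\mathbf{P}_{z+w}(\tau_0^-\in t+\varDelta_\delta)$. The heuristic is that, started from $y=z+w$, the process descends at linear speed $\beta$, so $\tau_0^-\approx y/\beta$. Integrating in $w$ against $\nu$ should therefore reproduce $\nu$ of the window $\beta(t+\varDelta_\delta)-z\approx\beta(t+\varDelta_\delta)$. The one additional scenario is that the post-jump path takes a second large jump, which costs an extra factor of order $\overline\nu$.

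The cleanest route is to swap the roles of the jump and the position through a Fubini/convolution identity. We write
\[
\int_{\lambda t}^\infty \mathbf{P}(\tau^-_{-z-w}\in t+\varDelta_\delta)\,\nu(dw)=\mathbf{E}\big[\nu\big(\{w>\lambda t:\ \tau^-_{-z-w}\in t+\varDelta_\delta\}\big)\big].
\]
Since $w\mapsto\tau^-_{-z-w}$ is non-decreasing, the set $\{w:\tau^-_{-z-w}\in t+\varDelta_\delta\}$ is an interval. It equals $(-\underline{X}_t-z,\,-\underline{X}_{t+\delta}-z]$, up to boundary conventions that do not matter because the law of $\underline X$ at fixed times has no atoms relevant here. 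The integral therefore equals
\[
\mathbf{E}\Big[\nu\big((-\underline X_t-z,\,-\underline X_{t+\delta}-z]\cap(\lambda t,\infty)\big)\Big].
\]

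For the uniform bound \eqref{eqn.70111} we apply Corollary~\ref{Corollary.202} with $x=\lambda t$. This bounds the integrand by $C\,\overline\nu(\lambda t)/(\lambda t)\cdot(1+\underline X_t-\underline X_{t+\delta})$ uniformly in $z\ge0$. Lemma~\ref{Lemma.310} gives $\mathbf{E}[\underline X_t-\underline X_{t+\delta}]\to\beta\delta$. By \eqref{eqn.30} and regular variation, $\overline\nu(\lambda t)/t\asymp\nu(\beta(t+\varDelta_\delta))$, which yields \eqref{eqn.70111}.

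For the limit \eqref{eqn.70112}, Lemma~\ref{Lemma.3001} gives $-\underline X_t/t\to\beta$ a.s. We split on the event $E_t=\{|\underline X_t+\beta t|\le\eta t\}$ with $\eta<\beta-\lambda$.
\begin{itemize}
\item On $E_t$ the interval lies in $(\lambda t,\infty)$ for $z\le K$. Its left endpoint is $x_t=-\underline X_t-z=\beta t(1+o(1))$ and its width is $D_t=\underline X_t-\underline X_{t+\delta}$. Lemma~\ref{Lemma.301} (via \eqref{eqn.30}) gives $\nu(x_t+\varDelta_{D_t})\approx\alpha\,\overline\nu(x_t)/x_t\cdot D_t$ when $D_t\ge\delta_0$. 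When $D_t$ is small we use Corollary~\ref{Corollary.202} together with the monotonicity bound, so that both regimes are controlled up to a factor $(1+o(1))$, with an error of order $\delta_0$ that we send to $0$ afterwards. Using $\overline\nu(x_t)/x_t\sim\overline\nu(\beta t)/(\beta t)$ a.s. and the uniform integrability of $D_t$ from Lemma~\ref{Lemma.310}, dominated convergence gives a normalized expectation tending to $\mathbf{E}[D_t]/(\beta\delta)\to1$.
\item On $E_t^c$, the uniform bound from the previous paragraph together with $\mathbf{E}[(1+D_t)\mathbf 1_{E_t^c}]\to0$ (uniform integrability and $\mathbf P(E_t^c)\to0$) makes the contribution negligible.
\end{itemize}

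The main obstacle is the step where $D_t$ is small. There the local approximation \eqref{eqn.30} is not uniform as the width tends to $0$, since Lemma~\ref{Lemma.301} only covers widths at least $\delta_0$. We must therefore bound $\nu$ of a short interval by $\nu$ of an interval of width $\delta_0$, and show that $\mathbf{E}[\delta_0\wedge D_t]$ contributes only $O(\delta_0)$ relative to $\beta\delta$. We also need the $o(1)$ errors to be uniform in $z\in[0,K]$, which follows because $z$ only shifts $x_t$ by a bounded amount.
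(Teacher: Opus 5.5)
Your proposal is correct and follows essentially the paper's route. Both use the Fubini identity to turn the integral into $\mathbf E\big[\nu(\text{random interval}\cap(\lambda t,\infty))\big]$, and both get the uniform bound from Corollary~\ref{Corollary.202} together with Lemma~\ref{Lemma.310}. For the limit, both split by atypical position of $-\underline X_t$, small width, moderate width (Lemma~\ref{Lemma.301}) and large width (uniform integrability); your use of the a.s.\ limit of $\underline X_t/t$ from Lemma~\ref{Lemma.3001} simply replaces the paper's final $\eta\to0+$ step.

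One small correction concerns the random set. Since $\tau^-_{-y}>t$ iff $\underline X_t\geq -y$, the set is $[-\underline X_t-z,\,-\underline X_{t+\delta}-z)$, not $(-\underline X_t-z,\,-\underline X_{t+\delta}-z]$. This is harmless, but not for the reason you give (absence of atoms). It is harmless because every later estimate depends only on the left endpoint and the length $\underline X_t-\underline X_{t+\delta}$, and the endpoint convention can be removed by an arbitrarily small sandwich.
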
 
 \proof  Note that $\tau_{-z-w}^-\in t+\varDelta_\delta$ if and only if $-\underline{X}_{t}\leq z+w < -\underline X_{t+\delta}$. 
 By Fubini's theorem,  
 \beqnn
 \int_{\lambda t}^\infty  	\mathbf{P}\big(\tau_{-z-w}^-\in t+\varDelta_\delta \big)\nu(dw)
 \ar=\ar  \int_{\lambda t}^\infty  \mathbf{P}\big( -\underline{X}_{t}-z\leq  w < -\underline X_{t+\delta}-z \big)\,\nu(dw) \cr\cr
 \ar=\ar \mathbf{E}\Big[  \nu\Big( [ -\underline{X}_{t}-z, -\underline{X}_{t+\delta}-z ) \cap (\lambda t,\infty) \Big) \Big] . 
 \eeqnn
 The intersection $[ -\underline{X}_{t}-z,\, -\underline{X}_{t+\delta} -z) \cap (\lambda t,\infty)$ has left endpoint strictly larger than $\lambda t$ and  length is bounded above by $\underline{X}_{t} -\underline{X}_{t+\delta}$.
 By Corollary~\ref{Corollary.202}, we have for all sufficiently large $t$,
 \beqlb\label{eqn.70113}
 \sup_{z\geq 0} \nu\big( [ -\underline{X}_{t}-z, -\underline{X}_{t+\delta} -z) \cap (\lambda t,\infty) \big)  
 \leq C\cdot \frac{\overline{\nu}(\lambda t)}{\lambda t} \cdot \big( \underline{X}_{t} -\underline{X}_{t+\delta}+1  \big). 
 \eeqlb
 Taking expectations on both sides of this inequality, then using Lemma~\ref{Lemma.310} and finally Lemma~\ref{Lemma.301} along with the fact that $\overline{\nu} \in \mathrm{RV}^\infty_{-\alpha}$, we can get the uniform upper bound \eqref{eqn.70111} immediately.

 We now start to prove the limit \eqref{eqn.70112}. 
 In view of \eqref{eqn.30} and Lemma~\ref{Lemma.310}, it suffices to prove that as $t\to\infty$, 
 \beqlb\label{eqn.70442}
 \sup_{z\in [0,K]}  \bigg| \frac{ \mathbf{E}\big[  \nu\big( [ -\underline{X}_{t}-z, -\underline{X}_{t+\delta}-z ) \cap (\lambda t,\infty) \big) \big] }{\alpha\cdot \frac{\overline{\nu}(\beta t)}{\beta t}\cdot \mathbf{E}\big[ \underline{X}_t -\underline{X}_{t+\delta}\big]} -1 \bigg| \to 0
 \eeqlb
 The absolute value on the left-hand side can  be bounded by 
 \beqlb\label{eqn.70441}
 \frac{ \mathbf{E}\big[  \big|\nu\big( [ -\underline{X}_{t}-z, -\underline{X}_{t+\delta}-z ) \cap (\lambda t,\infty) \big) -\alpha\cdot\frac{\overline{\nu}(\beta t)}{\beta t}\cdot \big(\underline{X}_t -\underline{X}_{t+\delta}\big)\big|\big] }{\alpha\cdot \frac{\overline{\nu}(\beta t)}{\beta t}\cdot \mathbf{E}\big[ \underline{X}_t -\underline{X}_{t+\delta}\big]}   . 
 \eeqlb
 In order to decompose the numerator into several tractable components, we introduce the following four mutually exclusive events: for any constants $\eta\in(0,\beta-\lambda)$, $\varepsilon\in(0,1\wedge \delta)$ and $R\geq1$, 
 \beqnn
 A^{(1)}_z(t,\eta)\ar:=\ar  \Big\{ \big|-\underline X_t -z-\beta t\big|>\eta t \Big\},\cr
 A^{(2)}_z(t,\eta,\varepsilon)\ar:=\ar  \Big\{  \big|-\underline X_t-z-\beta t\big|\leq\eta t,\,  \underline X_t-\underline X_{t+\delta} <\varepsilon \Big\}, \cr
 A^{(3)}_z(t,\eta,\varepsilon,R)\ar:=\ar  \Big\{  \big|-\underline X_t-z-\beta t\big|\leq\eta t,\,  \underline X_t-\underline X_{t+\delta} \in [\varepsilon,R] \Big\}, \cr 
 A^{(4)}_z(t,\eta,R)\ar:=\ar  \Big\{ \big|-\underline X_t -z-\beta t\big|\leq \eta t,\,\underline X_t-\underline X_{t+\delta} >R \Big\}.
 \eeqnn
 Note that these four events  together form the certain event. Then, by the triangle inequality  the numerator of \eqref{eqn.70441} can be bounded by the sum of the following three terms 
 \beqnn
 I^{(1)}_{z}(t,\eta,\varepsilon,R)\ar:=\ar  \mathbf{E}\bigg[  \Big|\nu\big( [ -\underline{X}_{t}-z,-\underline{X}_{t+\delta}-z )  \cap (\lambda t,\infty) \big) -\alpha\cdot \frac{\overline{\nu}(\beta t)}{\beta t}\cdot \big(\underline{X}_t -\underline{X}_{t+\delta}\big)\Big|, A^{(3)}_z(t,\eta,\varepsilon,R)\bigg] ,\cr\cr
 I^{(2)}_{z}(t,\eta,\varepsilon,R)\ar:=\ar  \mathbf{E}\Big[   \alpha\cdot \overline{\nu}(\beta t)/(\beta t)\cdot \big(\underline{X}_t -\underline{X}_{t+\delta}\big) ,\, A^{(1)}_z(t,\eta)\cup A^{(2)}_z(t,\eta,\varepsilon)\cup A^{(4)}_z(t,\eta,R) \Big] ,\cr\cr
 I^{(3)}_{z}(t,\eta,\varepsilon,R)\ar:=\ar  \mathbf{E}\Big[   \nu\big( [ -\underline{X}_{t}-z, -\underline{X}_{t+\delta} -z) \cap (\lambda t,\infty) \big)  ,\, A^{(1)}_z(t,\eta)\cup A^{(2)}_z(t,\eta,\varepsilon)\cup A^{(4)}_z(t,\eta,R)\Big]  . 
 \eeqnn
 By Lemma~\ref{Lemma.310}, we also have $\mathbf{E}\big[ \underline{X}_t -\underline{X}_{t+\delta}\big] \to \beta \delta $ as $t\to\infty$. 
 Consequently, the limit \eqref{eqn.70442} follows if we can prove that 
 \beqlb\label{eqn.70443}
 \limsup_{\eta\to0+} \limsup_{t\to \infty}\sup_{z\in[0,K]} \frac{I^{(1)}_{z}(t,\eta,\varepsilon,R)}{\overline{\nu}(\beta t)/t} =0,\quad \forall\, R>0,\, \varepsilon\in (0,1\wedge \delta),
 \eeqlb
 and 
 \beqlb\label{eqn.70444}
 \limsup_{R\to\infty} \limsup_{\varepsilon\to 0+}   \limsup_{t\to \infty}\sup_{z\in[0,K]} \frac{I^{(i)}_{z}(t,\eta,\varepsilon,R) }{\overline{\nu}(\beta t)/t} =0 ,\quad \forall \eta \in (0,\beta-\lambda), i=2,3.
 \eeqlb
 
 \textbf{Case $i=1$.} We now prove the limit \eqref{eqn.70443}. On the event $A^{(3)}_z(t,\eta,\varepsilon,R)$, we observe that 
 \beqnn
 \lambda t < -\underline{X}_t-z \quad 	(\beta-\eta)t \leq -\underline X_t-z \leq (\beta+\eta)t 
 \quad\mbox{and}\quad 
 \varepsilon \leq \underline X_t-\underline X_{t+\delta} \leq R,
 \eeqnn
 which follows that 
 \beqnn
 I^{(1)}_{z}(t,\eta,\varepsilon,R)
 \ar= \ar \, \mathbf{E}\bigg[  \Big|\nu\big( [ -\underline{X}_{t}-z,-\underline{X}_{t+\delta}-z )  \big) -\alpha\cdot \frac{\overline{\nu}(\beta t)}{\beta t}\cdot \big(\underline{X}_t -\underline{X}_{t+\delta}\big)\Big|,\, A^{(3)}_z(t,\eta,\varepsilon,R)\bigg] \cr
 \ar=\ar \mathbf{E}\bigg[  \bigg|\frac{\nu\big( [ -\underline{X}_{t}-z,-\underline{X}_{t+\delta}-z )  \big)}{\alpha\cdot \frac{\overline{\nu}(\beta t)}{\beta t}\cdot \big(\underline{X}_t -\underline{X}_{t+\delta}\big) } -1\bigg| \cdot \alpha\cdot \frac{\overline{\nu}(\beta t)}{\beta t}\cdot \big(\underline{X}_t -\underline{X}_{t+\delta}\big),\, A^{(3)}_z(t,\eta,\varepsilon,R)\bigg] \cr
 \ar\leq \ar\,\sup_{|y-\beta t|\leq \eta t} \sup_{\delta' \in [\varepsilon, R]} \bigg|\frac{\nu\big( y +\varDelta_{\delta'}  \big)}{\alpha\cdot \frac{\overline{\nu}(\beta t)}{\beta t}\cdot \delta' } -1\bigg| \cdot \alpha\cdot \frac{\overline{\nu}(\beta t)}{\beta t}\cdot \mathbf{E}\Big[   \big(\underline{X}_t -\underline{X}_{t+\delta}\big),\, A^{(3)}_z(t,\eta,\varepsilon,R)\Big] . 
 \eeqnn 
 The last expectation can be bounded by $\mathbf{E}\big[   \big(\underline{X}_t -\underline{X}_{t+\delta}\big) \big] \to \beta \delta$ as $t\to\infty$; see Lemma~\ref{Lemma.310}.  
 Consequently, uniformly for all sufficiently large $t$, 
 \beqnn
 \sup_{z\in[0,K]}\frac{I^{(1)}_{z}(t,\eta,\varepsilon,R)}{\overline{\nu}(\beta t)/t}\ar\leq \ar C\cdot \sup_{|y-\beta t|\leq \eta t} \sup_{\delta' \in [\varepsilon, R]} \bigg|\frac{\nu\big( y +\varDelta_{\delta'}  \big)}{\alpha\cdot \frac{\overline{\nu}(\beta t)}{\beta t}\cdot \delta' } -1\bigg| .
 \eeqnn
 By the triangle inequality, this supremum can further  be bounded by
 \beqlb\label{eqn.704431}
 \sup_{|y-\beta t|\leq \eta t} \sup_{\delta' \in [\varepsilon, R]} \bigg|\frac{\nu\big( y +\varDelta_{\delta'}  \big)}{\alpha\cdot \overline{\nu}(y)/y\cdot \delta' } -1\bigg|  \cdot \frac{\overline{\nu}(y)/y}{\overline{\nu}(\beta t)/(\beta t)} +  \sup_{|y-\beta t|\leq \eta t} \bigg|  \frac{\overline{\nu}(y)/y}{\overline{\nu}(\beta t)/(\beta t)}  -1\bigg| . 
 \eeqlb
 By Lemma~\ref{Lemma.301} and \eqref{eqn.30}, we have as $t\to\infty$,
 \beqnn
 \sup_{|y-\beta t|\leq \eta t} \sup_{\delta' \in [\varepsilon, R]} \bigg|\frac{\nu\big( y +\varDelta_{\delta'}  \big)}{\alpha\cdot \overline{\nu}(y)/y\cdot \delta' } -1\bigg|  \to0
 \quad \mbox{and}\quad 
 \sup_{|y-\beta t|\leq \eta t} \bigg|  \frac{\overline{\nu}(y)/y}{\overline{\nu}(\beta t)/(\beta t)}  -1\bigg| \to \Big( 1-\frac{\eta}{\beta} \Big)^{-\alpha-1} -1,
 \eeqnn
 which goes to $0$ as $\eta\to 0+$.  Taking them back into \eqref{eqn.704431} yields the limit \eqref{eqn.70443} immediately. 
 
 \textbf{Case $i=2$.} By the definition of $I^{(2)}_{z}(t,\eta,\varepsilon,R)$, we have 
 \beqnn
 \frac{I^{(2)}_{z}(t,\eta,\varepsilon,R) }{\overline{\nu}(\beta t)/t}
 \ar=\ar \frac{\alpha}{\beta} \cdot  \mathbf{E}\big[  \underline{X}_t -\underline{X}_{t+\delta},\,   A^{(1)}_z(t,\eta)\cup A^{(2)}_z(t,\eta,\varepsilon)\cup A^{(4)}_z(t,\eta,R)  \big]  . 
 \eeqnn
 Firstly, we have $ \sup_{z\in[0,K]}\mathbf{E}\big[  \underline{X}_t -\underline{X}_{t+\delta},\,     A^{(2)}_z(t,\eta,\varepsilon) \big] \leq  \mathbf{E}\big[  \underline{X}_t -\underline{X}_{t+\delta},\,   \underline{X}_t -\underline{X}_{t+\delta}<\varepsilon  \big] \leq   \varepsilon   $ uniformly in  $t\geq 0$. 
 Secondly, since the family $\{ \underline{X}_{t} -\underline{X}_{t+\delta} \}_{t\geq 0}$ is uniformly integrable  and $\sup_{z\in[0,K]}\mathbf{P}\big(A^{(1)}_z(t,\eta)\big) \to0$ as $t\to\infty$; see  Lemma~\ref{Lemma.310}, we have 
 \beqlb\label{eqn.70445}
 \limsup_{t\to\infty} \sup_{z\in[0,K]} \mathbf{E}\big[  \underline{X}_t -\underline{X}_{t+\delta},\, A^{(1)}_z(t,\eta)  \big] =0.
 \eeqlb
 Finally, by Chebyshev's inequality and Lemma~\ref{Lemma.310}, we have as $t\to\infty$, 
 \beqnn
 \sup_{z\in[0,K]} \mathbf{P}\big(A^{(4)}_z(t,\eta,R)\big) \leq  \mathbf{P}\big(\underline X_t-\underline X_{t+\delta} >R \big)  
 \leq  \frac{1}{R} \cdot   \mathbf{E}\big[\underline{X}_t-\underline X_{t+\delta} \big] \to \frac{\beta \delta }{R},
 \eeqnn
 which goes to $0$ as $R\to\infty$. This along with the uniform integrability of  $\{ \underline{X}_{t} -\underline{X}_{t+\delta} \}_{t\geq 0}$ yields 
 \beqlb\label{eqn.70446}
 \limsup_{R\to\infty}\limsup_{t\to\infty}  \sup_{z\in[0,K]} \mathbf{E}\big[  \underline{X}_t -\underline{X}_{t+\delta},\, A^{(4)}_z(t,\eta,R)  \big] =0. 
 \eeqlb
 Combining these limits together, we can obtain the limit \eqref{eqn.70444} with $i=2$ immediately. 
 
 \textbf{Case $i=3$.} 
 By \eqref{eqn.70113} and the fact that $\overline\nu \in \mathrm{RV}^\infty_{-\alpha}$, we have for all sufficiently large $t$, 
 \beqnn
 \sup_{z\in[0,K]}\frac{\mathbf{E}\big[   \nu\big( [ -\underline{X}_{t}-z, -\underline{X}_{t+\delta} -z) \cap (\lambda t,\infty) \big)  ,\, A^{(1)}_z(t,\eta)  \big]}{\overline{\nu}(\beta t)/t}  
 \ar\leq\ar C  \sup_{z\in[0,K]}\mathbf{E}\big[   \underline{X}_{t} -\underline{X}_{t+\delta}+1,\, A^{(1)}_z(t,\eta)   \big] ,
 \eeqnn
 which goes to $0$ as $t\to\infty$; see \eqref{eqn.70445}. Similarly, this inequality still holds with $A^{(1)}_z(t,\eta)$ replaced by $A^{(4)}_z(t,\eta,R) $, and also goes to  $0$ as $t\to\infty $ and then $R\to\infty$; see \eqref{eqn.70446}. 
 Finally, on the event $A^{(2)}_z(t,\eta,\varepsilon)$, we have $	(-\underline{X}_t-z)\vee\lambda t = -\underline{X}_t-z$. 
 Since $\delta'\mapsto \nu(x+\varDelta_{\delta'})$ is non-decreasing, we have 
 \beqnn
 \sup_{z\in[0,K]}\mathbf{E}\big[   \nu\big( [ -\underline{X}_{t}-z, -\underline{X}_{t+\delta} -z) \cap (\lambda t,\infty) \big)  ,\, A^{(2)}_z(t,\eta,\varepsilon)  \big]
 \leq \sup_{|y|\leq \eta t}\nu(\beta t+y+\varDelta_{\varepsilon}).
 \eeqnn
 Moreover, by \eqref{eqn.30} and the monotonicity of $\overline\nu$ we have as $t\to\infty$,
 \beqnn
 \sup_{|y|\leq \eta t} \frac{\nu(\beta t+y+\varDelta_{\varepsilon})}{\overline{\nu}(\beta t)/t} 
 \leq C\cdot \frac{\varepsilon}{\beta-\eta} \cdot \frac{\overline{\nu}(\beta t-\eta t)}{\overline{\nu}(\beta t)} \to C\cdot \frac{\varepsilon}{\beta-\eta} \cdot \big(  1-\eta/\beta\big)^{-\alpha},
 \eeqnn
 which goes to $0$ as $\varepsilon \to 0+$. 
 Combining these limits together yields the limit \eqref{eqn.70444} with $i=3$.
 \qed

 \begin{proposition}\label{Prop.7015}
 	For any constants  $R,\delta>0$ and $\lambda\in (0,\beta)$, we have 
 	\beqnn
 	\lim_{K\to\infty} \limsup_{t\to\infty} 
 	\underline{n}\Big( \mathcal{J}^{\lambda t}\leq R,\, \epsilon_{\mathcal{J}^{\lambda t}-}>K \,\Big|\, \zeta\in t+\varDelta_\delta \Big) =0. 
 	\eeqnn
 \end{proposition}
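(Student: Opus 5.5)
By Theorem~\ref{MainThm01} and \eqref{eqn.30}, the denominator $\underline n(\zeta\in t+\varDelta_\delta)$ is comparable to $\nu(\beta(t+\varDelta_\delta))\asymp \overline\nu(\beta t)/t$. It therefore suffices to show that
\beqnn
\lim_{K\to\infty}\limsup_{t\to\infty}\frac{\underline n\big(\mathcal J^{\lambda t}\le R,\ \epsilon_{\mathcal J^{\lambda t}-}>K,\ \zeta\in t+\varDelta_\delta\big)}{\nu(\beta(t+\varDelta_\delta))}=0 .
\eeqnn
Since $\mathcal J^{\lambda t}$ is the first element of $\mathcal B^{\lambda t}$, we bound the indicator by the number of jumps $s\le R$ with $\Delta\epsilon_s>\lambda t$, $\epsilon_{s-}>K$ and $\zeta\in t+\varDelta_\delta$. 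We then apply the Markov property at the jump time and the compensation formula \eqref{eqn.Compensation}, exactly as in \eqref{eqn.509} and in the proof of Lemma~\ref{Lemma.602}. This bounds the numerator by
\beqnn
\int_0^R ds\int_K^\infty \underline n_s(dz)\int_{\lambda t}^\infty \mathbf P\big(\tau^-_{-z-w}\in t-s+\varDelta_\delta\big)\,\nu(dw).
\eeqnn

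For each $s\in[0,R]$, the inner $w$-integral is controlled by the uniform bound \eqref{eqn.70111} of Proposition~\ref{Prop.7011}, applied with $t$ replaced by $t-s$. Since $s\le R$ and $t-s\ge t-R$ is still large, it is at most $C\,\nu(\beta(t-s+\varDelta_\delta))$, uniformly in $z\ge0$. Writing $\lambda t=\lambda' (t-s)$ with $\lambda'\in(\lambda,\beta)$ for large $t$, the lower limit is covered by the monotonicity in the lower limit of integration. By \eqref{eqn.30} and regular variation, $\sup_{s\le R}\nu(\beta(t-s+\varDelta_\delta))/\nu(\beta(t+\varDelta_\delta))$ stays bounded as $t\to\infty$.

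Hence the ratio above is at most
\beqnn
C\int_0^R \underline n(\epsilon_s>K,\zeta>s)\,ds\le C\int_0^\infty \underline n(\epsilon_s>K,\zeta>s)\,ds,
\eeqnn
with $C$ independent of $K$ and $t$. The right-hand side vanishes as $K\to\infty$ by dominated convergence, since $\underline n(\epsilon_s>K,\zeta>s)\le\underline n(\zeta>s)$, which is integrable by \eqref{eqn.501}, and $\underline n(\epsilon_s>K,\zeta>s)\to 0$ for each $s$.

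The main point requiring care is the uniformity in Proposition~\ref{Prop.7011}: it must hold for all $z\ge0$, not just for bounded $z$, and it must be applied at the shifted time $t-s$ with $s\le R$. This is exactly what \eqref{eqn.70111} provides, possibly after replacing $\lambda$ by a slightly smaller constant so that $\lambda t\ge\lambda(t-s)$. Everything else is routine.
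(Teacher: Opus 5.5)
Your proposal is correct and follows essentially the paper's argument. Both proofs bound the event by the expected jump count via the Markov property and the compensation formula, then apply the uniform-in-$z$ bound \eqref{eqn.70111}, and conclude from $\int_0^\infty \underline n(\epsilon_{s}>K,\zeta>s)\,ds\to0$ as $K\to\infty$. The only difference is cosmetic: the paper enlarges the window to $t-R+\varDelta_{R+\delta}$ and applies Proposition~\ref{Prop.7011} once at time $t-R$, while you apply it at each $t-s$. Your version is also valid, because the constants in Proposition~\ref{Prop.7011} are uniform in $t\ge t_0$. Also, $\lambda t\ge\lambda(t-s)$ already gives the monotonicity in the lower limit, so no auxiliary $\lambda'$ or change of $\lambda$ is needed.
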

 \proof Firstly, we have for sufficiently large $t$,
 \beqnn
 \underline{n}\big( \mathcal{J}^{\lambda t}\leq R,\, \epsilon_{\mathcal{J}^{\lambda t}-}>K,\, \zeta\in t+\varDelta_\delta \big) 
 \ar\leq\ar   \underline{n}\bigg( \sum_{0<s\leq R\wedge\zeta} \mathbf 1_{\{\epsilon_{s-}>K,\,\Delta\epsilon_s>\lambda t,\,\zeta\in t+\varDelta_{\delta}\}}
 \bigg). 
 \eeqnn
 Similarly as in \eqref{eqn.509},   the right-hand side equals 
 \beqnn
 \int_0^R ds \int_K^\infty \underline{n}_s(dz) \int_{\lambda t}^\infty
 \mathbf{P}_{z+w}\big( \tau_0^-\in t-s+\varDelta_\delta \big)\,\nu(dw),
 \eeqnn
 which can further be bounded by 
 \beqnn
 \lefteqn{\sup_{s\in[0,R]} \sup_{z\geq K} \int_{\lambda t}^\infty
 	\mathbf{P}_{z+w}\big( \tau_0^-\in t-s+\varDelta_\delta \big)\,\nu(dw) \cdot \int_0^R
 	\underline n\big(
 	\epsilon_{s-}>K,\zeta>s
 	\big)ds}\cr
 \ar\leq\ar  \sup_{z\geq K} \int_{\lambda t}^\infty
 \mathbf{P}_{z+w}\big( \tau_0^-\in t-R+\varDelta_{R+\delta} \big)\,\nu(dw) \cdot \int_0^\infty
 \underline n\big(
 \epsilon_{s-}>K,\zeta>s
 \big)ds. 
 \eeqnn
 By Proposition~\ref{Prop.7011}, we have for sufficiently large $t$,
 \beqnn
 \underline{n}\Big( \mathcal{J}^{\lambda t}\leq R,\, \epsilon_{\mathcal{J}^{\lambda t}-}>K \,\Big|\, \zeta\in t+\varDelta_\delta \Big)
 \leq C\cdot \frac{\nu\big(\beta t+\varDelta_\delta \big) }{\underline{n}\big(\zeta\in t+\varDelta_\delta \big)} \cdot \int_0^\infty
 \underline n\big(
 \epsilon_{s-}>K,\zeta>s
 \big)ds.
 \eeqnn
 Since $\underline{n}(\zeta)<\infty$, the last integral vanishes as $K\to\infty$.  
 The desired limit follows from this and Theorem~\ref{MainThm01}.
 \qed

 \begin{proposition}\label{Prop.7012}
 	For any constants $R,K,\delta>0$ and $\lambda \in(0,\beta)$, we have as $t\to\infty$, 
 	\beqnn
 	\bigg| \underline{n}\Big( \mathcal{J}^{\lambda t}\leq R,\, \epsilon_{\mathcal{J}^{\lambda t}-}\leq K \,\Big|\, \zeta\in t+\varDelta_\delta \Big) - \frac{1}{\underline{n}(\zeta)}  \int_0^R\underline{n}\big(\epsilon_{s-}\leq K,\zeta>s \big)ds  \bigg|
 	\to 0.
 	\eeqnn
 \end{proposition}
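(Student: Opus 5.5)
The plan is to sandwich the event $\{\mathcal{J}^{\lambda t}\leq R,\ \epsilon_{\mathcal{J}^{\lambda t}-}\leq K,\ \zeta\in t+\varDelta_\delta\}$ between a first-moment count of candidate jumps (upper bound) and that count minus a second-factorial-moment correction (lower bound). Both will be evaluated by the Markov property at the jump time and the compensation formula \eqref{eqn.Compensation}, followed by Proposition~\ref{Prop.7011}.

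Define the counting variable
\[
N_t:=\sum_{0<s\leq R\wedge\zeta}\mathbf 1_{\{\epsilon_{s-}\leq K,\,\Delta\epsilon_s>\lambda t,\,\zeta\in t+\varDelta_\delta\}} .
\]
On the event in question, $\mathcal J^{\lambda t}$ is one of the times counted, so this indicator is at most $N_t$. Conversely, if $N_t\geq1$ then $\mathcal J^{\lambda t}\leq R$. If moreover $\epsilon_{\mathcal J^{\lambda t}-}>K$, the counted time is not $\mathcal J^{\lambda t}$, so there are two jumps exceeding $\lambda t$ before $R$. Hence
\[
N_t-M_t-\mathbf 1_{\{\mathcal J^{\lambda t}\le R,\ \epsilon_{\mathcal J^{\lambda t}-}>K,\ \zeta\in t+\varDelta_\delta\}}
\;\le\; \mathbf 1_{\{\mathcal J^{\lambda t}\le R,\ \epsilon_{\mathcal J^{\lambda t}-}\le K,\ \zeta\in t+\varDelta_\delta\}}
\;\le\; N_t ,
\]
where $M_t$ counts ordered pairs $s<r\le R\wedge\zeta$ with both $\Delta\epsilon_s,\Delta\epsilon_r>\lambda t$, using $\mathbf 1_{\{N\ge1\}}\ge N-(N-1)^+$. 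By \eqref{Prop.203}, $\underline n(M_t)\le R\,\underline n(\zeta)\,\overline\nu(\lambda t)^2$. This is $o\big(\underline n(\zeta\in t+\varDelta_\delta)\big)$, because Theorem~\ref{MainThm01} with \eqref{eqn.30} gives $\underline n(\zeta\in t+\varDelta_\delta)\asymp\overline\nu(\beta t)/t$ and $t\,\overline\nu(\lambda t)\to0$ since $\alpha>1$. Since $K$ is fixed here, the middle subtracted term is kept inside the computation rather than dropped via Proposition~\ref{Prop.7015}: bound the event by $\epsilon_{\mathcal J-}$ jumps only through $M_t$. Precisely, on $\{N_t\ge1\}$ either the first counted time equals $\mathcal J^{\lambda t}$ or $M_t\ge1$. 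So it suffices to show
\[
\underline n(N_t)\sim \nu\big(\beta(t+\varDelta_\delta)\big)\int_0^R\underline n(\epsilon_{s-}\le K,\zeta>s)\,ds .
\]

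To compute $\underline n(N_t)$, apply the Markov property \eqref{eqn.n-Markov} at the jump and then the compensation formula, exactly as in \eqref{eqn.509}:
\[
\underline n(N_t)=\int_0^R ds\int_{[0,K]}\underline n_s(dz)\int_{\lambda t}^\infty \mathbf P\big(\tau^-_{-z-w}\in t-s+\varDelta_\delta\big)\,\nu(dw).
\]
Here the entrance law at $s-$ equals $\underline n_s$ by stochastic continuity. The inner integral is the quantity treated in \eqref{eqn.70112}, but with $t$ replaced by $t-s$ and the threshold $\lambda t$ in place of $\lambda(t-s)$. For $s\le R$ and $t$ large, $\lambda t=\lambda'(t-s)$ with $\lambda'\in(\lambda,\lambda_1)\subset(0,\beta)$ for some fixed $\lambda_1<\beta$. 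So I will re-run the proof of \eqref{eqn.70112} to get uniformity in $s\in[0,R]$; the only ingredients are Lemma~\ref{Lemma.310}, Lemma~\ref{Lemma.301} and the event decomposition, all uniform over such shifts. This yields that the inner integral equals $\big(1+o(1)\big)\,\nu\big(\beta(t-s+\varDelta_\delta)\big)$, uniformly in $z\le K$ and $s\le R$. Moreover $\nu\big(\beta(t-s+\varDelta_\delta)\big)\sim\nu\big(\beta(t+\varDelta_\delta)\big)$ uniformly in $s\le R$, by \eqref{eqn.30} and $\overline\nu\in\mathrm{RV}^\infty_{-\alpha}$. Dividing by $\underline n(\zeta\in t+\varDelta_\delta)\sim\underline n(\zeta)\,\nu\big(\beta(t+\varDelta_\delta)\big)$ from Theorem~\ref{MainThm01} gives the claim.

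The main obstacle is this uniformity in $s\in[0,R]$ of \eqref{eqn.70112}. If I want to avoid re-running its proof, the fallback is monotonicity. Since $\lambda t\ge\lambda(t-s)$, the inner integral is at most the $\lambda(t-s)$ version. For the lower bound, the difference between the two versions is at most $\nu\big((\lambda(t-R),\lambda t]\big)$ times a probability that tends to $0$: it involves $w\le\lambda t$ with first passage near $t-s$, which forces $-\underline X_{t-s}$ to be about $\lambda t<\beta t$, of vanishing probability. Combined with $\nu\big((\lambda(t-R),\lambda t]\big)\asymp\overline\nu(t)/t$, this difference is negligible. A discretization of $s$ into a finite grid then handles the remaining continuity in $s$.
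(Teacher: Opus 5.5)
Your proposal is correct and follows essentially the paper's route. The paper also bounds the event by the first-moment count of jumps above $\lambda t$ before $R$ with pre-jump position at most $K$, controls the discrepancy by the second factorial moment via \eqref{Prop.203}, and evaluates the count by the compensation formula combined with Proposition~\ref{Prop.7011} and Theorem~\ref{MainThm01}. Your explicit handling of uniformity in $s\in[0,R]$ (time $t-s$ with threshold $\lambda t$) fills in a step the paper only invokes implicitly.
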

 \proof Recall the set $\mathcal{B}^x$ defined in \eqref{eqn.70}.   For $R,K,\delta>0$, we first introduce two sets of jumps with different characteristics:
 \beqlb\label{eqn.7001}
 \mathcal{B}^{\lambda t}_{\leq R}:=\mathcal{B}^{\lambda t} \cap (0,R]
 \quad \mbox{and}
 \quad
 \mathcal{B}^{\lambda t}_{\leq R}(K) := \mathcal{B}^{\lambda t}_{\leq R} \cap   \big\{ 0<s\leq  \zeta:   \epsilon_{s-}\leq K  \big\}. 
 \eeqlb
 On the event $ \big\{ \mathcal J^{\lambda t}\leq R,\, \epsilon_{\mathcal J^{\lambda t}-}\leq K,\, \zeta\in t+\varDelta_\delta \big\}$, the first jump of size greater than $\lambda t$ occurs before time $R$ and its pre-jump position is bounded by $K$. Hence this jump belongs to $ \mathcal{B}^{\lambda t}_{\leq R}(K) $, and therefore 
 \beqnn
 \big\{\mathcal{J}^{\lambda t}\leq R,\, \epsilon_{\mathcal{J}^{\lambda t}-}\leq K ,\, \zeta\in t+\varDelta_\delta \big\}
 \subset   \big\{ \#\mathcal{B}^{\lambda t}_{\leq R}(K)\geq 1,\, \zeta\in t+\varDelta_\delta \big\}. 
 \eeqnn
 Conversely, suppose that $\# \mathcal{B}^{\lambda t}_{\leq R}(K)\geq1$, but $ \epsilon_{\mathcal J^{\lambda t}-}>K.$ 
 Since the existence of a jump in $\mathcal{B}^{\lambda t}_{\leq R}(K)$ already implies that $\mathcal J^{\lambda t}\leq R$, there must then be another jump of size greater than $\lambda t$, occurring after $\mathcal J^{\lambda t}$ but no later than time $R$, whose pre-jump position is at most $K$. Consequently, there are at least two jumps of size greater than $\lambda t$ before time $R$. Thus 
 \beqnn
 \big\{  \# \mathcal{B}^{\lambda t}_{\leq R}(K)\geq 1 ,\, \zeta\in t+\varDelta_\delta \big\}\setminus   \big\{\mathcal{J}^{\lambda t}\leq R,\, \epsilon_{\mathcal{J}^{\lambda t}-}\leq K ,\, \zeta\in t+\varDelta_\delta \big\} \subset \big\{\# \mathcal{B}^{\lambda t}_{\leq R} \geq 2  ,\, \zeta\in t+\varDelta_\delta \big\}. 
 \eeqnn
 The preceding two containment relations imply that 
 \beqnn
 \Big| \underline{n}\big( \mathcal{J}^{\lambda t}\leq R,\, \epsilon_{\mathcal{J}^{\lambda t}-}\leq K ,\, \zeta\in t+\varDelta_\delta \big)
 - \underline{n}\big( \# \mathcal{B}^{\lambda t}_{\leq R}(K) ,\, \zeta\in t+\varDelta_{\delta}\big)\Big| 
 \leq  \underline{n}\Big( \# \mathcal{B}^{\lambda t}_{\leq R} \cdot\big( \# \mathcal{B}^{\lambda t}_{\leq R} -1\big) \Big). 
 \eeqnn 
 By using the square of a sum formula and \eqref{Prop.203}, 
 \beqlb\label{eqn.701}
 \underline{n}\bigg( \# \mathcal{B}^{\lambda t}_{\leq R} \cdot\big( \# \mathcal{B}^{\lambda t}_{\leq R} -1\big) \bigg) =  2\cdot \underline{n}\bigg( \sum_{0< s<r\leq R\wedge \zeta} \mathbf{1}_{\{\Delta\epsilon_s> \lambda t,\,\Delta\epsilon_r> \lambda t \}}\bigg)
 \leq 
 2\cdot R\cdot\underline{n}(\zeta)\cdot 
 \big|\overline\nu ( \lambda t  )\big|^2,
 \eeqlb
 which is $o\big( \underline{n}(\zeta \in t+\varDelta_\delta) \big)$ as $t\to\infty$ by Theorem~\ref{MainThm01} and $\overline\nu \in \mathrm{RV}^\infty_{-\alpha}$ with $\alpha>1$.   
 Consequently,
 \beqnn
 \limsup_{t\to\infty}\bigg| \underline{n}\Big( \mathcal{J}^{\lambda t}\leq R,\, \epsilon_{\mathcal{J}^{\lambda t}-}\leq K \,\Big|\, \zeta\in t+\varDelta_\delta \Big) - \frac{\underline{n}\big( \# \mathcal{B}^{\lambda t}_{\leq R}(K) ,\,\zeta\in t+\varDelta_{\delta}\big) }{\underline{n}\big(\zeta \in t+\varDelta_\delta\big)}    \bigg|
 =0
 \eeqnn
 and hence it suffices to prove 
 \beqlb\label{eqn.7014}
 \lim_{t\to\infty}\frac{\underline{n}\big( \# \mathcal{B}^{\lambda t}_{\leq R}(K) ,\,\zeta\in t+\varDelta_{\delta} \big) }{\underline{n}\big(\zeta \in t+\varDelta_\delta\big)} 
 =\frac{1}{\underline{n}(\zeta)}  \int_0^R\underline{n}\big(\epsilon_{s-}\leq K,\zeta>s \big)ds . 
 \eeqlb
 Using the strong Markov property and the
 compensation formula \eqref{eqn.Compensation} again, 
 \beqnn
 \underline{n}\big( \# \mathcal{B}^{\lambda t}_{\leq R}(K) ,\, \zeta\in t+\varDelta_{\delta}\big)
 \ar=\ar \int_0^R ds  \int_0^K  \underline{n}_s(dz)
 \int_{\lambda t}^\infty 
 \mathbf{P}_{z+w}\big( \tau_0^-\in t-s+\varDelta_\delta \big)\nu(dw). 
 \eeqnn
 Then the limit \eqref{eqn.7014} follows from the uniform convergence in Proposition~\ref{Prop.7011} and Theorem~\ref{MainThm01}. 
 \qed 
 
 \medskip
 
 \textbf{\textit{Proof of Lemma~\ref{Lemma.701}.}} 
 Recall the distribution law of $\mathcal{T}$ in \eqref{CWX02}. By the triangle inequality,
 \beqnn
 \lefteqn{\Big| \underline{n}\big(\mathcal{J}^{\lambda t}\leq R \,\big|\,\zeta\in t+\varDelta_\delta   \big) - \mathbf{P}\big(\mathcal T\leq R \big) \Big| }\cr
 \ar=\ar \Big| \underline{n}\big(\mathcal{J}^{\lambda t}\leq R \,\big|\,\zeta\in t+\varDelta_\delta   \big) -\frac{ 1 }{\underline{n}(\zeta)}  \int_0^R ds \int_0^\infty \underline{n}\big( \epsilon_{s-}\in dz ,\, \zeta>s \big)\Big| \cr
 \ar\leq\ar \underline{n}\big(\mathcal{J}^{\lambda t}\leq R, \, \epsilon_{\mathcal{J}^{\lambda t} -} >K \,\big|\,\zeta\in t+\varDelta_\delta   \big) + \frac{ 1 }{\underline{n}(\zeta)}  \int_0^R   \underline{n}\big( \epsilon_{s-}>K ,\, \zeta>s \big)\,ds\cr
 \ar\ar + \Big| \underline{n}\big(\mathcal{J}^{\lambda t}\leq R ,\, \epsilon_{\mathcal{J}^{\lambda t} -} \leq K  \,\big|\,\zeta\in t+\varDelta_\delta   \big) -\frac{ 1 }{\underline{n}(\zeta)}  \int_0^R    \underline{n}\big( \epsilon_{s-}\leq K,\, \zeta>s \big)\,ds\Big| , 
 \eeqnn  
 which goes to $0$ as $t\to\infty$ and then $K\to\infty$ by Proposition~\ref{Prop.7015}, \ref{Prop.7012} and the fact that $\underline{n}(\zeta)<\infty$.   
 \qed

 \medskip
 
 \textbf{\textit{Proof of Lemma~\ref{Lemma.702}.}} Firstly, on the event $\{\mathcal{J}^{\lambda t}\leq R,\, \sup_{0\leq r<\mathcal{J}^{\lambda t}}\epsilon_r>K ,\, \zeta\in t+\varDelta_\delta  \} $,  there is at least one $s\in(0,R]$ such that  $  \Delta\epsilon_s> \lambda t$, $ \sup_{0\leq r<s}\epsilon_r>K $ and $\zeta\in t+\varDelta_\delta $. Consequently,    
 \beqnn
 \underline n\Big( \mathcal{J}^{\lambda t}\leq R,\, \sup_{0\leq r<\mathcal{J}^{\lambda t}}\epsilon_r>K ,\, \zeta\in t+\varDelta_\delta  \Big) 
 \leq \underline n\bigg(   \sum_{0<s\leq R} \mathbf{1}_{\{ \Delta\epsilon_s> \lambda t,\, \sup_{0\leq r<s}\epsilon_r>K, \, \zeta\in t+\varDelta_\delta \}}  \bigg).
 \eeqnn
 By  the compensation formula \eqref{eqn.Compensation}, the right-hand side equals 
 \beqnn
 \int_0^R ds \int_0^\infty \underline n\Big( \sup_{0\leq r<s}\epsilon_r>K,\,
 \epsilon_{s-}\in dz,\, \zeta>s \Big) \int_{\lambda t}^\infty \mathbf P_{z+w}\big(
 \tau_0^-\in t-s+\varDelta_\delta  \big)\,\nu(dw),
 \eeqnn
 which can be bounded by 
 \beqnn
 \lefteqn{\sup_{z\geq 0} \sup_{t-R\leq r\leq t} \int_{\lambda t}^\infty \mathbf P_{z+w}\big( \tau_0^-\in r+\varDelta_\delta  \big)\,\nu(dw) \cdot
 	\int_0^R \underline n\Big(  \sup_{0\leq r<s}\epsilon_r>K,\, \zeta>s \Big)\,ds}\cr
 \ar\leq\ar  \sup_{z\geq 0}   \int_{\lambda t}^\infty \mathbf P_{z+w}\big( \tau_0^-\in t-R+\varDelta_{R+\delta}  \big)\,\nu(dw) \cdot
 \int_0^\infty \underline n\Big(  \sup_{0\leq r<s}\epsilon_r>K,\, \zeta>s \Big)\,ds.
 \eeqnn
 Since $\underline{n}(\zeta)<\infty $, the last integral goes to $0$ as $K\to\infty$. 
 Moreover, by the uniform bound in Proposition~\ref{Prop.7011} and Theorem~\ref{MainThm01}, there exists some constant $t_0>0$ such that
 \beqnn
 \sup_{t\geq t_0}\frac{1}{\underline{n}\big( \zeta\in t+\varDelta_\delta \big)} \sup_{z\geq 0}   \int_{\lambda t}^\infty \mathbf P_{z+w}\big( \tau_0^-\in t-R+\varDelta_{R+\delta}  \big)\,\nu(dw) <\infty.
 \eeqnn
 Combining these estimates together yields the desired limit directly. 
 \qed

 \subsection{Proof of Lemma~\ref{Lemma.703}}
 
 For sufficiently large $t$, conditioned on $\zeta \in t+ \varDelta_\delta$, each jump with size larger than $\lambda t$ brings the excursion to levels above $K$, and hence there is at most one $s\in (0,\zeta]$ such that  $\Delta\epsilon_s > \lambda t $ and $ \sup_{0\leq r<s}\epsilon_r\leq K $. This yields that for any sufficiently large $t$, 
 \beqnn
 \lefteqn{\underline n\Big( \mathcal{J}^{\lambda t}\leq R,\, \sup_{0\leq r<\mathcal{J}^{\lambda t}}\epsilon_r\leq K,\, \sup_{\mathcal{J}^{\lambda t}\leq r<\zeta}  \epsilon_{r} - \epsilon_{\mathcal{J}^{\lambda t}} >K,\, \zeta\in t+\varDelta_\delta \Big)} \cr
 \ar=\ar \underline n\bigg( \sum_{0<s\leq R} \mathbf{1}_{ \{\Delta\epsilon_s > \lambda t,\,\sup_{0\leq r<s}\epsilon_r\leq K,\, \sup_{s\leq r<\zeta}  \epsilon_{r} - \epsilon_s >K,\, \zeta\in t+\varDelta_\delta \}} \bigg).
 \eeqnn
 Similarly as in \eqref{eqn.509},  the right-hand side equals  
 \beqnn
 \int_0^R ds \int_0^K \underline n\Big( \sup_{0\leq r<s}\epsilon_r\leq K,\, \epsilon_{s-}\in dz,\, \zeta>s \Big) \int_{\lambda t}^\infty \mathbf{P}_{z+w} \big( \tau_0^-\in t-s+\varDelta_\delta,\, \overline X_{\tau_0^-}-z-w >K \big)\, \nu(dw).
 \eeqnn
 By using the spatial homogeneity of $X$ and then the definition of $\tau_{-z-w}^-$, 
 \beqlb\label{eqn.702221}
 \lefteqn{\int_{\lambda t}^\infty \mathbf{P}_{z+w} \big( \tau_0^-\in t-s+\varDelta_\delta,\, \overline X_{\tau_0^-}-z-w >K \big)\, \nu(dw)}\cr
 \ar=\ar \int_{\lambda t}^\infty \mathbf{P} \big( \tau_{-z-w}^-\in t-s+\varDelta_\delta,\, \overline X_{\tau_{-z-w}^-}  >K \big)\, \nu(dw)\cr
 \ar=\ar \int_{\lambda t}^\infty \mathbf{P}\Big( \underline{X}_{t-s}\geq -z-w >\underline{X}_{t-s+\delta},\, \overline{X}_{\tau_{-z-w}^-}>K \Big)\, \nu(dw) .
 \eeqlb
 Conditionally on $\tau_{-z-w}^-\in t-s+\varDelta_\delta$, we see that  $\overline{X}_{\tau_{-z-w}^-} \leq \overline{X}_{ t-s+ \delta}$. This induces that 
 \beqlb\label{eqn9.74}
 \lefteqn{ \int_{\lambda t}^\infty \mathbf{P}_{z+w} \big( \tau_0^-\in t-s+\varDelta_\delta,\, \overline X_{\tau_0^-}-z-w >K \big)\, \nu(dw) }\cr
 \ar\leq\ar \int_{\lambda t}^\infty \mathbf{P}\Big( \underline{X}_{t-s}\geq -z-w >\underline{X}_{t-s+\delta},\, \overline{X}_{t-s+ \delta}>K \Big)\, \nu(dw) \cr
 \ar=\ar \mathbf E\Big[ \nu\Big( \big( (-\underline{X}_{t-s}-z)\vee\lambda t, -\underline{X}_{t-s+\delta}-z \big] \Big),  \, \overline{X}_{t-s+\delta}>K \Big],
 \eeqlb
 The interval $\big( (-\underline{X}_{t-s}-z)\vee\lambda t, -\underline{X}_{t-s+\delta}-z \big]$ has left endpoint larger than $\lambda t$ and length smaller than $\underline{X}_{t-s}-\underline{X}_{t-s+\delta} +1$. 
 By Corollary~\ref{Corollary.202} and \eqref{eqn.30}, we have uniformly for all sufficiently large $t$,
 \beqnn
 \int_{\lambda t}^\infty  \mathbf{P}\big( \tau_{-z-w}^-\in t-s+\varDelta_\delta,\, \overline X_{\tau_{-z-w}^-}>K \big)\,\nu(dw)
 \ar\leq\ar C\cdot\frac{\overline\nu(\lambda t)}{\lambda t}\cdot \mathbf{E}\Big[ \underline X_{t-s}-\underline X_{t-s+\delta}+1;\, \overline X_{t-s+\delta}>K 	\Big].
 \eeqnn
 Combining the preceding estimates together yields that uniformly for all sufficiently large $t$,
 \beqnn
 \lefteqn{\underline n\Big( \mathcal{J}^{\lambda t}\leq R,\, \sup_{0\leq r<\mathcal{J}^{\lambda t}}\epsilon_r\leq K,\, \sup_{\mathcal{J}^{\lambda t}\leq r<\zeta}\epsilon_r-\epsilon_{\mathcal{J}^{\lambda t}}>K,\, \zeta\in t+\varDelta_\delta \Big)}\cr\cr
 \ar\leq\ar C\cdot\frac{\overline\nu(\lambda t)}{\lambda t} \cdot\sup_{u\in[t-R,t]} \mathbf E\Big[ \underline X_u-\underline X_{u+\delta}+1;\, \overline X_{u+\delta}>K \Big] 	\cdot\int_0^R \underline n\Big( \sup_{0\leq r<s}\epsilon_r\leq K,\, \zeta>s \Big)\,ds.
 \eeqnn
 Obviously, the last integral can be bounded by $\underline n(\zeta)<\infty$.
 By the monotonicity of $\underline X$ and $\overline X$,
 \beqnn
 \sup_{u\in[t-R,t]} \mathbf E\Big[ \underline X_u-\underline X_{u+\delta}+1;\, \overline X_{u+\delta}>K \Big]
 \leq \mathbf E\Big[ \underline X_{t-R}-\underline X_{t+\delta}+1;\, \overline X_{t+\delta}>K \Big],
 \eeqnn
 which, for any $M>0$, can further be decomposed into the following two terms:
 \beqlb\label{eqn.7022}
 \mathbf E\Big[ \underline X_{t-R}-\underline X_{t+\delta}+1;\, \underline X_{t-R}-\underline X_{t+\delta}>M,\, \overline X_{t+\delta}>K \Big]
 \eeqlb
 and
 \beqlb\label{eqn.7023}
 \mathbf E\Big[ \underline X_{t-R}-\underline X_{t+\delta}+1;\, \underline X_{t-R}-\underline X_{t+\delta}\leq M,\, \overline X_{t+\delta}>K \Big].
 \eeqlb
 By using \eqref{eqn.7003} with $r=t-R$ and $\delta_2=R+\delta$, we can control the expectation \eqref{eqn.7022} by
 \beqnn
 \mathbf E\Big[ \underline X_{t-R}-\underline X_{t+\delta}+1;\, \underline X_{t-R}-\underline X_{t+\delta}>M \Big]
 \ar\leq\ar \mathbf E\Big[ 1-\underline X_{R+\delta};\, -\underline X_{R+\delta}>M \Big].
 \eeqnn
 Moreover, the expectation \eqref{eqn.7023} can be bounded by $(M+1)\cdot\mathbf P(\overline X_\infty>K)$.
 Consequently,
 \beqnn
 \lefteqn{\underline n\Big( \mathcal{J}^{\lambda t}\leq R,\, \sup_{0\leq r<\mathcal{J}^{\lambda t}}\epsilon_r\leq K,\, \sup_{\mathcal{J}^{\lambda t}\leq r<\zeta}\epsilon_r-\epsilon_{\mathcal{J}^{\lambda t}}>K\,\Big|\,\zeta\in t+\varDelta_\delta \Big)}\cr
 \ar\leq\ar  \frac{C\cdot\overline\nu(\lambda t)}{\lambda t\,\underline n(\zeta\in t+\varDelta_\delta)} \cdot \mathbf E\Big[1-\underline X_{R+\delta};\, -\underline X_{R+\delta}>M \Big]+  \frac{C\cdot\overline\nu(\lambda t)}{\lambda t\,\underline n(\zeta\in t+\varDelta_\delta)} \cdot(M+1)\cdot\mathbf P(\overline X_\infty>K).
 \eeqnn
 By Theorem~\ref{MainThm01} and \eqref{eqn.30}, we have uniformly for all sufficiently large $t$,
 \beqnn
 \lefteqn{\underline n\Big( \mathcal{J}^{\lambda t}\leq R,\, \sup_{0\leq r<\mathcal{J}^{\lambda t}}\epsilon_r\leq K,\, \sup_{\mathcal{J}^{\lambda t}\leq r<\zeta}\epsilon_r-\epsilon_{\mathcal{J}^{\lambda t}}>K\,\Big|\,\zeta\in t+\varDelta_\delta \Big)}\cr\cr
 \ar\leq\ar C\cdot\mathbf E\Big[ 2-\underline X_{R+\delta};\, -\underline X_{R+\delta}>M \Big] +C\cdot(M+1)\cdot\mathbf P(\overline X_\infty>K),
 \eeqnn
 which goes to $0$ as $K\to\infty$ and then $M\to\infty$, since $\overline X_\infty<\infty$ a.s. and $\mathbf E[-\underline X_{R+\delta}]<\infty$. This completes the proof. 
 \qed

 \subsection{Proof of Lemma~\ref{Lemma.704} and \ref{Lemma.705}} 
 
 \begin{proposition}\label{Prop.709}
  For any positive function $\theta$ on $\mathbb{R}_+$ such that $\big|X_t+\beta t\big|/\theta_t\overset{\rm p}\to0$, we have as $t\to\infty$,
 	\beqnn
 	\mathbf{P} \big(  |\underline{X}_t+\beta t |\geq \theta_t \big) \to 0. 
 	\eeqnn 
 \end{proposition}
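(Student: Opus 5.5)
The plan is to write the running infimum as the endpoint minus the reflected process, and to show separately that each piece is $o(\theta_t)$ in probability. Write
\[
\underline X_t+\beta t=(X_t+\beta t)-(X_t-\underline X_t),
\]
so that, by the triangle inequality,
\[
\mathbf P\big(|\underline X_t+\beta t|\geq\theta_t\big)\leq \mathbf P\big(|X_t+\beta t|\geq\theta_t/2\big)+\mathbf P\big(X_t-\underline X_t\geq \theta_t/2\big).
\]
The first term tends to $0$ directly from the hypothesis $|X_t+\beta t|/\theta_t\overset{\rm p}\to0$.

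For the second term I will use the duality lemma, as in the proof of Proposition~\ref{Prop.2011}. It gives $X_t-\underline X_t\overset{\rm d}=\overline X_t\leq \overline X_\infty$, and $\overline X_\infty<\infty$ a.s. because $X$ drifts to $-\infty$. Hence the second term is at most $\mathbf P(\overline X_\infty\geq\theta_t/2)$. This tends to $0$ provided $\theta_t\to\infty$.

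The only real step, and the main obstacle, is to show that the hypothesis forces $\theta_t\to\infty$, since $\theta$ is only assumed to be positive. I would argue by contradiction. Suppose that along some sequence $t_n\to\infty$ we have $\theta_{t_n}\leq M<\infty$. The hypothesis then gives $\mathbf P(|X_{t_n}+\beta t_n|\leq 1)\to1$. This contradicts the anti-concentration estimate \eqref{eqn.5021} established in the proof of Proposition~\ref{Prop.2011}, which states that $\sup_{y\in\mathbb R}\mathbf P(X_t+\beta t\in y+\varDelta_{c})\to0$ for every fixed $c>0$. Applying it with $y=-1$ and $c=2$ covers the interval $[-1,1]$ up to its left endpoint; the endpoint is harmless, since I can replace $1$ by any slightly smaller constant and still use the hypothesis. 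Therefore $\theta_t\to\infty$, the second term vanishes, and the proposition follows.

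If one prefers not to rely on \eqref{eqn.5021}, which is stated inside a proof, its argument can be repeated directly: split off the compound Poisson part of the jumps larger than $1$, and use the concentration-function bound for random walks with a large number of steps. Either way, the remaining steps are routine.
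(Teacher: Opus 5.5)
Your proof is correct and is essentially the paper's argument: the bound $|\underline X_t+\beta t|\leq (X_t-\underline X_t)+|X_t+\beta t|$, the duality lemma $X_t-\underline X_t\overset{\rm d}=\overline X_t$, and $\overline X_\infty<\infty$ a.s. The only difference is that the paper simply asserts $\theta_t\to\infty$, whereas you justify it from the hypothesis via the anti-concentration estimate \eqref{eqn.5021}; this is a valid addition that fills a step the paper leaves implicit.
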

 \proof Note that $\theta_t\to\infty$ as $t\to\infty$. By the triangle inequality  and the duality lemma, we have  
 \beqnn 
 \big|\underline{X}_t+\beta t \big| = \big|\underline{X}_t-X_t+X_t+\beta t \big| 
 \leq \big(X_t- \underline{X}_t\big)+\big|X_t+\beta t\big| 
 \quad \mbox{and}\quad 
 X_t- \underline{X}_t \overset{\rm d}= \overline{X}_t,\quad \forall\, t\geq 0. 
 \eeqnn
 Then   $\mathbf{P} \big(  |\underline{X}_t+\beta t |\geq \theta_t \big) 
 \leq  \mathbf{P} \big(  \overline{X}_t \geq \theta_t/2 \big) +  \mathbf{P} \big(  |X_t+\beta t|\geq \theta_t/2 \big)\to0$ as $t\to\infty$, since $ |X_t+\beta t|/\theta_t \overset{\rm p}\to0$ and $\overline{X}_\infty<\infty$ a.s. 
 \qed 
 
 \textbf{\textit{Proof of Lemma~\ref{Lemma.704}.}}  For sufficiently large $t$, we have 
 \beqnn
 \lefteqn{\underline n\Big( \mathcal{J}^{\lambda t}\leq R,\, \sup_{0\leq r<\mathcal{J}^{\lambda t}}\epsilon_r\leq K,\,   \big| \Delta\epsilon_{\mathcal{J}^{\lambda t}}-\beta t \big| > \frac{\theta_t}{2},\, \zeta\in t+\varDelta_\delta  \Big)}\cr
 \ar\leq\ar \underline n\bigg( \sum_{0<s\leq R} \mathbf{1}_{\{\Delta\epsilon_s> \lambda t,\,  \sup_{0\leq r<s}\epsilon_r\leq K, \,  | \Delta\epsilon_s-\beta t | > \theta_t/2 , \, \zeta\in t+\varDelta_\delta \}} \bigg) . 
 \eeqnn
 Similarly as in \eqref{eqn.509}, the right-hand side equals 
 \beqnn
 \int_0^R ds \int_0^K\underline{n}\Big( \sup_{0\leq r<s}\epsilon_r\leq K, \epsilon_{s-}\in dz, \,\zeta>s\Big)  \int_{\lambda t}^\infty \mathbf{P}_{z+w} \big( \tau_0^-\in t-s +\varDelta_{\delta} \big)\,\nu \Big(dw, |w-\beta t| >  \frac{\theta_t}{2}\Big). 
 \eeqnn
 An analogous argument as in \eqref{eqn.702221} and \eqref{eqn9.74} yields that  
 \beqnn
 \lefteqn{\underline n\Big( \mathcal{J}^{\lambda t}\leq R,\, \sup_{0\leq r<\mathcal{J}^{\lambda t}}\epsilon_r\leq K,\,   \big| \Delta\epsilon_{\mathcal{J}^{\lambda t}}-\beta t \big| > \frac{\theta_t}{2},\, \zeta\in t+\varDelta_\delta  \Big)}\cr\cr
 \ar\leq\ar \underline{n}(\zeta)\cdot    \sup_{s\in [0,R]}   \sup_{z\in [0,K]} \mathbf{E}\Big[ \nu\Big( \big[ -\underline{X}_{t-s}-z, -\underline{X}_{t-s+\delta}-z \big) \cap(\lambda t,\infty)  \cap\Big\{ w: |w-\beta t|>  \frac{\theta_t}{2} \Big\} \Big) \Big]\cr
 \ar\leq\ar \underline{n}(\zeta)\cdot   \sup_{z\in [0,K]} \mathbf{E}\Big[  \nu\Big( \big[ -\underline{X}_{t-R}-z, -\underline{X}_{t+\delta}-z \big) \cap(\lambda t,\infty)  \cap \Big\{ w: |w-\beta t| >  \frac{\theta_t}{2} \Big\}  \Big) \Big]\cr
 \ar\leq\ar \underline{n}(\zeta)\cdot \mathbf{E}\Big[   \nu\Big( \big[ -\underline{X}_{t-R}-K, -\underline{X}_{t+\delta} \big) \cap(\lambda t,\infty)  \cap \Big\{ w: |w-\beta t| >  \frac{\theta_t}{2} \Big\}  \Big) \Big]. 
 \eeqnn 
 Consequently, it suffices to prove that as $t\to\infty$, 
 \beqlb\label{eqn.7002}
 \frac{1}{\nu\big(\beta t +\varDelta_\delta\big)}\cdot \mathbf{E}\Big[ \nu\Big( \big[ -\underline{X}_{t-R}-K, -\underline{X}_{t+\delta} \big) \cap(\lambda t,\infty)  \cap \Big\{ w: |w-\beta t| >\frac{\theta_t}{2} \Big\}  \Big) \Big] \to0.
 \eeqlb
 When $\big[ -\underline{X}_{t-R}-K, -\underline{X}_{t+\delta} \big) \cap(\lambda t,\infty) \cap\big\{ w: |w-\beta t|> \theta_t/2 \big\} \neq\emptyset$,  we have that this interval has length at most $ \underline{X}_{t-R} - \underline X_{t+\delta} +K$ and then for all sufficiently large $t$, 
 \beqnn
 \big| \underline X_{t-R}+\beta t \big| >  \theta_t/4
 \quad \mbox{or}\quad
 \underline{X}_{t-R} - \underline X_{t+\delta} \geq \theta_t/4.
 \eeqnn
 Consequently, for all sufficiently large $t$, we can control the left-hand side of \eqref{eqn.7002} by 
 \beqnn
 \lefteqn{  \frac{1}{\nu\big(\beta t +\varDelta_\delta\big)}\cdot\mathbf{E}\Big[ \nu\big([ -\underline{X}_{t-R}-K, -\underline{X}_{t+\delta} ) \cap(\lambda t,\infty)  \big),  \, \big| \underline X_{t-R}+\beta t \big| >  \frac{\theta_t}{4} \Big]}\cr
 \ar\ar + \frac{1}{\nu\big(\beta t +\varDelta_\delta\big)}\cdot\mathbf{E}\Big[ \nu\big( \big[ -\underline{X}_{t-R}-K, -\underline{X}_{t+\delta} \big) \cap(\lambda t,\infty)  \big) ,\, \underline{X}_{t-R} - \underline X_{t+\delta} \geq \frac{\theta_t}{4}\Big].
 \eeqnn 
 By Corollary~\ref{Corollary.202}, this sum can be bounded by  
 \beqlb\label{eqn.7006}
 C\cdot \mathbf{E}\Big[ \big(\underline{X}_{t-R} - \underline X_{t+\delta} +K+1\big)  \cdot \big( \mathbf{1}_{\{| \underline X_{t-R}+\beta t | >   \theta_t/4 \}} + \mathbf{1}_{\{ \underline{X}_{t-R} - \underline X_{t+\delta} \geq  \theta_t/4 \} }\big) \Big]  ,
 \eeqlb
 uniformly for all sufficiently large $t$. 
 By \eqref{eqn.7003} with $r=t-R$ and $\delta_2=R+\delta$, 
 \beqnn  
 \underline{X}_{t-R} - \underline X_{t+\delta}
 \ar\leq\ar -\inf_{0\leq r\leq R+\delta}  \big(X_{t-R+r}-X_{t-R} \big).
 \eeqnn
 This along with the independent increments of $X$ yields  
 \beqnn
 \lefteqn{\mathbf{E}\Big[ \big(\underline{X}_{t-R} - \underline X_{t+\delta} +K+1\big)  \cdot   \mathbf{1}_{\{| -\underline X_{t-R}-\beta t | >   \theta_t/4 \}}   \Big] }\cr
 \ar\leq\ar \mathbf{E}\bigg[ \Big( -\inf_{0\leq r\leq R+\delta}  \big(X_{t-R+r}-X_{t-R} \big) +K+1\big)  \cdot   \mathbf{1}_{\{| \underline X_{t-R}+\beta t | >   \theta_t/4 \}}   \bigg] \cr
 \ar=\ar \mathbf{E}\bigg[  -\inf_{0\leq r\leq R+\delta}  \big(X_{t-R+r}-X_{t-R} \big) +K+1  \bigg] \cdot \mathbf{P}  \Big( | \underline X_{t-R}+\beta t | >   \frac{\theta_t}{4}  \Big).
 \eeqnn
 The stationary increments of $X$ induce that $\inf_{0\leq r\leq R+\delta}  \big(X_{t-R+r}-X_{t-R} \big)\overset{\rm d}=  \underline{X}_{R+\delta}$ and then by \eqref{eqn.206}, 
 \beqnn
 \mathbf{E}\bigg[  -\inf_{0\leq r\leq R+\delta}  \big(X_{t-R+r}-X_{t-R} \big) +K+1  \bigg] = \mathbf{E}\big[  -   \underline{X}_{R+\delta}  +K+1\big]<\infty. 
 \eeqnn
 By Proposition~\ref{Prop.709}, we also have as $t\to\infty$,
 \beqnn
 \mathbf{P}  \Big( | \underline X_{t-R}+\beta t | >   \frac{\theta_t}{4}  \Big)
 \leq  \mathbf{P}  \Big( \big| \underline X_{t-R}+\beta (t-R) \big| >   \frac{\theta_t}{8}  \Big)\to0. 
 \eeqnn 
 Putting these two estimates together yields that as $t\to\infty$,
 \beqlb\label{eqn.7007}
 \mathbf{E}\Big[ \big(\underline{X}_{t-R} - \underline X_{t+\delta} +K+1\big)  \cdot   \mathbf{1}_{\{| -\underline X_{t-R}-\beta t | >   \theta_t/4 \}}   \Big] \to0.
 \eeqlb
 Moreover, by using \eqref{eqn.7003} with $r=t-R$ and $\delta_2=R+\delta$, we have 
 \beqnn
 \mathbf{E}\Big[ \big(\underline{X}_{t-R} - \underline X_{t+\delta} +K+1\big)  \cdot   \mathbf{1}_{\{ \underline{X}_{t-R} - \underline X_{t+\delta} \geq  \theta_t/4 \} }  \Big] 
 \leq \mathbf{E}\Big[ \big(-\underline{X}_{R+\delta}   +K+1\big)  \cdot   \mathbf{1}_{\{ -\underline{X}_{R+\delta}   \geq  \theta_t/4 \} }  \Big] ,
 \eeqnn
 which goes to $0 $ as $t\to\infty$ by \eqref{eqn.206}.  
 Substituting this and \eqref{eqn.7007} into \eqref{eqn.7006}, we can get the limit \eqref{eqn.7002} immediately. 
 This completes the proof.
 \qed

 \textbf{\textit{Proof of Lemma~\ref{Lemma.705}.}} 
 In view of Remark~\ref{Remark.02}, it suffices to prove this lemma with $\theta_t=o(t)$ as $t\to\infty$. Other cases follow directly from the monotonicity of $s\mapsto\mathbf{P}_y\big(|\tau_0^--t|>s\big)$. 
 By the spatial homogeneity of $X$, we have  
 \beqnn
 \mathbf{P}_y \big(  |\tau_0^- -t| >\theta_t \big)=  \mathbf{P} \big(  |\tau_{-y}^- -t| >\theta_t \big) 
 = \mathbf{P} \big(  \tau_{-y}^- <t-\theta_t \big)  +  \mathbf{P} \big(  \tau_{-y}^- >t+\theta_t \big) . 
 \eeqnn
 On the event $\{\tau_{-y}^- <t-\theta_t \}$, we have $-\underline{X}_{t-\theta_t} >y$ and then  
 \beqnn
 \sup_{|y-\beta t|\leq K} \mathbf{P} \big(  -\underline{X}_{t-\theta_t} >y  \big) 
 \leq \mathbf{P} \big(  -\underline{X}_{t-\theta_t} >\beta t-K  \big)
 \ar=\ar \mathbf{P} \big(  -\underline{X}_{t-\theta_t} -\beta (t-\theta_t)>\beta \theta_t-K  \big), 
 \eeqnn
 which goes to $0$ as $t\to\infty$ by Proposition~\ref{Prop.709}.  
 On the event $\{  \tau_{-y}^- >t+\theta_t  \}$, we have $X_{t+\theta_t}\geq \underline{X}_{t+\theta_t} \geq -y$ and then $X_{t+\theta_t} + \beta (t+\theta_t)\geq \beta (t+\theta_t)-y\geq \beta\theta_t-K$ for all $|y-\beta t|\leq K$, which follows that 
 \beqnn
 \sup_{|y-\beta t|\leq K} \mathbf{P} \big(  \tau_{-y}^- >t+\theta_t \big) \leq    \mathbf{P} \big(  X_{t+\theta_t} + \beta (t+\theta_t)\geq  \beta\theta_t-K\big) , 
 \eeqnn
 which goes to $0$ as $t\to\infty$ by Lemma~\ref{Lemma.3001}.  The desired limit \eqref{eqn.7051} follows.
 \qed

 \section{Proof of Theorem \ref{MainThm05}} 
 \label{Sec.ProofThm05}
 \setcounter{equation}{0} 
 
 In this section, we prove the two conditional scaling limits stated in Theorem~\ref{MainThm05}. 
 As explained in detail in the Introduction, an excursion with a large lifetime or a large height is typically generated by a sufficiently large jump occurring at an early stage. 
 Moreover, the independent-increment property of the underlying L\'evy process suggests decomposing the excursion at this large jump and analyzing the corresponding pre- and post-jump contributions separately, i.e., for any fixed $\lambda \in (0,\beta)$,  
 \beqlb\label{eqn.81}
 \epsilon_s  =  \big( \epsilon_s- \Delta\epsilon_{\mathcal{J}^{\lambda t}} \cdot  \mathbf 1_{\{s\geq \mathcal{J}^{\lambda t}\}} \big) + \Delta\epsilon_{\mathcal{J}^{\lambda t}} \cdot  \mathbf 1_{\{s\geq \mathcal{J}^{\lambda t}\}},\quad s\in[0,\zeta] .
 \eeqlb
 The following lemmas establish the asymptotic behavior of the first occurrence time of the large jump, as well as that of the pre-jump and post-jump path segments, respectively. 
 
 \begin{lemma} \label{Lemma.901}
 	For any constants $\delta, R>0$ and $\lambda\in (0,\beta)$, we have  as $t\to\infty$,
 	\beqlb\label{eqn.7000}
 	\Big| \underline{n}\big(\mathcal{J}^{\lambda t}\leq R \,\big|\,\zeta\in t+\varDelta_\delta   \big) - \mathbf{P}\big(\mathcal T\leq R \big) \Big|
 	+\Big| \underline{n}\big(\mathcal{J}^{\lambda t}\leq R \,\big|\, \overline\epsilon\in \beta (t+\varDelta_\delta)   \big) - \mathbf{P}\big(\mathcal T\leq R \big) \Big| \to0.
 	\eeqlb
 	
 \end{lemma}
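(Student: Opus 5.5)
The first summand in \eqref{eqn.7000} is exactly Lemma~\ref{Lemma.701}, since by \eqref{CWX02} we have $\mathbf P(\mathcal T\le R)=\underline n(R\wedge\zeta)/\underline n(\zeta)$. The work is therefore the height-conditioned term. For that term I will reuse the candidate-jump set $\mathcal A_{\beta\delta}(\beta t,K)$ from Section~\ref{Sec.ProofThm02}, together with its time-split $\mathcal A^{\le R}_{\beta\delta}$, $\mathcal A^{>R}_{\beta\delta}$ from \eqref{eqn.600}.

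\textbf{Step 1: reduce $\{\mathcal J^{\lambda t}\le R\}$ to early candidate jumps.} Fix $K$ and $R'$. By \eqref{eqn.5018}, every $s\in\mathcal A_{\beta\delta}(\beta t,K)$ has $\Delta\epsilon_s>\beta t-2K>\lambda t$ for large $t$. Hence a candidate before time $R$ forces $\mathcal J^{\lambda t}\le R$. Conversely, suppose $\mathcal J^{\lambda t}\le R$ while no candidate occurs before $R$. Then either $\#\mathcal A_{\beta\delta}(\beta t,K)=0$, or some candidate occurs after $R$ while a $\lambda t$-jump occurs before $R$. In the second case the excursion has two jumps larger than $\lambda t$. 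Lemmas~\ref{Lemma.501} and~\ref{Lemma.503} control the first alternative, and \eqref{Prop.203} (with horizon $t$, using $\zeta$ small-probability beyond) controls the second. Together with Theorem~\ref{MainThm02}, this gives
\[
\big|\underline n(\mathcal J^{\lambda t}\le R\,|\,\overline\epsilon\in\beta(t+\varDelta_\delta))-\underline n(\#\mathcal A^{\le R}_{\beta\delta}(\beta t,K)\ge1\,|\,\overline\epsilon\in\beta(t+\varDelta_\delta))\big|\to0
\]
as $t\to\infty$ and then $K\to\infty$.

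\textbf{Step 1, the double-jump bound.} The two-jump bound needs care, because $\mathcal J^{\lambda t}$ lies before $R$ but the candidate may come arbitrarily late. I will bound this by the expected number of pairs $s<r$ with $\Delta\epsilon_s>\lambda t$ and $r\in\mathcal A_{\beta\delta}$. Applying the Markov property at $s$ and then the compensation formula, the post-$s$ count is at most $\sup_y \mathbf E_y[\#\text{candidates}]\lesssim \nu(\beta t-2K+\varDelta_{2K+\beta\delta})$, using Proposition~\ref{Prop.509}. The total is then $O(\overline\nu(\lambda t)\,\nu(\beta t+\varDelta_{\beta\delta}))=o(\nu(\beta t+\varDelta_{\beta\delta}))$.

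\textbf{Step 2: evaluate the early candidate mass.} Using Lemma~\ref{Lemma.503} again, $\underline n(\#\mathcal A^{\le R}\ge1)=\underline n(\#\mathcal A^{\le R})+o(\nu)$. Exactly as in \eqref{eqn.509}, the compensation formula gives
\[
\underline n(\#\mathcal A^{\le R}_{\beta\delta}(\beta t,K))=\int_0^R ds\int_0^K G_{\beta\delta}(\beta t,K,z)\,\underline n_s(dz).
\]
By Proposition~\ref{Prop.508}, this equals $\nu(\beta t+\varDelta_{\beta\delta})\big(\mathbf P(\overline X_\infty\le K)\int_0^R\underline n(\epsilon_s\le K,\zeta>s)ds+o(1)\big)$. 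Dividing by Theorem~\ref{MainThm02} and letting $K\to\infty$ gives $\underline n(R\wedge\zeta)/\underline n(\zeta)$, which is the required limit.

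The main obstacle is the two-jump estimate in Step 1, where the large $\lambda t$-jump and the later candidate are not both confined to $[0,R]$. If the crude bound through Proposition~\ref{Prop.509} fails to be uniform in the post-jump starting point, I would instead first discard $\{\mathcal A^{>R'}\neq\emptyset\}$ via Lemma~\ref{Lemma.600}. That leaves both jumps within time $R'$, where \eqref{Prop.203} applies directly with $t=R'$.
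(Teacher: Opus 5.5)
Your proposal is correct. It departs from the paper's proof mainly in how it disposes of the configuration ``a jump above $\lambda t$ before time $R$, but the candidate jump after $R$''.

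\textbf{How the paper handles it.} The paper first replaces $\{\overline\epsilon\in\beta(t+\varDelta_\delta)\}$ by $\{\#\mathcal A_{\beta\delta}(\beta t,K)=1\}$. It then reduces to $\underline n(\#\mathcal A^{\leq R}_{\beta\delta}(\beta t,K)=1)$, up to the error $\underline n(\mathcal J^{\lambda t}\leq R,\,\#\mathcal A^{>R}_{\beta\delta}(\beta t,K)\geq1)$. To control that error it introduces a second horizon $R+S$. A candidate in $(R,R+S]$ produces two jumps above $\lambda t$ in $[0,R+S]$, controlled by \eqref{eqn.701}. Candidates after $R+S$ are controlled by Lemma~\ref{Lemma.600}, at the cost of an extra limit $S\to\infty$.

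\textbf{How you handle it.} You bound the expected number of pairs (early jump above $\lambda t$, later candidate) directly. You use the Markov property at the first jump, compensation, and Proposition~\ref{Prop.509}, which is exactly the mechanism in the proof of Lemma~\ref{Lemma.503}. This works, and your concern about uniformity is unnecessary. Proposition~\ref{Prop.509} holds for every starting point $x\in\mathbb R$, so the post-jump expected number of candidates is at most $\underline n(\zeta)\widehat V(K)\,\nu((\beta t-2K,\beta t+\beta\delta])$. Hence the pair count is at most $C_K\,\underline n(R\wedge\zeta)\,\overline\nu(\lambda t)\,\nu((\beta t-2K,\beta t+\beta\delta])=o\big(\nu(\beta t+\varDelta_{\beta\delta})\big)$ for each fixed $K$.

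Your route needs no time horizon for the second jump and dispenses with Lemma~\ref{Lemma.600} and the parameter $S$. The paper's route avoids a new compensation computation by reusing estimates it has already proved. Your Step~2 is the computation the paper only indicates (``similarly as in the proof of Lemma~\ref{Lemma.504}''), and you carry it out correctly.

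\textbf{Two bookkeeping corrections.}
\begin{enumerate}
\item Step~1 compares with $\underline n(\#\mathcal A^{\leq R}_{\beta\delta}(\beta t,K)\geq1,\ \overline\epsilon\in\beta(t+\varDelta_\delta))$, whereas Step~2 evaluates the unrestricted mass $\underline n(\#\mathcal A^{\leq R}_{\beta\delta}(\beta t,K)\geq1)$. The difference is at most $\underline n(\#\mathcal A_{\beta\delta}(\beta t,K)\geq1,\ \overline\epsilon\notin\beta(t+\varDelta_\delta))$, so you need Lemma~\ref{Lemma.502} here, which you never cite. For the alternative $\#\mathcal A_{\beta\delta}(\beta t,K)=0$, only Lemma~\ref{Lemma.501} is relevant, not Lemma~\ref{Lemma.503}.
\item Drop the parenthetical suggestion of applying \eqref{Prop.203} with horizon $t$. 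It yields $t\,\underline n(\zeta)\,\overline\nu(\lambda t)\,\overline\nu(\beta t-2K)$, whose ratio to $\nu(\beta t+\varDelta_{\beta\delta})\asymp\overline\nu(\beta t)/t$ is of order $t^2\overline\nu(t)$. That is regularly varying with index $2-\alpha$, hence not negligible for $\alpha\in(1,2)$, and it does not cover candidates after time $t$ in any case. The pair-count argument you develop afterwards is the one to keep.
\end{enumerate}
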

 
 \begin{lemma} \label{Lemma.902}
 	For any constants $\lambda \in (0,\beta)$ and   $\delta>0$, for every $\varepsilon>0$, we have as $t\to\infty$,  
 	\beqlb\label{eqn.9020}
 	\underline{n}\bigg(\Big|\frac{\Delta\epsilon_{ \mathcal{J}^{\lambda t}}}{t} -\beta\Big|\geq \varepsilon \,\bigg|\,\zeta\in t+\varDelta_\delta\bigg) 
 	+ \underline{n}\bigg(\Big|\frac{\Delta\epsilon_{ \mathcal{J}^{\lambda t}}}{t} -\beta\Big|\geq \varepsilon \,\bigg|\,  \overline\epsilon\in \beta (t+\varDelta_\delta)\bigg)  \to 0. 
 	\eeqlb
 \end{lemma}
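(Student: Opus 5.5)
The statement to prove is Lemma~\ref{Lemma.902}; the two conditionings are handled separately, and in both the key step is the localization already established in Theorem~\ref{MainThm04}.

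\emph{Lifetime conditioning.} I would invoke Theorem~\ref{MainThm04}, more precisely the decomposition in its proof (Part~I), with a function $\theta\in\boldsymbol{\varTheta}$ from Remark~\ref{Remark.02}, so that $\theta_t=o(t)$ and $|X_t+\beta t|/\theta_t\overset{\rm p}\to0$. Fix $\varepsilon>0$; for large $t$ we have $\theta_t/2<\varepsilon t$. On $\{\mathcal J^{\lambda t}\le R,\ \sup_{r<\mathcal J^{\lambda t}}\epsilon_r\le K\}$ the event $|\Delta\epsilon_{\mathcal J^{\lambda t}}/t-\beta|\ge\varepsilon$ is contained in $\{|\Delta\epsilon_{\mathcal J^{\lambda t}}-\beta t|>\theta_t/2\}$, whose conditional mass tends to $0$ by Lemma~\ref{Lemma.704}. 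The complementary events are controlled exactly as in Part~I: $\{\mathcal J^{\lambda t}>R\}$ by Lemma~\ref{Lemma.701} (its limit $1-\underline n(R\wedge\zeta)/\underline n(\zeta)$ vanishes as $R\to\infty$), and $\{\mathcal J^{\lambda t}\le R,\ \sup_{r<\mathcal J^{\lambda t}}\epsilon_r>K\}$ by Lemma~\ref{Lemma.702}. Letting $t\to\infty$, then $K\to\infty$, then $R\to\infty$ gives the first term of \eqref{eqn.9020}.

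\emph{Height conditioning.} Here I would use the candidate set $\mathcal A_{\beta\delta}(\beta t,K)$. By Lemmas~\ref{Lemma.501}, \ref{Lemma.503} and \ref{Lemma.600}, up to conditional probability vanishing as $t\to\infty$, then $K\to\infty$, then $R\to\infty$, there is exactly one candidate jump $s\in\mathcal A^{\le R}_{\beta\delta}(\beta t,K)$. By \eqref{eqn.5018} its size lies in $(\beta t-2K,\beta t+\beta\delta]$, so $|\Delta\epsilon_s/t-\beta|<\varepsilon$ for large $t$. The remaining task is to show that $s=\mathcal J^{\lambda t}$, i.e. that no jump of size exceeding $\lambda t$ occurs before $s$, which I would do as follows.

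\emph{Main obstacle: excluding an earlier big jump.} An earlier jump exceeding $\lambda t$ would, together with $s$, give at least two jumps larger than $\lambda t$ before time $R$. By \eqref{Prop.203} this event has $\underline n$-mass at most $R\,\underline n(\zeta)\,\overline\nu(\lambda t)^2$. This is $o(\nu(\beta t+\varDelta_{\beta\delta}))$ because $\overline\nu(t)^2\, t/\overline\nu(t)=t\,\overline\nu(t)\to0$ when $\alpha>1$, and $\underline n(\overline\epsilon\in\beta(t+\varDelta_\delta))\asymp\nu(\beta t+\varDelta_{\beta\delta})$ by Theorem~\ref{MainThm02}. Hence, with conditional probability tending to one, $\mathcal J^{\lambda t}=s$ and $|\Delta\epsilon_{\mathcal J^{\lambda t}}/t-\beta|<\varepsilon$, which proves the second term of \eqref{eqn.9020}.
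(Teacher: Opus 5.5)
Your proposal is correct and follows essentially the paper's proof. The lifetime part uses the same three-way split via Lemmas~\ref{Lemma.701}, \ref{Lemma.702} and \ref{Lemma.704}. The height part rests on the same mechanism: the candidate jump in $\mathcal A^{\le R}_{\beta\delta}(\beta t,K)$ has size within $O(K)$ of $\beta t$, and it coincides with $\mathcal J^{\lambda t}$ except on a two-big-jumps event controlled by \eqref{Prop.203}. Your only deviation is skipping the paper's preliminary split on $\{\mathcal J^{\lambda t}>R\}$ and its reduction \eqref{eqn.9003}, which is a harmless streamlining.
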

 
 \begin{lemma} \label{Lemma.903}
 	For any constants $\lambda \in (0,\beta)$ and   $\delta,T>0$, for every $\varepsilon>0$, we have as $t\to\infty$,  
 	\beqlb\label{eqn.903}
 	\underline{n}\bigg(\sup_{s\leq T }  \frac{\epsilon_{(s\wedge  \mathcal{J}^{\lambda t})-}}{t}  \geq \varepsilon \,\bigg|\,\zeta\in t+\varDelta_\delta\bigg) 
 	+ \underline{n}\bigg(\sup_{s\leq T\wedge \zeta}\frac{\epsilon_{(s\wedge  \mathcal{J}^{\lambda t})-}}{t}  \geq  \varepsilon \,\bigg|\, \overline\epsilon\in \beta (t+\varDelta_\delta) \bigg)   \to 0. 
 	\eeqlb 
 \end{lemma}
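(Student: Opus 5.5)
We must prove Lemma~\ref{Lemma.903}: under either local conditioning, the pre-jump segment $\{\epsilon_{(s\wedge\mathcal J^{\lambda t})-}:s\le T\}$ stays below $\varepsilon t$ with conditional probability tending to one. The plan is to reduce both terms to the event that the excursion reaches level $\varepsilon t$ before its first jump of size larger than $\lambda t$. That event is incompatible with a single early large jump. We bound its $\underline n$-mass by $o(\nu(\beta(t+\varDelta_\delta)))$ and then divide by $\underline n(\zeta\in t+\varDelta_\delta)$ or $\underline n(\overline\epsilon\in\beta(t+\varDelta_\delta))$. By Theorems~\ref{MainThm01}--\ref{MainThm02} and \eqref{eqn.30}, both normalizers are of order $\overline\nu(\beta t)/t$.

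\emph{Step 1 (no large jump before crossing).} Fix $\eta\in(1/\alpha,1)$ and split according to whether some jump exceeding $t^\eta$ occurs before $\tau^+_{\varepsilon t}$.
\begin{itemize}
\item If no such jump occurs, the excursion reaches $\varepsilon t$ using only jumps $\le t^\eta$. The strong Markov argument at $\tau_1^+$ from the proof of Lemma~\ref{Lemma.501}(i), combined with Proposition~\ref{Prop.503} with $\vartheta_x=x^\eta$, $x=\varepsilon t$, bounds this mass by $C\exp\{-(\varepsilon t-t^\eta-1)/t^\eta\}$. This is negligible against any polynomial.
\item Otherwise, the excursion has a jump in $(t^\eta,\lambda t]$ strictly before $\mathcal J^{\lambda t}$, and this jump occurs at a time $\le T$.
\end{itemize}

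\emph{Step 2 (an intermediate jump followed by survival or a high maximum).} For the lifetime conditioning, still on the event of Step 1, the excursion must also satisfy $\zeta\in t+\varDelta_\delta$. Two cases arise.
\begin{itemize}
\item There is a second jump after the intermediate one. Then there are two jumps exceeding $t^\eta$ before time $t+\delta$. By \eqref{Prop.203} this has mass at most $(t+\delta)\,\underline n(\zeta)\,\overline\nu(t^\eta)^2$. Since $\eta\alpha>1$ gives $t\,\overline\nu(t^\eta)\to0$, the mass is $o(\overline\nu(\beta t)/t)$ provided $\eta$ is chosen close enough to $1$ that $2\eta\alpha>\alpha+2$. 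This forces $\eta>(\alpha+2)/(2\alpha)$, which lies below $1$ because $\alpha>2$ may fail. If $\alpha\le2$ I instead use the finer bound of Proposition~\ref{Prop.7011}: conditioning after the second (large) jump gives a factor $\nu(\beta(t+\varDelta_\delta))$. The first jump then contributes only $O(\overline\nu(t^\eta))\to0$ via compensation over $s\le T$.
\item There is no later jump exceeding $\lambda t$. Then from a position $\le\lambda t+\varepsilon t$ the process must survive to time about $t$ without a big jump. Compensation with \eqref{Asytau} bounds this by $\overline\nu(t^\eta)\cdot\sup_y\mathbf P_y(\tau_0^-\in\cdot)$ for $y\le(\lambda+\varepsilon)t<\beta t$. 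The passage probability for such $y$ requires another large jump, so the same two-jump bound applies.
\end{itemize}
For the height conditioning the structure is the same, but one uses the candidate-jump set $\mathcal A_{\beta\delta}(\beta t,K)$. Lemmas~\ref{Lemma.501}, \ref{Lemma.503} and \ref{Lemma.600} reduce matters to an early jump in $\mathcal A$ with $\epsilon_{s-}\le K$. On that event the pre-jump supremum exceeding $\varepsilon t$ forces either an earlier excursion above $\varepsilon t$ with pre-jump position $\le K$ afterwards, or $\mathcal J^{\lambda t}<s$.
\begin{itemize}
\item In the first case, the proof of Lemma~\ref{Lemma.502} (via Proposition~\ref{Prop.509}) gives a bound of $\underline n(\overline\epsilon>\varepsilon t)\,\nu(\cdot)\to0$ relative to $\nu$.
\item In the second case there are two jumps exceeding $\lambda t$, handled by \eqref{Prop.203}.
\end{itemize}

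\emph{Main obstacle.} The critical estimate is the intermediate-jump scenario of Step 2 for the lifetime conditioning. Squared tail bounds alone may be too weak when $\alpha$ is close to $1$. I would first try the compensation formula applied at the second (large) jump, feeding in the uniform bound \eqref{eqn.70111}. This yields a bound of the form
\[
C\,\nu\big(\beta(t+\varDelta_\delta)\big)\int_0^T \underline n\big(\textstyle\sup_{r<s}\epsilon_r>\varepsilon t,\ \zeta>s\big)\,ds
\le C\,T\,\nu\big(\beta(t+\varDelta_\delta)\big)\,\underline n(\overline\epsilon>\varepsilon t),
\]
which is $o(\nu(\beta(t+\varDelta_\delta)))$, mirroring Lemma~\ref{Lemma.702} with $K$ replaced by $\varepsilon t$. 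In fact this single bound may handle the whole lifetime case directly, without Step 1.
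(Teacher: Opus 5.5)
Your height-conditioned argument is sound and is essentially the paper's. You reduce to a unique early candidate in $\mathcal A^{\le R}_{\beta\delta}(\beta t,K)$. You compensate at it using $G_{\beta\delta}$ and Proposition~\ref{Prop.508}, which gives a bound $C\,\underline n(\overline\epsilon\ge\varepsilon t)$ relative to $\nu(\beta(t+\varDelta_\delta))$. When $\mathcal J^{\lambda t}$ precedes the candidate, you use the two-jump bound \eqref{eqn.701}.

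The lifetime-conditioned term, however, has a genuine gap. In Step~2, consider the sub-case ``no later jump exceeding $\lambda t$'', which contains $\mathcal J^{\lambda t}=\infty$. This sub-case is not controlled when $\alpha\le 2$. The crude bound $(t+\delta)\,\underline n(\zeta)\,\overline\nu(t^\eta)^2$ from \eqref{Prop.203} is not $o(\overline\nu(\beta t)/t)$ for any $\eta<1$. Your substitute, Proposition~\ref{Prop.7011}, only integrates over jump sizes $w>\lambda t$, so it is unavailable exactly here. Hence ``the same two-jump bound applies'' is unjustified. Your ``main obstacle'' shortcut has the complementary defect. It integrates the time of the large jump only over $[0,T]$, so it covers only $\{\mathcal J^{\lambda t}\le T\}$. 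But the event $\{\sup_{s\le T}\epsilon_{(s\wedge\mathcal J^{\lambda t})-}\ge\varepsilon t\}$ forces neither $\mathcal J^{\lambda t}\le T$ nor $\mathcal J^{\lambda t}<\infty$. Separately, your opening plan cannot be taken literally. Without intersecting with the conditioning event, reaching $\varepsilon t$ before $\mathcal J^{\lambda t}$ has mass of order $\overline\nu(\varepsilon t)$ when $\varepsilon<\lambda$ (a single jump in $(\varepsilon t,\lambda t]$). That is far above $\nu(\beta(t+\varDelta_\delta))$.

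The missing idea is that late or absent large jumps need not be bounded directly. Lemma~\ref{Lemma.701} gives $\underline n(\mathcal J^{\lambda t}\le R\mid\zeta\in t+\varDelta_\delta)\to\underline n(R\wedge\zeta)/\underline n(\zeta)$. So the complementary event vanishes as $t\to\infty$ and then $R\to\infty$, for every $\alpha>1$ and with no intermediate-jump bookkeeping. On $\{\mathcal J^{\lambda t}\le R\}$, once $\varepsilon t>K$ the event lies in $\{\sup_{0\le r<\mathcal J^{\lambda t}}\epsilon_r>K\}$, and Lemma~\ref{Lemma.702} finishes the argument. Its proof is exactly your compensation at $\mathcal J^{\lambda t}$ with \eqref{eqn.70111}, run over $[0,R]$ instead of $[0,T]$. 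This is the paper's two-line proof of the first term, and it makes your Step~1 and the $\eta$-dependent case split unnecessary.
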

 
 \begin{lemma} \label{Lemma.904}
 	For any constants $\lambda \in (0,\beta)$ and   $\delta,T>0$, for every $\varepsilon>0$, we have as $t\to\infty$,  
 	\beqlb\label{eqn.904}
 	\underline{n}\bigg(\sup_{s\leq T } \Big|\frac{\epsilon_{s\vee  \mathcal{J}^{\lambda t}}-\epsilon_{  \mathcal{J}^{\lambda t}}}{t} \Big|\geq \varepsilon \,\bigg|\,\zeta\in t+\varDelta_\delta\bigg) 
 	+ \underline{n}\bigg(\sup_{s\leq T\wedge \zeta}  \Big|\frac{\epsilon_{s\vee  \mathcal{J}^{\lambda t}}-\epsilon_{  \mathcal{J}^{\lambda t}}}{t} \Big|\geq  \varepsilon \,\bigg|\, \overline\epsilon\in \beta (t+\varDelta_\delta) \bigg)   \to 0. 
 	\eeqlb  
 \end{lemma}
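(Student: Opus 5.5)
We will prove Lemma~\ref{Lemma.904} separately for the two conditionings, using the decomposition at the first large jump $\mathcal J^{\lambda t}$ and the estimates already established for the jump itself.

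\emph{Reduction to the post-jump segment.} For the lifetime conditioning, Lemma~\ref{Lemma.701} (equivalently the first part of Lemma~\ref{Lemma.901}) together with $\underline n(R\wedge\zeta)/\underline n(\zeta)\to1$ as $R\to\infty$ lets us restrict to $\{\mathcal J^{\lambda t}\le R\}$ at a cost that vanishes as $t\to\infty$ and then $R\to\infty$. Lemma~\ref{Lemma.702} further restricts to $\{\sup_{r<\mathcal J^{\lambda t}}\epsilon_r\le K\}$. For the height conditioning, Lemmas~\ref{Lemma.501}, \ref{Lemma.503} and \ref{Lemma.600} allow the analogous restriction to a unique early jump in $\mathcal A^{\le R}_{\beta\delta}(\beta t,K)$. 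On these events we have $\Delta\epsilon_s>\lambda t$ and $\epsilon_{s-}\le K$, so this jump is $\mathcal J^{\lambda t}$ for large $t$. The event in \eqref{eqn.904} then says that the post-jump path moves by more than $\varepsilon t$ within time $T$ after $\mathcal J^{\lambda t}$. We bound its count by a compensation sum
\[
\underline n\Big(\sum_{0<s\le R}\mathbf 1_{\{\epsilon_{s-}\le K,\,\Delta\epsilon_s>\lambda t,\ \sup_{s\le r\le s+T}|\epsilon_r-\epsilon_s|\ge\varepsilon t,\ \zeta\in t+\varDelta_\delta\}}\Big),
\]
and similarly with $\overline\epsilon\in\beta(t+\varDelta_\delta)$ in place of the lifetime constraint.

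\emph{Key estimate.} By the Markov property and the compensation formula \eqref{eqn.Compensation}, the count above equals
\[
\int_0^R ds\int_0^K\underline n_s(dz)\int_{\lambda t}^\infty \mathbf P_{z+w}\big(\sup_{r\le T}|X_{r\wedge\tau_0^-}-X_0|\ge\varepsilon t,\ \tau_0^-\in t-s+\varDelta_\delta\big)\,\nu(dw).
\]
We split according to whether $w\le 2\beta t$ or $w>2\beta t$.

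For $w\le 2\beta t$, the starting point satisfies $z+w\le 3\beta t$. Since $\tau_0^-\ge t-R>T$ for large $t$, the event forces $\sup_{r\le T}|X_r-X_0|\ge\varepsilon t$ together with $\tau_0^-\in t-s+\varDelta_\delta$. We apply the Markov property at time $T$. The first-passage factor $\int\mathbf P_{z+w+u}(\tau_0^-\in t-s-T+\varDelta_\delta)\nu(dw)$ is bounded by $C\nu(\beta(t+\varDelta_\delta))$ uniformly over shifts $u$, by \eqref{eqn.70111} (after a spatial shift). The count is then bounded by
\[
C\,\nu(\beta(t+\varDelta_\delta))\,\mathbf P\big(\sup_{r\le T}|X_r|\ge\varepsilon t\big),
\]
and the last factor tends to $0$ by \eqref{eqn.206}.

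For $w>2\beta t$, we use the crude bound from Corollary~\ref{Corollary.202} combined with $\mathbf P(\tau_{-y}^-\in\cdot)$. Reaching level $0$ near time $t$ from height at least $2\beta t$ requires $|X_r+\beta r|\gtrsim\beta t$, which has vanishing probability. The interval-mass argument of Proposition~\ref{Prop.7011} shows that this contribution is $o(\nu(\beta(t+\varDelta_\delta)))$.

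\emph{Conclusion and height case.} Dividing by $\underline n(\zeta\in t+\varDelta_\delta)\asymp\nu(\beta(t+\varDelta_\delta))$ (Theorem~\ref{MainThm01}) and taking limits in $t$, then $K$, then $R$ gives the first term of \eqref{eqn.904}. For the height term, the argument mirrors the proof of Lemma~\ref{Lemma.502}. The jump size is confined to $(\beta t-2K,\beta t+\beta\delta]$, so the integrand becomes $\mathbf P_{z+w}(\sup_{r\le T\wedge\tau_0^-}|X_r-X_0|\ge\varepsilon t)\le \mathbf P(\sup_{r\le T}|X_r|\ge\varepsilon t)\to0$, multiplied by $\nu(\beta t-2K+\varDelta_{2K+\beta\delta})=O(\underline n(\overline\epsilon\in\beta(t+\varDelta_\delta)))$ via Theorem~\ref{MainThm02}. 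The condition $\overline\epsilon\in\cdot$ can simply be dropped because we only need an upper bound.

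\emph{Main obstacle.} The main obstacle is obtaining, in the lifetime case, a bound that is uniform in the post-$T$ restart position for the first-passage window mass. Our plan is to use the sup over $z\ge0$ already present in \eqref{eqn.70111}. A negative displacement $u$ is harmless because the restart position is still at least $\lambda t-\varepsilon t$ for small $\varepsilon$, so Corollary~\ref{Corollary.202} still applies with $\lambda$ replaced by $\lambda-\varepsilon$.
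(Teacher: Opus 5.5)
Your proof works. For the height term it is the paper's argument: confine $\Delta\epsilon_{\mathcal J^{\lambda t}}$ to $(\beta t-2K,\beta t+\beta\delta]$, drop the height constraint, apply the compensation formula, and use $\mathbf P(\sup_{r\le T}|X_r|\ge\varepsilon t)\to0$. One point to add there: the jump in $\mathcal A^{\le R}_{\beta\delta}(\beta t,K)$ is not automatically $\mathcal J^{\lambda t}$, since an earlier jump exceeding $\lambda t$ could occur. It equals $\mathcal J^{\lambda t}$ only off the event of two jumps larger than $\lambda t$ before time $R$, which is negligible by \eqref{eqn.701}, exactly as in \eqref{eqn.9004}.

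For the lifetime term your route differs somewhat. The paper keeps the event $\{\sup_{r\le T-s}|X_r|\ge\varepsilon t\}$ inside $\mathbf E\big[\nu([-\underline X_{t-s}-z,-\underline X_{t-s+\delta}-z)\cap(\lambda t,\infty));\,\cdot\,\big]$. It bounds the $\nu$-mass by Corollary~\ref{Corollary.202} and concludes from the uniform integrability of $\underline X_{t-T}-\underline X_{t+T+\delta}$ (Lemma~\ref{Lemma.310}). You instead apply the Markov property at post-jump time $T$. This decouples the two events and gives the product bound $C\,\nu(\beta(t+\varDelta_\delta))\,\mathbf P(\sup_{r\le T}|X_r|\ge\varepsilon t)$ without needing uniform integrability; that is a legitimate and slightly cleaner alternative.

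Your lifetime-case reductions to $\{\mathcal J^{\lambda t}\le R\}$ and to a pre-jump level at most $K$ are unnecessary. On the event in question $\mathcal J^{\lambda t}\le T$ automatically, and your bound is uniform in $z\ge0$, so $\int_0^\infty\underline n_s(\mathbb R_+)\,ds=\underline n(\zeta)$ already suffices.

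Your justification of the uniform first-passage bound, in the last paragraph, is wrong as stated. On $\{\sup_{r\le T}|X_r-X_0|\ge\varepsilon t\}$ the displacement $u=X_T-X_0$ is not bounded below by $-\varepsilon t$. This is precisely the event on which large displacements occur, including large negative ones. So the restart position $z+w+u$ can lie anywhere in $(0,\infty)$, and replacing $\lambda$ by $\lambda-\varepsilon$ is not available.

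The bound you need nevertheless holds for every real shift, for a different reason. By spatial homogeneity and Fubini, for any $y\in\mathbb R$,
\[
\int_{\lambda t}^\infty \mathbf P_{y+w}\big(\tau_0^-\in t'+\varDelta_\delta\big)\,\nu(dw)= \mathbf E\Big[\nu\big([-\underline X_{t'}-y,\,-\underline X_{t'+\delta}-y)\cap(\lambda t,\infty)\big)\Big].
\]
The intersection with $(\lambda t,\infty)$ comes from the integration range of the jump variable $w$, not from the restart position. Hence Corollary~\ref{Corollary.202}, Lemma~\ref{Lemma.310} and \eqref{eqn.30} bound this by $C\,\frac{\overline\nu(\lambda t)}{\lambda t}\,\mathbf E[\underline X_{t'}-\underline X_{t'+\delta}+1]\le C'\,\nu(\beta(t+\varDelta_\delta))$, uniformly in $y\in\mathbb R$ and $t'\in[t-R-T,t]$. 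The derivation of \eqref{eqn.70113} never uses $z\ge0$. With this repair your split at $w=2\beta t$ also becomes superfluous, since the whole range $w>\lambda t$ is handled at once.
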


 \textbf{\textit{Proof of Theorem~\ref{MainThm05}(1).}} Here we just give the detailed proof for the uniform weak convergence under $\underline{n}\big(\cdot\,|\, \zeta\in t+\varDelta_\delta\big)$.  
 The case of $\underline{n}\big(\cdot\,|\, \overline\epsilon\in  \beta(t+\varDelta_\delta)\big)$ can be proved similarly.   
 
 \textbf{Step 1.} We first prove the weak convergence for any fixed $\delta>0$.  
 By Lemma~\ref{Lemma.701}, we have $\underline n\big(  \mathcal{J}^{\lambda t} \geq \zeta \, \big|\, \zeta\in t+\varDelta_\delta \big)  \to 0$ as $t\to\infty$. 
 For any fixed $\lambda \in (0,\beta)$, recall the path-decomposition \eqref{eqn.81}. 
 By using Slutsky's theorem along with Lemma~\ref{Lemma.901} and \ref{Lemma.902}, we have $\big(\mathcal{J}^{\lambda t}, \Delta\epsilon_{\mathcal{J}^{\lambda t}}/t\big)$ under $\underline{n}(\cdot\,|\, \zeta \in t+\varDelta_\delta)$ converges to $(\mathcal{T},\beta)$ as $t\to \infty$, which induces that $\{t^{-1} \Delta\epsilon_{\mathcal{J}^{\lambda t}} \cdot  \mathbf 1_{\{s\geq \mathcal{J}^{\lambda t}\}}:s\geq 0 \}$  converges weakly to $\{ \beta \cdot \mathbf{1}_{s\geq \mathcal{T}}:s\geq 0 \}$ in $D(\mathbb{R}_+;\mathbb{R})$; see Example~1.19 in \cite[p.329]{JacodShiryaev2003}.  
 By Theorem 16.4 in \cite[p.~170]{Billingsley1999},  it remains to prove that for any $T,\varepsilon>0$, as $t\to\infty$, 
 \beqlb\label{eqn.906}
 \underline n\Big( \sup_{0\leq s\leq T}  \big| \epsilon_s- \Delta\epsilon_{\mathcal{J}^{\lambda t}} \cdot  \mathbf 1_{\{s\geq \mathcal{J}^{\lambda t}\}} \big| \geq \varepsilon t,\, \mathcal{J}^{\lambda t} <\zeta 
 \, \Big|\, \zeta\in t+\varDelta_\delta \Big) \to0. 
 \eeqlb 
 On the event $\{\mathcal{J}^{\lambda t}<\zeta\}$, we have for any $0\leq s\leq T$, 
 \beqnn
 \epsilon_s- \Delta\epsilon_{\mathcal{J}^{\lambda t}} \cdot  \mathbf 1_{\{s\geq \mathcal{J}^{\lambda t}\}}  = \big( \epsilon_{s\wedge \mathcal{J}^{\lambda t}}- \Delta\epsilon_{\mathcal{J}^{\lambda t}} \cdot  \mathbf 1_{\{s\geq \mathcal{J}^{\lambda t}\}} \big) + \big( \epsilon_{s\vee \mathcal{J}^{\lambda t}}-\epsilon_{\mathcal{J}^{\lambda t}}  \big),
 \eeqnn
 which follows that 
 \beqnn
 \sup_{0\leq s\leq T} \big| \epsilon_s- \Delta\epsilon_{\mathcal{J}^{\lambda t}} \cdot  \mathbf 1_{\{s\geq \mathcal{J}^{\lambda t}\}} \big| 
 \leq \sup_{0\leq s\leq T} \epsilon_{(s\wedge \mathcal{J}^{\lambda t})-} +  \sup_{0\leq s\leq T}\big| \epsilon_{s\vee \mathcal{J}^{\lambda t}}-\epsilon_{\mathcal{J}^{\lambda t}}  \big|
 \eeqnn
 This induces that the left-hand side of \eqref{eqn.906} can be bounded by 
 \beqnn 
 \underline n\Big(  \sup_{0\leq s\leq T} \epsilon_{(s\wedge \mathcal{J}^{\lambda t})-} \geq \frac{\varepsilon t}{2}
 \, \Big|\, \zeta\in t+\varDelta_\delta \Big)   +  \underline n\Big(  \sup_{0\leq s\leq T}\big| \epsilon_{s\vee \mathcal{J}^{\lambda t}}-\epsilon_{\mathcal{J}^{\lambda t}}  \big| \geq   \frac{\varepsilon t}{2}
 \, \Big|\, \zeta\in t+\varDelta_\delta \Big)  ,
 \eeqnn 
 which  tends to $0$ as $t\to\infty$ by Lemma~\ref{Lemma.903} and \ref{Lemma.904}.

 \textbf{Step 2.}  We extend the convergence uniformly in
 $\delta\in[\theta_0,\theta_t]$. It suffices to prove that  for any $T>0$ and any bounded, non-negative, uniformly continuous functional $F$ on $D\big([0,T];\mathbb{R})$, as $t\to\infty$, 
 \beqlb\label{eqn.9009}
 \sup_{ \delta\in[\theta_0,\theta_t] }   \Big| \underline{n}\big( F(\epsilon_s/t:s\in[0,T])\,\big|\, \zeta\in t+\varDelta_\delta \big)- \mathbf{E}\big[ F( \beta\cdot \mathbf{1}_{\{s\geq \mathcal{T} \}}:s\in[0,T])\big] \Big| \to 0.
 \eeqlb 
 For any large integer $n\geq 1$, we have 
 \beqlb\label{eqn.9010}
 \lefteqn{\underline{n}\big( F(\epsilon_s/t:s\in[0,T])\,\big|\, \zeta\in t+\varDelta_\delta \big)}\cr\cr
 \ar \leq \ar \frac{1}{\underline{n}\big(   \zeta\in t+\varDelta_\delta \big)}\cdot \sum_{k=0}^{[n\delta]}\underline{n}\Big( F(\epsilon_s/t:s\in[0,T]) ,\, \zeta\in t+\frac{k}{n}+\varDelta_{1/n} \Big),
 \eeqlb 
 which can further be decomposed into the following two terms:
 \beqnn
 I_n(t,\delta) \ar:=\ar   \frac{1}{\underline{n}\big(   \zeta\in t+\varDelta_\delta \big)} \sum_{k=0}^{[n\delta]} \underline{n}\Big( F\Big(\frac{\epsilon_s}{t+k/n}:s\in[0,T]\Big) ,\, \zeta\in t+\frac{k}{n}+\varDelta_{1/n} \Big),\cr\cr
 \varepsilon_n(t,\delta) \ar:=\ar \frac{1}{\underline{n}\big(   \zeta\in t+\varDelta_\delta \big)} \sum_{k=0}^{[n\delta]}\underline{n}\Big( F\Big(\frac{\epsilon_s}{t}:s\in[0,T]
 \Big)-  F\Big(\frac{\epsilon_s}{t+k/n}:s\in[0,T]\Big),\, \zeta\in t+\frac{k}{n}+\varDelta_{1/n} \Big). 
 \eeqnn
 Recall that the Skorokhod topology is weaker than the uniform topology. 
 The boundedness and uniform continuity of $F$ yield that for some constant $C>0$ independent of $\epsilon$ and $t,k,n$ such that
 \beqnn
 \bigg|  F\Big(\frac{\epsilon_s}{t}:s\in[0,T]
 \Big)-  F\Big(\frac{\epsilon_s}{t+k/n}:s\in[0,T]\Big)  \bigg| 
 \ar\leq\ar C\cdot \mathbf{1}_{\{ \overline{\epsilon} \geq 2\beta t \}} + C\cdot   \frac{k/n}{t^2}\cdot \overline{\epsilon} \cdot \mathbf{1}_{\{ \overline{\epsilon} \leq 2\beta t \}}  \cr\cr
 \ar\leq\ar C\cdot \Big(\mathbf{1}_{\{ \overline{\epsilon} \geq 2\beta t \}} +  \frac{k/n}{t} \Big) . 
 \eeqnn  
 Applying this and the law of total probability to $ \big|\varepsilon_n(t,\delta) \big| $ yields
 \beqlb\label{eqn.9012}
 \sup_{ \delta\in[\theta_0,\theta_t]} \big|\varepsilon_n(t,\delta) \big|  
 \ar\leq\ar C\cdot \sup_{ \delta\in[\theta_0,\theta_t]} \frac{\underline{n}\big( \overline{\epsilon} \geq 2\beta t ,  \zeta\in t+\varDelta_{([n\delta]+1)/n} \big)}{\underline{n}\big(   \zeta\in t+\varDelta_\delta \big)}\cr
 \ar\ar +\frac{C\cdot \theta_t}{t}\cdot \sup_{ \delta\in[\theta_0,\theta_t]}  \frac{\underline{n}\big(  \zeta\in t+\varDelta_{([n\delta]+1)/n} \big)}{\underline{n}\big(   \zeta\in t+\varDelta_\delta \big)} ,
 \eeqlb
 which goes to $0$ as $t\to\infty$ by using Theorem~\ref{MainThm04} and~\ref{MainThm01} to the two terms on the right-hand side respectively. 
 For $I_n(t,\delta)$,  our result established in the first step states that there exists a non-negative function $\rho_n$ on $\mathbb R_+$
 going to $0$ at infinity such that, for all $t>0$ and $k\geq0$,
 \beqnn
 \underline{n}\Big( F\Big(\frac{\epsilon_s}{t+k/n}:s\in[0,T])\,\Big|\, \zeta\in t+\frac{k}{n}+\varDelta_{1/n} \Big)
 \leq  \mathbf{E}\big[ F( \beta\cdot \mathbf{1}_{\{s\geq \mathcal{T} \}}:s\in[0,T])\big]+\rho_n(t);
 \eeqnn
 equivalently, 
 \beqnn
 \lefteqn{\underline{n}\Big( F\Big(\frac{\epsilon_s}{t+k/n}:s\in[0,T]) ,\, \zeta\in t+\frac{k}{n}+\varDelta_{1/n} \Big)}\cr\cr
 \ar\leq\ar  \Big[\mathbf{E}\big[ F( \beta\cdot \mathbf{1}_{\{s\geq \mathcal{T} \}}:s\in[0,T])\big]+\rho_n(t)\Big] \cdot \underline{n}\Big(  \zeta\in t+\frac{k}{n}+\varDelta_{1/n} \Big) . 
 \eeqnn
 Substituting this back into the summation in \eqref{eqn.9010} yields 
 \beqnn 
 I_n(t,\delta)  \ar\leq \ar  \sum_{k=0}^{[n\delta]} \Big[\mathbf{E}\big[ F( \beta\cdot \mathbf{1}_{\{s\geq \mathcal{T} \}}:s\in[0,T])\big]+\rho_n(t)\Big] \cdot \frac{\underline{n}\big(  \zeta\in t+k/n+\varDelta_{1/n} \big)}{\underline{n}\big(   \zeta\in t+\varDelta_\delta \big)}\cr
 \ar=\ar  \Big[\mathbf{E}\big[ F( \beta\cdot \mathbf{1}_{\{s\geq \mathcal{T} \}}:s\in[0,T])\big]+\rho_n(t)\Big]  \cdot \frac{\underline{n}\big(  \zeta\in t+ \varDelta_{([n\delta]+1)/n} \big)}{\underline{n}\big(   \zeta\in t+\varDelta_\delta \big)}. 
 \eeqnn
 This induces that 
 \beqnn 
 \limsup_{t\to\infty}    \sup_{ \delta\in[\theta_0,\theta_t]} I_n(t,\delta)  \ar\leq \ar   \mathbf{E}\big[ F( \beta\cdot \mathbf{1}_{\{s\geq \mathcal{T} \}}:s\in[0,T])\big]  \cdot \limsup_{t\to\infty}    \sup_{ \delta\in[\theta_0,\theta_t]}  \frac{\underline{n}\big(  \zeta\in t+ \varDelta_{([n\delta]+1)/n} \big)}{\underline{n}\big(   \zeta\in t+\varDelta_\delta \big)}\cr\cr
 \ar\leq\ar \mathbf{E}\big[ F( \beta\cdot \mathbf{1}_{\{s\geq \mathcal{T} \}}:s\in[0,T])\big]  \cdot \Big( 1+\frac{1/n}{\theta_0} \Big).
 \eeqnn
 Combining this together with \eqref{eqn.9012} and then letting $n\to\infty$, we obtain
 \beqnn
 \limsup_{t\to\infty}    \sup_{ \delta\in[\theta_0,\theta_t]} \underline{n}\big( F(\epsilon_s/t:s\in[0,T])\,\big|\, \zeta\in t+\varDelta_\delta \big) 
 \leq \mathbf{E}\big[ F( \beta\cdot \mathbf{1}_{\{s\geq \mathcal{T} \}}:s\in[0,T])\big] . 
 \eeqnn
 On the other hand, for the lower bound, an analogous argument gives
 \beqnn
 \liminf_{t\to\infty}    \inf_{ \delta\in[\theta_0,\theta_t] } \underline{n}\big( F(\epsilon_s/t:s\in[0,T]) \,\big|\, \zeta\in t+\varDelta_\delta \big)  
 \geq \mathbf{E}\big[ F( \beta\cdot \mathbf{1}_{\{s\geq \mathcal{T} \}}:s\in[0,T])\big] . 
 \eeqnn
 Combining them together yields \eqref{eqn.9009} immediately. 
 \qed 
 
 \medskip
 
 \textbf{\textit{Proof of Theorem~\ref{MainThm05}(2).}} Similarly as in the proof of claim (1), here we still just prove that the weak convergence in claim (2) under $\underline{n}\big(\cdot\,|\, \zeta\in t+\varDelta_\delta\big)$ holds for any fixed $\delta$. It can be extended to the uniform convergence in
 $\delta\in[\theta_0,\theta_t]$ as before.
 The uniform weak convergence under $\underline{n}\big(\cdot\,|\, \overline\epsilon\in\beta(t+\varDelta_\delta)\big)$ can be proved in the same way.
 
 We start to prove the weak convergence under $\underline{n}\big(\cdot\,|\, \zeta\in t+\varDelta_\delta\big)$.
 By Lemma~\ref{Lemma.701}, it suffices to prove that for any $\varepsilon\in(0,1)$, $0<s_1<s_2<\infty$ and $0<\eta\leq \frac{\varepsilon(s_1\wedge 1)}{4\beta+1}<1$, as $t\to\infty$,
 \beqlb\label{eqn.9013}
 \underline{n}\bigg(
 \sup_{s_1\leq s\leq s_2}
 \Big|\frac{\epsilon_{ts}}{t}-\beta(1-s)^+\Big|
 \geq\varepsilon,\,
 \mathcal{J}^{\lambda t}\leq\eta t
 \,\bigg|\,\zeta\in t+\varDelta_\delta
 \bigg)\to0.
 \eeqlb
 On the event $\{\zeta\in t+\varDelta_\delta\}$, if
 $s\in[s_1,s_2]$ satisfies $ts\geq\zeta$, then
 $s\geq\zeta/t>1$ and hence
 $\big|\epsilon_{ts}/t-\beta(1-s)^+\big|=0$.
 It remains to consider $s\in[s_1,s_2]$ with $ts<\zeta$.
 On the further event $\{\mathcal J^{\lambda t}\leq\eta t\}$, we observe that for any $s\in[s_1,s_2]$,
 \beqnn
 \frac{\epsilon_{ts}}{t}
 \ar=\ar
 \Big(\frac{\epsilon_{\mathcal{J}^{\lambda t}}}{t}-\beta\Big)
 +\bigg[
 \frac{\epsilon_{ts}-\epsilon_{\mathcal{J}^{\lambda t}}}{t}
 +\beta\Big(s-\frac{\mathcal{J}^{\lambda t}}{t}\Big)
 \bigg]
 +\beta(1-s)+\beta\cdot\frac{\mathcal{J}^{\lambda t}}{t},
 \eeqnn
 which along with  the triangle inequality yields
 \beqnn
 \Big|\frac{\epsilon_{ts}}{t}-\beta(1-s)^+\Big|
 \ar\leq\ar
 \Big|\frac{\epsilon_{\mathcal{J}^{\lambda t}}}{t}-\beta\Big|
 +\bigg|
 \frac{\epsilon_{ts}-\epsilon_{\mathcal{J}^{\lambda t}}}{t}
 +\beta\cdot\frac{ts-\mathcal{J}^{\lambda t}}{t}
 \bigg|
 +\beta\big|(1-s)-(1-s)^+\big|
 +\beta\cdot\frac{\mathcal{J}^{\lambda t}}{t}.
 \eeqnn
 Consequently, the left-hand side of \eqref{eqn.9013} can be bounded by the sum of the following four terms:
 \beqnn
 A_1(t)\ar:=\ar
 \underline{n}\bigg(
 \Big|\frac{\epsilon_{\mathcal{J}^{\lambda t}}}{t}-\beta\Big|
 \geq\frac{\varepsilon}{4},\,
 \mathcal{J}^{\lambda t}\leq\eta t
 \,\bigg|\,\zeta\in t+\varDelta_\delta
 \bigg),\cr
 A_2(t)\ar:=\ar
 \underline{n}\bigg(
 \sup_{s_1\leq s\leq s_2}
 \mathbf{1}_{\{ts<\zeta\}}\cdot 
 \Big|
 \frac{\epsilon_{ts}-\epsilon_{\mathcal{J}^{\lambda t}}}{t}
 +\beta\Big(s-\frac{\mathcal{J}^{\lambda t}}{t}\Big)
 \Big|
 \geq\frac{\varepsilon}{4},\,
 \mathcal{J}^{\lambda t}\leq\eta t
 \,\bigg|\,\zeta\in t+\varDelta_\delta
 \bigg),\cr
 A_3(t)\ar:=\ar
 \underline{n}\bigg(
 \sup_{s_1\leq s\leq s_2}
 \mathbf{1}_{\{ts<\zeta\}}\cdot 
 \beta\cdot \big|(1-s)-(1-s)^+\big|
 \geq\frac{\varepsilon}{4},\,
 \mathcal{J}^{\lambda t}\leq\eta t
 \,\bigg|\,\zeta\in t+\varDelta_\delta
 \bigg),\cr
 A_4(t)\ar:=\ar
 \underline{n}\bigg(
 \beta\cdot\frac{\mathcal{J}^{\lambda t}}{t}
 \geq\frac{\varepsilon}{4},\,
 \mathcal{J}^{\lambda t}\leq\eta t
 \,\bigg|\,\zeta\in t+\varDelta_\delta
 \bigg).
 \eeqnn
 It is obvious that $A_4(t)=0$ since
 $\beta\eta\leq\frac{\beta\varepsilon(s_1\wedge 1)}{4\beta+1}<\frac{\varepsilon}{4}$.
 For $A_1(t)$, by the triangle inequality we have
 \beqnn
 A_1(t)\ar\leq\ar
 \underline{n}\bigg(
 \Big|\frac{\Delta\epsilon_{\mathcal{J}^{\lambda t}}}{t}-\beta\Big|
 \geq\frac{\varepsilon}{8},\,
 \mathcal{J}^{\lambda t}\leq\eta t
 \,\bigg|\,\zeta\in t+\varDelta_\delta
 \bigg)
 +\underline{n}\bigg(
 \frac{\epsilon_{\mathcal{J}^{\lambda t}-}}{t}
 \geq\frac{\varepsilon}{8},\,
 \mathcal{J}^{\lambda t}\leq\eta t
 \,\bigg|\,\zeta\in t+\varDelta_\delta
 \bigg).
 \eeqnn
 The first term goes to $0$ as $t\to\infty$ by Lemma~\ref{Lemma.902}.
 For the second term, we choose $R>0$ and obtain that for all $t>R/\eta$,
 \beqnn
 \lefteqn{
 	\underline n\bigg(
 	\frac{\epsilon_{\mathcal J^{\lambda t}-}}{t}
 	\geq\frac{\varepsilon}{8},\,
 	\mathcal J^{\lambda t}\leq\eta t
 	\,\bigg|\,\zeta\in t+\varDelta_\delta
 	\bigg)
 }\cr\cr
 \ar\leq\ar
 \underline n\bigg(
 \frac{\epsilon_{\mathcal J^{\lambda t}-}}{t}
 \geq\frac{\varepsilon}{8},\,
 \mathcal J^{\lambda t}\leq R
 \,\bigg|\,\zeta\in t+\varDelta_\delta
 \bigg)
 +
 \underline n\bigg(
 R<\mathcal J^{\lambda t}\leq\eta t
 \,\bigg|\,\zeta\in t+\varDelta_\delta
 \bigg)\cr\cr
 \ar\leq\ar
 \underline n\bigg(
 \sup_{s\leq R}
 \frac{\epsilon_{(s\wedge\mathcal J^{\lambda t})-}}{t}
 \geq\frac{\varepsilon}{8}
 \,\bigg|\,\zeta\in t+\varDelta_\delta
 \bigg)
 +
 \underline n\bigg(
 \mathcal J^{\lambda t}>R
 \,\bigg|\,\zeta\in t+\varDelta_\delta
 \bigg),
 \eeqnn
 which tends to $0$ as $t\to\infty$ and then $R\to\infty$ by Lemma~\ref{Lemma.901} and \ref{Lemma.903}.
 Consequently, we have $A_1(t)\to0$ as $t\to\infty$. For $A_3(t)$, note that
 $\beta\big|(1-s)-(1-s)^+\big|
 =\beta(s-1)^+
 \leq\beta(\zeta/t-1)$
 for all $s\in[s_1,s_2]$ with $ts<\zeta$, we have for sufficiently large $t$,
 \beqnn
 A_3(t)\ar\leq\ar
 \underline{n}\bigg(
 \beta\Big(\frac{\zeta}{t}-1\Big)
 \geq\frac{\varepsilon}{4},\,
 \mathcal{J}^{\lambda t}\leq\eta t
 \,\bigg|\,\zeta\in t+\varDelta_\delta
 \bigg)=0.
 \eeqnn
 It remains to prove $A_2(t)\to0$ as $t\to\infty$. On the event
 $\{\mathcal J^{\lambda t}\leq\eta t\}$, we have
 $\mathcal J^{\lambda t}\leq ts$ for all $s\in[s_1,s_2]$, and hence
 \beqlb\label{eqn.9.10}
 A_2(t)
 \ar\leq\ar
 \underline n\bigg(
 \sup_{0\leq u\leq s_2t}
 \mathbf 1_{\{\mathcal J^{\lambda t}+u<\zeta\}}\cdot 
 \bigg|
 \frac{\epsilon_{\mathcal J^{\lambda t}+u}
 	-\epsilon_{\mathcal J^{\lambda t}}}{t}
 +\beta\cdot\frac{u}{t}
 \bigg|
 \geq\frac{\varepsilon}{4},\,
 \mathcal J^{\lambda t}\leq\eta t
 \,\bigg|\,\zeta\in t+\varDelta_\delta\bigg). 
 \eeqlb
 Obviously, the last expression can be bounded by
 \beqnn
 \frac{1}{\underline n(\zeta\in t+\varDelta_\delta)}
 \cdot\underline n\bigg(
 \sum_{0<r\leq\eta t\wedge\zeta}
 \mathbf 1_{\{\Delta\epsilon_r>\lambda t\}}
 \cdot\mathbf 1_{\{
 	\sup_{0\leq u\leq s_2t}
 	\mathbf 1_{\{r+u<\zeta\}}
 	|\epsilon_{r+u}-\epsilon_r+\beta u|
 	\geq\varepsilon t/4,\,
 	\zeta\in t+\varDelta_\delta
 	\}}
 \bigg).
 \eeqnn
 By using the Markov property \eqref{eqn.n-Markov} and then the compensation formula \eqref{eqn.Compensation}, the second factor in the preceding expression  can further be bounded by
 \beqnn
 \lefteqn{
 	\underline n\Bigg(
 	\sum_{0<r\leq\eta t\wedge\zeta}
 	\mathbf 1_{\{\Delta\epsilon_r>\lambda t\}}
 	\cdot\mathbf P_{\epsilon_r}\bigg(
 	\sup_{0\leq u\leq s_2t}
 	|X_u-\epsilon_r+\beta u|
 	\geq\frac{\varepsilon t}{4},\,
 	\tau_0^-\in t-r+\varDelta_\delta
 	\bigg)
 	\Bigg)
 }\cr\cr
 \ar=\ar
 \int_0^{\eta t}dr\int_0^\infty\underline n_r(dz)
 \int_{\lambda t}^{\infty}\nu(dw)\,
 \mathbf P_{z+w}\bigg(
 \sup_{0\leq u\leq s_2t}
 |X_u-(z+w)+\beta u|
 \geq\frac{\varepsilon t}{4},\,
 \tau_0^-\in t-r+\varDelta_\delta
 \bigg)\cr\cr
 \ar=\ar
 \int_0^{\eta t}dr\int_0^\infty\underline n_r(dz)
 \int_{\lambda t}^{\infty}\nu(dw)\,
 \mathbf P\bigg(
 \sup_{0\leq u\leq s_2t}|X_u+\beta u|
 \geq\frac{\varepsilon t}{4},\,
 \tau_{-(z+w)}^-\in t-r+\varDelta_\delta
 \bigg)\cr\cr
 \ar\leq \ar \int_0^{\eta t}dr \int_0^\infty  \underline n_r(dz)\,  \mathbf E\bigg[  \nu\Big(\big(-\underline X_{t-r}-z,\, -\underline X_{t-r+\delta}-z \big]\cap(\lambda t,\infty) \Big);\, \sup_{0\leq u\leq s_2t} |X_u+\beta u| \geq\frac{\varepsilon t}{4} \bigg] . 
 \eeqnn
 By Corollary~\ref{Corollary.202} and the monotonicity of $\overline{\nu}$, there exists a constant $C>0$ such that for all sufficiently large $t$, all $z\geq0$ and $0\leq r\leq\eta t$,
 \beqnn
 \nu\Big(\big(-\underline X_{t-r}-z,\, -\underline X_{t-r+\delta}-z \big]\cap(\lambda t,\infty) \Big)
 \leq
 C\cdot\frac{\overline{\nu}(\lambda t)}{\lambda t}
 \cdot\big(
 \underline X_{t-r}-\underline X_{t-r+\delta}+2
 \big).
 \eeqnn
 Combining these two estimates together and then taking them back into \eqref{eqn.9.10}, we obtain that $A_2(t)$ is bounded by
 \beqnn
 A_2(t)\leq
 \frac{C\cdot\overline\nu(\lambda t)/(\lambda t)}
 {\underline n(\zeta\in t+\varDelta_\delta)}
 \cdot
 \int_0^{\eta t}
 \underline n(\zeta>r)\cdot
 \sup_{0\leq r\leq\eta t}
 \mathbf E\bigg[
 \underline X_{t-r}
 -\underline X_{t-r+\delta}+2;\,
 \sup_{0\leq u\leq s_2t}
 |X_u+\beta u|
 \geq\frac{\varepsilon t}{4}
 \bigg]\,dr.
 \eeqnn
 By Lemma~\ref{Lemma.310} and Proposition~4.2 in
 \cite{CuiWuXu2025}, the last supremum is uniformly bounded in $t\geq0$ and also converges to $0$ as
 $t\to\infty$.
 By using Theorem~\ref{MainThm01}, \eqref{eqn.30} and then the dominated convergence theorem along with \eqref{eqn.501}, we obtain that $A_2(t)\to0$ as
 $t\to\infty$.
 Combining all preceding estimates together yields the limit \eqref{eqn.9013}. This completes the whole proof.
 \qed

 \subsection{Proofs of Lemma~\ref{Lemma.901},~\ref{Lemma.902},~\ref{Lemma.903} and~\ref{Lemma.904}}

 \textbf{\textit{Proof of Lemma~\ref{Lemma.901}.}} The convergence of the first absolute value in \eqref{eqn.7000} to $0$  follows directly from Lemma~\ref{Lemma.701}. 	
 We now consider the second one. 
 Recall  the set  $\mathcal{A}_\delta(x,K)$ defined in \eqref{eqn.5033}. 
 In view of Lemma~\ref{Lemma.501}-\ref{Lemma.503} and Theorem~\ref{MainThm02}, we see that the two events $\{ \overline\epsilon\in \beta (t+\varDelta_\delta)  \}$ and $\{ \#\mathcal{A}_{\beta \delta}(\beta t,K)=1\}$ are asymptotically equivalent, which induces that  
 \beqlb\label{eqn.90011}
 \limsup_{K\to\infty}\limsup_{t\to\infty}\bigg|\underline{n}\big(\mathcal{J}^{\lambda t}\leq R \,\big|\,  \overline\epsilon\in \beta (t+\varDelta_\delta)   \big)  -\frac{\underline{n}\big(\mathcal{J}^{\lambda t}\leq R, \#\mathcal{A}_{\beta \delta}(\beta t,K)=1 \big)}{ \underline{n}\big( \overline\epsilon\in \beta (t+\varDelta_\delta)   \big) } \bigg| =0.
 \eeqlb
 Recall the two sets $\mathcal{A}^{\leq R}_\delta(x,K)$ and $\mathcal{A}^{>R}_\delta(x,K)$ defined in \eqref{eqn.600}. The law of total probability yields 
 \beqnn
 \underline{n}\Big(\mathcal{J}^{\lambda t}\leq R,\, \#\mathcal{A}_{\beta \delta}(\beta t,K)=1 \Big) 
 \ar=\ar \underline{n}\Big(\mathcal{J}^{\lambda t}\leq R,\, \#\mathcal{A}^{\leq R}_{\beta \delta}(\beta t,K)=1 \Big) \cr\cr
 \ar\ar - \underline{n}\Big(\mathcal{J}^{\lambda t}\leq R,\, \#\mathcal{A}^{\leq R}_{\beta \delta}(\beta t,K)=1,\, \#\mathcal{A}^{> R}_{\beta \delta}(\beta t,K)\geq 1\Big)\cr\cr
 \ar\ar + \underline{n}\Big(\mathcal{J}^{\lambda t}\leq R,\, \#\mathcal{A}^{\leq R}_{\beta \delta}(\beta t,K)=0,\, \#\mathcal{A}^{> R}_{\beta \delta}(\beta t,K)=1 \Big).
 \eeqnn
 In view of \eqref{eqn.5018} again, we obtain that $\{\mathcal{J}^{\lambda t}> R\}$ and $\{ \#\mathcal{A}_{\beta \delta}^{\leq R}(\beta t,K)=1 \}$  are two mutually exclusive events for all sufficiently large $t$, and hence 
 $\underline{n}\big(\mathcal{J}^{\lambda t}\leq R,\, \#\mathcal{A}^{\leq R}_{\beta \delta}(\beta t,K)=1 \big)= \underline{n}\big(\#\mathcal{A}^{\leq R}_{\beta \delta}(\beta t,K)=1 \big)$. 
 By the triangle inequality, 
 \beqlb\label{eqn.9002}
 \bigg|  \frac{\underline{n}\big(\mathcal{J}^{\lambda t}\leq R,\, \#\mathcal{A}_{\beta \delta}(\beta t,K)=1 \big)}{ \underline{n}\big( \overline\epsilon\in \beta (t+\varDelta_\delta)   \big) } - \frac{ \underline{n}\big( \#\mathcal{A}^{\leq R}_{\beta \delta}(\beta t,K)=1 \big)}{ \underline{n}\big( \overline\epsilon\in \beta (t+\varDelta_\delta)   \big) }\bigg| 
 \leq  \frac{ \underline{n}\big(\mathcal{J}^{\lambda t}\leq R ,\, \#\mathcal{A}^{> R}_{\beta \delta}(\beta t,K)\geq 1\big) }{ \underline{n}\big( \overline\epsilon\in \beta (t+\varDelta_\delta)   \big) } .
 \eeqlb
 For any $S>R$, using the law of total probability again, the last expression can be bounded by 
 \beqlb\label{eqn.9001}
 \frac{\underline{n}\big(\mathcal{J}^{\lambda t}\leq R ,\, \#\mathcal{A}^{> R}_{\beta \delta}(\beta t,K)-\#\mathcal{A}^{> R+S}_{\beta \delta}(\beta t,K)\geq 1\big)}{ \underline{n}\big( \overline\epsilon\in \beta (t+\varDelta_\delta)   \big) }  + \frac{\underline{n}\big(\mathcal{J}^{\lambda t}\leq R ,\,  \#\mathcal{A}^{> R+S}_{\beta \delta}(\beta t,K)\geq 1\big)}{ \underline{n}\big( \overline\epsilon\in \beta (t+\varDelta_\delta)   \big) }  .
 \eeqlb
 Recall the set $\mathcal{B}^x_{\leq R}$ defined in \eqref{eqn.7001}. 
 For sufficiently large $t$, by \eqref{eqn.5018} we observe that  on the event $\{ \mathcal{J}^{\lambda t}\leq R ,\, \#\mathcal{A}^{> R}_{\beta \delta}(\beta t,K)-\#\mathcal{A}^{> R+S}_{\beta \delta}(\beta t,K)\geq 1 \}$,  at least two jumps larger than $\lambda t$ occur in the time interval $[0,R+S]$. 
 Then by \eqref{eqn.701} the first term in \eqref{eqn.9001} can be bounded by  
 \beqnn
 \frac{\underline{n}\big( \# \mathcal{B}^{\lambda t}_{\leq R+S}\geq 2\big)}{ \underline{n}\big( \overline\epsilon\in \beta (t+\varDelta_\delta)   \big) }  
 \leq 
 \frac{\underline{n}\big( \# \mathcal{B}^{\lambda t}_{\leq R+S} \cdot \big(\# \mathcal{B}^{\lambda t}_{\leq R+S}-1  \big)\big)}{ \underline{n}\big( \overline\epsilon\in \beta (t+\varDelta_\delta)   \big) } 
 \leq 2 (R+S) \cdot \underline{n}\big( (R+S)\wedge \zeta \big)\cdot \frac{\big| \overline{\nu}(\lambda t) \big|^2}{ \underline{n}\big( \overline\epsilon\in \beta (t+\varDelta_\delta)   \big) } ,
 \eeqnn
 which goes to $0$ as $t\to \infty$ by Theorem~\ref{MainThm02} and $\overline{\nu}\in \mathrm{RV}^\infty_{-\alpha}$ with $\alpha>1$.  
 Moreover,  the second term  can be bounded by 
 \beqnn
 \frac{\underline{n}\big(\mathcal{J}^{\lambda t}\leq R ,\,  \#\mathcal{A}^{> R+S}_{\beta \delta}(\beta t,K)\geq 1\big)}{ \underline{n}\big( \overline\epsilon\in \beta (t+\varDelta_\delta)   \big) } 
 \leq \frac{\underline{n}\big(  \#\mathcal{A}^{> R+S}_{\beta \delta}(\beta t,K)\geq 1\big)}{ \underline{n}\big( \overline\epsilon\in \beta (t+\varDelta_\delta)   \big) } ,
 \eeqnn
 which also goes to $0$ as $t\to\infty$, then $K\to\infty $ and finally $S\to\infty$ by  Lemma \ref{Lemma.600} and Theorem \ref{MainThm02}. 
 Substituting these two limits back into \eqref{eqn.9002} and then \eqref{eqn.90011} yields that 
 \beqlb\label{eqn.9003}
 \limsup_{K\to\infty}\limsup_{t\to\infty}\bigg| \underline{n}\big(\mathcal{J}^{\lambda t}\leq R \,\big|\,  \overline\epsilon\in \beta (t+\varDelta_\delta)   \big)  -\frac{\underline{n}\big( \#\mathcal{A}_{\beta \delta}^{\leq R}(\beta t,K)=1 \big)}{ \underline{n}\big( \overline\epsilon\in \beta (t+\varDelta_\delta)   \big) } \bigg| =0.
 \eeqlb
 Similarly as in the proof of Lemma~\ref{Lemma.504}, one can also prove that 
 \beqnn
  \limsup_{K\to\infty} \limsup_{t\to\infty} \bigg|\frac{\underline{n}\big( \#\mathcal{A}_{\beta \delta}^{\leq R}(\beta t,K)=1 \big)}{ \underline{n}\big( \overline\epsilon\in \beta (t+\varDelta_\delta)   \big) } - \frac{\underline{n}(R\wedge \zeta)}{\underline{n}(\zeta)}\bigg|=0.
 \eeqnn
 Combining these two limits together with Theorem~\ref{MainThm02} immediately yields  the convergence of the second absolute value in \eqref{eqn.7000} to $0$. This completes the proof.  
 \qed

 \textbf{\textit{Proof of Lemma~\ref{Lemma.902}}.} For any constants $R,K>0$, the law of total probability gives
 \beqnn
 \underline{n}\bigg(\Big|\frac{\Delta\epsilon_{ \mathcal{J}^{\lambda t}}}{t} -\beta\Big|\geq \varepsilon \,\bigg|\,\zeta\in t+\varDelta_\delta\bigg) 
 \ar\leq \ar  \underline{n}\bigg(\Big|\frac{\Delta\epsilon_{ \mathcal{J}^{\lambda t}}}{t} -\beta\Big|\geq \varepsilon,\, \mathcal{J}^{\lambda t}>R \,\bigg|\,\zeta\in t+\varDelta_\delta\bigg)\cr
 \ar\ar +  \underline{n}\bigg(\Big|\frac{\Delta\epsilon_{ \mathcal{J}^{\lambda t}}}{t} -\beta\Big|\geq \varepsilon  ,\, \mathcal{J}^{\lambda t}\leq R,\, \sup_{0\leq r<\mathcal{J}^{\lambda t}} \epsilon_r>K\,\bigg|\,\zeta\in t+\varDelta_\delta\bigg)\cr
 \ar\ar +  \underline{n}\bigg(\Big|\frac{\Delta\epsilon_{ \mathcal{J}^{\lambda t}}}{t} -\beta\Big|\geq \varepsilon  ,\, \mathcal{J}^{\lambda t}\leq R,\, \sup_{0\leq r<\mathcal{J}^{\lambda t}} \epsilon_r\leq K \,\bigg|\,\zeta\in t+\varDelta_\delta\bigg).
 \eeqnn
 The first term can be bounded by $ \underline{n}\big( \mathcal{J}^{\lambda t}>R \,\big|\,\zeta\in t+\varDelta_\delta\big)$, 
 which tends to $0$ as $t\to\infty$ and then $R\to\infty$ by Lemma~\ref{Lemma.701}. 
 By Lemma~\ref{Lemma.702}, the second term can be bounded by 
 \beqnn
 \underline{n}\Big(  \mathcal{J}^{\lambda t}\leq R,\, \sup_{0\leq r<\mathcal{J}^{\lambda t}} \epsilon_r>K\,\Big|\,\zeta\in t+\varDelta_\delta\Big),
 \eeqnn
 which also goes to $0$ as $t\to\infty$ and then  $K\to\infty$. 
 Finally, by Lemma~\ref{Lemma.704}, the third term on the right-hand side tends to $0$ as $t\to\infty$.
 Combining these estimates yields the convergence of the first term in \eqref{eqn.9020} to $0$.
 
 We now consider the second term in \eqref{eqn.9020}. 
 For any $R>0$, the law of total probability also gives
 \beqnn
 \underline{n}\bigg(\Big|\frac{\Delta\epsilon_{ \mathcal{J}^{\lambda t}}}{t} -\beta\Big|\geq \varepsilon \,\bigg|\,  \overline\epsilon\in \beta (t+\varDelta_\delta)\bigg)  
 \ar=\ar    \underline{n}\bigg(\Big|\frac{\Delta\epsilon_{ \mathcal{J}^{\lambda t}}}{t} -\beta\Big|\geq \varepsilon,\, \mathcal{J}^{\lambda t}>R  \,\bigg|\,  \overline\epsilon\in \beta (t+\varDelta_\delta)\bigg) \cr
 \ar\ar +  \underline{n}\bigg(\Big|\frac{\Delta\epsilon_{ \mathcal{J}^{\lambda t}}}{t} -\beta\Big|\geq \varepsilon,\, \mathcal{J}^{\lambda t}\leq R \,\bigg|\,  \overline\epsilon\in \beta (t+\varDelta_\delta)\bigg) .
 \eeqnn
 The first term on the right-hand side can be bounded by  $\underline{n}\big(  \mathcal{J}^{\lambda t}>R \,\big|\,  \overline\epsilon\in \beta (t+\varDelta_\delta)\big)$, which goes to $0$ as $t\to\infty $ and then $R\to\infty$ by Lemma~\ref{Lemma.901}. 
 For the second term, by \eqref{eqn.9003} we have 
 \beqnn
 \bigg|   \underline{n}\Big(\Big|\frac{\Delta\epsilon_{ \mathcal{J}^{\lambda t}}}{t} -\beta\Big|\geq \varepsilon,\, \mathcal{J}^{\lambda t}\leq R \,\Big|\,  \overline\epsilon\in \beta (t+\varDelta_\delta)\Big)- \frac{ \underline{n}\big(\big|  \Delta\epsilon_{ \mathcal{J}^{\lambda t}}/t -\beta\big|\geq \varepsilon,\,  \#\mathcal{A}_{\beta \delta}^{\leq R}(\beta t,K) =1\big)}{\underline{n}\big( \overline\epsilon\in \beta (t+\varDelta_\delta) \big)}  \bigg| \to 0,
 \eeqnn
 as $t\to\infty$ and then $K\to \infty$. 
 It remains to prove that 
 \beqlb\label{eqn.9005}
 \limsup_{K\to\infty}  \limsup_{t\to\infty}  \frac{ \underline{n}\big(\big| \Delta\epsilon_{ \mathcal{J}^{\lambda t}}/t -\beta\big|\geq \varepsilon,\,  \# \mathcal{A}_{\beta \delta}^{\leq R}(\beta t,K) =1\big)}{\underline{n}\big( \overline\epsilon\in \beta (t+\varDelta_\delta) \big)}  =0. 
 \eeqlb
 By the law of total probability, the fraction on the left-hand side can be bounded by 
 \beqlb\label{eqn.9004}
 \frac{\underline{n}\big(\big| \Delta\epsilon_{ \mathcal{J}^{\lambda t}}/t -\beta\big|\geq \varepsilon,\,   \mathcal{J}^{\lambda t}\in\mathcal{A}_{\beta \delta}^{\leq R}(\beta t,K) \big) }{ \underline{n}\big( \overline\epsilon\in \beta (t+\varDelta_\delta)   \big) } + \frac{ \underline n\Big( \mathcal J^{\lambda t}\notin \mathcal A_{\beta\delta}^{\leq R}(\beta t,K),\, \#\mathcal A_{\beta\delta}^{\leq R}(\beta t,K)=1 \Big) }{ \underline n\big( \overline\epsilon\in\beta(t+\varDelta_\delta) \big) }. 
 \eeqlb 
 For sufficiently large $t$, on the event $ \mathcal{J}^{\lambda t}\in\mathcal{A}_{\beta \delta}^{\leq R}(\beta t,K)$, by \eqref{eqn.5018} we have  
 $\Delta\epsilon_{ \mathcal{J}^{\lambda t}} -\beta t \in [-2K, \beta \delta]$ and hence the first term in \eqref{eqn.9004} equals  $0$. 
 Moreover, on the event $\big\{  \mathcal J^{\lambda t}\notin \mathcal A_{\beta\delta}^{\leq R}(\beta t,K),\, \#\mathcal A_{\beta\delta}^{\leq R}(\beta t,K)=1 \big\}$, we have that at least two jumps larger than $\lambda t$ occur in the time interval $[0,R]$.
 This induces that 
 \beqnn
 \frac{ \underline n\Big( \mathcal J^{\lambda t}\notin \mathcal A_{\beta\delta}^{\leq R}(\beta t,K),\, \#\mathcal A_{\beta\delta}^{\leq R}(\beta t,K)=1 \Big) }{ \underline n\big( \overline\epsilon\in\beta(t+\varDelta_\delta) \big) }
 \leq   \frac{\underline{n}\big( \# \mathcal{B}^{\lambda t}_{\leq R}\geq 2\big)}{ \underline{n}\big( \overline\epsilon\in \beta (t+\varDelta_\delta)   \big) } ,
 \eeqnn
 which goes to $0$ as $t\to\infty$ by \eqref{eqn.701}.  
 These two estimates together yield the limit \eqref{eqn.9005}. 
 This completes the proof. 
 \qed

 \textbf{\textit{Proof of Lemma~\ref{Lemma.903}}.} For any constant $R>0$, the law of total probability yields that the first term in \eqref{eqn.903} can be bounded by 
 \beqnn
 \underline{n}\Big(  \sup_{s\leq T }  \epsilon_{(s\wedge  \mathcal{J}^{\lambda t})-}  \geq \varepsilon t,\, \mathcal{J}^{\lambda t}\leq R \,\Big|\,\zeta\in t+\varDelta_\delta\Big) + \underline{n}\big(  \mathcal{J}^{\lambda t}> R \,\big|\,\zeta\in t+\varDelta_\delta\big) .
 \eeqnn
 The first term on the right-hand side goes to $0$ by Lemma~\ref{Lemma.702}. The second one goes to $0$ as $t\to\infty$ and then $R\to\infty$ by Lemma~\ref{Lemma.701}. Consequently,  the first term in \eqref{eqn.903} converges to $0$ as $t\to\infty$.
 
 We now consider the second term in \eqref{eqn.903}.
 For any $R>0$, the law of total probability also shows that it can be bounded by 
 \beqnn
 \underline{n}\Big(\sup_{s\leq T\wedge \zeta}\epsilon_{(s\wedge  \mathcal{J}^{\lambda t})-}  \geq  \varepsilon  t ,\, \mathcal{J}^{\lambda t}\leq R \,\Big|\, \overline\epsilon\in \beta (t+\varDelta_\delta) \Big)  + \underline{n}\Big(   \mathcal{J}^{\lambda t}>R \,\Big|\, \overline\epsilon\in \beta (t+\varDelta_\delta) \Big) .
 \eeqnn
 The second term goes to $0$ as $t\to\infty$ and then $R\to\infty$ by Lemma~\ref{Lemma.901}. 
 It remains to prove the first term also goes to $0$ as $t\to\infty$.  
 An argument analogous to  \eqref{eqn.9005} yields that it suffices to prove that as $t\to\infty$, 
 \beqlb\label{eqn.9007}
 \frac{\underline{n}\big(\sup_{s\leq T\wedge \zeta}\epsilon_{(s\wedge  \mathcal{J}^{\lambda t})-}  \geq  \varepsilon  t ,\,   \# \mathcal{A}_{\beta \delta}^{\leq R}(\beta t,K) =1,\, \mathcal{J}^{\lambda t} \in \mathcal{A}_{\beta \delta}^{\leq R}(\beta t,K) \big) }{\underline{n}\big( \overline\epsilon\in \beta (t+\varDelta_\delta)  \big) } \to0.
 \eeqlb
 By  the definition of $ \mathcal{A}_{\beta \delta}^{\leq R}(\beta t,K)$ and \eqref{eqn.5018}, for some $\eta\in(0,1)$ and sufficiently large $t$,  the preceding fraction can be  bounded by 
 \beqnn
 \frac{1}{\underline{n}\big( \overline\epsilon\in \beta (t+\varDelta_\delta)  \big)}\cdot  \underline{n}\bigg(  \sum_{s\leq R\wedge \zeta} \mathbf{1}_{\{  \sup_{r\leq T\wedge s} \epsilon_{r-}  \geq  \varepsilon  t,\, \epsilon_{s-} \leq K,\, \Delta \epsilon_s \geq (\beta t)^\eta,\, \sup_{s\leq r\leq \zeta} \epsilon_r \in \beta (t+\varDelta_\delta ),\, \sup_{s\leq r\leq \zeta} \epsilon_r-\epsilon_s\leq K \}}\bigg). 
 \eeqnn
 Similarly as in the proof of Lemma~\ref{Lemma.502} , we can control it by
 \beqlb\label{eqn.9008}
 \frac{\nu\big( \beta t +\varDelta_\delta \big)}{\underline{n}\big( \overline\epsilon\in \beta (t+\varDelta_\delta)  \big)}\int_0^R ds \int_0^K \frac{G_{\beta\delta}(\beta t,K,z)}{\nu\big( \beta t +\varDelta_\delta \big)} \underline{n}\Big(  \sup_{r\leq T\wedge s} \epsilon_{r-}   \geq  \varepsilon  t ,\epsilon_{s-} \in dz,\, \zeta>s \Big),
 \eeqlb
 where $G_{\beta\delta}(x,K,z)$ is defined as in \eqref{eqn.50123} with $\vartheta_x=x^\eta$. 
 By Theorem~\ref{MainThm02} and Proposition~\ref{Prop.508}, there exists some constant $C>0$ such that uniformly for sufficiently large $t$,  
 \beqnn
 \eqref{eqn.9008} \leq C\cdot \int_0^R ds \int_0^K \underline{n}\Big(  \sup_{r\leq T\wedge s} \epsilon_{r-}   \geq  \varepsilon  t ,\epsilon_{s-} \in dz ,\, \zeta>s \Big) \leq C\cdot \underline{n}\big(\overline{\epsilon}\geq   \varepsilon  t\big),
 \eeqnn
 which goes to $0 $ as $t\to\infty$ and then the limit \eqref{eqn.9007} holds.
 This completes the proof.
 \qed

 \textbf{\textit{Proof of Lemma~\ref{Lemma.904}}.}
 On the event $ \{\sup_{s\leq T} |\epsilon_{s\vee  \mathcal{J}^{\lambda t}}-\epsilon_{  \mathcal{J}^{\lambda t}}|\geq \varepsilon t \}$, we have $\mathcal{J}^{\lambda t}\leq T$ and hence 
 \beqnn
 \underline{n}\Big(\sup_{s\leq T } \big| \epsilon_{s\vee  \mathcal{J}^{\lambda t}}-\epsilon_{  \mathcal{J}^{\lambda t}}  \big|\geq \varepsilon t ,\,\zeta\in t+\varDelta_\delta\Big) 
 \ar\leq \ar  \underline{n}\bigg(\sum_{s\leq T } \mathbf{1}_{\{  \Delta\epsilon_s> \lambda t,\, \sup_{s\leq r\leq T} |\epsilon_r-\epsilon_s| \geq \varepsilon t, \, \zeta\in t+\varDelta_\delta \}} \bigg) 
 \eeqnn
 By using the  Markov property and then the compensation formula \eqref{eqn.Compensation}, the right-hand side equals  
 \beqnn
 \int_0^T ds \int_0^\infty \underline{n}_s(dz) \int_{\lambda t}^\infty \mathbf{P}_{z+w} \Big( \sup_{0\leq r\leq T-s} \big|X_r-z-w \big|\geq \varepsilon t , \tau_0^-\in t-s+\varDelta_\delta\Big) \,\nu(dw)   ,
 \eeqnn
 which can be bounded by 
 \beqnn
 \underline{n}(\zeta)\cdot \sup_{s\in[0,T]} \sup_{z\geq 0} \int_{\lambda t}^\infty \mathbf{P}_{z+w} \Big( \sup_{0\leq r\leq T-s} \big|X_r-z-w \big|\geq \varepsilon t , \tau_0^-\in t-s+\varDelta_\delta\Big) \,\nu(dw) .
 \eeqnn
 By the spatial homogeneity of $X$ along with the definition of $\tau_{-z-w}^-$,  and then Fubini's theorem
 \beqnn
 \lefteqn{ \sup_{s\in[0,T]} \sup_{z\geq 0}  \int_{\lambda t}^\infty \mathbf{P}_{z+w} \Big( \sup_{0\leq r\leq T-s} \big|X_r-z-w \big|\geq \varepsilon t , \tau_0^-\in t-s+\varDelta_\delta\Big) \,\nu(dw)  }\cr
 \ar=\ar  \sup_{s\in[0,T]} \sup_{z\geq 0} \int_{\lambda t}^\infty \mathbf{P} \Big( \sup_{0\leq r\leq T-s} \big|X_r \big|\geq \varepsilon t , \underline{X}_{t-s} \geq -z-w >  \underline{X}_{t-s+\delta} \Big) \,\nu(dw)  \cr
 \ar=\ar  \sup_{s\in[0,T]} \sup_{z\geq 0}  \mathbf{E} \Big[ \nu \big( [-\underline{X}_{t-s}-z, -\underline{X}_{t-s+\delta}-z  )\cap (\lambda t,\infty) \big); \sup_{0\leq r\leq T-s} |X_r|\geq \varepsilon t \Big].
 \eeqnn  
 By Corollary~\ref{Corollary.202}  and \eqref{eqn.30}, the last expression can be bounded for all sufficiently large $t$  by, 
 \beqnn
 \lefteqn{C\cdot  \sup_{s\in[0,T]} \mathbf{E} \Big[  \underline{X}_{t-s}-\underline{X}_{t-s+\delta} +1, \sup_{0\leq r\leq T-s} |X_r|\geq \varepsilon t \Big] \cdot \nu\big(\beta (t+\varDelta_\delta)\big) }\cr
 \ar\leq\ar C\cdot   \mathbf{E} \Big[  \underline{X}_{t-T}-\underline{X}_{t+T+\delta} +1, \sup_{0\leq r\leq T} |X_r|\geq \varepsilon t \Big] \cdot \nu\big(\beta (t+\varDelta_\delta)\big)
 \eeqnn
 Combining these estimates together and then using Theorem~\ref{MainThm01}, we obtain  for all sufficiently large $t$, 
 \beqnn
 \underline{n}\bigg(\sup_{s\leq T } \Big|\frac{\epsilon_{s\vee  \mathcal{J}^{\lambda t}}-\epsilon_{  \mathcal{J}^{\lambda t}}}{t} \Big|\geq \varepsilon \,\bigg|\,\zeta\in t+\varDelta_\delta\bigg) 
 \leq  C\cdot   \mathbf{E} \Big[  \underline{X}_{t-T}-\underline{X}_{t+T+\delta} +1, \sup_{0\leq r\leq T} |X_r|\geq \varepsilon t \Big] ,
 \eeqnn 
 which goes to $0$ as $t\to\infty$, since $\sup_{0\leq r\leq T} |X_r|<\infty$ a.s. and the family $\{  \underline{X}_{t-T}-\underline{X}_{t+T+\delta}\}_{t\geq T}$   is uniformly integrable; see Lemma~\ref{Lemma.310}. This proves the convergence of the first term in \eqref{eqn.904} to $0$. 
 
 We now consider the second term in \eqref{eqn.904}. 
 Similarly as in the proof of Lemma~\ref{Lemma.903}, it suffices to prove that as $t\to\infty$, 
 \beqnn
 \frac{\underline{n}\big(\sup_{s\leq T\wedge \zeta} |\epsilon_{s\vee  \mathcal{J}^{\lambda t}}- \epsilon_{ \mathcal{J}^{\lambda t}}|  \geq  \varepsilon  t ,\,   \# \mathcal{A}_{\beta \delta}^{\leq R}(\beta t,K) =1,\, \mathcal{J}^{\lambda t} \in \mathcal{A}_{\beta \delta}^{\leq R}(\beta t,K) \big) }{\underline{n}\big( \overline\epsilon\in \beta (t+\varDelta_\delta)  \big) } \to 0.
 \eeqnn 
 In view of \eqref{eqn.5018}, on the event $\{ \# \mathcal{A}_{\beta \delta}^{\leq R}(\beta t,K) =1,\, \mathcal{J}^{\lambda t} \in \mathcal{A}_{\beta \delta}^{\leq R}(\beta t,K) \}$ we have $\Delta\epsilon_{\mathcal{J}^{\lambda t}} \in [\beta t-2K,\beta t+\beta \delta]$ and hence the preceding fraction can be bounded by
 \beqnn
 \lefteqn{ \frac{1 }{\underline{n}\big( \overline\epsilon\in \beta (t+\varDelta_\delta)  \big) }\cdot \underline{n}\big(\sup_{s\leq T\wedge \zeta} \big|\epsilon_{s\vee  \mathcal{J}^{\lambda t}}- \epsilon_{ \mathcal{J}^{\lambda t}} \big| \geq  \varepsilon  t ,\,   \Delta\epsilon_{\mathcal{J}^{\lambda t}} \in [\beta t-2K,\beta t+\beta \delta] ,\, \mathcal{J}^{\lambda t}\leq R\big) }\cr
 \ar\leq\ar \frac{1}{\underline{n}\big( \overline\epsilon\in \beta (t+\varDelta_\delta)  \big)} \cdot \underline{n}\bigg( \sum_{0<s\leq R\wedge \zeta} \mathbf{1}_{\{\Delta \epsilon_s \in [\beta t-2K,\beta t+\beta \delta], \, \sup_{s\leq r\leq T} |\epsilon_{r}- \epsilon_{s} | \geq  \varepsilon  t   \}} \bigg)\cr
 \ar\leq \ar  \frac{\nu\big( (\beta t-2K,\beta t +\beta\delta] \big)}{\underline{n}\big( \overline\epsilon\in \beta (t+\varDelta_\delta)  \big)} \cdot \int_0^R   \underline{n}(\zeta >s)\, ds  \cdot   \mathbf{P}\Big( \sup_{0\leq r\leq T} \big|X_r \big| \geq  \varepsilon  t   \Big)  ,
 \eeqnn
 which goes to $0$ as $t\to\infty$, by Theorem \ref{MainThm02}, \eqref{eqn.30} and $\sup_{0\leq r\leq T}|X_r|<\infty$ a.s.
 \qed

 \section{Proof of Theorem \ref{MainThm06}} 
 \label{Sec.ProofThm06}
 \setcounter{equation}{0} 
 
 For notational convenience in the proof of Theorem~\ref{MainThm06}, we introduce the following family of finite measures on $\mathbb R_+$:
 \beqlb\label{eqn.8011}
 \mu_{t,\delta}(dz) :=  \int_{t}^{t+\delta} \frac{  \underline{n}_s(dz)}{ \nu\big( \beta t+\varDelta_\delta\big) } \, ds,\quad t,\delta>0 .
 \eeqlb 
 The vague convergence of these measures as $t\to\infty$ is established in the following proposition.
 
 \begin{proposition}\label{Prop.801}
 	For any constants $\delta>0$, $0\leq x<y<\infty$ and any continuous function $f$ on $\mathbb{R}_+$, we have as $t\to\infty$,
 	\beqnn
 	\bigg|\int_x^y f(z)\,\mu_{t,\delta}(dz)
 	-\int_x^y f(z)V(z)\,dz\bigg|\to0.
 	\eeqnn
 \end{proposition}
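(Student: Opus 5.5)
The plan is to start from the decomposition \eqref{c12} of the entrance law and read off the local behaviour of $\underline n_s$ near the origin from the uniform local large deviations of Lemma~\ref{Lemma.203}. The expected mechanism is a single big jump. An excursion alive at a time $s\in[t,t+\delta]$ in a bounded region $[x,y]$ has, with overwhelming likelihood, made its big jump at an early time $r$. The process after that jump then behaves like $X$ started near the origin and lying in a window close to its starting point after time $s-r\approx t$.

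\textbf{Step 1 (approximating $f$ and writing the target mass).} By a standard approximation it suffices to take $f=\mathbf 1_{(a,a+h]}$ with $[a,a+h]\subset[x,y]$ and $h$ small. For a continuous $f$ one then sandwiches $f$ between step functions on a fine mesh of $[x,y]$, using that $V$ is bounded and monotone. Multiplying \eqref{c12} by $1/s$ and integrating over $s\in[t,t+\delta]$ gives
\beqnn
\int_t^{t+\delta}\underline n_s\big(a+\varDelta_h\big)\,ds
=\int_t^{t+\delta}\frac{ds}{s}\int_0^s dr\int_{[0,a+h]}\underline n_r(dz)\,\mathbf P\big(X_{s-r}\in a-z+\varDelta_h,\ X_{s-r}>-z\big)\ \mbox{(up to the constraint $x\geq z$ in \eqref{c12})}.
\eeqnn
The integrand is restricted to $z\le a+h$, which is bounded.

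\textbf{Step 2 (main region $r\le\varepsilon t$).} For $u=s-r\in[(1-\varepsilon)t,t+\delta]$ I rewrite
\beqnn
\mathbf P\big(X_u\in a-z+\varDelta_h\big)=\mathbf P\big(X_u+\beta u\in \beta u+(a-z)+\varDelta_h\big).
\eeqnn
Lemma~\ref{Lemma.203}, applied with the shift $a-z$ bounded, then gives uniformly $\mathbf P(X_u\in a-z+\varDelta_h)\sim u\,\nu(\beta u+\varDelta_h)$. By \eqref{eqn.30} this is $\sim \alpha h\,\overline\nu(\beta t)/\beta$, uniformly over the region. The factor $u/s$ tends to $1$ up to $O(\varepsilon)$. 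What remains is
\beqnn
\int_0^{\varepsilon t}\underline n_r\big(\{z\le a\}\big)\,dr\ \to\ V(a),
\eeqnn
where I use $V(a)=\int_0^\infty\underline n(\epsilon_r\le a,\zeta>r)\,dr$ and the $h$-fuzz is absorbed as $h\to0$. Hence this region contributes
\beqnn
\delta\cdot\frac{\alpha\,\overline\nu(\beta t)}{\beta t}\,h\,V(a)\,(1+o(1))\sim \nu(\beta t+\varDelta_\delta)\int_a^{a+h}V(z)\,dz .
\eeqnn

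\textbf{Step 3 (remaining regions $r\in(\varepsilon t,s]$).} I must show these are $o\big(\overline\nu(\beta t)/t\big)$. For the intermediate part, $\varepsilon t<r<(1-\varepsilon)t$, I would use the crude bound $\underline n_r([0,y])\le\underline n(\zeta>r)\le C\,\overline\nu(\beta r)$ from \eqref{CWX01}, combined with Lemma~\ref{Lemma.203} giving $\mathbf P(X_{s-r}\in\cdot)\le C\,\overline\nu(\beta(s-r))$. This bounds the contribution by roughly $t\,\overline\nu(\varepsilon\beta t)^2$. That suffices only when $\alpha>2$, so in general I will need a sharper a priori bound $\underline n_r([0,y])\le C\,\overline\nu(r)/r$. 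I would obtain it first by running the same identity \eqref{c12} as an inequality, with Corollary~\ref{Corollary.202} in place of the exact asymptotics, possibly iterating; alternatively it should follow from the time-integrated entrance-law estimate of \cite{CuiWuXu2026b} cited in Subsection~\ref{Subsec.Methodologies}.

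\textbf{Main obstacle: the late region $r\ge(1-\varepsilon)t$.} Here the free increment $s-r$ is short, so no large-deviation factor helps and the contribution is essentially $\varepsilon\int_{(1-\varepsilon)t}^{t+\delta}\underline n_r([0,y])\,dr$. With the a priori bound $\underline n_r([0,y])\le C\,\overline\nu(r)/r$ from Step~3, this is $O(\varepsilon)\cdot\overline\nu(\beta t)/t\cdot t\cdot\frac1t$ after dividing by $s\sim t$, i.e.\ $O(\varepsilon)$ relative to the main term, and it vanishes as $\varepsilon\to0$. Securing that a priori local bound on $\underline n_r$ near zero, uniformly in $r$, is the crux of the argument. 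Given it, the three regions combine, letting $t\to\infty$, then $\varepsilon\to0$, then $h\to0$, to give the claimed convergence.
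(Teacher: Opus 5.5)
Your route differs from the paper's and is sound in outline, but the step you call the crux is not proved and needs one ingredient you have not named.

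\textbf{Comparison with the paper.} The paper does not analyse \eqref{c12} here at all. After the change of variable $r=s-t$ it bounds the quantity by $\sup_{r\in[0,\delta]}$ of the deviation at the single time $t+r$. It then uses \eqref{eqn.30} to replace $\delta/\nu(\beta t+\varDelta_\delta)$ by $\beta(t+r)/(\alpha\overline\nu(\beta(t+r)))$. What remains is exactly the uniform local entrance-law limit $\frac{\beta s}{\alpha\overline\nu(\beta s)}\int_x^y f\,d\underline n_s\to\int_x^y fV\,dz$, which is quoted from Corollary~1.6 of \cite{CuiWuXu2026b}. You instead re-derive that local limit on bounded windows from \eqref{c12}, Lemma~\ref{Lemma.203} and \eqref{CWX01}. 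This keeps everything inside the present paper, at the cost of a three-region decomposition and an a priori bound. The paper's version is immediate but rests entirely on the external result.

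\textbf{Step~2 and the reduction to windows.} Step~2 is right, provided the constraint in \eqref{c12} is read as $X_{s-r}\geq 0$ under $\mathbf P$, not $X_{s-r}>-z$. A single window only gives $hV(a)\le\liminf\le\limsup\le hV(a+h)$, up to factors $1\pm O(\varepsilon)$. So continuous $f$ has to be handled by upper and lower Riemann sums using the monotonicity of $V$, as you indicate.

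\textbf{The intermediate region.} Your estimate there drops the factor $1/s$ coming from \eqref{c12}. With that factor, the crude bound is $C\,\overline\nu(\varepsilon\beta t)^2$, which is $o(\overline\nu(\beta t)/t)$ for every $\alpha>1$. So no restriction to $\alpha>2$ arises.

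\textbf{The a priori bound.} Iterating \eqref{c12} does not produce it if, as in your late-region discussion, the transition probability over short increments is bounded by $1$. You would only get $\underline n_s([0,y])\le C\overline\nu(\beta s)/s+\frac12\sup_{r\ge s/2}\underline n_r([0,y])$, which gains nothing.

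The missing ingredient is that $u\mapsto\mathbf P(|X_u|\le y)$ is integrable on $(0,\infty)$. This follows from transience, as used in the proof of Proposition~\ref{Prop.506}. Alternatively, Lemma~\ref{Lemma.203} gives $\mathbf P(|X_u|\le y)\le C\overline\nu(\beta u)$ for large $u$.

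Apply \eqref{c12} with the window $[0,y]$ and split at $r=s/2$. For the early part, use $\sup_{u\ge s/2}\mathbf P(|X_u|\le y)\le C\overline\nu(\beta s)$ and $\int_0^\infty\underline n_r([0,y])\,dr=V(y)$. This gives
\[
s\,\underline n_s([0,y])\le C\,\overline\nu(\beta s)\,V(y)+\sup_{r\ge s/2}\underline n_r([0,y])\int_0^\infty\mathbf P(|X_u|\le y)\,du .
\]
Inserting $\underline n_r([0,y])\le\underline n(\zeta>r)\le C\overline\nu(\beta r)$ yields $\underline n_s([0,y])\le C\overline\nu(\beta s)/s$ for large $s$, in a single step.

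With that bound and the same integrability, the late region $r\ge(1-\varepsilon)t$ is even $O(\overline\nu(\beta t)/t^2)$, and your argument closes.
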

 \proof
 By \eqref{eqn.8011} and the change of variable $r=s-t$, we have
 \beqnn
 \bigg|\int_x^y f(z)\,\mu_{t,\delta}(dz)
 -\int_x^y f(z)V(z)\,dz\bigg|
 \ar=\ar
 \bigg|\int_0^\delta dr\int_x^y f(z)
 \frac{\underline n_{t+r}(dz)}
 {\nu\big(\beta t+\varDelta_\delta\big)}
 -\int_x^y f(z)V(z)\,dz\bigg|\cr\cr
 \ar\leq\ar
 \frac{1}{\delta}\int_0^\delta
 \bigg|\int_x^y f(z)
 \frac{\delta\cdot\underline n_{t+r}(dz)}
 {\nu\big(\beta t+\varDelta_\delta\big)}
 -\int_x^y f(z)V(z)\,dz\bigg|\,dr\cr\cr
 \ar\leq\ar
 \sup_{r\in[0,\delta]}
 \bigg|\int_x^y f(z)
 \frac{\delta\cdot\underline n_{t+r}(dz)}
 {\nu\big(\beta t+\varDelta_\delta\big)}
 -\int_x^y f(z)V(z)\,dz\bigg|.
 \eeqnn
 By the triangle inequality, the last supremum can be bounded by the sum of the following two terms:
 \beqnn
 \sup_{r\in[0,\delta]}
 \bigg|
 \frac{\alpha\cdot\overline\nu\big(\beta(t+r)\big)\cdot\delta}
 {\beta(t+r)\cdot\nu\big(\beta t+\varDelta_\delta\big)}
 -1\bigg|
 \cdot\int_x^y |f(z)|V(z)\,dz
 \eeqnn
 and
 \beqnn
 \sup_{r\in[0,\delta]}
 \frac{\alpha\cdot\overline\nu\big(\beta(t+r)\big)\cdot\delta}
 {\beta(t+r)\cdot\nu\big(\beta t+\varDelta_\delta\big)}
 \bigg|
 \frac{\beta(t+r)}
 {\alpha\cdot\overline\nu\big(\beta(t+r)\big)}
 \int_x^y f(z)\,\underline n_{t+r}(dz)
 -\int_x^y f(z)V(z)\,dz
 \bigg|.
 \eeqnn
 By \eqref{eqn.30}, the first term goes to $0$ as $t\to\infty$.
 Moreover, by Corollary~1.6 in \cite{CuiWuXu2026b},
 \beqnn
 \lim_{t\to\infty}\sup_{r\geq0}
 \bigg|
 \frac{\beta(t+r)}
 {\alpha\cdot\overline\nu\big(\beta(t+r)\big)}
 \int_x^y f(z)\,\underline n_{t+r}(dz)
 -\int_x^y f(z)V(z)\,dz
 \bigg|=0,
 \eeqnn
 and then the second term also vanishes as $t\to\infty$.
 This completes the proof.
 \qed
 
 \textbf{\textit{Proof of Theorem \ref{MainThm06}.}}
 By the identity \eqref{eqn.EH} and Fubini's theorem, we first have for all $\delta\in [\delta_0,\infty]$, 
 \beqnn
 \underline n\big(f(0,0)\,\big|\,\zeta\in t+\varDelta_\delta\big)
 =
 f(0,0)
 =
 \frac{1}{\mathbf{E}\big[\widehat{H}_1\big]}
 \Big(
 \underline{\tt a}\cdot f(0,0)
 +
 \int_0^\infty dx\int_0^\infty
 f(0,0)V(x)\,\nu(-x-dy)
 \Big).
 \eeqnn
 Consequently, it suffices to prove that the uniform convergence \eqref{eqn.MainThm06} holds for any non-negative, bounded and
 continuous function $f$ on $[0,\infty)^2$ with
 $\|f\|_\infty\leq1$ and $f(0,0)=f(x,y)=0$ if $x<0$ or $y<0$;
 that is, as $t\to\infty$,
 \beqlb\label{eqn.801}
 \bigg|
 \underline n\Big(
 f\big(\epsilon_{\zeta-},-\epsilon_\zeta\big)
 \,\Big|\,\zeta\in t+\varDelta_\delta
 \Big)
 -
 \frac{1}{\mathbf{E}[\widehat{H}_1]}
 \int_0^\infty dx\int_0^\infty
 f(x,y)V(x)\,\nu(-x-dy)
 \bigg|
 \to0.
 \eeqlb
 The proof consists of two steps.
 We first establish that the limit \eqref{eqn.801} holds for any
 fixed $\delta>0$ and then extend this convergence uniformly to all
 $\delta\in[\delta_0,\infty]$ by the method of interval partition.
 
 \textbf{Step 1.}
 We now prove that the limit \eqref{eqn.801} holds for any fixed $\delta>0$.
 By the triangle inequality, the left-hand side of \eqref{eqn.801}
 can be bounded by the sum of the following two terms:
 \beqnn
 \int_0^\infty dx\int_0^\infty
 f(x,y)V(x)\,\nu(-x-dy)
 \cdot
 \bigg|
 \frac{\nu\big(\beta t+\varDelta_\delta\big)}
 {\underline n\big(\zeta\in t+\varDelta_\delta\big)}
 -
 \frac{1}{\mathbf{E}\big[\widehat{H}_1\big]}
 \bigg|
 \eeqnn
 and
 \beqnn
 \frac{\nu\big(\beta t+\varDelta_\delta\big)}
 {\underline n\big(\zeta\in t+\varDelta_\delta\big)}
 \cdot
 \bigg|
 \frac{
 	\underline n\big(
 	f(\epsilon_{\zeta-},-\epsilon_\zeta),
 	\zeta\in t+\varDelta_\delta
 	\big)}
 {\nu\big(\beta t+\varDelta_\delta\big)}
 -
 \int_0^\infty dx\int_0^\infty
 f(x,y)V(x)\,\nu(-x-dy)
 \bigg|.
 \eeqnn
 By the fact that $\|f\|_\infty\leq1$ and \eqref{eqn.EH},
 the first term can be bounded by
 \beqnn
 \mathbf{E}\big[\widehat{H}_1\big]
 \cdot
 \bigg|
 \frac{\nu\big(\beta t+\varDelta_\delta\big)}
 {\underline n\big(\zeta\in t+\varDelta_\delta\big)}
 -
 \frac{1}{\mathbf{E}\big[\widehat{H}_1\big]}
 \bigg|,
 \eeqnn
 which goes to $0$ as $t\to\infty$; see
 Theorem~\ref{MainThm01} and \eqref{eqn.EH0}.
 To obtain the limit \eqref{eqn.801}, it remains to prove that
 as $t\to\infty$,
 \beqlb\label{eqn.802}
 \bigg|
 \frac{
 	\underline n\big(
 	f(\epsilon_{\zeta-},-\epsilon_\zeta),
 	\zeta\in t+\varDelta_\delta
 	\big)}
 {\nu\big(\beta t+\varDelta_\delta\big)}
 -
 \int_0^\infty dx\int_0^\infty
 f(x,y)V(x)\,\nu(-x-dy)
 \bigg|
 \to0.
 \eeqlb
 For $t\geq1$, by using the Markov property at time $1$ we get
 \beqnn
 \underline n\big(
 f(\epsilon_{\zeta-},-\epsilon_\zeta),
 \zeta\in t+\varDelta_\delta
 \big)
 \ar=\ar
 \int_0^\infty
 \mathbf{E}_z\Big[
 f\big(X_{\tau_0^--},-X_{\tau_0^-}\big),
 \tau_0^-\in t-1+\varDelta_\delta
 \Big]\,\underline n_1(dz).
 \eeqnn
 For any $z>0$, it is easy to identify that
 \beqnn
 f\big(X_{\tau_0^--},-X_{\tau_0^-}\big)
 \cdot
 \mathbf{1}_{\{\tau_0^-\in t-1+\varDelta_\delta\}}
 \ar=\ar
 \sum_{s\in t-1+\varDelta_\delta}
 f\big(X_{s-},-X_s\big)
 \cdot
 \mathbf{1}_{\{\underline X_{s-}>0,X_s<0\}}
 \cr\cr
 \ar=\ar
 \sum_{s\in t-1+\varDelta_\delta}
 f\big(X_{s-},-X_{s-}-\Delta X_s\big)
 \cdot
 \mathbf{1}_{\{
 	\underline X_{s-}>0,\Delta X_s<-X_{s-}
 	\}},
 \eeqnn
 which along with the compensation formula
 \eqref{eqn.CompenstationX} yields that
 \beqnn
 \mathbf{E}_z\Big[
 f\big(X_{\tau_0^--},-X_{\tau_0^-}\big),
 \tau_0^-\in t-1+\varDelta_\delta
 \Big]
 \ar=\ar
 \mathbf{E}_z\bigg[
 \sum_{s\in t-1+\varDelta_\delta}
 f\big(X_{s-},-X_{s-}-\Delta X_s\big)
 \cdot
 \mathbf{1}_{\{
 	\underline X_{s-}>0,\Delta X_s<-X_{s-}
 	\}}
 \bigg]
 \cr\cr
 \ar=\ar
 \int_{t-1}^{t-1+\delta}
 \mathbf{E}_z\Big[
 \int_{X_{s-}}^\infty
 f(X_{s-},-X_{s-}+w)\,\nu(-dw),
 \underline X_{s-}>0
 \Big]\,ds.
 \eeqnn
 Recall that $f(0,0)=f(x,y)=0$ if $x<0$ or $y<0$.
 For convenience, we define
 \beqlb\label{eqn.8001}
 g(x):=
 \int_x^\infty f(x,-x+w)\,\nu(-dw),
 \quad x\geq0.
 \eeqlb
 Using Fubini's theorem and then the Markov property,
 \beqnn
 \underline n\big(
 f(\epsilon_{\zeta-},-\epsilon_\zeta),
 \zeta\in t+\varDelta_\delta
 \big)
 \ar=\ar
 \int_0^\infty\underline n_1(dz)
 \int_{t-1}^{t-1+\delta}
 \mathbf{E}_z\big[
 g(X_{s-}),\underline X_{s-}>0
 \big]\,ds
 \cr\cr
 \ar=\ar
 \int_{t-1}^{t-1+\delta}
 \underline n\Big(
 \mathbf{E}_{\epsilon_1}\big[
 g(X_{s-}),\underline X_{s-}>0
 \big],\zeta>1
 \Big)\,ds
 \cr\cr
 \ar=\ar
 \int_{t-1}^{t-1+\delta}
 \underline n\big(
 g(\epsilon_{(s+1)-}),\zeta>s+1
 \big)\,ds
 \cr\cr
 \ar=\ar
 \int_{t-1}^{t-1+\delta}ds
 \int_0^\infty g(x)\,\underline n_{s+1}(dx)
 =
 \int_0^\infty g(x)
 \int_t^{t+\delta}\underline n_s(dx)\,ds
 \eeqnn
 and
 \beqnn
 \int_0^\infty V(x)\cdot g(x)\,dx
 =
 \int_0^\infty dx\int_0^\infty
 f(x,y)V(x)\,\nu(-x-dy).
 \eeqnn
 Recall the measure $\mu_{t,\delta}(dz)$ defined in \eqref{eqn.8011}.
 The left-hand side of \eqref{eqn.802} equals
 \beqnn
 \bigg|
 \int_0^\infty g(x)\,\mu_{t,\delta}(dx)
 -
 \int_0^\infty g(x)V(x)\,dx
 \bigg|.
 \eeqnn
 For any $\varepsilon\in(0,1)$ and integer $M\geq1$,
 it can be bounded by the sum of the following three terms:
 \beqnn
 I_1(t,\varepsilon)
 \ar:=\ar
 \bigg|
 \int_0^\varepsilon g(x)\,\mu_{t,\delta}(dx)
 -
 \int_0^\varepsilon g(x)V(x)\,dx
 \bigg|,
 \cr\cr
 I_2(t,\varepsilon,M)
 \ar:=\ar
 \bigg|
 \int_\varepsilon^M g(x)\,\mu_{t,\delta}(dx)
 -
 \int_\varepsilon^M g(x)V(x)\,dx
 \bigg|,
 \cr\cr
 I_3(t,M)
 \ar:=\ar
 \bigg|
 \int_M^\infty g(x)\,\mu_{t,\delta}(dx)
 -
 \int_M^\infty g(x)V(x)\,dx
 \bigg|.
 \eeqnn
 Firstly, note that $g$ is bounded on $[\varepsilon,M]$,
 Proposition~\ref{Prop.801} yields that
 $I_2(t,\varepsilon,M)\to0$ as $t\to\infty$.
 To obtain the limit \eqref{eqn.802} and then the limit
 \eqref{eqn.MainThm06} for any fixed $\delta>0$, it remains to prove
 \beqlb\label{eqn.804}
 \limsup_{\varepsilon\to0+}
 \limsup_{t\to\infty}I_1(t,\varepsilon)=0
 \quad\mbox{and}\quad
 \limsup_{M\to\infty}
 \limsup_{t\to\infty}I_3(t,M)=0.
 \eeqlb
 
 \textbf{(i)}
 Firstly, by the triangle inequality we have
 \beqlb\label{eqn.80111}
 I_1(t,\varepsilon)
 \leq
 \int_0^\varepsilon |g(x)|\,\mu_{t,\delta}(dx)
 +
 \int_0^\varepsilon |g(x)|V(x)\,dx.
 \eeqlb
 Note that $g(x)\leq\nu\big((-\infty,-x)\big)$,
 by \eqref{eqn.EH} the second term on the right-hand side goes to
 $0$ as $\varepsilon\to0+$.
 For the first term, in view of \eqref{eqn.8001} we decompose $g$
 into the following two parts: for any $\varepsilon'\in(0,1)$,
 \beqnn
 g_1(\varepsilon',x):=
 \int_x^{x+\varepsilon'}f(x,-x+w)\,\nu(-dw)
 \quad\mbox{and}\quad
 g_2(\varepsilon',x):=
 \int_{x+\varepsilon'}^\infty f(x,-x+w)\,\nu(-dw).
 \eeqnn
 This along with the triangle inequality yields that the first term
 on the right-hand side of \eqref{eqn.80111} can be bounded by
 $I_{1,1}(t,\varepsilon,\varepsilon')
 +I_{1,2}(t,\varepsilon,\varepsilon')$ with
 \beqnn
 I_{1,i}(t,\varepsilon,\varepsilon')
 \ar:=\ar
 \int_0^\varepsilon
 g_i(\varepsilon',x)\,\mu_{t,\delta}(dx),
 \quad i=1,2.
 \eeqnn
 For $i=1$, since
 $g_1(\varepsilon',x)
 \leq\sup_{z\in[0,\varepsilon']}|f(x,z)|
 \cdot\nu\big((-\infty,-x)\big)$, we have
 \beqnn
 I_{1,1}(t,\varepsilon,\varepsilon')
 \ar\leq\ar
 \sup_{y\in[0,\varepsilon]}
 \sup_{z\in[0,\varepsilon']}|f(y,z)|
 \cdot
 \int_0^\varepsilon
 \nu\big((-\infty,-x)\big)\,\mu_{t,\delta}(dx).
 \eeqnn
 It follows from  \eqref{eqn.8011} and the compensation formula
 \eqref{eqn.Compensation} applied to terminal jumps that
 \beqnn
 \int_0^\varepsilon
 \nu\big((-\infty,-x)\big)\,\mu_{t,\delta}(dx)
 \ar\leq\ar
 \int_0^\infty
 \nu\big((-\infty,-x)\big)\,\mu_{t,\delta}(dx)
 \cr\cr
 \ar=\ar
 \frac{1}{\nu\big(\beta t+\varDelta_\delta\big)}
 \int_t^{t+\delta}ds\int_0^\infty
 \nu\big((-\infty,-x)\big)\,\underline n_s(dx)
 \cr\cr
 \ar=\ar
 \frac{1}{\nu\big(\beta t+\varDelta_\delta\big)}
 \underline n\bigg(
 \sum_{0<s\leq\zeta}
 \mathbf{1}_{\{
 	s\in t+\varDelta_\delta,\,
 	\epsilon_{s-}>0,\,
 	\Delta\epsilon_s<-\epsilon_{s-}
 	\}}
 \bigg)
 \cr\cr
 \ar\leq\ar
 \frac{\underline n\big(\zeta\in t+\varDelta_\delta\big)}
 {\nu\big(\beta t+\varDelta_\delta\big)}.
 \eeqnn
 Hence, by  Theorem~\ref{MainThm01} and \eqref{eqn.30},
 we have as $t\to\infty$,
 \beqnn
 I_{1,1}(t,\varepsilon,\varepsilon')
 \ar\leq\ar
 \sup_{y\in[0,\varepsilon]}
 \sup_{z\in[0,\varepsilon']}|f(y,z)|
 \cdot
 \frac{\underline n\big(\zeta\in t+\varDelta_\delta\big)}
 {\nu\big(\beta t+\varDelta_\delta\big)}
 \to
 \beta\underline n(\zeta)\cdot
 \sup_{y\in[0,\varepsilon]}
 \sup_{z\in[0,\varepsilon']}|f(y,z)|,
 \eeqnn
 which goes to $0$ as $\varepsilon\to0$ and then
 $\varepsilon'\to0$ because of the continuity of $f$ and $f(0,0)=0$.
 For $i=2$, we have
 $g_2(\varepsilon',x)
 \leq\nu\big((-\infty,-x-\varepsilon']\big)
 \leq\nu\big((-\infty,-\varepsilon']\big)$
 for all $x\geq0$ and then
 \beqnn
 I_{1,2}(t,\varepsilon,\varepsilon')
 \ar\leq\ar
 \nu\big((-\infty,-\varepsilon']\big)
 \cdot
 \mu_{t,\delta}\big((0,\varepsilon]\big).
 \eeqnn
 By Proposition~\ref{Prop.801} with $x=0$ and $y=\varepsilon$,
 we have
 \beqnn
 \limsup_{t\to\infty}I_{1,2}(t,\varepsilon,\varepsilon')
 \ar\leq\ar
 \nu\big((-\infty,-\varepsilon']\big)
 \cdot
 \int_0^\varepsilon V(z)\,dz,
 \eeqnn
 which goes to $0$ as $\varepsilon\to0$.
 These two estimates together yield the first limit in
 \eqref{eqn.804} by letting first $t\to\infty$,
 then $\varepsilon\to0+$ and finally $\varepsilon'\to0+$.
 
 \textbf{(ii)} 
 Since  $\|f\|_\infty\leq 1$ and $V(\infty)<\infty$, we also have 
 \beqlb\label{eqn.803}
 I_3(t,M)
 \ar\leq\ar    \int_{t}^{t+\delta} ds \int_M^\infty \nu\big((-\infty,-x]\big) \frac{\underline{n}_s(dx) }{\nu\big( \beta t+\varDelta_\delta\big)} + \int_M^\infty \nu\big((-\infty,-x]\big)\,V(x)\,dx  \cr
 \ar\leq\ar   \sum_{k=M}^{\infty} \nu\big((-\infty,-k]\big) \int_{t}^{t+\delta}
 \frac{\underline{n}_s\big( k+\varDelta_1 \big)}{\nu\big( \beta t+\varDelta_\delta\big)}\, ds + V(\infty)\cdot\int_M^\infty \nu\big((-\infty,-x]\big) \,dx .
 \eeqlb
 By Theorem~1.5 in \cite{CuiWuXu2026b} and $V(\infty)<\infty$, there exists $C>0$ such that, for any sufficiently large $t$, uniformly in $s\in[t,t+\delta]$ and $k\geq M$,
 \beqnn
 \underline n_s\big(k+\varDelta_1\big)
 \ar\leq\ar
 C\cdot\frac{\overline\nu(\beta s+k)}{\beta s+k}
 \int_k^{k+1}V(z)\,dz
 \leq
 C\cdot\frac{\overline\nu(\beta s)}{\beta s}.
 \eeqnn
 By using the monotonicity of $\overline{\nu}$ and then  \eqref{eqn.30} with any fixed $\delta>0$, we obtain for all sufficiently large $t$ 
 \beqnn
 \sum_{k=M}^{\infty} \nu\big((-\infty,-k]\big) \int_{t}^{t+\delta}
 \frac{\underline{n}_s\big( k+\varDelta_1 \big)}{\nu\big( \beta t+\varDelta_\delta\big)}\, ds 
 \ar\leq\ar \frac{C}{\nu\big( \beta t+\varDelta_\delta\big)} \sum_{k=M}^{\infty} \nu\big((-\infty,-k]\big) \int_{t}^{t+\delta} \frac{\overline\nu(\beta s)}{\beta s}\, ds \cr
 \ar\leq\ar C\delta\cdot \frac{\overline\nu(\beta t)/(\beta t)}{\nu\big( \beta t+\varDelta_\delta\big)} \cdot \sum_{k=M}^{\infty} \nu\big((-\infty,-k]\big)  
 \leq C   \int_{M-1}^{\infty} \nu\big((-\infty,-z]\big) dz. 
 \eeqnn 
 Plugging this back into the right-hand side of the second inequality in \eqref{eqn.803}, 
 \beqnn
 \limsup_{t\to\infty} I_3(t,M) \ar\leq \ar  C\cdot  	\int_{M-1}^\infty \nu\big((-\infty,-x]\big)dx \to 0,
 \eeqnn 
 as $M\to\infty$, since  $ \int_1^\infty  \nu\big((-\infty,-y)\big)\, dy =    \int_{-\infty}^{-1} (|y|-1) \,\nu(dy) <\infty$. 
 
 \textbf{Step 2.} We now extend \eqref{eqn.MainThm06} to hold uniformly for $\delta\in[\delta_0,\infty]$.
 For any large integer $n>1/\delta_0$ and $\delta\in[\delta_0,\infty]$, since $f\geq0$, we have
 \beqnn
 \sum_{k=0}^{[n\delta]-1}
 \underline n\big(f(\epsilon_{\zeta-},-\epsilon_\zeta),
 \zeta\in t+k/n+\varDelta_{1/n}\big)
 \ar\leq\ar
 \underline n\big(f(\epsilon_{\zeta-},-\epsilon_\zeta),
 \zeta\in t+\varDelta_\delta\big)
 \cr\cr
 \ar\leq\ar
 \sum_{k=0}^{[n\delta]}
 \underline n\big(f(\epsilon_{\zeta-},-\epsilon_\zeta),
 \zeta\in t+k/n+\varDelta_{1/n}\big).
 \eeqnn
 For convenience, we write $A$ for the limit on the right-hand side of \eqref{eqn.MainThm06} with $f(0,0)=0$.
 Applying the limit \eqref{eqn.MainThm06} with $\delta=1/n$, we obtain a positive function $\rho_n$ on $\mathbb R_+$
 that decreases to $0$ at infinity and satisfies, for all sufficiently large $t$ and all $k\geq0$,
 \beqnn
 \big(A-\rho_n(t)\big)\cdot
 \underline n\big(\zeta\in t+k/n+\varDelta_{1/n}\big)
 \ar\leq\ar
 \underline n\big(f(\epsilon_{\zeta-},-\epsilon_\zeta),
 \zeta\in t+k/n+\varDelta_{1/n}\big)
 \cr\cr
 \ar\leq\ar
 \big(A+\rho_n(t)\big)\cdot
 \underline n\big(\zeta\in t+k/n+\varDelta_{1/n}\big).
 \eeqnn
 Combining these two estimates together yields
 \beqnn
 \underline n\Big(f(\epsilon_{\zeta-},-\epsilon_\zeta)
 \,\Big|\,\zeta\in t+\varDelta_\delta\Big)
 \ar\leq\ar
 \frac{A+\rho_n(t)}
 {\underline n\big(\zeta\in t+\varDelta_\delta\big)}
 \sum_{k=0}^{[n\delta]}
 \underline n\big(\zeta\in t+k/n+\varDelta_{1/n}\big)
 \cr\cr
 \ar\leq\ar
 \big(A+\rho_n(t)\big)\cdot
 \frac{\underline n\big(\zeta\in t+\varDelta_{\delta+1/n}\big)}
 {\underline n\big(\zeta\in t+\varDelta_\delta\big)}.
 \eeqnn
 In view of Theorem~\ref{MainThm01}, the monotonicity of $\nu(x+\varDelta_\delta)$ in $\delta$ and \eqref{eqn.30}, this yields
 \beqnn
 \lefteqn{\limsup_{t\to\infty}
 	\sup_{\delta\in[\delta_0,\infty]}
 	\underline n\Big(f(\epsilon_{\zeta-},-\epsilon_\zeta)
 	\,\Big|\,\zeta\in t+\varDelta_\delta\Big)}
 \cr\cr
 \ar\leq\ar
 A\cdot\limsup_{t\to\infty}
 \sup_{\delta\in[\delta_0,\infty]}
 \frac{\underline n\big(\zeta\in t+\varDelta_{\delta+1/n}\big)}
 {\underline n\big(\zeta\in t+\varDelta_\delta\big)}
 \cr\cr
 \ar=\ar
 A\cdot\limsup_{t\to\infty}
 \sup_{\delta\in[\delta_0,\infty]}
 \frac{\nu\big(\beta(t+\varDelta_{\delta+1/n})\big)}
 {\nu\big(\beta(t+\varDelta_\delta)\big)}
 \cr\cr
 \ar\leq\ar
 A\cdot\Big(
 1+\limsup_{t\to\infty}
 \sup_{\delta\in[\delta_0,\infty]}
 \frac{\nu\big(\beta(t+\delta)+\varDelta_{\beta/n}\big)}
 {\nu\big(\beta t+\varDelta_{\beta\delta_0}\big)}
 \Big)
 \leq
 A\cdot\Big(1+\frac{1}{n\delta_0}\Big).
 \eeqnn
 On the other hand, an analogous argument gives
 \beqnn
 \liminf_{t\to\infty}
 \inf_{\delta\in[\delta_0,\infty]}
 \underline n\Big(f(\epsilon_{\zeta-},-\epsilon_\zeta)
 \,\Big|\,\zeta\in t+\varDelta_\delta\Big)
 \ar\geq\ar
 A\cdot\liminf_{t\to\infty}
 \inf_{\delta\in[\delta_0,\infty]}
 \frac{\nu\big(\beta(t+\varDelta_{\delta-1/n})\big)}
 {\nu\big(\beta(t+\varDelta_\delta)\big)}\geq 
 A\cdot\Big(1-\frac{1}{n\delta_0}\Big).
 \eeqnn
 Combining these bounds and then letting $n\to\infty$ yields the desired uniform convergence \eqref{eqn.MainThm06}.
 \qed

 \bibliographystyle{plain}
 
 \bibliography{Reference}

@Article{Alili2001,
  author  = {Alili, L. and Chaumont, L.},
  title   = {A new fluctuation identity for {L}{\'e}vy processes and some applications},
  journal = {Bernoulli},
  year    = {2001},
  volume  = {7},
  number  = {3},
  pages   = {557--569},
}

@Article{AsmussenFossKorshunov2003,
	author  = {Asmussen, S. and Foss, S. and Korshunov, D.},
	title   = {Asymptotics for sums of random variables with local subexponential behaviour},
	journal = {J. Theoret. Probab.},
	year    = {2003},
	volume  = {16},
	number  = {2},
	pages   = {489--518},
 }

@Book{Bertoin1996,
	author    = {Bertoin, J.},
	publisher = {Cambridge University Press},
	title     = {L\'evy {P}rocesses},
	year      = {1996},
	address   = {Cambridge},
}

@Book{Billingsley1999,
	title     = {Convergence of {P}robability {M}easures},
	publisher = {John Wiley \& Sons, Inc.},
	year      = {1999},
	author    = {Billingsley, P.},
	address   = {New York},
	edition   = {2nd},
}

@Book{BinghamGoldieTeugels1987,
 	title     = {Regular {V}ariation},
 	publisher = {Cambridge University Press},
 	year      = {1987},
 	author    = {Bingham, N.H. and Goldie, C.M. and Teugels, J.L.},
 	volume    = {27},
 	address   = {Cambridge},
 }

@Article{CuiWuXu2025,
 	author  = {Cui, Z. and Wu, H. and Xu, W.},
 	title   = {{Large excursions of reflected L\'evy processes: asymptotic shapes}},
 	journal = {arXiv:2512.08171},
 	year    = {2025}, 
 }

@article{CuiWuXu2026b,
  	title={{Local asymptotics for entrance laws of reflected L\'evy excursions}},
  	author={Cui, Z. and Wu, H. and Xu, W.},
  	journal={arXiv:2610.03449},
  	year={2026}
  }

@Article{WuXu2026c,
  	author  = {Wu, H. and Xu, W.},
  	title   = {{Local limit theorems for heavy-tailed L\'evy processes with negative drift}},
  	journal = {In preparation},
  	year    = {2026}, 
  }

@Article{DenisovShneer2013,
 	author  = {Denisov, D. and Shneer, V.},
 	journal = {J. Appl. Probab.},
 	title   = {Asymptotics for the first passage times of {L}\'{e}vy processes and random walks},
 	year    = {2013},
 	volume  = {50},
 	number  = {1},
 	pages   = {64--84},
 }

@Article{DenisovVatutinWachtel2014,
 	author  = { Denisov, D. and Vatutin, V. and Wachtel, V. },
 	journal = {Electron. J. Probab.},
 	title   = {Local probabilities for random walks with negative drift conditioned to stay nonnegative},
 	year    = {2014},
 	volume  = {19},
 	pages   = {1--17},
 }

@Book{Doney2007,
  	title     = {{Fluctuation Theory for L\'evy Processes}},
  	publisher = {Springer},
  	year      = {2007},
  	author    = {Doney, R.A.}, 
  	address   = {Berlin},
  }

@article{Doney2012,
 	title={{Local behaviour of first passage probabilities}},
 	author={Doney, R.A.},
 	journal={Probab. Theory Relat. Fields},
 	volume={152},
 	number={3},
 	pages={559--588},
 	year={2012}, 
 }

@article{DoneyRivero2013,
 	title={{Asymptotic behaviour of first passage time distributions for L{\'e}vy processes}},
 	author={Doney, R.A. and Rivero, V.},
 	journal={Probab. Theory Relat. Fields},
 	volume={157},
 	number={1},
 	pages={1--45},
 	year={2013}, 
 }

@Article{DoneySavov2010,
  author  = {Doney, R.A. and Savov, M.S.},
  title   = {The asymptotic behavior of densities related to the supremum of a stable process},
  journal = {Ann. Probab.},
  year    = {2010},
  volume  = {38},
  number  = {1},
  pages   = {316--326},
  doi     = {10.1214/09-AOP479},
 }

@article{DoneyRivero2016Erratum,
 	title={{Erratum to: Asymptotic behaviour of first passage time distributions for L{\'e}vy processes}},
 	author={Doney, R.A. and Rivero, V.},
 	journal={Probab. Theory Relat. Fields},
 	volume={164},
 	number={3},
 	pages={1079--1083},
 	year={2016}, 
 }

@Book{JacodShiryaev2003,
 	title     = {Limit {T}heorems for {S}tochastic {P}rocesses},
 	publisher = {Springer},
 	year      = {2003},
 	author    = {Jacod, J. and Shiryaev, A. N.},
 	address={Berlin},
 }

@Book{Kyprianou2014,
 	title     = {Fluctuations of {L}\'evy {P}rocesses with {A}pplications: {I}ntroductory {L}ectures},
 	publisher = {Springer Science \& Business Media},
 	year      = {2014},
 	author    = {Kyprianou, A. E.},
 }

@Book{Petrov2012,
 	title={{Sums of Independent Random Variables}},
 	author={Petrov, V.V.},
 	year={1975},
 	publisher={Springer},
 	address={Berlin},
 }

@Book{Resnick1987,
	author    = {Resnick, S. I.},
	publisher = {Springer},
	title     = {{Extreme Values, Regular Variation and Point Processes}},
	year      = {1987}, 
	address={ New York},
}

@Book{Sato1999,
 	title     = {L\'{e}vy {P}rocesses and {I}nfinitely {D}ivisible {D}istributions},
 	publisher = {Cambridge University Press},
 	year      = {1999},
 	author    = {Sato, K.I.},
 	address={Cambridge},
 }

@Article{VatutinWachtel2009,
 	title={Local probabilities for random walks conditioned to stay positive},
 	author={Vatutin, V. and Wachtel, V.},
 	journal={Probab. Theory Relat. Fields},
 	volume={143},
 	number={1},
 	pages={177--217},
 	year={2009},
 }

@Article{WuXu2026a,
 	author    = {Wu, H. and Xu, W.},
 	title     = {{Uniform local asymptotics for L\'evy processes with subexponential jumps}},
 	journal   = {arXiv:2608.11637},
 	year      = {2026}, 
 }

 \end{document}